\title{The ghost stairs stabilize to sharp symplectic embedding obstructions}
\author{Dan Cristofaro-Gardiner}
\thanks{DCG partially supported by NSF grant DMS-1402200}
\address{Department of Mathematics, Harvard University and University of California, Santa Cruz}
\email{gardiner@math.harvard.edu}
\author{Richard Hind}
\thanks{RH partially supported by the Simons Foundation under grant \#317510}
\address{Department of Mathematics, Notre Dame University}
\email{hind.1@nd.edu}
 \author{Dusa McDuff}
  \thanks{DM partially supported by NSF grant DMS-1308669}
\address{Department of Mathematics,
 Barnard College, Columbia University}
\email{dusa@math.columbia.edu}
\date{February 10, 2017}
\newtheorem{theorem}{Theorem}[subsection]
\newtheorem{question}[theorem]{Question}
\newtheorem{proposition}[theorem]{Proposition}
\newtheorem{prop}[theorem]{Proposition}
\newtheorem{corollary}[theorem]{Corollary}
\newtheorem{lemma}[theorem]{Lemma}
\newtheorem{lemma-definition}[theorem]{Lemma-Definition}
\numberwithin{figure}{section}
\numberwithin{equation}{subsection}
\theoremstyle{definition}
\newtheorem{definition}[theorem]{Definition}
\newtheorem{defn}[theorem]{Definition}
\newtheorem{remark}[theorem]{Remark}
\newtheorem{rmk}[theorem]{Remark}
\newtheorem{example}[theorem]{Example}
\newtheorem{cor}[theorem]{Corollary}
\newcommand{\eqdef}{\;{:=}\;}
\newcommand{\im}{{\rm im}}
\newcommand{\er}{{\Diamond}}
\newcommand{\less} {{\smallsetminus }}
\newcommand{\MS}{{\medskip}}
\newcommand{\NI}{{\noindent}}
\newcommand{\eps}{{\varepsilon}}
\newcommand{\ind}{{\rm ind}}
\newcommand{\gr}{{\rm gr}}
\renewcommand{\Tilde}{\widetilde}
\newcommand{\ov}{\overline}
\newcommand{\La}{{\Lambda}}
\newcommand{\Mm}{{\mathcal M}}
\newcommand{\Nn}{{\mathcal N}}
\newcommand{\Jj}{{\mathcal J}}
\newcommand{\Bb}{{\mathcal B}}
\newcommand{\Ii}{{\mathcal I}}
\newcommand{\Cc}{{\mathcal C}}
\newcommand{\Ee}{{\mathcal E}}
\newcommand{\Ll}{{\mathcal L}}
\newcommand{\p}{{\partial}}
\newcommand{\C}{{\mathbb C}}
\newcommand{\R}{{\mathbb R}}
\newcommand{\T}{{\mathbb T}}
\newcommand{\Z}{{\mathbb Z}}
\newcommand{\Aa}{\mathcal{A}}
\newcommand{\op}{\operatorname}
\newcommand{\Ker}{\op{Ker}}
\newcommand{\om}{\omega}
\newcommand{\al}{\alpha}
\newcommand{\be}{\beta}
\newcommand{\de}{\delta}
\newcommand{\De}{\Delta}
\newcommand{\ga}{\gamma}
\newcommand{\la}{\lambda}
\newcommand{\ka}{\kappa}
\newcommand{\Si}{\Sigma}
\newcommand{\si}{\sigma}
\newcommand{\se} {{\stackrel{s}\hookrightarrow}}
\renewcommand{\epsilon}{\varepsilon}
\begin{document}

\begin{abstract}

In determining when a four-dimensional ellipsoid can be symplectically embedded into a ball, McDuff and Schlenk found an infinite sequence of ``ghost" obstructions that generate an infinite ``ghost staircase" determined by the even index Fibonacci numbers.  The ghost obstructions are not visible for the four-dimensional embedding problem because strictly stronger obstructions also exist.  We show that in contrast, the embedding constraints associated to the ghost obstructions are sharp for the stabilized problem; moreover, the corresponding optimal embeddings are given by symplectic folding.  The proof introduces several ideas of independent interest, namely: (i) an improved version of the index inequality familiar from the theory of embedded contact homology (ECH), (ii) new applications of relative intersection theory in the context of neck stretching analysis, (iii) a new approach to estimating the ECH grading of multiply covered elliptic orbits in terms of areas and continued fractions, and (iv) a new technique for understanding the ECH of ellipsoids by constructing explicit bijections between certain sets of lattice points.

\end{abstract}

\maketitle

\tableofcontents

\section{Introduction}
\subsection{Background}

Let $(M_1,\omega_1), (M_2,\omega_2)$ be symplectic manifolds.  A {\em symplectic embedding}
\[ (M_1,\omega_1)\; \se \; (M_2,\omega_2)\]
is a smooth embedding $\Psi: M_1 \to M_2$ such that $\Psi^* \omega_2 = \omega_1$.  It can be a difficult problem to determine whether or not one symplectic manifold can be embedded into another; this is particularly true when the manifolds have the same dimension\footnote{If $\dim(M_2) \ge
\dim(M_1) + 2$ and $M_1$ is open, versions of Gromov's $h$-principle apply.}.

In fact, even deciding when one {\em symplectic ellipsoid}
\[ E(a_1,\ldots,a_n) = \left \lbrace \pi \frac{|z_1|^2}{a_1} + \ldots + \pi \frac{|z_n|^2}{a_n} < 1 \right \rbrace \subset \mathbb{C}^n,\]
can be embedded into another is largely open.  Hofer conjectured a purely combinatorial criteria for settling the $n=2$ case, and McDuff proved this in \cite{M1}.  In higher dimensions, Buse and Hind have constructed some optimal embeddings of ellipsoids into balls \cite{BH}, but a complete understanding of the problem seems far off.

In \cite{CGH}, the authors began the study of the function\footnote{In fact, an optimal embedding can be realized in all cases where the value of $c_k(x)$ is known. Optimal 4-dimensional embeddings exist by Corollary 1.6 in \cite{Mell} and this covers the cases when $c_k(x)=c_0(x)$. The folding maps which give 
 our other cases are quite explicit, 
and we can get optimal embeddings from Theorem 4.3 in \cite{pelayongoc}.}
\[ c_k(x) = \op{inf}  \lbrace \mu\hspace{1 mm} | \hspace{1 mm} E(1,x) \times \mathbb{C}^k \se B^4(\mu) \times \mathbb{C}^k \rbrace,\]
where $B^4(\mu) = E(\mu,\mu)$ is the $4$-ball of capacity $\mu$.
This is the stabilized version of the function $c_0(a)$, which was computed by McDuff and Schlenk in \cite{MS}.  It is a version of the ellipsoid embedding problem in which most of the arguments are infinite.

The main theorem of  \cite{CGH} states that
\[ c_k(x) = c_0(x), \quad 1 \le x \le \tau^4,\]
where $\tau \eqdef \frac{1+\sqrt{5}}{2}$ denotes the Golden Mean.  The function $c_k(x)$ is currently unknown for $x > \tau^4$; in fact, it is not even known whether or not this function depends on $k\ge 1$.  It {\em is} known, however, that
\begin{equation}
\label{eqn:foldingbound}
c_k(x) \le \frac{3x}{x+1},\qquad x>\tau^4,
\end{equation}
because of an explicit ``symplectic folding construction'' given by 
Hind in \cite{Hi}\footnote{
Actually the construction in \cite{Hi} only applied to compact subsets of the stabilized ellipsoid.
To embed the whole product we are appealing to  \cite{pelayongoc}.}.  In particular,  because
the volume bound gives $c_0(x) \ge \sqrt x$,  we have $c_k(x) < c_0(x)$ for $x > \tau^4$.
It is then natural to ask the following:
\begin{question}
\label{que:mainquestion}
Is it the case that
\begin{equation}
\label{eqn:conjecturedequation}
c_k(x) = \frac{3x}{x+1}
\end{equation}
for $x > \tau^4$ and $k \ge 1$?
\end{question}

An affirmative answer to Question~\ref{que:mainquestion} would imply that the stabilized embedding problem is quite rigid: all of the optimal embeddings would  be given
 either by stabilizing four-dimensional embeddings as in \cite{CGH}, or by Hind's generalization of symplectic folding.  On the other hand, an outcome in the negative would require the existence of as yet unknown embeddings.  In their proof \cite{HK2} that $c_k(x)$ is asymptotic to $3$ as $x \to \infty$, Hind and Kerman showed that \eqref{eqn:conjecturedequation} holds for
all  integers of the form $3g_n-1$, where $g_n$ is an odd index Fibonacci number.

\subsection{The ghost stairs}

One of the more mysterious aspects of McDuff and Schlenk's computation of $c_0(x)$ is the ``ghost stairs'', which we now review.

Recall first of all that the function $c_0(x)$ is particularly intricate for $1 \le x \le \tau^4$.  Here, it is given by an infinite staircase determined by the odd index Fibonacci numbers $g_\bullet = (1,2,5, 13, 34,\dots)$, called the ``Fibonacci staircase''.  For $\tau^4 \le x \le 7$, the function $c_0(x)$ is seemingly simpler ---  it turns out that
\begin{equation}
\label{eqn:c0a}
c_0(x) = \frac{x+1}{3}
\end{equation}
for $x$ in this range.  Nevertheless, McDuff and Schlenk show that there is a kind of analogue of the Fibonacci staircase underlying \eqref{eqn:c0a}, which they call the {\em ghost stairs}.

The idea is that the deviation of the Fibonacci staircase from the classical volume constraint when $x<\tau^4$ comes from a sequence of sharp obstructions, one for each of the embedding problems
\[ E\left(1,\frac{g_{n+2}}{g_n}\right) \to B^4(\mu).
\]
These obstructions imply that $$
c_0\left(\frac{g_{n+2}}{g_n}\right) = \frac{g_{n+2}}{g_{n+1}}   = \frac {3\frac{g_{n+2}}{g_n}}{\frac{g_{n+2}}{g_n}+1},
$$
where the second equality holds by the Fibonacci identity $3g_{n+1} = g_n + g_{n+2}$.
One can write down analogous obstructions for the problem
\begin{equation}
\label{eqn:ghostproblem}
E(1,b_n) \to B^4(\mu),
\end{equation}
where the $b_n, n\ge 0,$ are determined by the {\em even} index Fibonacci numbers
\begin{equation}\label{eq:evenFib}
h_\bullet: = (1,3,8,21,55,\dots)
\end{equation}
 via the formula $b_n = \frac{h_{2n+3}}{h_{2n+1}}$.\footnote
 {
The numerical properties of the ratios $b'_s = \frac{h_{2s+2}}{h_{2s}}$ are not the same --- for example, the even index terms in the sequence $h_\bullet$ are all divisible by $3$ --- and so their role in \cite{MS} is somewhat different.  However, the $b'_s$ do come up in our arguments in \S\ref{sec:heart},
since the class $z_M$ is determined by the ratio $\frac {\ell_n}{\ell_{n-1}} = b'_n$.
 }
   Thus $b_0 = 8, b_1 = \frac {55}8, b_2 = \frac{377}{55}$, and so on; the $b_n$ are decreasing, with limit $\tau^4$.  Since
$h_{n+3} = 3h_{n+2} - h_{n+1}$,
we again obtain the estimate $c_0(x)\ge \frac{3x}{x+1}$ for $x=b_n, n \ge 0$; moreover, as explained in \cite[\S 4.3]{MS}, the obstructions at the $b_n$ fit together to form an infinite staircase converging to $\tau^4$ from the right.   However, in dimension $4$ these obstructions are {\em not} sharp at $b_n$, since as mentioned above they are weaker than the volume obstruction,\footnote
{
In fact, the graphs of $y = \sqrt x$ and $y=\frac{3x}{x+1}$  cross at $x=\tau^4$.}
 and so do not influence $c_0(b_n)$ directly.
 It is for this reason that McDuff and Schlenk call them {\em ghost stairs}.

Our main result is that the embedding obstructions at the $b_n$
persist under stabilization.
Because of the symplectic folding bound \eqref{eqn:foldingbound} they are sharp, so that we obtain the following.
\begin{theorem}
\label{thm:main}
$c_k(b_n) = \frac{3 b_n} {b_n + 1}$ for all
$k \ge 1$ and $n\ge 0$.
\end{theorem}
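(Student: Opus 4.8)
Theorem~\ref{thm:main} will follow by combining an upper bound from symplectic folding with a matching lower bound from Gromov--Witten / ECH obstructions. The upper bound $c_k(b_n) \le \frac{3b_n}{b_n+1}$ is already contained in \eqref{eqn:foldingbound}, so the entire content is the lower bound
\[
c_k(b_n) \ge \frac{3b_n}{b_n+1}, \qquad k \ge 1,\ n \ge 0.
\]
Since $c_k$ is nonincreasing in $k$, it suffices to prove this for all (large) $k$, which is where the stabilization machinery applies most cleanly; an inverse-limit argument then recovers all $k \ge 1$. So the target is: for each $n$, if $E(1,b_n) \times \C^k \se B^4(\mu) \times \C^k$ for $k$ large, then $\mu \ge \frac{3b_n}{b_n+1}$.

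First I would set up the obstruction as a nonexistence statement for a certain punctured $J$-holomorphic curve. Suppose $\mu < \frac{3b_n}{b_n+1}$ and an embedding exists. Rescaling the folding construction and removing the ellipsoid produces a symplectic cobordism; after neck-stretching along the boundary $\partial E(1,b_n) \times S^{2k-1}_R$ (or the appropriate stabilized contact-type hypersurface), a suitable count of curves in the relevant stabilized symplectic manifold would yield a finite-energy punctured curve $C$ in the completed cobordism, asymptotic at its positive end to iterates of the short Reeb orbit of $B^4(\mu)$ and at its negative end to Reeb orbits of $E(1,b_n)$, with a constraint on the covered multiplicities coming from the even-index Fibonacci data. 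The key numerical input is the Fibonacci identity $h_{m+3} = 3h_{m+2} - h_{m+1}$, which is exactly what makes the ECH index of the corresponding orbit configuration vanish (so that the curve is ``rigid''), just as in the four-dimensional ghost-stair picture of \cite{MS}; the relevant ECH grading computation for multiply covered elliptic orbits is the tool labeled (iii) in the abstract.

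Second, the heart of the argument is to show that such a curve $C$ \emph{cannot} exist once $k \ge 1$, i.e.\ that the obstruction that is ``killed'' in dimension~4 by the volume constraint survives stabilization. Here I would run the relative intersection-theory / neck-stretching analysis (tools (i) and (ii)): one writes down the improved index inequality, applies it to the components of the broken curve, and uses positivity of relative intersection numbers together with the area (action) constraints to derive a contradiction with $\mu < \frac{3b_n}{b_n+1}$. Concretely, the action of the positive asymptotics is bounded above by a multiple of $\mu$, the action of the negative asymptotics is bounded below in terms of $b_n$ using the continued-fraction expansion of $b_n = h_{2n+3}/h_{2n+1}$, and the improved index inequality forces a genus/winding estimate that, combined with Stokes, closes the gap — the point being that in the stabilized setting the extra $\C^k$ directions contribute to the index but not to the relevant action, so the sharp (rather than merely volume-theoretic) obstruction is what controls $\mu$. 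The explicit lattice-point bijection (tool (iv)) enters in identifying which ECH generators of the ellipsoid actually appear, i.e.\ in matching the combinatorics of $\partial E(1,b_n)$ with the Fibonacci data so that the index count comes out exactly.

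**Main obstacle.** I expect the crux to be the intersection-theoretic sharpness argument in the second step: showing that the broken holomorphic building produced after neck-stretching is forced into precisely the configuration dictated by the even-index Fibonacci numbers, with no ``extra'' components or covering multiplicities that would weaken the action estimate. In dimension~4 one has the full power of automatic transversality and the ECH index inequality with its well-understood equality case; in the stabilized problem the curves live in higher dimensions, so one must substitute relative intersection numbers and the improved index inequality of (i) for these tools, and the delicate part is ruling out degenerations (e.g.\ multiple covers of the ghost components, or curves with components in the symplectization that carry unexpected action) that the four-dimensional volume obstruction would have excluded automatically. Controlling these degenerations — and in particular verifying that the asymptotic Reeb orbits on the ellipsoid end are exactly those predicted by the continued fraction of $b_n$ — is where the bulk of the work, and the new ideas (i)--(iv), will be needed.
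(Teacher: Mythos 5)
Your overall skeleton — folding for the upper bound, then a rigid punctured curve in the complement cobordism whose positive action forces $\mu \ge \frac{3b_n}{b_n+1}$, with a stabilization step showing the four-dimensional obstruction survives into $B^4(\mu)\times\C^k$ — matches the shape of the argument. But there is a genuine gap at the crux: you give no workable mechanism for producing the four-dimensional curve, and the mechanism you sketch is precisely the one that fails here. You assert that the Fibonacci identity makes ``the ECH index of the corresponding orbit configuration vanish (so that the curve is rigid)'' and appeal to a ``suitable count'' of ECH type. For the ghost stairs this is false: the curve that must be produced (degree $h_{2n+2}$, genus zero, one negative end on $\beta_1^{h_{2n+3}}$) has Fredholm index zero but ECH index \emph{two}, and no ECH cobordism-map obstruction at $b_n$ can stabilize, because ECH always recovers at least the volume bound $c_0(x)\ge\sqrt{x}$, which strictly exceeds $\frac{3x}{x+1}$ for $x>\tau^4$ — this is exactly why these are ``ghost'' stairs and why new methods are needed. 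The actual existence proof (Proposition~\ref{prop:goodcurves}) instead stretches the neck on the twelve nodal genus-zero spheres in the class $B=h_{2n+2}L-\sum W_iE_i$ (Gromov--Witten invariant $12$ after Cremona reduction to $3L-E_1-\cdots-E_8$) and shows that at most $9$ of the twelve limits have top asymptotic to $\{(\beta_1,\ell_n),(\beta_2,\ell_n)\}$ or $\{(\beta_2,h_{2n+1})\}$; this requires classifying the low-action curves (Proposition~\ref{prop:lowenergy}), handling multiple covers of the degree-$3$ curve $C_0$ of Remark~\ref{rmk:C0}, the connector analysis of \S\ref{sec:heart} with the model curve and the refined index/area estimates, and the $B\cdot B=1$ intersection argument bounding how many curves can break each bad way. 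None of this, nor any substitute for it, appears in your proposal.

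A second, smaller but still essential omission: in the stabilization step you never isolate the condition that the curve have exactly \emph{one} negative end (together with genus zero, index zero, and $\gcd(d,p)=1$). This is what makes the Fredholm index remain zero in $\ov M$ and makes the compactness/counting arguments of Proposition~\ref{prop:goodC1} go through; it is also why one cannot simply stabilize the curve carrying the sharp four-dimensional obstruction for $\tau^4<x<7$ (the curve $C_0$ has two negative ends, and indeed that obstruction provably does not persist, since $c_k(x)\le\frac{3x}{x+1}$). Finally, the direction of your second step is muddled: the argument does not show that the curve ``cannot exist once $k\ge1$''; rather one shows the curve \emph{must} exist for any embedding (via a nonvanishing count, proved using $\T^k$-invariant $J$, a second neck stretching, and automatic transversality), and positivity of its action then yields $\mu\ge \frac{h_{2n+3}}{h_{2n+2}}=\frac{3b_n}{b_n+1}$ directly, with no contradiction hypothesis on $\mu$ needed.
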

Thus  $c_k(8) = \frac 8{21}$, $c_k(\frac{55}8) = \frac 8{21}$ and so on.

\subsection{Methods and relationship with embedded contact homology}

In view of the upper bound in \eqref{eqn:foldingbound},  Theorem~\ref{thm:main} will follow if we can establish a suitable lower bound for $c_k(x)$ at the given values of $x$.  In other words, we must find embedding obstructions for  these  $x$.  As in \cite{HK,CGH} this is
 accomplished by a two-step process:
 \begin{itemize}\item first, we find suitable $J$-holomorphic curves that
 obstruct the existence of a four-dimensional embedding $E(1,x)\;\se\; B^4(\mu)$ where $\mu< \frac{3x}{x+1}$, and
 \item second, we show that these obstructions persist for  stabilized embeddings $$
 E(1,x)\times \C^{k}\;\se\;  B^4(\mu)\times \C^{k}.
 $$
 \end{itemize}
 Although we are interested here in calculating $c_k(x)$ for the rational numbers $b_n$, it is convenient to increase $b_n$ slightly to  $x = b_n+\eps = \frac{p}{q} + \eps$,
 where $\eps>0$ is very small and irrational, so that there are only two periodic orbits on the boundary of the ellipsoid.
 In four dimensions it is also often convenient to compactify $B^4(\mu)$ to $\C P^2(\mu)$ by  adding the line at infinity.\footnote
 {
 In fact if the domain is an ellipsoid the two embedding problems are equivalent: see \cite{Mell}.  Here
 $\C P^2(\mu)$ denotes
 $\C P^2$ with symplectic form $\om$ scaled so that $\om$ takes the value $\mu$ on the line $L$.}
Thus,  for the first step we  consider the negative completion $\ov X:= \ov X_{\mu,x} $ of
 $\C P^2(\mu)\less {\rm im}\Phi$, where
 \begin{equation}\label{eq:phi}
  \Phi: E(1,x)\;\se\; \C P^2(\mu)
  \end{equation}
   is a symplectic embedding for some $\mu$, and  look for $J$-holomorphic  curves\footnote
 {
 Here we assume that $J$ is admissible, i.e. adapted to the negative end of $\ov X$: see \S\ref{sec:prelim} for more details.}
  $C$ in $\ov X$ of degree $d$ that are negatively asymptotic  to the short orbit $\be_1$ on $\Phi(\p E(1,x))$ with some multiplicity
 $m$.    If such a curve exists for generic $J$ and all sufficiently small $\eps>0$,
  then  the fact that it must have positive symplectic area gives the inequality
  \begin{equation}
  \label{eqn:areaineq}
  \mu> \frac m d.
  \end{equation}

When proving the existence of $C$ we will restrict to the case when $C$ has Fredholm index zero,
  since these are the curves that can potentially be counted, and will also work with a fixed value $\mu_\star$ of $\mu$.
  As we explain in more detail below, it turns out that the second stabilization step
  works for curves $C$ that have genus zero, Fredholm index zero and just one negative end of multiplicity $m= p$.

The second step is accomplished by the method of \cite{HK, CGH}, who prove a result that can  be stated as follows.   It will be convenient to denote  by
  $$
  \Mm\bigl(\ov X_{\mu,x}, dL,s, \{(\be_1,m_1),(\be_2,m_2)\}\bigr)
  $$
  the moduli space of  genus zero $J$-holomorphic  curves  with degree $d$ and $s$ negative ends, that cover the short orbit $\be_1$ a total of $m_1$ times and the longer orbit $\be_2$  a total of $m_2$ times.  Here we assume that $x = \frac pq + \eps$ as above.

It turns out (see \eqref{eqn:Xind}) that if $C$ has just one negative end of multiplicity $m_1 = m$ then its Fredholm index is
 \begin{equation}\label{eq:indC}
 \ind(C) = 2\bigr(3d - m - \lceil \frac mx \rceil\bigr).
 \end{equation}
 Hence, if $\ind(C) = 0$, we have $3d> m+\frac mx$.  If we now let $\eps\to 0$, we obtain $3d\ge  m+\frac {mq}p$  with equality exactly if $p|m$, since $\gcd(p,q) = 1$ by hypothesis.
Thus $$
\frac md \le \frac {3p}{p+q} = \frac{3b}{b+1},
$$
with equality exactly  if $p|m$.
In other words, the obstruction that index $0$ curves as above give through \eqref{eqn:areaineq} is never stronger than the folding bound, and so such curves could potentially persist for the stabilized embedding.  Our main stabilization result proves this when $m=p$.  (See Remark~\ref{rmk:counterex} for some  generalizations.)

 \begin{prop} \label{prop:goodC} Let $x:  = b + \eps$, where $b =  \frac pq$ with $\gcd(p,q) = 1$ and $\eps>0$ irrational and  very small, and fix $\mu_*>0$.  Suppose
that for all sufficiently small $\eps>0$ and generic  admissible $J$ there is a  genus zero curve $C$ in $\ov X_{\mu_\star,x}$ with degree $d$, Fredholm index zero, and one negative end on
$\{(\be_1, p)\}$,  where  $\gcd(d,  p) = 1$.
   Then, $3d=p+q$,
    and for all $k\ge 0$, we have $c_k(b)\ge \frac { p}d= \frac{3b}{b+1}$.
\end{prop}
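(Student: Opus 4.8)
The identity $3d = p+q$ is a short computation. Since $C$ has a single negative end, of multiplicity $m = p$, formula~\eqref{eq:indC} gives $0 = \ind(C) = 2\bigl(3d - p - \lceil p/x\rceil\bigr)$; and for $x = b + \eps = \tfrac pq + \eps$ with $\eps>0$ sufficiently small one has $q - 1 < p/x < q$, so $\lceil p/x\rceil = q$, whence $3d = p+q$ and $\tfrac pd = \tfrac{3p}{p+q} = \tfrac{3b}{b+1}$. The real content is the lower bound $c_k(b)\ge \tfrac pd$, which I would obtain by following the stabilization scheme of \cite{HK, CGH}. A preliminary rescaling reduces it to an obstruction for $E(1,x)$: since $E\bigl(\tfrac b{b+\eps}, b\bigr) = \tfrac b{b+\eps}\,E(1,b+\eps)\subset E(1,b)$, any embedding $E(1,b)\times\C^k \se B^4(\mu)\times\C^k$ produces $E(1,x)\times\C^k \se B^4\bigl(\tfrac xb\mu\bigr)\times\C^k$, so $c_k(x)\le \tfrac xb\,c_k(b)$, and it suffices to prove $c_k(x)\ge \tfrac pd$ for all sufficiently small $\eps$ and then let $\eps\to 0$.

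So suppose, for contradiction, that $E(1,x)\times\C^k \se B^4(\mu)\times\C^k$ with $\mu < \tfrac pd$; the curve $C$ is available by hypothesis. The plan is to compactify the stabilized target, realising $E(1,x)\times B^{2k}(S)\se B^4(\mu)\times B^{2k}(S)\se \C P^2(\mu)\times\C P^k(R)$ for $S$ and $R$ large (legitimate because the image of $E(1,x)\times B^{2k}(S)$ is bounded), and then to stretch the neck along the boundary of a slightly shrunken copy of the embedded domain $E(1,x)\times B^{2k}(S)$, using a family of admissible almost complex structures that are suitably standard near the divisors at infinity. Beginning with a genus zero curve of degree $d$ in the $\C P^2$-direction (together with a line in the $\C P^k$-direction, so that the relevant genus zero Gromov--Witten number of $\C P^2\times\C P^k$ is nonzero), through generic point constraints placed outside the image of the domain, SFT compactness produces a holomorphic building whose analysis is the heart of the argument.

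One identifies the building by index and homology bookkeeping that parallels the four-dimensional situation of $C$. The outcome should be that, apart from covers of trivial cylinders and components lying in the interior of the domain, which carry no $\C P^2$-degree, a single component $C'$ carries the full degree $d$, is genus zero, projects to a point in the $\C P^k$-factor, and lies in the completed complement of the domain; the existence of a four-dimensional curve like $C$, together with the stabilized index formula, the single point constraint, and the hypothesis $\gcd(d,p) = 1$ — which also makes $C$ and $C'$ somewhere injective, hence amenable to automatic transversality and to ECH-style intersection estimates — should force $C'$ to have exactly one negative end, on $(\be_1, p)$, excluding the possibilities that $C'$ has ends on higher iterates of $\be_1$, ends on $\be_2$, or several ends. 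Since $C'$ projects to a point in $\C P^k(R)$, its $\omega$-area is just $d\mu - p$, and being nonconstant it has positive area, so $\mu > \tfrac pd$, the desired contradiction. With the rescaling and $\eps\to 0$, this gives $c_k(b)\ge \tfrac pd$ for every $k\ge 0$.

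I expect the main obstacle to be the compactness and intersection-theoretic analysis of the preceding paragraph once the noncompact $\C^k$-factors are present: one must rule out degenerations in which the limit curve escapes into the $\C^k$-directions or splits its degree among several nonconstant components, and one must keep this control uniform as $S,R\to\infty$ (and as the shrinking of the domain tends to the identity). This is exactly where the improved index inequality and the new applications of relative intersection theory to neck stretching — advertised in the introduction — enter, and where the argument goes beyond a verbatim appeal to \cite{HK, CGH}.
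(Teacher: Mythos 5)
Your computation of $3d=p+q$ and the final positivity-of-area step match the paper, but the core of your stabilization argument has a genuine gap: you never actually use the hypothesized four-dimensional curve $C$, and the structure of the limit building that you need is asserted rather than proved. In your scheme you stretch closed genus zero curves in $\C P^2(\mu)\times\C P^k(R)$ through point constraints and claim that ``index and homology bookkeeping \dots should force'' a top-level component of full degree $d$ with exactly one negative end on $\be_1^{p}$. Nothing in the stated ingredients forces this: limits of such closed curves can a priori have several components, with negative ends on covers of $\be_2$, on orbit sets mixing $\be_1$ and $\be_2$, or with many ends on $\be_1$. Indeed, producing a curve with a single negative end on $\be_1^{p}$ in dimension four is exactly the hard content of \S\ref{sec:stab}--\S\ref{sec:heart} (Proposition~\ref{prop:goodcurves}); it is established only for the special values $b_n$, and the relevant curves have ECH index $2$, so they are not what a generic stretching of point-constrained closed curves detects --- what one expects to see instead is the curve $C_0$ of Remark~\ref{rmk:C0}, which has two negative ends and does not stabilize. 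In addition, stretching along the nonsmooth boundary of $E(1,x)\times B^{2k}(S)$ is not a legitimate neck; the paper works with the genuine ellipsoid $E(1,x,S,\dots,S)$, whose Reeb dynamics ($\be_1$, $\be_2$, plus a Morse--Bott family) underlie all the index formulas used.

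The paper's actual mechanism (Proposition~\ref{prop:goodC1}) is a deformation argument in the style of \cite{HK,CGH}: connect the hypothetical embedding to the standard stabilized embedding $\Phi\times\iota$ by an isotopy, convert this into a path $J_t$ of admissible almost complex structures on a fixed completion $\ov M$, and prove (i) compactness of $\bigcup_t\Mm_{J_t}(\ov M,dL,\be_1^{p})$ --- this is where $\gcd(d,p)=1$ and the fact that the negative end has multiplicity exactly $p$ (so that $\lceil p/x\rceil=q$ is tight) are used, via Lemmas~\ref{lem:largeS}, \ref{negcurves} and \ref{lem:cpct}, to exclude multiple covers and nontrivial breaking --- and (ii) nonvanishing of the count at the standard end, which is precisely where the hypothesis enters: the given four-dimensional curve $C$, viewed in the $\T^k$-invariant slice $\ov X\subset\ov M$, together with automatic transversality and a second neck stretch along a product hypersurface, shows the count is nonzero for a suitable $\T^k$-invariant $J$. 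Positivity of action at the other end of the path then yields $\mu\ge \frac{p}{d}$ for the arbitrary embedding. Your proposal is missing both halves of this mechanism, and it also misplaces the refined index inequality and the relative intersection theory, which the paper uses for the four-dimensional existence result rather than for stabilization. (Your preliminary rescaling reducing $c_k(b)$ to obstructions at $x=b+\eps$ is fine and is implicit in the paper.)
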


The proof is given in \S\ref{sec:stablzn}: see  Proposition~\ref{prop:goodC1}.   The main point is that if $C$ has genus zero, Fredholm index zero, and one negative end, then its Fredholm index remains zero under stabilization.  Moreover the arguments in \cite{CGH}  guarantee that its contribution to the count of curves in the stabilization cannot be cancelled by some other curve even when one varies $\mu$ and the almost complex structure.

We end with some  comments on the proof of the first step.
 In \cite{CGH},
 the authors show  that when
 $x = \frac{g_{n+2}}{g_n} < \tau^4$ the embedding obstruction coming from  ``embedded contact homology'' (ECH)  is carried by a curve
with genus zero and one negative end as above.
  However, for $x =b_n> \tau^4$, it cannot be the case that the obstruction coming from ECH stabilizes.
Indeed,   \cite{CGHR} shows that embedded contact homology always at least recovers the volume obstruction $c_0(x) \ge \sqrt{x}$, which as already mentioned  is strictly above $\frac{3x}{x+1}$ for $x > \tau^4$.
Further,
  curves $C$ in four-dimensional cobordisms that are detected by the ECH cobordism map generally
have ECH index and Fredholm index equal to zero.
 However, in our case
 we will see that
 the relevant curves have ECH index {\em two} and Fredholm index zero, and hence cannot be
 expected to be
 embedded.\footnote
{
In fact, the ECH cobordism map detects buildings with ECH index zero that may (and often do) consist of curves with both positive and negative ECH index.
Further, it may not always be the case that curves with ECH index  two and Fredholm index zero must have double points, but as mentioned in Remark~\ref{rmk:writhexact}~(ii)
this is known in some situations.}

Thus, new methods are needed.  The basic idea in the present work is to stretch a collection of nodal curves that are modified forms of the McDuff--Schlenk obstructions that give the ghost stairs, and look at the top part of the resulting buildings.  Our aim is to show that at least one of the resulting buildings has a top level with Fredholm index zero and one negative end, and so by Proposition~\ref{prop:goodC} stabilizes to an index zero curve that gives an obstruction in higher dimensions.  We therefore need to
analyze the possible buildings that can arise when we stretch.  This analysis is complicated by the possible presence of negative index multiple covers --- configurations that most probably do occur,
see Remark~\ref{rmk:model} and Remark~\ref{rmk:8}.
To get around this difficulty, we use the fact that the modified
 McDuff-Schlenk obstructions lie in classes that have precisely $12$ genus zero representatives, and we show that at most
$9$ of these break in a problematic way.

To this end, we develop the tools used to analyze relative intersections and  ECH indices.
Specifically, we use a refined index inequality (Proposition~\ref{prop:indineq}), which is a reformulation of results in Hutchings \cite{hech}, together with a new approach to estimating the grading of elliptic orbits (which contributes to the ECH index in subtle ways) in terms of areas rather than lattice point counts, see Lemma~\ref{lem:fundest}.  We also use a technique pioneered by Hutchings and Nelson \cite{HN} that calculates writhes of curves that are close to breaking as the neck is stretched.  Situations requiring the analysis of potentially complicated holomorphic buildings are quite common in applications of holomorphic curve theory, see for example \cite{hlecture,HN,HT,N}, and so we expect our strategy to be potentially useful in other contexts.

This analysis of the limiting buildings forms the bulk of the paper.    It is described in \S\ref{sec:stab}, with the hardest computations deferred to \S\ref{sec:noconn} and \S\ref{sec:heart}.
 As we point out in Remark~\ref{rmk:n=0}~(ii), the same method works
 rather easily for the Fibonacci stairs $b = \frac{g_{n+2}}{g_n}$, while in the case of the ghost stairs it is  complicated by the presence of the obstruction curve that determines $c_0(x)$ for $\tau^4 < x < 7$.

Although in principle the methods we develop here could potentially be adapted to compute $c_k(x)$ for other values of $x$, as we explain in Remark~\ref{rmk:8} this is probably neither efficient nor the best approach for general $x$.  Indeed, many of the calculations here are simplified because of special properties of the Fibonacci numbers, and even with this the computations are quite involved.   We intend to explore other ways to construct suitable curves $C$ in a later paper.

\section{Preliminaries}
\label{sec:prelim}

This section reviews basic background material on continued fractions, Fibonacci identities, and the ECH index formulas.  

\subsection{Weight sequences and best approximations}
\label{sec:bestapprox}

 Beside the
 even index Fibonacci numbers $h_{\bullet}$ in \eqref{eq:evenFib}, the following auxiliary sequences will be useful, where 
 $Q_n = P_{n-1} = h_{2n+1}$:
\begin{align} \label{eq:ell}
& Q_0 = 1,\;\; Q_1= 8,\;\; Q_2 = 55, \;\; Q_3 = 377, \;\; Q_4 = 2584,\;\; Q_5 = 17711, \;\;   \ldots  \\  \notag
& \ell_{-1}: = 0, \;\;  \ell_0 = 1,\; \;\ell_1 = 7,\;\;\ell_2 = 48,\;\; \ell_3 = 329, \;\; \ell_4 = 2255, 
\;\dots\; \ell_n = \tfrac 13 h_{2n+2},\\ \notag
&  t_0 = 1,\; \;t_1 = 6,\;\; t_2 = 41,\;\; t_3 = 281, \, t_4 = 1926,\;\;  \quad    t_k: = \ell_k-\ell_{k-1} .
\end{align}
We also write
\[ b_n: = \frac{h_{2n+3}}{h_{2n+1}} = : \frac{P_n}{Q_n} = \frac{Q_{n+1}}{Q_n}.\]
We will use the following Fibonacci identities:
\begin{align}\label{eq:Fib1}
& 3\, \star_{n+1}  = \star_n + \star_{n+2},\quad \star = g,h;\\ \label{eq:Fibt}
& t_n - t_{n-1} = 5 \ell_{n-2},
\\ \label{eq:Fibq}
& Q_n = \ell_n + \ell_{n-1},
 \\ \label{eq:Fib2}
& h_{n+2}^2  - h_{n+1} h_{n+3} = 1; \\ \label{eq:Fibh}
& h_{2n+2}^2 - (3 h_{2n+2} -  h_{2n+3})\; =\; h_{2n+1} (h_{2n+3}- 1)+1.
\end{align}
Further, the $Q_n$, $\ell_n$ and $t_n$ are all  linear combinations of certain Fibonacci numbers,
and satisfy the recursion
\begin{equation}
\label{eqn:basicrecursion} 
\star_n = 7 \star_{n-1} - \star_{n-2},
\end{equation}
Hence their ratios $\frac{\star_n}{\star_{n-1}}$ converge to $\tau^4$.  
Moreover, the above identities 
may be proved  by checking them
 on two or three low values of $n$: because 
the Fibonacci numbers satisfy a two step linear recursion, one only needs to check   linear identities  for two values of $n$, and quadratic identities such as  \eqref{eq:Fib2} for three values of $n$: see \cite[Prop. 3.2.3]{MS}.

\begin{lemma}
\label{lem:fibid0} \begin{itemize}\item[{\rm (i)}] Let $x_n$ and $y_n$, $n\ge 0$, be two sequences that satisfy \eqref{eqn:basicrecursion}.  Then
the quantity $
x_n y_n - x_{n-1}y_{n+1} $ is independent of $n$.  Moreover the following identities hold:
\begin{align} \label{eq:recurrel}
&Q_{n+1} \ell_n =  Q_n \ell_{n+1} + 1, \\ \notag
& \ell_n\ell_n = \ell_{n-1}\ell_{n+1} + 1, \\ \notag
&\ell_nP_n = \ell_{n-1}P_{n+1} + 8.
\end{align}
\item[{\rm (ii)}]
The following sequences (and their products) are increasing with $n$:  
$$
 \frac {Q_n}{P_{n}}, \quad \frac {\ell_n}{\ell_{n+1}}, \quad \frac {t_{n+1}}{t_n},\quad    \frac {\ell_n}{P_{n}},\quad     \frac {\ell_n}{P_{n+1}}, \quad   \frac {Q_n}{\ell_{n}}, \quad 
 \frac {Q_{n+k}}{t_{n}}, \quad 0\le k \le 3. \;\;  
 $$

\end{itemize}
\end{lemma}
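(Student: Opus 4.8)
\emph{Proof plan.} The whole lemma will follow from one uniform observation, which I would establish first: if two sequences $x_\bullet$ and $y_\bullet$ both satisfy the recursion \eqref{eqn:basicrecursion}, i.e.\ $u_n = 7u_{n-1} - u_{n-2}$, then the ``Casoratian''
\[
D_n(x,y) \;:=\; x_n y_n - x_{n-1}y_{n+1}
\]
is independent of $n$. This is a two-line check: substituting $x_{n+1} = 7x_n - x_{n-1}$ and $y_{n+2} = 7y_{n+1} - y_n$ into $D_{n+1}(x,y) = x_{n+1}y_{n+1} - x_n y_{n+2}$ and cancelling the two copies of $7 x_n y_{n+1}$ returns $x_n y_n - x_{n-1}y_{n+1}$. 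This proves the first sentence of (i). I would also note the variant that the ``cross term'' $x_{n+1}y_n - x_n y_{n+1}$ equals $D_n(x_{\bullet+1}, y_\bullet)$ and is therefore likewise $n$-independent. By the remark preceding the lemma, $Q_\bullet$, $P_\bullet = Q_{\bullet+1}$, $\ell_\bullet$, and $t_\bullet = \ell_\bullet - \ell_{\bullet-1}$ all satisfy \eqref{eqn:basicrecursion} (because $Q_n = h_{2n+1}$, $\ell_n = \tfrac13 h_{2n+2}$, $P_n = h_{2n+3}$, and iterating $h_{m+2} = 3h_{m+1} - h_m$ gives $h_{m+4} = 7h_{m+2} - h_m$), so the observation applies to every pair drawn from these four sequences and their index shifts.

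Next I would read off the three identities \eqref{eq:recurrel}. Each is a Casoratian evaluated at $n = 0$, using $\ell_{-1} = 0$, $\ell_0 = Q_0 = 1$, $\ell_1 = 7$, $Q_1 = P_0 = 8$, $P_1 = Q_2 = 55$: namely $Q_{n+1}\ell_n - Q_n\ell_{n+1} = D_n(Q_{\bullet+1}, \ell_\bullet) = D_0(Q_{\bullet+1}, \ell_\bullet) = Q_1\ell_0 - Q_0\ell_1 = 1$; $\ell_n^2 - \ell_{n-1}\ell_{n+1} = D_n(\ell_\bullet,\ell_\bullet) = \ell_0^2 - \ell_{-1}\ell_1 = 1$; and $\ell_n P_n - \ell_{n-1}P_{n+1} = D_n(\ell_\bullet, P_\bullet) = \ell_0 P_0 - \ell_{-1}P_1 = 8$.

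For part (ii) I would argue as follows. A finite product of positive increasing sequences is positive and increasing, and each ratio in the list is positive for $n\ge 0$ (the $Q_n, P_n, \ell_n, t_n$ are positive and strictly increasing, satisfying $u_n \ge 6u_{n-1}$ once $u_{n-1} > u_{n-2} > 0$), so it is enough to show each listed ratio $a_n/b_n$ is increasing. Since $b_n > 0$, this is equivalent to $a_{n+1}b_n - a_n b_{n+1} > 0$; and the left-hand side equals $D_n(a_{\bullet+1}, b_\bullet)$, hence is $n$-independent and can be checked at $n = 0$ (or, in three cases, is immediate from part (i)). Running through the list: for $Q_n/P_n$ the quantity is $Q_1 P_0 - Q_0 P_1 = 9$; for $\ell_n/\ell_{n+1}$ it is $\ell_{n+1}^2 - \ell_n\ell_{n+2} = 1$ by \eqref{eq:recurrel}; for $t_{n+1}/t_n$ it is $t_2 t_0 - t_1^2 = 41 - 36 = 5$; for $\ell_n/P_n$ it is $\ell_1 P_0 - \ell_0 P_1 = 1$; for $\ell_n/P_{n+1}$ it is $\ell_{n+1}P_{n+1} - \ell_n P_{n+2} = 8$ by \eqref{eq:recurrel}; for $Q_n/\ell_n$ it is $Q_{n+1}\ell_n - Q_n\ell_{n+1} = 1$ by \eqref{eq:recurrel}; and for $Q_{n+k}/t_n$ it is $Q_{k+1}t_0 - Q_k t_1 = Q_{k+1} - 6Q_k$, which for $k = 0,1,2,3$ equals $2, 7, 47, 322$ respectively (using $Q_0,\dots,Q_4 = 1, 8, 55, 377, 2584$ and $t_0 = 1$, $t_1 = 6$) --- all strictly positive.

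I do not expect any real obstacle: once the ``Casoratian is constant'' fact is in place, every assertion collapses to a single base-case evaluation. The only points needing care are clerical --- keeping straight which shifted pair of sequences realizes each identity and at which index to evaluate the resulting constant, and verifying positivity and strict monotonicity of $Q_\bullet, P_\bullet, \ell_\bullet, t_\bullet$ so that the two reductions in (ii) (``$a_n/b_n$ increasing $\Leftrightarrow$ cross term positive'' and ``product of increasing sequences is increasing'') are legitimate --- with the four cases $0 \le k \le 3$ of $Q_{n+k}/t_n$ being the most tedious bookkeeping.
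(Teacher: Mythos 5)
Your proof is correct and follows essentially the same route as the paper: show that $x_ny_n-x_{n-1}y_{n+1}$ is constant by substituting the recursion once, read off the identities \eqref{eq:recurrel} from the base case, and reduce each monotonicity claim in (ii) to the sign of an $n$-independent cross term checked at $n=0$ (the paper handles $Q_{n+k}/t_n$, $k<3$, by writing it as a product of increasing ratios rather than checking all four $k$ directly, but that is an inessential difference).
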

\begin{proof}  (i) holds because 

\[ x_n y_n - x_{n-1}y_{n+1} = (7x_{n-1} - x_{n-2})y_n - x_{n-1}(7 y_n - y_{n-1}) = x_{n-1}y_{n-1} - x_{n-2}y_n.\]
Hence $x_n y_n - x_{n-1}y_{n+1} = x_1 y_1 - x_{0}y_{2} =: \ka$ is constant.
Thus 
the quotient $\frac{x_n}{y_{n+1}}$  increases or  decreases according to whether the constant is positive or negative.  
Alternatively, (i) implies that to check whether one of these sequences is increasing or decreasing, one just has to look at the first two terms.
The rest of the lemma now holds by direct calculation.  Note that it suffices to check that $ \frac {Q_{n+3}}{t_{n}}$ increases because, if $0\le k < 3$, then
 $ \frac {Q_{n+k}}{t_{n}} =   \frac {Q_{n+3}}{t_{n}}\cdot  \frac {Q_{n+k}}{Q_{n+3}}$  is the product of two increasing sequences.
\end{proof}

Because the sequence $\frac{P_n}{Q_n}$ converges to $\tau^4 = \frac 12(7 + 3\sqrt 5)$  which is a solution of the equation $\tau^4 +  \tau^{-4} = 7$,
one can check that

\begin{align}
\label{eqn:lnpnlimit}
\lim_{n \to \infty} \frac{\ell_n}{P_n}  &= \lim_{n \to \infty} \left(\frac{P_n}{9P_n} + \frac{Q_n}{9P_n}\right) = \sigma =: \frac 16(3-\sqrt 5) < 0.128, \\ \notag
\lim_{n \to \infty} \frac{\ell_n}{Q_n} & = \lim_{n \to \infty} \frac{\ell_n}{P_n}\;\frac {P_n}{Q_n} =  1-\si,\\ \notag
\lim_{n \to \infty} \frac{t_n}{Q_n} & = \lim_{n \to \infty} \frac{\ell_n}{Q_n} - \lim_{n \to \infty} \frac{\ell_{n-1}}{P_{n-1}} = 1-2\si > 0.745.
\end{align}

\MS

\NI{\bf Weight sequences:}
As explained in \cite[Lem.1.2.6]{MS} for example,  the weight sequence for $b = \frac pq$ is a nonincreasing finite sequence of positive numbers in $\frac 1q\Z$ such that
\begin{align}\label{eq:weight}
w(\tfrac pq) = (w_1,\dots w_m),  &\quad  W(\tfrac pq): = q\, w(\tfrac pq) = (W_1,\dots,W_m), \;\;\mbox{ where }\\ \notag
W_m = 1,& \quad \sum _i W_i^2 = pq, \quad \sum_i W_i = p + q -1.
\end{align}
If $b$ has continued fraction expansion $[a_0,a_1,\dots ,a_k],$ then the weights $W(b)$ occur in blocks of lengths $a_0,a_1,\dots,a_k$.  Hence $m = \sum a_i$
and we may write
\begin{align}\label{eq:weight0}
W(b) = (X_0^{\times a_0}, X_1^{\times a_1}, \dots, X_k^{\times a_k}), & \quad X^{\times a}: = \underbrace{X,\dots,X}_a.
\end{align}
In this notation, given $b:=\frac pq$ with $\gcd(p,q)=1$, the corresponding  $X_i$ and $a_i$ are determined for increasing $i$  by  the recursion 
$$
X_{-1} = p, \quad X_0 = q, \quad X_{i+1} = X_{i-1}-a_iX_i \;\; i<k,
$$
where the $a_i>0$ are chosen so that $0\le X_{i+1}<X_i$.   On the other hand, given a continued fraction
$[a_0,a_1,\dots,a_k]$, we can calculate which number $b = \frac pq$ it represents by using the same recursion but starting at the end with $X_k =1$.
This recursion implies that  $p = a_0 X_0 + X_1$ and hence that 
\begin{align}\label{eq:weight1}
W(b) &= \left(X_0^{\times a_0}, \dots, X_r^{\times a_r}, W\left(\frac{X_r}{X_{r+1}}\right)\right), \quad \forall r\ge 0.
\end{align}
The relevance of weight expansions to our embedding problem is this result from \cite{Mell}. 

\begin{prop}\label{prop:ell}   Let $w(b)$ be the weight expansion of $b = \frac pq$.  Then for any $\eps>0$ one can embed $m$ disjoint balls of capacities 
$(1-\eps)w(b)$ into ${\rm int\,} E(1,b)$, and hence remove 
 almost all of the interior of an ellipsoid $E(1,b)$ by blowing  it up $m$ times with weights $(1-\eps)w_i$.  
\end{prop}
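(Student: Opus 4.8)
The plan is to reduce the statement to a single geometric ``ball-peeling'' move and then to induct on the length of the continued fraction of $b$, following McDuff \cite{Mell}. First I would note that the second assertion follows from the first: embedding $m$ disjoint symplectic balls of capacities $c_i$ into ${\rm int\,}E(1,b)$ is the same, under the standard dictionary between embedded symplectic balls and symplectic blow-ups, as realizing the $m$-fold blow-up of $E(1,b)$ with exceptional classes of areas proportional to the $c_i$; and with $c_i=(1-\eps)w_i$ the volume so removed is proportional to $\sum_i((1-\eps)w_i)^2=(1-\eps)^2\sum_i w_i^2=(1-\eps)^2\,\tfrac{pq}{q^2}=(1-\eps)^2 b$ by \eqref{eq:weight}, which tends to $\mathrm{vol}\,E(1,b)$ as $\eps\to0$. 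So it suffices to prove the ball-embedding claim, and after rescaling it is enough to show: for every $\delta\in(0,1)$ and every rational $c\ge 1$, one can embed disjoint balls of capacities $(1-\delta)\,w(c)$ into ${\rm int\,}E(1,c)$.

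The one geometric input I would use is a \emph{peeling lemma}: for $c\ge 1$ and $\delta\in(0,1)$ there is a disjoint symplectic embedding
\[
B^4(1-\delta)\;\sqcup\;{\rm int\,}E\bigl(1-\delta,\,(1-\delta)(c-1)\bigr)\;\hookrightarrow\;{\rm int\,}E(1,c);
\]
that is, removing the ``first weight'' ball of capacity $1-\delta$ from $E(1,c)$ leaves room for a rescaled copy $(1-\delta)E(1,c-1)$ of the reduced ellipsoid. This is the basic building block of the ball-packing description of ellipsoids, and can be extracted from the ball-splitting constructions underlying \cite{Mell} (and from the weight-sequence analysis of \cite[Lem.~1.2.6]{MS}).

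Granting this, I would write $b=p/q$ in lowest terms with continued fraction $[a_0,a_1,\dots,a_k]$ (one may assume $b\ge 1$, since otherwise $E(1,b)=b\,E(1,1/b)$ and $w(b)=b\,w(1/b)$) and induct on $k$. If $k=0$ then $q=1$ and $b=a_0$; applying the peeling lemma $a_0$ times, each time to the rescaled leftover ellipsoid, extracts $a_0$ disjoint balls of capacities $1-\delta_j$ with final leftover the degenerate $E(1-\delta,0)$, and since $w(a_0)=(1^{\times a_0})$, shrinking the $\delta_j$ finishes this case. If $k\ge 1$, write $p=a_0q+X_1$ with $0\le X_1<q$; then $a_0$ applications of the peeling lemma extract $a_0$ disjoint balls of capacities close to $1$ together with a disjointly embedded rescaled copy of $E(1,X_1/q)=\tfrac{X_1}{q}E(1,q/X_1)$ inside ${\rm int\,}E(1,b)$. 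Here $\gcd(q,X_1)=1$ and $q/X_1=[a_1,\dots,a_k]$ has shorter continued fraction, so by the inductive hypothesis the rescaled ellipsoid is packed by balls of capacities close to $\tfrac{X_1}{q}w(q/X_1)$. Concatenating the two families and invoking the weight recursion $w(p/q)=\bigl(1^{\times a_0},\,\tfrac{X_1}{q}\,w(q/X_1)\bigr)$ coming from \eqref{eq:weight1} then recovers balls of capacities $w(b)$, up to the finitely many shrinkage factors $1-\delta_j$, which are absorbed into a single factor $1-\eps$ by taking each $\delta_j$ small.

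I expect the main obstacle to be the peeling lemma itself --- arranging the ball, the leftover ellipsoid, and their embeddings to be genuinely disjoint with open image, and organizing the error factors so that the finitely many iterations compose to a single $(1-\eps)$. Everything else is the bookkeeping of the Euclidean algorithm on $(p,q)$ matched against \eqref{eq:weight}, \eqref{eq:weight1}.
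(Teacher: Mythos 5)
The paper itself gives no proof of this proposition --- it is quoted directly from \cite{Mell} --- so the relevant comparison is with the argument in that reference, and your skeleton is essentially that argument. Your reduction of the blow-up statement to the ball-embedding statement, the volume bookkeeping via $\sum w_i^2=p/q$, the Euclidean-algorithm induction, and the absorption of finitely many shrinking factors into a single $1-\eps$ are all correct as you set them up: your recursion $w(p/q)=\bigl(1^{\times a_0},\tfrac{X_1}{q}\,w(q/X_1)\bigr)$ is exactly \eqref{eq:weight1} divided by $q$, and the hypothesis $c\ge 1$ of your peeling lemma is maintained at every step, since the $j$-th peel is applied to (a shrunk copy of) $E(1,b-j)$ with $j\le a_0-1$, and $b-j\ge 1$ there.

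The caveat is that the ``peeling lemma'' is not a formality you can wave at with ``can be extracted from'': it is where all of the geometry of the proposition lives, so as written your proof has outsourced its only substantive step. The lemma is true, and the cleanest route is toric: the moment triangle of $E(1,c)$, with vertices $(0,0),(1,0),(0,c)$, is cut along $x+y=1$ into the open unit simplex, whose moment-map preimage is exactly ${\rm int\,}B^4(1)$, and the triangle with vertices $(1,0),(0,1),(0,c)$, which the integral affine map $(x,y)\mapsto (x,\,x+y-1)$ carries onto the moment triangle of $E(1,c-1)$. The subtlety --- and the reason the factor $1-\delta$ is genuinely needed rather than cosmetic --- is that the preimage of this second triangle is \emph{not} symplectomorphic to ${\rm int\,}E(1,c-1)$: over the cut edge $x+y=1$ the fibers are full $2$-tori, whereas for a standard ellipsoid the corresponding edge lies on a coordinate axis where the fibers degenerate to circles. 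One must therefore embed a slightly shrunk ellipsoid into the preimage of a slightly shrunk triangle, which is a standard but nontrivial construction of Traynor/Schlenk/McDuff type, and it is precisely what \cite{Mell} (and the related analysis in \cite[\S 1.2]{MS}) actually carries out. If you either cite that lemma precisely or prove it along the lines above, your induction completes the argument; without it, the proposal is a correct reduction of the proposition to its own key special case rather than a proof.
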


The following lemma is helpful when finding continued fraction expansions.

\begin{lemma}\label{le:weight} Let $S_0, S_1,\dots$ be a (strictly) increasing sequence of positive integers with $7 > \frac {S_1}{S_0}> \tau^4$, that satisfy 
the recursion \eqref{eqn:basicrecursion}.  Then 
 there are positive integers $a_1,\dots, a_k$ for some $k\ge 0$ such that
 $$
 \frac{S_{n+1}}{S_n} = [6; (1,5)^{\times n},a_1,\dots,a_k],\quad  \forall n\ge 0.
 $$ 
\end{lemma}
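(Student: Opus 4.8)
The plan is to work directly with the ratios $b_n := S_{n+1}/S_n$ and track how their continued fraction expansions evolve. The recursion \eqref{eqn:basicrecursion} for $S_\bullet$ translates into
\[
b_n \;=\; \frac{S_{n+1}}{S_n} \;=\; 7 - \frac{S_{n-1}}{S_n} \;=\; 7 - \frac{1}{b_{n-1}}, \qquad n \ge 1,
\]
so that $b_n = f(b_{n-1})$ with $f(x) = 7 - 1/x$. Since $f$ is strictly increasing on $(0,\infty)$, fixes $\tau^4$ (because $\tau^4 + \tau^{-4} = 7$), and has $f(7) = 48/7 < 7$, the interval $(\tau^4,7)$ is forward-invariant under $f$. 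Using the hypothesis $b_0 = S_1/S_0 \in (\tau^4,7) \subset (6,7)$, it then follows by induction that $b_n \in (\tau^4,7) \subset (6,7)$ for every $n \ge 0$; in particular $\lfloor b_n \rfloor = 6$ for all $n$.

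I would then argue by induction on $n$. For $n = 0$: the number $b_0$ is a non-integer rational in $(6,7)$, so it has a finite regular continued fraction $b_0 = [6; a_1, \dots, a_k]$ with positive integers $a_i$ and some $k \ge 1$; this is the tail that will appear for every $n$. For the inductive step, assume $b_{n-1} = [6; c_1, \dots, c_m]$ with all $c_i \ge 1$, so that $b_{n-1} - 6 = 1/[c_1; c_2, \dots, c_m]$. Because $b_{n-1} \in (6,7)$ one gets, successively,
\[
b_{n-1} - 1 \;=\; 5 + (b_{n-1}-6) \;=\; [5; c_1, \dots, c_m] \in (5,6), \qquad \frac{b_{n-1}}{b_{n-1}-1} \;=\; 1 + \frac{1}{b_{n-1}-1} \;=\; [1; 5, c_1, \dots, c_m] \in (1,2),
\]
and hence $b_n = 7 - 1/b_{n-1} = 6 + (b_{n-1}-1)/b_{n-1} = 6 + 1/\bigl(b_{n-1}/(b_{n-1}-1)\bigr) = [6; 1, 5, c_1, \dots, c_m]$. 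Thus each application of $f$ (in the relevant range) prepends the block $(1,5)$ to the continued fraction tail, and iterating from $n=0$ gives $b_n = [6; (1,5)^{\times n}, a_1, \dots, a_k]$ for all $n \ge 0$, which is the assertion.

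The content of the inductive step is routine continued-fraction algebra; the only point needing (easy) care is verifying the interval containments $b_{n-1} \in (6,7)$, $b_{n-1}-1 \in (5,6)$, and $b_{n-1}/(b_{n-1}-1) \in (1,2)$ at each stage, since these are exactly what forces the partial quotients $6$, $1$, $5$ to appear as claimed. This is also the one place where the full hypothesis $S_1/S_0 > \tau^4$ is used (rather than merely $S_1/S_0 > 6$), via the forward-invariance of $(\tau^4,7)$ under $f$. I do not anticipate any genuine obstacle here; the essential observation is simply that $x \mapsto 7 - 1/x$ acts on continued fractions in $(\tau^4,7)$ by prepending the block $(1,5)$. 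One could instead extract the same fact from the weight-expansion recursion $X_{i+1} = X_{i-1} - a_i X_i$ recalled before the lemma, but the direct route above seems cleanest.
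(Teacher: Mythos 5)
Your proof is correct. The inductive step is sound: if $b_{n-1}=[6;c_1,\dots,c_m]$ with all $c_i\ge 1$, then writing $y=[c_1;\dots,c_m]$ one checks directly that $7-1/b_{n-1}=\frac{41y+7}{6y+1}=[6;1,5,c_1,\dots,c_m]$, which is exactly your prepending claim; in fact this identity is purely formal once the tail entries are positive, so the interval containments you verify are a safe but not strictly necessary precaution (note that $(6,7)$ is already forward-invariant under $f(x)=7-1/x$, so the role you assign to the hypothesis $S_1/S_0>\tau^4$ is a little overstated --- $S_1/S_0\in(6,7)$ would do for your argument). Your route differs from the paper's in mechanism, though both are inductions on $n$ driven by the recursion \eqref{eqn:basicrecursion}. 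The paper first uses Lemma~\ref{lem:fibid0} to show the ratios $S_{n+1}/S_n$ decrease to $\tau^4$, hence lie in $(\tau^4,7)$, so the expansion begins $[6,1,\dots]$; it then runs the induction through the weight-expansion recursion \eqref{eq:weight1}, writing $W(S_{n+1}/S_n)=\bigl(S_n^{\times 6},\,S_n-S_{n-1},\,W(\tfrac{S_n-S_{n-1}}{S_{n-1}})\bigr)$ as in \eqref{eqn:WS} and reading off the continued fraction from the block lengths. Your argument replaces this bookkeeping with the one-line observation that the M\"obius map $x\mapsto 7-1/x$ prepends the block $(1,5)$ to the continued fraction, which is more elementary and self-contained; the paper's version has the side benefit of producing the weight expansions \eqref{eqn:WS} explicitly, which is the form in which the result is actually consumed later (e.g.\ in Corollary~\ref{cor:weight} and the inductive computation in Lemma~\ref{lem:WcM}).
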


\begin{proof}  
We give an inductive argument.  The case $n=0$   
holds because $6< \frac{S_1}{S_0} < 7$.

We next claim that the sequence $ \frac{S_{n+1}}{S_n}$ is decreasing with limit $\tau^4$.  This holds by applying part (i) of Lemma~\ref{lem:fibid0} with $x_n= y_n = S_n$, and using the fact that 
$\frac {S_1}{S_0} : = x> \tau^4$ so that $x^2 - 7x +1>0$.  In particular, for all $n$, $6 < \frac{S_{n+1}}{S_n} < 7$, so the first entry of its continued fraction expansion is $6$, and 
$\frac{S_{n+1}}{S_n}\ge \frac{13}{2}$, hence $2(S_{n+1}-6S_n) > S_{n}$ so that
 the continued fraction expansion of $\frac{S_{n+1}}{S_n}$ has the form  
$
[6,1, \dots]$. Thus by \eqref{eq:weight1} we have
\begin{align}\label{eqn:WS}
W\Bigl(\frac {S_{n+1}}{S_n}\Bigr)  &= \Bigl( S_n^{\times 6}, S_{n+1}-6S_n = S_n-S_{n-1}, W(\frac{S_n-S_{n-1}}{S_{n-1}})\Bigr) .
\end{align}
But, by induction, we may assume that $\frac {S_{n}}{S_{n-1}} = [6,(1,5)^{\times(n-1)},a_1,\dots, a_k]$.  Therefore,
$\frac {S_{n}-S_{n-1}}{S_{n-1}} = [5,(1,5)^{\times(n-2)},a_1,\dots, a_k]$.  
Hence, because the continued fraction for $\frac {S_{n+1}}{S_n} $ is given by the length of the blocks in
its weight expansion, we find that
$$
\frac {S_{n+1}}{S_n}= [6,(1,5)^{\times n}, a_1,\dots, a_k],
$$
as claimed.
\end{proof}

\begin{corollary} \label{cor:weight}   For $n \ge 1$,  we have the following weight expansions.
\begin{align*} 
{\rm (a)}\qquad &b_n  = \frac{Q_{n+1}}{Q_n} = [6; (1,5)^{\times (n-1)}, 1, 7],\\ 
{\rm(b)}\qquad& \frac{\ell_{n+1}}{\ell_{n}}  = [6; (1,5)^{\times n},1] = [6; (1,5)^{\times (n-1)}, 1, 6],\\
{\rm (c)}\qquad& \frac{t_{n+1}}{t_{n}}  = 
[6; (1,5)^{\times n}].
\end{align*}
\end{corollary}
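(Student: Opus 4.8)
The plan is to realize all three families of ratios simultaneously as the convergents of a single periodic continued fraction. Write $[6;\overline{1,5}]$ for the infinite continued fraction with partial quotients $a_0=6$, $a_j=1$ for odd $j\ge1$, and $a_j=5$ for even $j\ge2$, and let $p_j/q_j$ be its $j$-th convergent, normalized by $p_{-1}=1$, $q_{-1}=0$, $p_0=6$, $q_0=1$, and $p_j=a_jp_{j-1}+p_{j-2}$, $q_j=a_jq_{j-1}+q_{j-2}$. The heart of the argument is the claim
\[ p_{2n}=t_{n+1},\qquad q_{2n}=t_n,\qquad p_{2n+1}=\ell_{n+1},\qquad q_{2n+1}=\ell_n\qquad(n\ge0), \]
proved by induction on $j$. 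The base cases $p_0=6=t_1$, $q_0=1=t_0$, $p_1=6+1=7=\ell_1$, $q_1=1+0=1=\ell_0$ are immediate, and the inductive step requires only two elementary identities, both consequences of $t_k=\ell_k-\ell_{k-1}$ together with the recursion \eqref{eqn:basicrecursion}: the defining relation $\ell_{m+1}=t_{m+1}+\ell_m$, which matches the odd-step recursion $p_{2n+1}=1\cdot p_{2n}+p_{2n-1}$, $q_{2n+1}=q_{2n}+q_{2n-1}$; and $t_{m+1}=6\ell_m-\ell_{m-1}=5\ell_m+t_m$ (from $t_{m+1}=\ell_{m+1}-\ell_m$ and $\ell_{m+1}=7\ell_m-\ell_{m-1}$), which matches the even-step recursion $p_{2n}=5p_{2n-1}+p_{2n-2}$, $q_{2n}=5q_{2n-1}+q_{2n-2}$.

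Granting the claim, (c) and the first form of (b) are just truncations: the convergent $p_{2n}/q_{2n}$ is by definition $[6;(1,5)^{\times n}]$ and equals $t_{n+1}/t_n$; the convergent $p_{2n+1}/q_{2n+1}$ is $[6;(1,5)^{\times n},1]$ and equals $\ell_{n+1}/\ell_n$. The second form of (b) follows from the standard identity $[\dots,1,6]=[\dots,1,5,1]$, i.e. $[6;(1,5)^{\times(n-1)},1,6]=[6;(1,5)^{\times n},1]$. For (a), one appends a final partial quotient $7$ to the truncation $[6;(1,5)^{\times(n-1)},1]=p_{2n-1}/q_{2n-1}=\ell_n/\ell_{n-1}$, obtaining
\[ [6;(1,5)^{\times(n-1)},1,7]=\frac{7p_{2n-1}+p_{2n-2}}{7q_{2n-1}+q_{2n-2}}=\frac{7\ell_n+t_n}{7\ell_{n-1}+t_{n-1}}; \]
since $7\ell_n+t_n=7\ell_n+(\ell_n-\ell_{n-1})=(7\ell_n-\ell_{n-1})+\ell_n=\ell_{n+1}+\ell_n=Q_{n+1}$ by \eqref{eq:Fibq}, and likewise the denominator equals $Q_n$, this is $b_n$.

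There is an alternative for parts (a) and (b) that avoids the convergent bookkeeping: apply Lemma~\ref{le:weight} to the tails $(Q_1,Q_2,\dots)$ and $(\ell_1,\ell_2,\dots)$, which are increasing, satisfy \eqref{eqn:basicrecursion}, and have leading ratios $Q_2/Q_1=55/8$ and $\ell_2/\ell_1=48/7$ in the admissible range $(\tau^4,7)$; the lemma then produces continued fractions $[6;(1,5)^{\times n},a_1,\dots,a_k]$ whose tails are pinned down by the single evaluations $55/8=[6;1,7]$ and $48/7=[6;1,6]$. The one place this shortcut genuinely fails — and the main obstacle — is part (c): the ratios $t_{n+1}/t_n$ increase to $\tau^4$ \emph{from below} (this is exactly the monotonicity recorded in part (ii) of Lemma~\ref{lem:fibid0}), so they never satisfy the hypothesis $S_1/S_0>\tau^4$ of Lemma~\ref{le:weight}, and a direct argument such as the convergent computation above — or, equivalently, running the weight recursion \eqref{eq:weight1} by hand on $t_{n+1}/t_n$ — is unavoidable. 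A minor secondary point is keeping track of the two admissible terminations of a finite continued fraction so as to produce both displayed forms in (b).
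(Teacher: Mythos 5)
Your argument is correct, and it takes a genuinely different route from the paper. The paper's entire proof of Corollary~\ref{cor:weight} is one line: since $Q_n,\ell_n,t_n$ satisfy \eqref{eqn:basicrecursion}, the statement is declared an immediate consequence of Lemma~\ref{le:weight} (applied, implicitly, to suitably shifted sequences so that the leading ratio lies in $(\tau^4,7)$ --- your ``alternative'' paragraph for (a) and (b) is essentially the paper's intended argument). You instead identify $t_{n+1},t_n,\ell_{n+1},\ell_n$ as the numerators and denominators of the even and odd convergents of $[6;\overline{1,5}]$ by a short induction using only $t_k=\ell_k-\ell_{k-1}$ and \eqref{eqn:basicrecursion}, read off (b) and (c) as truncations, handle the two admissible terminations in (b) by the standard identity, and obtain (a) from the tail-replacement formula together with $7\ell_n+t_n=\ell_{n+1}+\ell_n=Q_{n+1}$; I checked these computations and they are all right. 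What your approach buys is self-containedness and uniform coverage of all three parts, and it is in fact needed for (c): your observation there is a fair criticism of the paper's citation, since the ratios $t_{n+1}/t_n$ increase to $\tau^4$ from below, so no shift of the $t$-sequence satisfies the hypothesis $7>S_1/S_0>\tau^4$ of Lemma~\ref{le:weight} as stated. The only place I would soften your wording is ``unavoidable'': the inductive argument in the \emph{proof} of Lemma~\ref{le:weight} only uses that all ratios $S_{n+1}/S_n$ lie in $[\tfrac{13}{2},7)$ for $n\ge1$, which the $t$-sequence does satisfy (with base case $t_1/t_0=6=[6]$), so one could alternatively rerun that induction under this weaker hypothesis rather than compute convergents directly; but the gap in the literal appeal to the lemma is real, and your convergent computation closes it cleanly, while also matching the description of the convergents of $\theta_n$ used later in Example~\ref{ex:bigmono}.
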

\begin{proof}  Since the sequences  $Q_n, \ell_n, t_n$ satisfy \eqref{eqn:basicrecursion}, this an immediate consequence of
Lemma~\ref{le:weight}.  
\end{proof}

In \S\ref{sec:heart} we will need the following variants of the quadratic formula  $\sum W_i^2 = pq$ in \eqref{eq:weight}.  The first involves a vector $z_M(n)$ that is part of the data of a ``model curve" that we will study.

\begin{lemma}\label{lem:WcM} For $n\ge 1$, define $z_M(n)$ to be the vector  $6W(\frac{\ell_n}{\ell_{n-1}})$ with $7$ ones appended at the end.  Thus $z_M(n)$ has the same length as $W(b_n)$, and has the following expansion
$$
z_M(n) = \bigl( (6\ell_{n-1})^{\times 6},  6 t_{n-1}, (6\ell_{n-2})^{\times 5},\dots, 6 t_0, 1^{\times 7}\bigr).
$$
Then
\begin{align}\label{eqn:WcM}  z_M(n)\cdot W(b_n) = \ell_{n-1}Q_n + t_n Q_{n+1} - 1.
\end{align}
\end{lemma}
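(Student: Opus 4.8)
The plan is to write both $z_M(n)$ and $W(b_n)$ as explicit integer vectors whose blocks line up in position, evaluate the pairing block by block, telescope the resulting sum, and close with an elementary quadratic Fibonacci identity.

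I would begin by recording the two weight expansions. By Corollary~\ref{cor:weight} we have $\tfrac{\ell_n}{\ell_{n-1}}=[6;(1,5)^{\times(n-1)},1]$ and $b_n=[6;(1,5)^{\times(n-1)},1,7]$, so the two continued fractions agree except for the extra final entry $7$ in $b_n$. Running the weight recursion $X_{i+1}=X_{i-1}-a_iX_i$ of \eqref{eq:weight0}, using the form of $\tfrac{\ell_n}{\ell_{n-1}}$ ending in $1$ (which produces the same weight vector), and invoking \eqref{eqn:basicrecursion} together with $t_m=\ell_m-\ell_{m-1}$ and $Q_m=\ell_m+\ell_{m-1}$ (from \eqref{eq:Fibq}), a short induction on the index yields
\[
W\!\Bigl(\tfrac{\ell_n}{\ell_{n-1}}\Bigr)=\bigl(\ell_{n-1}^{\times6},\,t_{n-1},\,\ell_{n-2}^{\times5},\,t_{n-2},\dots,\ell_0^{\times5},\,t_0\bigr),
\]
\[
W(b_n)=\bigl(Q_n^{\times6},\,Q_n-Q_{n-1},\,Q_{n-1}^{\times5},\,Q_{n-1}-Q_{n-2},\dots,Q_1^{\times5},\,Q_1-Q_0,\,1^{\times7}\bigr).
\]
Multiplying the first of these by $6$ and appending $1^{\times7}$ gives exactly the asserted expansion of $z_M(n)$, of the same length $6n+8$ as $W(b_n)$. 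The key point is that after the appending the blocks of $z_M(n)$ sit directly over those of $W(b_n)$: each $(6\ell_m)^{\times6}$ or $(6\ell_m)^{\times5}$ lies over $Q_{m+1}^{\times6}$ or $Q_{m+1}^{\times5}$, each $(6t_m)^{\times1}$ lies over $(Q_{m+1}-Q_m)^{\times1}$, and the terminal $1^{\times7}$ lies over $1^{\times7}$.

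Carrying out the pairing block by block and factoring out $6$ then gives
\[
z_M(n)\cdot W(b_n)=6\Bigl[\ell_{n-1}Q_n+\sum_{m=0}^{n-1}\bigl((5\ell_m+t_m)Q_{m+1}-t_mQ_m\bigr)\Bigr]+7 .
\]
Now $5\ell_m+t_m=6\ell_m-\ell_{m-1}=\ell_{m+1}-\ell_m=t_{m+1}$ by \eqref{eqn:basicrecursion}, so the inner sum telescopes to $t_nQ_n-t_0Q_0=t_nQ_n-1$, and
\[
z_M(n)\cdot W(b_n)=6\bigl(\ell_{n-1}Q_n+t_nQ_n-1\bigr)+7=6\ell_{n-1}Q_n+6t_nQ_n+1 .
\]
It remains to identify this with $\ell_{n-1}Q_n+t_nQ_{n+1}-1$. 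Subtracting, and using $Q_{n+1}=7Q_n-Q_{n-1}$ so that $Q_{n+1}-6Q_n=Q_n-Q_{n-1}$, the required equality becomes
\[
t_n(Q_n-Q_{n-1})=5\ell_{n-1}Q_n+2 .
\]
Substituting $t_n=\ell_n-\ell_{n-1}$, $Q_n-Q_{n-1}=\ell_n-\ell_{n-2}$, $Q_n=\ell_n+\ell_{n-1}$, and expanding with \eqref{eqn:basicrecursion}, this reduces to $2\bigl(\ell_{n-1}^2-7\ell_{n-1}\ell_{n-2}+\ell_{n-2}^2\bigr)=2$, i.e.\ to $\ell_{n-1}^2-\ell_{n-2}\ell_n=1$, which is a shift of the second identity of \eqref{eq:recurrel}. (Alternatively, being a quadratic Fibonacci identity, it may simply be checked on three values of $n$.) This proves \eqref{eqn:WcM}.

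The step that I expect to need the most care is the first: pinning down the two explicit weight vectors and verifying that, after appending $1^{\times7}$, the block structure of $z_M(n)$ matches that of $W(b_n)$ position by position. This works precisely because the continued fractions of $\tfrac{\ell_n}{\ell_{n-1}}$ and $b_n$ share the initial segment $[6;(1,5)^{\times(n-1)},1]$, and because writing $\tfrac{\ell_n}{\ell_{n-1}}$ with last entry $1$ splits its final run of $1$'s in the same way it occurs inside $W(b_n)$. Once the alignment is established the telescoping and the closing identity are routine.
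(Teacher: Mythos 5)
Your argument is correct, but it takes a genuinely different route from the paper's. The paper proves \eqref{eqn:WcM} by induction on $n$: it checks $n=1$ by hand, peels off the leading blocks of $z_M(n)$ and $W(b_n)$ via the truncation recursion \eqref{eqn:WS}, applies the inductive hypothesis to the truncated vectors, and closes by checking that two coefficient combinations vanish. You instead write both vectors in closed form --- the explicit expansions $W(\tfrac{\ell_n}{\ell_{n-1}})=(\ell_{n-1}^{\times6},t_{n-1},\ell_{n-2}^{\times5},\dots,t_0)$ and $W(b_n)=(Q_n^{\times6},Q_n-Q_{n-1},Q_{n-1}^{\times5},\dots,Q_1-Q_0,1^{\times7})$, both of which are correct and do follow from Corollary~\ref{cor:weight} together with the weight recursion and $t_m=\ell_m-\ell_{m-1}$, $Q_m=\ell_m+\ell_{m-1}$ --- then pair block by block, telescope using $5\ell_m+t_m=t_{m+1}$, and arrive at the intermediate value $6\ell_{n-1}Q_n+6t_nQ_n+1=6\ell_nQ_n+1$ before converting it to the stated right-hand side via $\ell_{n-1}^2-\ell_{n-2}\ell_n=1$. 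I verified the block alignment, the telescoping, and the final quadratic reduction (e.g. for $n=2$ both sides equal $15841$), so the proof is sound. What your route buys is a one-shot closed-form evaluation that avoids inducting on the dot product itself and in fact directly yields the simplified form $6\ell_nQ_n+1$, which the paper only extracts afterwards in the remark establishing \eqref{eqn:cmdotW}; the cost is that you must first pin down the two full weight vectors (the small induction you flag), whereas the paper's truncation argument only ever touches the first two blocks at each step. One incidental point: the identity driving your telescoping is $t_m-t_{m-1}=5\ell_{m-1}$, which is also what the paper's own proof invokes, even though \eqref{eq:Fibt} is printed with the index shifted by one.
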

\begin{proof}  When $n=1$, $z_M(1) = (6^{\times 6}, 6, 1^{\times 7})$ and the claim is that
$$
z_M(1)\cdot W(\frac{55}{8}) = 
8 + 6\times 55 - 1 = 337.
$$
    But
$$
z_M(1)\cdot W(\frac{55}{8})  = (6^{\times 6}, 6, 1^{\times 7})\cdot (8^{\times 6}, 7, 1^{\times 7}) = 36\times 8 + 42 + 7 = 337.
$$
Thus we may assume inductively that the result is known for $n-1\ge 1$ and consider the case $n$.  As in \eqref{eqn:WS}, we may write
$$
 z_M(n) =  \bigl( (6\ell_{n-1})^{\times 6},  6 t_{n-1},  c'_M(n-1)\bigr),
 $$
 where $ c'_M(n-1)$ is the truncated version of $ z_M(n-1)$ in which the first entry $6\ell_{n-2}$ is removed.
Since $W(b_n)$ has an analogous expression, we find that
\begin{align*}
 z_M(n)\cdot W(b_n) &= 36 \ell_{n-1} Q_n + 6 t_{n-1} (Q_n - Q_{n-1})  +  c'_M(n-1)\cdot W\bigl(\frac{P_{n-1} - Q_{n-1}}{Q_{n-1} }\bigr)\\ 
& = 36 \ell_{n-1} Q_n + 6 t_{n-1} (Q_n - Q_{n-1}) + ( \ell_{n-2} Q_{n-1} + t_{n-1} Q_n - 1)\\
&\hspace{2in} - 6\ell_{n-2}Q_{n-1} \\
& = Q_n ( 7 t_{n-1} + 36 \ell_{n-1} ) - Q_{n-1} ( 5 \ell_{n-2} - 6t_{n-1} ),
\end{align*}
where the second equality is obtained using the inductive hypothesis.
Hence we must show that the right hand side of the last equation equals $ \ell_{n-1}Q_n + t_n Q_{n+1} - 1$.
If we write $Q_{n+1} = 7Q_n - Q_{n-1}$ and gather the terms in $Q_n, Q_{n-1}$ on different sides of the equation,
 we find that it suffices to show
$$
Q_n\bigl(7 t_n - 35 \ell_{n-1} -  7t_{n-1}   \bigr) = Q_{n-1}\big( t_n - 6t_{n-1} -5 \ell_{n-2}  \bigr).
$$
But the coefficient on the left vanishes because  $t_n - t_{n-1} = 5 \ell_{n-1}$ by \eqref{eq:Fibt}, while the same identity shows that the coefficient on the right
also vanishes because  $ t_n - 6t_{n-1} = t_{n-1} - t_{n-2} - 5\ell_{n-2} = 0$.
\end{proof}  

\begin{remark}
We can instead write
\begin{equation}
\label{eqn:cmdotW}
z_M(n) \cdot W(b_n) = \ell_n^2 + 41\ell_n\ell_{n-1} - 5 \ell_{n-1}^2 + 6.
\end{equation}
For our purposes, the identity \eqref{eqn:WcM} is more geometrically natural --- later, we will see that it directly implies that the model curve has the area we expect.  However, we will need  \eqref{eqn:cmdotW} as well.  To prove \eqref{eqn:cmdotW}, it is equivalent by Lemma~\ref{lem:WcM} to show that the right hand sides of \eqref{eqn:cmdotW} and \eqref{eqn:WcM} are equal.  We can rewrite the right hand side of \eqref{eqn:WcM}:
\begin{align*}
\ell_{n-1}Q_n + t_n Q_{n+1} - 1 & = (\ell_{n-1} + t_{n+1}) Q_n + 1  \\
& = (\ell_{n+1} - \ell_{n}+\ell_{n-1}) Q_n + 1\\
& = 6 \ell_n Q_n + 1\\
& = 6 \ell_n^2 + 6 \ell_n\ell_{n-1} + 1,
\end{align*}
where in the first line we have used Lemma~\ref{lem:fibid0}, and in the last we have used \eqref{eq:Fibq}.  So, it is equivalent to show 
\[  (6 \ell_n^2 + 6 \ell_n\ell_{n-1}) - (\ell_n^2 + 41\ell_n\ell_{n-1} - 5 \ell_{n-1}^2 ) =  5,\]
or equivalently that
\begin{equation}
\label{eqn:fineq}
5( \ell_n^2 - 7\ell_n\ell_{n-1} +\ell_{n-1}^2) = 5.
\end{equation}
Since
\[ \ell_n^2 - 7\ell_n\ell_{n-1} + \ell_{n-1}^2  = \ell_{n-1}^2 -\ell_n\ell_{n-2}= 1,\]
where the last equality follows by \eqref{eq:recurrel}, equation \eqref{eqn:fineq} holds. \hfill$\er$
\end{remark}

The other identity that we will use gives a convenient way for studying sequences satisfying a certain recursion closely related to \eqref{eqn:basicrecursion}.

\begin{lemma}\label{lem:R}  Given positive integers $A, B>0$, define 
$$
R(A,B): = \Bigr(R_0^{\times 6}, R_1, R_2^{\times 5}, \dots, R_{2n-3}, R_{2n-2}^{\times 5}, R_{2n-1}\Bigr)
$$
 by the recursion $R_0 = A, \hspace{1 mm} R_1 = B,$
\begin{align}\label{eqn:R}
& R_{2k} = R_{2k-2}-R_{2k-1},\;\; R_{2k+1} = R_{2k-1} - 5 R_{2k},\; k<n.
\end{align}
Then we have:
\begin{itemize}
\item[{\rm (i)}]
$R(A,B) = A \cdot R(1,0) + B \cdot R(0,1)$.
\item[{\rm (ii)}]
$R(0,1) = \Bigl(0^{\times 6},1,(-1)^{\times 5},6,(-7)^{\times 5},\dots,(-\ell_{k-1})^{\times 5},t_{k},\dots, (- \ell_{n-2})^{\times 5}, t_{n-1}\Bigr).$
 \item[{\rm (iii)}]  Let $\widetilde W$ be the vector obtained from  $W(b_n)$ by deleting the last block of length $7$.  
 Then
 $$
 \widetilde W = \Bigl(Q_n^{\times 6}, P_n-6Q_n, (7Q_n-P_n)^{\times 5},\dots\Bigr) \equiv -Q_{n-1}\ R(0,1) \pmod {Q_n}.
 $$
\item[{\rm (iv)}]  If $\De = R(x_0,x_1) = (x_0^{\times 6}, x_1, x_2^{\times 5}, \dots, x_{2n-1})$ for some $n\ge 1$, then $$
\De\cdot R(0,1) = \ell_{n-1}x_{2n-1}.
$$
\end{itemize}
\end{lemma}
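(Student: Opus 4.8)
The plan is to handle the four parts in sequence, since only part (iv) needs a real idea. Part (i) is immediate: the recursion \eqref{eqn:R} produces $R_0,\dots,R_{2n-1}$ from $(R_0,R_1)=(A,B)$ by a fixed linear rule — concretely $(R_{2k},R_{2k+1})^{T}=M^{k}(A,B)^{T}$ with $M=\bigl(\begin{smallmatrix}1&-1\\-5&6\end{smallmatrix}\bigr)$ — so each entry, and hence the whole vector (whose block multiplicities do not depend on $(A,B)$), is $\R$-linear in $(A,B)$; this gives $R(A,B)=A\,R(1,0)+B\,R(0,1)$. For part (ii) I would prove by induction on $k$ that the sequence underlying $R(0,1)$ satisfies $R_{2k}=-\ell_{k-1}$ and $R_{2k+1}=t_k$ for $0\le k\le n-1$, adopting the convention $\ell_{-1}=0$. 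The base case is $R_0=0=-\ell_{-1}$ and $R_1=1=t_0$; the inductive step uses only $t_{k-1}=\ell_{k-1}-\ell_{k-2}$ together with $\ell_k=7\ell_{k-1}-\ell_{k-2}$ from \eqref{eqn:basicrecursion}, since $R_{2k}=R_{2k-2}-R_{2k-1}=-\ell_{k-2}-t_{k-1}=-\ell_{k-1}$ and $R_{2k+1}=R_{2k-1}-5R_{2k}=t_{k-1}+5\ell_{k-1}=6\ell_{k-1}-\ell_{k-2}=\ell_k-\ell_{k-1}=t_k$.

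For part (iii) the key point is to recognize that $\widetilde W=R(Q_n,\,P_n-6Q_n)$. By Corollary~\ref{cor:weight}(a), $b_n=P_n/Q_n$ has continued fraction $[6;(1,5)^{\times(n-1)},1,7]$, so by \eqref{eq:weight0} and \eqref{eq:weight1} the vector $W(b_n)$ has blocks of lengths $6,1,5,1,5,\dots,1,7$ with block values $X_0=Q_n,\,X_1,\dots,X_{2n}=1$ obeying $X_{i+1}=X_{i-1}-a_iX_i$, where $a_0=6$ and $(a_1,a_2,\dots)=(1,5,1,5,\dots)$. Deleting the final block of seven $1$'s leaves $\widetilde W=(X_0^{\times6},X_1,X_2^{\times5},\dots,X_{2n-2}^{\times5},X_{2n-1})$, and the recursion governing $X_1,\dots,X_{2n-1}$ is precisely \eqref{eqn:R} with $X_0=Q_n$ and $X_1=X_{-1}-6X_0=P_n-6Q_n$ (so $X_2=X_0-X_1=7Q_n-P_n$, giving the displayed form). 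Hence $\widetilde W=R(Q_n,P_n-6Q_n)$, and part (i) yields $\widetilde W=Q_nR(1,0)+(P_n-6Q_n)R(0,1)\equiv P_nR(0,1)\pmod{Q_n}$. Since $b_n=P_n/Q_n=Q_{n+1}/Q_n$ and $Q_{n+1}=7Q_n-Q_{n-1}$ by \eqref{eqn:basicrecursion}, we have $P_n\equiv-Q_{n-1}\pmod{Q_n}$, so $\widetilde W\equiv-Q_{n-1}R(0,1)\pmod{Q_n}$.

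For part (iv), write $\Delta=(x_0^{\times6},x_1,x_2^{\times5},\dots,x_{2n-2}^{\times5},x_{2n-1})$ and, using part (ii), $R(0,1)=(S_0^{\times6},S_1,\dots,S_{2n-1})$ with $S_{2k}=-\ell_{k-1}$ and $S_{2k+1}=t_k$. Expanding the dot product according to these multiplicities gives
\[
\Delta\cdot R(0,1)=6x_0S_0+\sum_{k=1}^{n-1}\bigl(x_{2k-1}S_{2k-1}+5x_{2k}S_{2k}\bigr)+x_{2n-1}S_{2n-1},
\]
in which the first term vanishes because $S_0=-\ell_{-1}=0$. The crucial observation is that each bracketed term collapses: substituting $S_{2k-1}=t_{k-1}$, $S_{2k}=-\ell_{k-1}$ and $5x_{2k}=x_{2k-1}-x_{2k+1}$ (from \eqref{eqn:R}), and then using $t_{k-1}-\ell_{k-1}=-\ell_{k-2}$,
\[
x_{2k-1}S_{2k-1}+5x_{2k}S_{2k}=t_{k-1}x_{2k-1}-\ell_{k-1}\bigl(x_{2k-1}-x_{2k+1}\bigr)=\ell_{k-1}x_{2k+1}-\ell_{k-2}x_{2k-1},
\]
so the sum telescopes to $\ell_{n-2}x_{2n-1}-\ell_{-1}x_1=\ell_{n-2}x_{2n-1}$. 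Adding the remaining term $x_{2n-1}S_{2n-1}=t_{n-1}x_{2n-1}$ and using $\ell_{n-2}+t_{n-1}=\ell_{n-1}$ produces $\Delta\cdot R(0,1)=\ell_{n-1}x_{2n-1}$, as claimed; the cases $n=1$ and $n=2$ are consistent because the empty sum contributes $0$.

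The only genuinely non-routine step is the telescoping in part (iv): one has to notice that the multiplicity-$5$ even entries and the multiplicity-$1$ odd entries should be grouped into consecutive pairs, and that after the substitution $5x_{2k}=x_{2k-1}-x_{2k+1}$ this pair collapses — via the elementary identities $t_k=\ell_k-\ell_{k-1}$ — to the clean difference $\ell_{k-1}x_{2k+1}-\ell_{k-2}x_{2k-1}$. Everything else is bookkeeping: parts (i) and (ii) are straightforward inductions, and part (iii) amounts to matching the continued-fraction block structure of $b_n$ with the recursion \eqref{eqn:R} and then reducing mod $Q_n$. The main place an error could creep in is keeping the block lengths and the index shifts straight.
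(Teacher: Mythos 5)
Your proof is correct and follows essentially the same route as the paper: (i) by linearity of the recursion, (ii) by checking the recursion against the identities for $\ell_k, t_k$, and (iii) by recognizing $\widetilde W = R(Q_n, P_n-6Q_n)$ from the continued-fraction block structure and reducing mod $Q_n$. The only cosmetic difference is in (iv), where the paper argues by induction on $n$ while you unroll the same computation into a telescoping sum; the key identity (using $x_{2k+1}=x_{2k-1}-5x_{2k}$ and $t_{k-1}=\ell_{k-1}-\ell_{k-2}$) is the paper's inductive step in disguise.
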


\begin{proof}  
The sequences $R(A,B)$ and $A \cdot R(1,0) + B \cdot R(0,1)$ both have the same initial conditions, and any linear combination of sequences satisfying \eqref{eqn:R} also satisfies this recursion.  This proves (i).  To prove (ii) one just has has to check that the recursion is satisfied, and this follows because $\ell_n$ and $t_n: = \ell_n - \ell_{n-1}$ both satisfy \eqref{eqn:basicrecursion}.  To prove (iii), notice first that  $ \widetilde W$ does satisfy the recursion \eqref{eqn:R} by part~(a)  of Corollary~\ref{cor:weight}. Further $P_n-6Q_n = Q_{n+1} - 6Q_n = Q_n - Q_{n-1} $ by \eqref{eqn:basicrecursion}.  Hence, (iii) follows from (i).  We  prove (iv) by induction on $n$.  It is clear when $n=1$, and the inductive step holds because by (ii) and \eqref{eqn:R}, we have
\begin{align*}
\ell_{n-1}x_{2n-1} - 5 \ell_{n-1} x_{2n} + t_{n} x_{2n+1}& = \ell_{n-1}(x_{2n-1} - 5x_{2n}) +  t_{n} x_{2n+1} 
\\ &
= \ell_{n-1}x_{2n+1} + t_{n} x_{2n+1} =  \ell_n x_{2n+1},
\end{align*}
as required. 
\end{proof}

\NI {\bf Best approximations:} 

Let $\theta$ be any irrational number. We will  need to use some facts about rational numbers $p/q$ that best approximate $\theta$ from below. 

 Recall that a rational number $p/q$ 
 in lowest terms is a {\bf best rational approximation} to $\theta$ if 
$ | \theta - p/q | < |\theta - m/n|$
for all $n < q$, while it is a
{\bf best rational approximation from below}
if 
\[ 0 < \theta - p/q< \theta - m/n,\quad \forall n<q, m < \theta n.
\] 
 To elaborate, let $\theta = a_0 + \frac 1{a_1 + \dots}$ have continued fraction expansion
\[ \theta = [a_0,a_1,\ldots].\]
The {\bf convergents} of $\theta$ are the rational numbers
\begin{align}\label{eq:convgt}
c_k & \eqdef p_k/q_k \eqdef [a_0,a_1,\ldots,a_k]. 
\end{align}
For any $k$, they satisfy
\[ c_0 < c_2 < \ldots < c_{2k} < \theta < c_{2k+1} < \ldots < c_3 < c_1.\]
Any convergent is a best approximation to $\theta$.  To get all possible best approximations, we must also consider the {\bf semiconvergents} of $\theta$.  A semiconvergent is a fraction of the form $c_{k-2} \oplus {\bf r} \cdot c_{k-1}$, where 
\begin{itemize}\item $0 < r < a_k$, 
\item the operation $\oplus$ is defined by the rule
\[ \frac{p}{q} \oplus \frac{p'}{q'} = \frac{p+p'}{q+q'},\]
and 
\item the multiplication by ${\bf r}$ denotes repeated addition with the $\oplus$ operation.
\end{itemize}
  For motivation, note that the convergents satisfy 
\begin{equation}
\label{eqn:fundamentalrelation}
c_k = c_{k-2} \oplus {\bf a_k} \cdot c_{k-1}.
\end{equation} 
If $k$ is even, then 
the fractions $c_{k-2} \oplus {\bf r} \cdot c_{k-1}$ increase with $r\ge 0$ and for $r\le a_k$ are all smaller than $\theta$, while 
if $k$ is odd these fractions are bigger than $\theta$.  Another useful fact is that the convergents satisfy
\begin{equation}
\label{eqn:anotherrelation}
q_np_{n-1} - q_{n-1}p_n = (-1)^n.
\end{equation}

The following well known fact
 will be very useful; for a proof see \cite{HW} or the proof of \cite[Lem.~3.3]{HT2}.
 
\begin{lemma}~\label{le:fact}  Suppose that $\theta>0$ is an irrational number.
Then the  rational numbers that best approximate $\theta$ from below are the even convergents $c_{2k}$, and the semiconvergents $c_{2k-2} \oplus {\bf r} \cdot c_{2k-1}$, with $k\ge 1$ and $1\le r< a_{2k}$.  
\end{lemma}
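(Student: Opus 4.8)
The plan is to run the classical Farey/mediant argument for best one-sided approximations, as in \cite{HW} or the proof of \cite[Lem.~3.3]{HT2}; I will only sketch its structure. Write $c_k = p_k/q_k$, write $q(f)$ for the denominator of a reduced fraction $f$, and let $\Ll$ be the set consisting of the even convergents $c_{2k}$ ($k\ge 0$) together with the semiconvergents $c_{2k-2}\oplus{\bf r}\cdot c_{2k-1}$ ($k\ge 1$, $1\le r< a_{2k}$). By \eqref{eqn:fundamentalrelation} and the stated monotonicity in $r$ of the fractions $c_{2k-2}\oplus{\bf r}\cdot c_{2k-1}$, the elements of $\Ll$ can be listed in increasing order as $d_0 = c_0 < d_1 < d_2 < \cdots$ with $d_j\to\theta$: within the ``$k$-th block'' one has $d_j = c_{2k-2}\oplus{\bf r}\cdot c_{2k-1}$, $r = 1,\dots,a_{2k}$ (the value $r = a_{2k}$ giving $c_{2k}$), and the successor of $c_{2k}$ in the list is $c_{2k}\oplus c_{2k+1}$. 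I must show that $\Ll$ is exactly the set of best approximations to $\theta$ from below.

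The engine is a unimodularity computation. If $\frac pq = c_{2k-2}\oplus{\bf r}\cdot c_{2k-1} = \frac{p_{2k-2}+r p_{2k-1}}{q_{2k-2}+r q_{2k-1}}$ with $1\le r\le a_{2k}$, then \eqref{eqn:anotherrelation} gives $p_{2k-1}q - p\,q_{2k-1} = p_{2k-1}q_{2k-2} - p_{2k-2}q_{2k-1} = 1$, so $\frac pq$ and $c_{2k-1}$ are Farey neighbors, and $\frac pq < \theta < c_{2k-1}$ by the facts about semiconvergents and the interlacing of convergents; likewise $c_{2k}$ and $c_{2k+1}$ are Farey neighbors bracketing $\theta$. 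Hence each $d_j\in\Ll$ is a Farey neighbor of an odd convergent $c(j)$ with $d_j < \theta < c(j)$ and $d_{j+1} = d_j\oplus c(j)$; in particular consecutive $d_j$'s are Farey neighbors with $q(d_{j+1}) = q(d_j) + q(c(j))$.

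For the inclusion $\Ll\subseteq\{\text{best from below}\}$ I would invoke the elementary Farey fact that a reduced fraction strictly between Farey neighbors $\frac ab < \frac cd$ has denominator at least $b+d$ (because $\frac pq - \frac ab\ge\frac1{bq}$ and $\frac cd - \frac pq\ge\frac1{dq}$ while $\frac cd - \frac ab = \frac1{bd}$): applied with $\frac ab = d_j$ and $\frac cd = c(j)$ it shows the interval $(d_j,\theta)\subseteq(d_j,c(j))$ contains no fraction of denominator $\le q(d_j)$, which is exactly the assertion that $d_j$ is a best approximation to $\theta$ from below. For the reverse inclusion, let $\frac pq$ be a best approximation from below. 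Since $d_0 = c_0$ has denominator $1$, the hypothesis forces $\frac pq\ge d_0$ (with the case $q=1$ handled by the standard convention on best approximations), and since $d_j\to\theta$ there is a unique $j$ with $d_j\le\frac pq < d_{j+1}$. If $\frac pq = d_j$ we are done; otherwise $\frac pq$ lies strictly between the Farey neighbors $d_j$ and $d_{j+1}$, so $q\ge q(d_j)+q(d_{j+1}) > q(d_{j+1})$, and then $d_{j+1}$ is a fraction with $\frac pq < d_{j+1} < \theta$ of strictly smaller denominator, contradicting that $\frac pq$ is best from below. Hence $\frac pq\in\Ll$.

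I expect no real difficulty here: the content is standard and could simply be cited. The only points needing attention are the block transitions --- verifying that the list continues past $c_{2k}$ with $c_{2k}\oplus c_{2k+1}$ and that $c_{2k}, c_{2k+1}$ are again Farey neighbors bracketing $\theta$, so that the ``$d_{j+1} = d_j\oplus c(j)$'' pattern really does persist across blocks --- together with the degenerate small-denominator cases (such as $q = 1$), whose treatment depends on the precise normalization of ``best rational approximation from below''; for both it is cleanest to defer to \cite{HW}.
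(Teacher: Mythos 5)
The paper does not prove this lemma at all: it is stated as a ``well known fact'' with the proof delegated to \cite{HW} and to the proof of \cite[Lem.~3.3]{HT2}. Your sketch is essentially the standard Farey/mediant argument one finds in those references, and it is correct as outlined: the unimodularity computation $p_{2k-1}q-pq_{2k-1}=1$ (from \eqref{eqn:anotherrelation}) does show that each lower semiconvergent is a Farey neighbor of the odd convergent above $\theta$, hence consecutive terms $d_j,d_{j+1}$ of your ordered list are Farey neighbors, and the two inclusions follow exactly as you say from the fact that a fraction strictly between Farey neighbors has denominator at least the sum of theirs; the block transition via $c_{2k}\oplus c_{2k+1}$ also works since $c_{2k},c_{2k+1}$ are Farey neighbors bracketing $\theta$. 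The only loose end is the one you already flagged: with the paper's literal definition the condition is vacuous for $q=1$, so every integer below $\theta$ would qualify, and one must adopt the usual normalization (as in \cite{HW}) to make the statement exactly match the list beginning at $c_0=\lfloor\theta\rfloor$. Since the paper itself leans on the cited references for precisely this kind of bookkeeping, deferring that point is reasonable, and your argument supplies a self-contained proof where the paper offers only a citation.
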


The following two examples are key.
 
\begin{example}
\label{ex:bigmono}
Let $\theta = \theta_n: = b_n + \eps$ for some very small irrational $\eps>0$ to be chosen later. 
 We will want to know those best approximations from below with denominator no more than $\ell_n$.  By Corollary~\ref{cor:weight} we have
  \[b_n = P_n/Q_n = [6; (1,5)^{\times (n-1)}, 1, 7]\]
We know that $Q_n > \ell_n$.  If $\epsilon > 0$ is sufficiently small, then $b_n$ is an even convergent of $\theta$, and the even convergents with denominator less than $Q_n$ have continued fraction expansion
\[ c_{2k} \eqdef [6; (1,5)^{\times k}] = \frac{t_{k+1}}{t_{k}},\qquad 0 \le k < n,
\]
 while the odd convergents with denominator less than $Q_n$ have continued fraction expansion
\begin{equation}\label{eqn:bestapp}
c_{2k+1} \eqdef [6; (1,5)^{\times k},1] = [6; (1,5)^{\times (k-1)}, 1, 6] = \frac{\ell_{k+1}}{\ell_{k}},\qquad 0 \le k < n.
\end{equation}
Further, there are $6$ semiconvergents of the form
\[ \frac{t_n + r\cdot \ell_n}{t_{n-1} + r\cdot \ell_{n-1}}, \quad 1 \le r \le 6\]
that are smaller than $\theta$, and for each $k < n$, there are $4$ semiconvergents of the form
\[ \frac{t_k + r\cdot \ell_k}{t_{k-1} + r \cdot \ell_{k-1}}, \quad 1 \le r \le 4\]      
that are smaller than $\theta$. 

The even convergents, and the semiconvergents mentioned above are all of the best possible approximations to $\theta$ from below with denominator no more than $Q_n$. \hfill$\er$
\end{example}

\begin{example}
\label{ex:smallmono}
Now let $\theta = \tilde{\theta}_n := \frac 1{\theta_n}$.  We want to know those best approximations from below with denominator no more than $\ell_n$.  We know
\[ 1/b_n = [0; 6, (1,5)^{n-1}, 1, 7].\]
If $\epsilon > 0$ is sufficiently small, then all of the even convergents to $\theta$ are of the form
\[c_{2k} \eqdef [0; 6, (1,5)^{\times (k-1)}, 1] = [0;6, (1,5)^{\times (k-2)}, 1, 6] = \frac{\ell_{k-1}}{\ell_k}\] 
for $1 \le k \le n$, except for the convergent $c_0 \eqdef [0] = \frac{\ell_{-1}}{\ell_0}.$
Thus, in this case the best  rational approximations from below all have denominators $\ell_k$ for $0 \le k \le n$.    \hfill$\er$           
\end{example}

\subsection{Basics of embedded contact homology}\label{sec:ECH}

Let $J$ be an almost complex structure on a completed symplectic cobordism $\overline{X}$.  We will assume throughout the paper that $J$ is {\bf admissible}.  This means that on any symplectization end 
$\bigl(Y\times I, d(e^s\la)\bigr)$
of $\overline{X}$ (where $I = (-\infty, -N)$ or $(N,\infty)$ and $s$ denotes the coordinate on $\mathbb{R}$),  $J$ is translation invariant, rotates the contact structure $\ker(\la)$ 
positively with respect to $d\lambda$, and sends $\partial_s$ to the Reeb vector field $R$.  We will want to consider $J$-holomorphic curves with disconnected domain. 
So in the following
 we will call  a curve with  connected domain  {\bf irreducible}, and call it {\bf reducible} otherwise.  
 Further a curve is called {\bf somewhere injective} if each of its irreducible components is somewhere injective and no two have the same image.  All of the curves throughout the paper will have punctured domain, and are asymptotic to closed Reeb orbits near the punctures, see for example \cite[\S 3.1]{hlecture}.  We consider curves up to the usual equivalence relation, namely reparametrization of the domain.

\MS

\NI
{\bf Relative intersection theory:}  Consider two 
 distinct\footnote{This means in particular that $C$ and $C'$ have no irreducible components in common.} somewhere injective, $J$-holomor\-phic curves  $C, C'$ in a four-dimensional completed symplectic cobordism $\overline{X}$. In our proof, we will frequently want to compute
\[ C \cdot C.' \]
This is an algebraic count of intersection points of $C$ with $C'$.  By positivity of intersections, each point counts positively.  

Because $\overline{X}$ is noncompact, the quantity $C \cdot C'$ is not purely homological.  Rather, we have
\begin{equation}
\label{eqn:interstheq}
C \cdot C' = Q_{\tau}([C],[C']) + L_{\tau}(C,C').      
\end{equation}
Here, $\tau$ denotes a trivialization of $\xi = \Ker(\lambda)$ over all embedded Reeb orbits, and $[C]$ denotes the {\bf relative homology class} of $C$.  This is defined regardless of whether or not $C$ is somewhere injective, and takes values in $H_2(\overline{X},\alpha,\beta)$, where $\alpha$ and $\beta$ are {\bf orbit sets}, namely finite sets $\lbrace (\gamma_i,m_i) \rbrace$, where the $\gamma_i$ are embedded Reeb orbits, and the $m_i$ are positive integers. The orbit set $\alpha$ is given by the positive asymptotics of $C$.   We say that $C$ is {\bf asymptotic} to an orbit set $\Theta = \lbrace (\gamma_i,m_i) \rbrace$ at $+\infty$ if, for each $i$, the sum of the multiplicities of the positive ends of $C$ at $\gamma_i$ is exactly $m_i$, and $C$ has no positive ends at any other orbit other than the $\gamma_i$.   The orbit set $\beta$ is given by the negative asymptotics.     The fact that $Q_{\tau}([C],[C'])$ is homological is proved in \cite{hech}.  

Equations \eqref{eqn:cp2calc} and \eqref{eqn:blowupcalc} show
 how to compute it in the situations relevant to us. 
 
If $C$ partitions $m_i$ as $m_{i1},\dots, m_{in_i}$, then we also denote the orbit set as
\begin{equation}\label{eqn:orbset}
\lbrace (\gamma_i,m_i) \rbrace = \{(\ga_i^{m_{i1}},\dots, \ga_i^{m_{i n_i}})\},\qquad m_i = \sum_{j=1}^{n_i} m_{ij},
\end{equation}
where $\ga_i^r$ denotes a single end on $\ga_i$ of multiplicity $r$.

For future use, we define
\[ \mathcal{M}(\overline{X},J, \alpha,\beta)= \mathcal{M}(\alpha,\beta)\]
to be the moduli space of $J$-holomorphic curves in $\overline{X}$ that are asymptotic to the orbit set $\alpha$ at $+\infty$ and asymptotic to the orbit set $\beta$ at $-\infty$.

The term $L_{\tau}(C,C')$ is the {\bf asymptotic linking number} of $C$ and $C'$.  To define it, first fix an embedded orbit $\gamma_i$ at which both $C$ and $C'$ have positive ends.  By intersecting $C$ with an $s=R$ slice in the positive end of $\overline{X}$ for sufficiently large $R$, the positive ends of $C$ at $\gamma_i$ form a link $\zeta^+_{i,C}$, which we can regard as a link in $\mathbb{R}^3$ via the trivialization $\tau$ as in \cite[\S 3.3]{hlecture}.  We can define a link $\zeta^+_{i,C'}$ similarly, and we can define the linking number $L_{\tau}(\zeta^+_{i,C}, \zeta^+_{i,C'})$ of these two links to be their linking number in $\mathbb{R}^3$, using the identification $\tau$.  If $R$ is sufficiently large, then this number does not depend on the choice of $R$.  We can define links $\zeta^-_{i,C}, \zeta^-_{i,C'}$ and linking numbers for orbits at which $C$ and $C'$ both have negative ends analogously.  

We now define 
\begin{equation}
\label{eq:linking}
 L_{\tau}(C,C') = \sum_{i=1}^{n} L_{\tau}(\zeta_{i,C}^+,\zeta_{i,C'}^+) - \sum_{j=1}^{m} L(\zeta_{j,C}^-,\zeta_{j,C'}^-)
 \end{equation}
where the first sum is over the embedded orbits at which both $C$ and $C'$ have positive ends, and the second sum is over the embedded orbits at which both $C$ and $C'$ have negative ends.

\MS

\NI {\bf The ECH index and the partition conditions:}
Let $C \in \mathcal{M}(\alpha,\beta)$ be a somewhere injective curve in $\overline{X}$.   Part of our proof will involve estimating the ECH index of such a curve.  We now review what we need to know about the ECH index.

Recall first the {\bf Fredholm index} for curves in $4$-dimensions\footnote
{
See Lemma~\ref{lem:largeS} for higher dimensions.} takes the form
\begin{equation}
\label{eqn:freddefn}
\ind(C) = -\chi(C) + 2c_{\tau}(C)+CZ_{\tau}^{ind}(C).
\end{equation}
Here, $c_{\tau}(C)$ denotes the {\bf relative first Chern class} of $C$ (see \cite[\S 3]{hlecture}), and $CZ_{\tau}^{ind}(C)$ denotes the Conley--Zehnder index
\begin{equation}\label{eq:Frind1}
 CZ^{ind}_{\tau}(C) = \sum_i CZ_{\tau}(\gamma_i) - \sum_j CZ_{\tau}(\gamma_j),
 \end{equation}
where the first sum is over the (possibly multiply covered) orbits given by the positive ends of $C$, the second sum is over the (possibly multiply covered) orbits given by the negative ends of $C$, and $CZ_{\tau}$ of a Reeb orbit $\gamma$ denotes its Conley--Zehnder index: see \eqref{eq:Frind2} below for the elliptic case.  

If $C$ is somewhere injective, we can bound $\ind(C)$ from above by the {\bf ECH index} of $C$.  The ECH index depends only on the relative homology class of $C$, and is defined for mutiply covered curves as well by the formula 
\begin{equation}
\label{eqn:echdefn}
I([C]) = c_{\tau}([C]) + Q_{\tau}([C]) + CZ_{\tau}^I([C]),
\end{equation}
where $Q_{\tau}$ denotes the relative intersection pairing from \eqref{eqn:interstheq}, and $CZ^I_{\tau}$ is the {\bf total Conley-Zehnder index}
\[CZ^I_{\tau}([C]) = \sum_i \sum^{m_i}_{k=1} CZ_{\tau}(\alpha_i^{k}) - \sum_j \sum^{n_j}_{k=1} CZ_{\tau}(\beta_j^k),\]
where $\alpha = \lbrace (\alpha_i,m_i) \rbrace,$ $\beta = \lbrace (\beta_j,n_j) \rbrace$, and $\gamma^x$ denotes the $x$-fold cover of $\gamma$. The precise statement of this bound is the {\bf index inequality}        
\begin{equation}
\label{eqn:indexinequality}
\ind(C) \le I([C]) -2 \delta(C)
\end{equation}
for somewhere injective curves, proved in \cite{hech}; see also Proposition~\ref{prop:indineq} below.   Here, $\delta(C) \ge 0$ is an algebraic count of the singularities of $C$.

When equality holds in \eqref{eqn:indexinequality}, for example if $\ind(C)=I(C)$, then 
we can say much more about the asymptotics of $C$. 
Indeed, if such a curve $C$ has ends at an embedded orbit $\alpha_i$ with total multiplicity $m_i$,  then the multiplicities of the ends of $C$ at $\alpha_i$ give a partition of $m_i$ that 
is called the {\bf  ECH partition}.    This partition depends only on whether $\alpha_i$ is at the positive or negative end of $C$, and can be computed purely combinatorially, as is shown in \cite{hech} and reviewed in the proof of Proposition~\ref{prop:indineq} below.  For positive and negative ends, it is denoted respectively as
\[ p^+_{\alpha_i}(m_i), \qquad  p^-_{\alpha_i}(m_i).\]
The next remark explains what we will need.

\begin{remark}\label{rmk:p+}{\bf (Computation of  $p^{\pm}_{\alpha}(m)$ for elliptic ends)} \rm

\NI (i)  Consider a positive end along an elliptic orbit $\alpha$ with 
  mod $1$ monodromy angle of $\theta\in (0,1)$. 
  Let $\Lambda$ be the maximal concave piecewise linear path in the first quadrant that starts at $(0,0)$, ends at $(m, \lfloor m \theta \rfloor)$, has vertices at lattice points, and stays below the line $y = \theta x$.  It is shown in \cite{hech} that $p^+_{\alpha}(m)$ is given by the horizontal displacements of this path.
Here the word \lq\lq maximal" includes the assumption that the edges of $\La$ have no interior lattice points, in other words that for each segment of the path the horizontal and vertical displacements are  mutually prime.  Thus instead of a single segment labelled by $(4,6)$, for example, we have two segments each with labels $(2,3)$. 
\MS

\NI (ii) 
For a negative end the procedure is analogous, except that $\La$ is now the minimal convex lattice path that lies above the line $y = \theta x$.
For example, if $
\al$ is the long orbit of $E(1,x)$ where $x = \frac PQ + \eps$, then the monodromy angle of $\al$ is $x$,
so $p_\al^-(Q) = (Q)$ only if $\frac{P+1}Q$ is the best approximation to $x$ from above.
In the case $\frac PQ = b_n$, it follows from \eqref{eqn:bestapp}  that this best approximation is $\frac{\ell_{n}}{\ell_{n-1}}$. 
 Since $7\ell_{n-1}< Q = Q_n$, the path $\La$ starts with $7$ segments along the line of slope $\frac{\ell_{n}}{\ell_{n-1}}$; 
 see Lemma~\ref{lem:partcond} below.
 Thus  in this case the partition $p_\al^-(Q)$ must have at least eight terms.  
  \hfill$\er$
\end{remark}

Since $p^+_{\alpha}(m)$ only depends on the mod $1$ monodromy angle $\theta$
of $\alpha$, we will sometimes write $p^+_{\theta}(m)$ instead.  A useful fact about the positive partition is if $p^+_{\theta}(m)=(a_1,\ldots,a_s)$, then
\begin{equation}
\label{eqn:partitionequation}
\lfloor (a_i + a_j) \theta \rfloor = \lfloor a_i \theta \rfloor + \lfloor a_j \theta \rfloor
\end{equation}
for any $1 \le i \ne j \le s$, see \cite[Ex. 3.13.]{hlecture}. 
Similarly, if $p^-_{\theta}(m)=(b_1,\ldots,b_s)$ then 
$\lceil (\sum_i b_i) \theta \rceil =\sum_i \lceil b_i \theta  \rceil$.  For example, if $\theta \equiv \frac PQ + \eps \pmod 1$
and $p^-_\theta(Q) = (b_1,\ldots,b_s)$ then 
\begin{equation}
\label{eqn:CZQ}
\sum_i \lfloor b_i \theta \rfloor = \sum_i (\lceil b_i \theta  \rceil -1) = P +1 -s.
\end{equation}

\MS

\NI
{\bf The relative adjunction formula:}
The index inequality \eqref{eqn:indexinequality} is related to an adjunction formula that we will also need.  Namely, recall the {\bf relative adjunction formula} from \cite{hlecture}.  This says that if $C$ is somewhere injective then
\begin{equation}
\label{eqn:adjunction}
c_{\tau}([C]) = \chi(C) + Q_{\tau}([C]) +w_{\tau}([C]) - 2 \delta(C).
\end{equation}
The term here that has not already been introduced, $w_{\tau}(C),$ is called the {\bf asymptotic writhe} of $C$.  Its definition is similar to the definition of the asymptotic linking number in \eqref{eq:linking}.  Namely, fix an embedded orbit $\gamma_i$ at which $C$ has positive ends, and 
regard  the links $\zeta^+_{i,C}$ and $\zeta^-_{i,C}$ as links in $\mathbb{R}^3$ via the trivialization $\tau$.  Let $w_{\tau}(\zeta^+_{i,C})$ and $w_{\tau}(\zeta^-_{i,C})$ denote the writhes of these links.  If $R$ is sufficiently large, then this does not depend on the precise choice of $R$. We can define writhes associated to negative ends analogously.

We can now define
\[w_{\tau}(C) \eqdef \sum_{i=1}^n w_{\tau}(\zeta^+_{i,C}) -  \sum_{j=1}^m w_{\tau}(\zeta^-_{j,C}),\] 
where the first sum is over the orbits at which $C$ has positive ends, and the second sum is over the orbits at which $C$ has negative ends.    
        \MS
        
\NI {\bf An improved index inequality:}
There is a refined version of \eqref{eqn:indexinequality} that will be relevant to what follows.  To state it, suppose first that  
$\gamma$ is an elliptic orbit at which $C$ has 
positive
ends of total multiplicity $m>0$, and let $\theta$ be the mod $1$ monodromy angle of $\gamma$, normalized to be in $(0,1)$. 
 The ends of $C$ give a partition $(a_1,\ldots,a_n)$ of $m$.  Order the numbers $a_1, \ldots, a_n$ so that
\begin{equation}\label{eq:ai}
\theta>  \frac{\lfloor a_1 \theta \rfloor} {a_1} \ge \ldots \ge \frac {\lfloor a_n \theta \rfloor} {a_n},
\end{equation}
and let $\Lambda_C$ be the concave lattice path in the first quadrant  that starts at $(0,0)$, ends at $(m, \sum_{i=1}^n \lfloor a_i \theta \rfloor),$ and has edge vectors $(a_i, \lfloor a_i \theta \rfloor)$, appearing in the same order as the $a_i$. 
Define
\begin{equation}\label{eq:Adef}  A_C(\gamma,m) = \mathcal{L}(\Lambda_C) + \tfrac 12 b(\Lambda_C),
\end{equation}
where:
\begin{itemize}
\item $\mathcal{L}(\Lambda_C)$ is  
the number of lattice points in the region bounded by the line $y = \theta x$ and the vertical line from $(m, \sum_{i=1}^n \lfloor a_i \theta \rfloor)$ to $(m, m \theta)$
that lie strictly above the path $\Lambda_C$;
\item $b(\La_C)$ is the sum over all edges of $\Lambda_C$ of the number of {\bf interior} lattice points in each edge.
\end{itemize}

Now define
\begin{equation}\label{eq:Adef1} 
A(C) = \sum_{(\gamma,m)} A_C(\gamma,m),
\end{equation}
where the sum is over pairs $(\gamma,m)$ for which $\gamma$ is elliptic and $C$ has at least one positive end. 
There is a similar definition for negative ends, that we do not give since we do not need it.
 Note that if $C$ has ECH partitions, then $A(C) = 0$.  This holds by the maximality condition in Remark~\ref{rmk:p+}~(i) together with 
\eqref{eqn:partitionequation}.
  
 Here is the 
refined index inequality. 
\begin{proposition}
\label{prop:indineq}  Let $C$ be a somewhere injective curve.  Then
\begin{align}\label{eqn:indineq}
I(C) - \op{ind}(C) \ge 2 \delta(C) + 2A(C).
\end{align}   
\end{proposition}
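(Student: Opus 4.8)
The plan is to deduce \eqref{eqn:indineq} from the sharp form of the writhe bound of Hutchings~\cite{hech} by a sequence of algebraic cancellations followed by a lattice-point count. First I would combine the definition \eqref{eqn:echdefn} of $I(C)$ and the definition \eqref{eqn:freddefn} of $\ind(C)$ with the relative adjunction formula \eqref{eqn:adjunction}, using the latter to eliminate the relative Chern class $c_\tau([C])$; the terms involving $\chi(C)$ and $Q_\tau([C])$ then cancel, leaving
\[
I(C) - \ind(C) \;=\; 2\de(C) \;-\; w_\tau(C) \;+\; CZ^I_\tau(C) - CZ^{ind}_\tau(C).
\]
So the proposition becomes equivalent to the inequality $CZ^I_\tau(C) - CZ^{ind}_\tau(C) - w_\tau(C) \ge 2A(C)$, which no longer mentions $\de(C)$, $\chi(C)$, or the Chern and intersection classes.

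Next I would localize this inequality. Both $w_\tau(C)$ and the Conley--Zehnder difference break into signed sums over the embedded orbits at which $C$ has ends, with positive and negative ends contributing separately, while by \eqref{eq:Adef1} the quantity $A(C)$ is a sum over elliptic orbits carrying positive ends of the local terms $A_C(\ga,m)$ of \eqref{eq:Adef}. Regrouping $CZ^{ind}_\tau(C)$ from a sum over punctures to a sum over embedded orbits, it suffices to prove three local statements: (a) for an elliptic orbit $\ga$ with monodromy angle $\theta\in(0,1)$ carrying positive ends of total multiplicity $m$ and multiplicity partition $(a_1,\dots,a_n)$,
\[
\sum_{k=1}^{m} CZ_\tau(\ga^k) \;-\; \sum_{l=1}^{n} CZ_\tau(\ga^{a_l}) \;-\; w_\tau(\zeta^+_{\ga,C}) \;\ge\; 2 A_C(\ga,m);
\]
(b) the same with the right side replaced by $0$ when $\ga$ is hyperbolic, where $A_C$ is not counted and the partition conditions are forced in any case; and (c) the matching statement for negative ends in the weak form $w_\tau(\zeta^-_{\ga,C}) \ge \sum_k CZ_\tau(\ga^k) - \sum_l CZ_\tau(\ga^{b_l})$, which is exactly what makes the negative-orbit contribution nonnegative. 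Statements (b) and (c) are the ingredients of the \emph{unrefined} index inequality already established in \cite{hech}, so nothing new is needed there.

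The content is statement (a). Here I would invoke the writhe bound of \cite{hech} (see also \cite[\S 3, \S 5]{hlecture}) in its sharp form: the positive asymptotic braid $\zeta^+_{\ga,C}$ is a disjoint union of braids approximating the $a_l$-fold covers of $\ga$, and its writhe is at most an explicit combinatorial quantity built from $\theta$ and $(a_1,\dots,a_n)$ --- a sum of one self-writhe contribution per end, each governed by $\lfloor a_l\theta\rfloor$ with a correction detecting interior lattice points on the edge vector $(a_l,\lfloor a_l\theta\rfloor)$, together with pairwise linking contributions controlled by $\min(a_i\lfloor a_j\theta\rfloor,\,a_j\lfloor a_i\theta\rfloor)$. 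Since $CZ_\tau(\ga^k)=2\lfloor k\theta\rfloor+1$ for an elliptic orbit, both sides of (a) can be rewritten as counts of lattice points in the region of \eqref{eq:Adef}, measured against the concave path $\La_C$ (concave by the ordering \eqref{eq:ai}) and the line $y=\theta x$; expanding the writhe bound and subtracting it from the Conley--Zehnder difference edge by edge, the surplus should telescope to exactly $2\mathcal{L}(\La_C)+b(\La_C)=2A_C(\ga,m)$. Intuitively, the lattice points lying strictly above $\La_C$ inside that region are precisely those not already ``spent'' on the $CZ^{ind}$ and linking terms, while the interior lattice points on the edges of $\La_C$ are precisely the slack in the self-writhe bound; when $\La_C$ is the maximal concave path with primitive edges, i.e.\ when $C$ has ECH partitions (cf.\ Remark~\ref{rmk:p+}), both contributions vanish.

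I expect the main obstacle to be the bookkeeping in this last step: one must extract the writhe bound of \cite{hech} in a form precise enough to expose the slack, handle the interaction of the pairwise linking terms when several edges of $\La_C$ have equal slope, and verify that the lattice-point accounting closes up to give exactly $2\mathcal{L}(\La_C)+b(\La_C)$ rather than merely a cruder lower bound. By contrast the algebraic reduction, the localization, and the hyperbolic and negative-end cases are routine given \cite{hech}.
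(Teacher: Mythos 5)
Your proposal is essentially the paper's own proof: the same elimination of $c_\tau$ via the relative adjunction formula, the same reduction to $CZ^I_\tau(C) - CZ^{ind}_\tau(C) - w_\tau(C) \ge 2A(C)$ localized orbit by orbit, and the same use of Hutchings' writhe bound combined with a Pick's-theorem lattice-point count (the paper simply assumes $C$ has no negative ends, the only case it needs, while you dispose of them with the weak bound --- a harmless extension). The one slip is that the pairwise linking terms at a \emph{positive} elliptic end are controlled by $\max(a_i\lfloor a_j\theta\rfloor,\,a_j\lfloor a_i\theta\rfloor)$, not $\min$ (the $\min$ form is the negative-end bound \eqref{eqn:writheb3}); with the $\max$ form \eqref{eqn:writhebound2} as input, the telescoping you anticipate is exactly the computation \eqref{eqn:picks}--\eqref{eqn:Bequation} in the paper.
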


\begin{proof}
This is 
 implicit in the work of Hutchings, but for completeness we give the proof.  In this proof we will assume  that $C$  has no negative ends, since that is the case we need.

By combining the definition of the ECH index \eqref{eqn:echdefn}, the definition of the Fredholm index \eqref{eqn:freddefn}, and the relative adjunction formula \eqref{eqn:adjunction}, we get
\begin{equation}
\label{eqn:withadjunction}
I(C) - \op{ind}(C) = CZ_{\tau}^I(C) - CZ_{\tau}^{ind}(C) - w_{\tau}(C) + 2 \delta(C).
\end{equation}
The terms $w_{\tau}(C), CZ_{\tau}^I(C)$ and $CZ_{\tau}^{ind}(C)$ are all sums over terms corresponding to each orbit at which $C$ has ends.  

So, let $(\gamma,m)$ be a pair corresponding to an orbit at which $C$ has positive ends, assume that $\gamma$ is elliptic, and let $\zeta$ be the braid coming from the ends of $C$ at $\gamma$.  By \cite[Eq. 5.4]{hlecture} and \cite[Lem. 5.5]{hlecture}, we have\footnote
{
There is a similar lower estimate for the writhe of a negative end; see Remark~\ref{rmk:writhexact}~(ii).}
\begin{equation}
\label{eqn:writhebound2} 
w_{\tau}(\zeta) \le  \sum_{i,j = 1}^n \op{max}(p_ia_j,p_ja_i) - \sum_{i=1}^n p_i,
\end{equation}
where $p_i =  \lfloor a_i \theta \rfloor$  in the notation of \eqref{eq:ai}.
We also know that
\begin{equation}
\label{eqn:czdefn}
CZ_{\tau}^I( (\gamma,m) ) - CZ_{\tau}^{ind}( (\gamma,m)) = \sum^n_{i=1} (2 \lfloor i \theta \rfloor + 1) - \sum_{i=1}^n (2 p_i + 1),
\end{equation}
where $CZ_{\tau}^I( (\gamma,m))$ denotes the contribution of the pair $(\gamma,m)$ to $CZ_{\tau}^I$, and similarly for $CZ_{\tau}^{ind}( (\gamma,m)).$     
 
 Consider
\[ 2A \eqdef \sum_{i,j = 1}^n \op{max}( p_i a_j, p_j a_i).\]
This is twice the area of the region $P$ bounded by the path $\Lambda_C$ defined above, the vertical line from $(m,0)$ to 
$(m, \sum_{i=1}^n p_i)$, and the $x$-axis.  Pick's theorem gives
\begin{equation}
\label{eqn:picks}
2 A = 2T - B - 2,
\end{equation}
where $T$ is the total number of lattice points in $P$, and $B$ is the number of boundary lattice points.  We have
\begin{equation}
\label{eqn:Tequation}
T = m + 1 + \sum_{i=1}^n \lfloor i \theta \rfloor - \mathcal{L}(\Lambda_C),
\end{equation}
and
\begin{equation}
\label{eqn:Bequation}
B = m + n + \sum_{i=1}^n p_i + b(\Lambda_C).
\end{equation}

Combining \eqref{eqn:writhebound2} with \eqref{eqn:picks}, \eqref{eqn:Tequation}, and \eqref{eqn:Bequation} gives 
\[ 
w_{\tau}( (\gamma,m) ) \le m - k + 2\sum_{i=1}^n \lfloor i \theta \rfloor - 2 \mathcal{L}(\Lambda_C) - b(\Lambda_C) - 2 \sum_{i=1}^n p_i.
\]
Combining this inequality with \eqref{eqn:czdefn} gives
\[ w_{\tau}((\gamma,m)) \le CZ_{\tau}^I( (\gamma,m)) - CZ_{\tau}^{ind}((\gamma,m)) - 2 \mathcal{L}(\Lambda_C) - b(\Lambda_C).\]
Now  sum this final equation over all elliptic orbit sets, use the bound \cite[Lem. 5.1]{hlecture} for the hyperbolic orbit sets, and combine the resulting equation with \eqref{eqn:withadjunction}.
\end{proof}      

\begin{rmk}\label{rmk:writhexact}\rm (i)  If $\ga$ is a negative end of $C$ of total multiplicity $m$, then the analog of \eqref{eqn:writhebound2} is the lower bound
\begin{equation}
\label{eqn:writheb3} 
w_{\tau}^-(\zeta) \ge  \sum_{i,j = 1}^n \op{min}(p_ia_j,p_ja_i) - \sum_{i=1}^n p_i,
\end{equation}
where  $C$ has ends of multiplicities $(a_1,\dots,a_n)$ on $\ga$ and $p_i =  \lceil a_i \theta \rceil$,
see \cite[\S 5]{hlecture}.  
\MS

\NI (ii)  The inequality for the writhe given in \eqref{eqn:writhebound2}  is proved by considering the asymptotic behavior of $C$ near the limiting orbit $\ga$.
As pointed out to us by Hutchings, if $C$ has only one positive end on $\ga$ with multiplicity $m$ and if $p^+_{\theta}(m) = (m)$ so that $C$ has the ECH partition at this end, then this estimate is in fact an equality.  To see this, note that
by \cite[Lemma~6.4]{HUT01} this is equivalent to claiming that  the asymptotic expansion  of the trajectory has a term corresponding to the smallest possible elgenvalue. But this holds by the argument outlined in \cite[Remark~3.3]{HT}.  
   A similar statement holds for negative ends.

 Note also that  exactness of the writhe  bounds at both ends of a curve implies equality in \eqref{eqn:indineq}.  
 Therefore, if a curve has ECH partitions (so that $A(C) = 0$), exact writhe bounds, and also has $I(C) -\ind(C) = 2$,
then it must have a double point.
\MS

\NI (iii) The proof of Proposition~\ref{prop:indineq} above shows that if $C$ is a simple curve such that $I(C) = \op{ind}(C) - 2\de(C)$, then the path $\Lambda_C$ must be maximal, and so $C$ must have the ECH partitions.  This proves \eqref{eqn:indexinequality}.  
 \hfill$\er$
\end{rmk}

\NI {\bf ECH index computations:}
For our purposes, $\overline{X}$ will always be the completion of a symplectic cobordism $X$ 
with two boundary components $\p ^{\pm}{X}$ that are either empty or are ellipses $Y=\partial E(a,b)$; later we will refer to the ends that we add to complete $X$ as {\bf symplectization-like ends}.    We now review the relevant formulas for $c_{\tau}, Q_{\tau},$ and $CZ$ for these $\overline{X}$.  Recall that if $b/a$ is irrational, then the Reeb vector field for $\partial E(a,b)$ has exactly two embedded orbits, $\gamma_1 = \lbrace z_2 = 0 \rbrace$ and $\gamma_2 = \lbrace z_1 = 0 \rbrace$.  They are both elliptic.  It is convenient to keep track of their {\bf action} 
defined by 
\begin{align}\label{eqn:action}
\mathcal{A}(\gamma) &= \int_{\gamma} \lambda.
\end{align}
We have $\mathcal{A}(\gamma_1) = a$ and $\mathcal{A}(\gamma_2)=b$.    
 
First assume that $\overline{X} = Y \times \R$.  Then, as explained in eg \cite[\S 3.7]{hlecture}, for any curve $C$, both $c_{\tau}([C])$ and $Q_{\tau}([C])$ depend only on the asymptotics of $C$.  Assume, then, that 
\[C \in \mathcal{M}(Y \times \R,\alpha,\beta),\]
where $\alpha = \lbrace (\gamma_1, m_1), (\gamma_2, m_2) \rbrace$ and $\beta = \lbrace (\gamma_1, n_1), (\gamma_2,n_2) \rbrace$.  

Then we define the {\bf action}  of $C$ to be
\begin{align}\label{eqn:action2}
\mathcal{A}(C) &= \Aa(\al) - \Aa(\be) = \int_{\al} \lambda - \int_\be \la.
\end{align}
It is  shown in \cite[\S 3.7]{hlecture} that we can choose the trivialization $\tau$ so that:
\begin{itemize}
\item The monodromy angle of $\gamma_1$ is $a/b$ and the monodromy angle of $\gamma_2$ is $b/a$.  
\item $c_{\tau}(C) = (m_1 + m_2) - (n_1 + n_2).$
\item $Q_{\tau}(C) = 2(m_1m_2-n_1n_2)$. 
\end{itemize}    
To compute the relevant $CZ$ terms, recall that if $\gamma$ is an elliptic orbit, with monodromy angle $\theta$ with respect to $\tau$, then 
\begin{equation}\label{eq:Frind2} CZ_{\tau}(\gamma) = 2 \lfloor \theta \rfloor + 1.
 \end{equation}

Since in the current situation  the terms in the ECH index for $C$ only depend on the asymptotics of $C$, it is convenient to define a {\bf grading} 
\begin{equation}\label{eq:gr10}
\op{gr}\left(\lbrace (\gamma_1,m_1), (\gamma_2,m_2) \rbrace\right) = m_1 + m_2 + 2m_1m_2 + \sum_{i=1}^{m_1} CZ_{\tau}(\gamma_1^i) + \sum_{i=1}^{m_2} CZ_{\tau}(\gamma_2^i)
\end{equation} 
associated to any orbit set 
on the ellipsoid $\p E(a,b)$, 
so that if $C \in 
\mathcal{M}(Y\times \R, \alpha,\beta)$ then
\begin{equation}\label{eq:gr00}
I(C) = \op{gr}(\alpha) - \op{gr}(\beta).
\end{equation} 
One can check that when $b/a$ is irrational,
\begin{equation}\label{eq:gr0}
\op{gr}\left(\lbrace (\gamma_1,m_1), (\gamma_2,m_2) \rbrace\right)  = 2(\#N(m_1,m_2) - 1),
\end{equation}
where $N(m_1,m_2)$ is the number of integral points in the first quadrant triangle with slant edge  $x + \frac ba y = m_1 + \frac ba m_2$.  
 
There are two other $4$-dimensional cobordisms $\overline{X}$ for which we will want to understand these calculations.

The first comes from removing the interior of an irrational ellipsoid $E(1,x)$ from $\mathbb{C}P^2(\mu)$ and completing 
at the negative end, where the symplectic form on $\mathbb{C}P^2(\mu)$ is the Fubini--Study form scaled so that the line has size $\mu$.  In this case, any relative homology class is determined by its coefficient along the {\bf line class} $L$ and its negative asymptotics in $H_2(\overline{X},\emptyset,\beta).$   Continuing with the trivialization from above, if $C \in \mathcal{M}(\overline{X},\emptyset,\beta),$ $[C] = dL$, and $\beta = \lbrace (\gamma_1, n_1), (\gamma_2,n_2) \rbrace$ then
\begin{equation}
\label{eqn:cp2calc}
c_{\tau}([C]) = 3d - (n_1 + n_2), \quad Q_{\tau}([C]) = d^2 - 2n_1n_2.     
\end{equation}               
Further,  if $n_1$ is partitioned as $(a_1,\dots,a_r) $ while $n_2$ is partitioned as $(b_1,\dots,b_s) $,  then equations  \eqref{eqn:freddefn}, 
 \eqref{eq:Frind1}, and  \eqref{eq:Frind2} imply that if $C$ has $k$ connected components then
\begin{equation} \label{eqn:Xind}
\tfrac12 \ind (C) =  -k + 3d -   \sum_i \bigl(a_i + \lfloor \frac{a_i}x\rfloor \bigr)   -  \sum_j \bigl(b_j+   \lfloor b_jx\rfloor\bigr).
\end{equation}               
 Finally we define the {\bf action } (or {\bf $\om$-energy}) of $C$ to be 
\begin{align}\label{eqn:action3}
\mathcal{A}(C) &= d\,\mu - \Aa(\be) = d\, \mu -  n_1 - n_2x. 
\end{align}

The second cobordism 
 comes from performing a sequence of blowups in the interior of an irrational ellipsoid and then completing 
at the positive end; call it $\widehat{\Ee}$.
  Let $E_1, \ldots, E_k$ denote the {\bf exceptional classes} associated to these blowups.  In this case, any relative homology class is determined by its coefficients along these classes.  Using the same trivialization as above, if $C \in \mathcal{M}(\widehat{\Ee},\alpha,\emptyset)$, 
  $[C] = - (m_1 E_1 + \ldots m_k E_k)$, and $\alpha = \lbrace (\gamma_1, n_1), (\gamma_2,n_2) \rbrace$, then
\begin{equation}
\label{eqn:blowupcalc}
c_{\tau}([C]) =  (n_1+n_2) - (m_1 + \ldots + m_k), \quad Q_{\tau}([C]) = 2n_1n_2 - (dm_1^2 + \ldots m_k^2), 
\end{equation}      
so that by \eqref{eqn:echdefn}   we have
\begin{equation} \label{eqn:Eei}
 I (C) =  \gr(\al) -  \sum_i (m_i + m_i^2). 
\end{equation}  
Further, if $C$ is connected and its positive end is partitioned as above then   \eqref{eqn:freddefn}       implies that
\begin{equation} \label{eqn:indEE}
 \tfrac 12 \ind(C)  = -1 + r + s + \sum_i \bigl(a_i + \lfloor \frac{a_i}x\rfloor \bigr)   +  \sum_j \bigl(b_j+   \lfloor b_jx\rfloor\bigr) - \sum m_i.
\end{equation}    
Notice that in this formula the $+1$ terms in the index formula \eqref{eq:Frind2}  are cancelled by the contribution of each end to 
the Euler characteristic, while the multiplicities $n_1,n_2$ in $c_{\tau}([C])$ have been rewritten as sums 
$ \sum_i a_i ,  \sum_j b_j$.
Finally, if  the symplectic area of $E_i$   
  is $w_i$, we define the {\bf action} of $C$ to be
\begin{align}\label{eqn:action4}
\mathcal{A}(C) &= \Aa(\al) - \sum m_iw_i  = n_1 + n_2 x - \sum m_iw_i.
\end{align}

Note that in all these situations the action is nonnegative.  This is clear in the case of a symplectization  since the condition that
 $J$ is admissible implies that $d\lambda$ is pointwise nonnegative on $C$,   with equality at $p \in C$ if and only if the tangent space to $C$ at $p$ is the span of the Reeb vector field and $\partial_s$.    A similar argument works for any exact cobordism.  It remains to note that the cobordisms in the second two examples can be made exact by removing the line from $\mathbb{C}P^2(\mu)$ and the exceptional divisors from
$\widehat{\Ee}$, in which case the contributions $d\, \mu$ and $\sum d_iw_i$ to $\Aa(C)$ can be interpreted as actions of the corresponding Reeb orbits.

In \S\ref{sec:stablzn}  we will also use the Fredholm index formula in higher dimensions.  If the dimension is $2N$ the analog of \eqref{eqn:freddefn} is
\begin{equation}
\label{eqn:freddefnN}
\ind(C) = (N-3)\chi(C) + 2c_{\tau}(C)+CZ_{\tau}^{ind}(C),
\end{equation}
 where $CZ_{\tau}^{ind}(C)$ is now calculated as follows.  As before $CZ_{\tau}^{ind}(C)$  is a sum of contributions from each end of $C$, where  a single end of multiplicity $a_k$ on the $k^{th}$ orbit $\ga_k$ of a generic\footnote
 {
 i.e. the ratios  $b_i/b_j, i\ne j,$ are irrational}
  ellipsoid $E(b_1,\dots,b_N)$ contributes a sum of $N-1$ terms of the form $2 \lfloor \theta \rfloor + 1$ (see \eqref{eq:Frind2}) with monodromy angles $\theta =  \frac{b_i}{b_k}$ for $1\le i\le N, i\ne k$.   
If some ratios $b_i/b_j, i\ne j,$ are rational,
then we are in a Morse-Bott situation and the index depends on whether the end is positive or negative.
In particular, if  $E(1,x,S,\dots,S)\subset \C^{2 + k}$ where $1<x<S$ so that $N = 2+k$, then the contribution to the index of a
 negative end of multiplicity $a$ on an orbit of action $S$ is the same 
 as that of the third orbit $\ga_3$ on the ellipsoid $E(1,x,S_1,\dots,S_k)$ where $S_1<S_2<\dots<S_k$ are slight perturbations of $S$.  Thus the monodromy angles are
 $ \frac{1}{S_1}, \frac{x}{S_1}, \frac{S_2}{S_1},\dots,\frac{S_k}{S_1}$, where all but the first two terms are slightly $>1$.

\section{Stabilizable curves}\label{sec:stab}

We now explain how to establish the existence of a curve $C$ satisfying the conditions in Proposition~\ref{prop:goodC} for suitable values of $d,p$. The argument is slightly different for the two cases $n=0$ (with $b_0 = 8$) and $n>0$.

\subsection{The setup}  \label{sec:stablesetup}
Recall from [MS] that for $n > 0$  there is an embedding
\begin{equation}\label{eq:mu0}
\Phi:  E(1,b_{n}+\epsilon)\; \se\; {\rm int} (B^4(\mu_n+\epsilon')), \qquad
\mu_n: = \frac {1 + b_n}3 = \frac{h_{2n+2}}{h_{2n+1}},\quad n>0,
\end{equation}
where  the numbers $\epsilon, \epsilon'>0$ are very small and irrational.\footnote
{
See Remark~\ref{rmk:eps} for a more precise description.  For now, we work with these small perturbations by simplifying via approximate identities such as
$\mu_n+\epsilon'\approx \mu_n$.}

 Let $\Ee: = \Ee_n$ be the image of this embedding, and write
 $\beta_1$ for the short orbit on its boundary $\p \Ee_n$ and $\beta_2$ for the long one.  We complete ${\rm int} (B^4(\mu_n+\epsilon'))$ to
 $\C P^2(\mu_n + \eps')$ and then
  define
  \begin{equation}\label{eq:X}
  \ov X: = \mbox{negative completion of }\bigl(\C P^2(\mu_n + \eps') \less \Ee_n\bigr).
 \end{equation}
When $n=0$, we make similar definitions with $\mu_0 = \frac {17}6 = c_0(8)$.

This section and the next explain the proof of the following result.

\begin{proposition}
\label{prop:goodcurves}
For $n\ge 0$ and generic admissible $J$ on $\overline{X}$, the moduli space $\mathcal{M}(\overline{X},h_{2n+2}L,1,\beta_1^{h_{2n+3}})$, of genus zero curves $C$ in class $h_{2n+2}L$ with  a single negative end on $\be_1^{h_{2n+3}}$, is nonempty.
\end{proposition}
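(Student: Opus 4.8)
The plan is to produce the required curve $C$ in class $h_{2n+2}L = \ell_n L$ (note $h_{2n+2} = 3\ell_n$, so actually the degree is $3\ell_n$; I will track the degree carefully) by starting from the explicit McDuff--Schlenk obstruction to the four-dimensional embedding $E(1,b_n+\epsilon) \se B^4(\mu_n+\epsilon')$, converting it to a curve in $\C P^2(\mu_n+\epsilon')$ through the embedded ellipsoid $\Ee_n$, and then performing a neck-stretching along $\p\Ee_n$. Concretely, the embedding $\Phi$ of \eqref{eq:mu0} is the optimal one, so the obstruction is realized by a $J$-holomorphic curve; using Proposition~\ref{prop:ell} one removes almost all of $\Ee_n$ by blowing up $\C P^2(\mu_n+\epsilon')$ along the weight sequence $w(b_n)$, and the obstructing class is a curve $\overline{C}$ of degree $3\ell_n$ passing through the exceptional divisors with multiplicities prescribed by the vector $z_M(n)$ of Lemma~\ref{lem:WcM}. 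The identity \eqref{eqn:WcM} is exactly what guarantees that this ``model curve'' has the expected symplectic area, namely that it is borderline (area zero in the relevant sense), which is why the obstruction is sharp for the four-dimensional problem at $\mu = \mu_n$.

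Next I would compute the invariants of $\overline{C}$ and of its stretched limit. Using \eqref{eqn:cp2calc} and \eqref{eqn:Xind}, one checks that a genus zero curve in class $3\ell_n L$ with a single negative end of multiplicity $h_{2n+3} = P_n = Q_{n+1}$ on the short orbit $\beta_1$ has Fredholm index zero: indeed $\tfrac12\ind = -1 + 3(3\ell_n) - P_n - \lceil P_n/(b_n+\epsilon)\rceil$ and one uses $\ell_n = \tfrac13 h_{2n+2}$ together with the Fibonacci identity $3h_{2n+2} = h_{2n+1} + h_{2n+3}$ and $\lceil Q_{n+1}/b_n\rceil = Q_n + 1$ (from $b_n = Q_{n+1}/Q_n$) to see this vanishes. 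One also computes, via \eqref{eq:gr00}--\eqref{eq:gr0} and the refined index inequality Proposition~\ref{prop:indineq}, that the ECH index of such a curve is $2$, so by Remark~\ref{rmk:writhexact} it is not expected to be embedded. The point is that this is precisely the top level we are looking for: a genus zero, Fredholm index zero curve with one negative end on $\{(\beta_1, p)\}$ with $p = h_{2n+3}$, which by Proposition~\ref{prop:goodC} stabilizes. So the existence of such a $C$ for \emph{generic} admissible $J$ — which is what Proposition~\ref{prop:goodcurves} asserts — is equivalent to showing that when we stretch the neck along $\p\Ee_n$ in $\overline{X}$ for a generic $J$, at least one component of the resulting holomorphic building has a top level of this type.

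The hard part — and this is the bulk of the paper, deferred to \S\ref{sec:noconn} and \S\ref{sec:heart} — is the analysis of the limiting buildings. The model curve lies in a class with exactly $12$ genus zero representatives, and when we stretch, each could a priori break into a building whose top level fails to be the curve we want: it could acquire extra negative ends, nonzero genus, or negative-index multiply-covered components on the short orbit (configurations which, per Remark~\ref{rmk:model} and Remark~\ref{rmk:8}, genuinely occur). The strategy is to use the refined index inequality Proposition~\ref{prop:indineq}, the area estimates for multiply covered elliptic orbits (Lemma~\ref{lem:fundest}), the continued-fraction/best-approximation combinatorics of Examples~\ref{ex:bigmono} and \ref{ex:smallmono}, the ECH partition conditions of Remark~\ref{rmk:p+}, and the Hutchings--Nelson writhe-near-breaking technique to show that \emph{at most $9$} of the $12$ representatives can break in a bad way. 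Hence at least $3$ (in fact it suffices to find one) stretch to a building with the desired top level, establishing non-emptiness of $\mathcal{M}(\overline{X}, h_{2n+2}L, 1, \beta_1^{h_{2n+3}})$. For $n=0$ the same scheme applies with $\mu_0 = \tfrac{17}{6} = c_0(8)$, and is somewhat easier; the genuine complication for $n>0$, as noted in Remark~\ref{rmk:n=0}, is the interference of the obstruction curve governing $c_0(x)$ on $\tau^4 < x < 7$, which must be shown not to absorb our curve. I expect the combinatorial bookkeeping controlling the ECH index and partition conditions of the potential limiting components — especially ruling out the problematic multiply-covered short-orbit covers — to be the main obstacle.
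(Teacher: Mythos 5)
Your overall scheme (blow up along the weight sequence, stretch the finitely many genus zero representatives of a distinguished class, and show that not all of them can break badly) is the paper's strategy, but your setup contains a genuine error: the curves that get stretched lie in the class $B = h_{2n+2}L - \sum_i W_iE_i$ of \eqref{eq:B}, whose constraint vector is the \emph{normalized weight vector} $W(b_n)$, not $z_M(n)$. It is for this class that one has $B\cdot B=1$, $c_1(B)=1$ (see \eqref{eqn:selfintersection}), $\om(B)=_\eps \frac1{h_{2n+1}}$ (see \eqref{eqn:area}), and, after Cremona reduction to $3L-E_1-\cdots-E_8$, Gromov--Witten invariant $12$, so that for generic $J$ there are exactly $12$ immersed nodal spheres to stretch. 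The vector $z_M(n)$ of Lemma~\ref{lem:WcM} plays a completely different role: it is the constraint vector of the \emph{model connector} $C_M$ in the lowest level of the limit building (Proposition~\ref{prop:improva}), and the identity \eqref{eqn:WcM} computes that connector's action $\frac1{Q_n}$, not the area of the stretched curve. A curve of degree $h_{2n+2}$ through the constraints $z_M(n)$ does not have $c_1=1$ or self-intersection $1$, so none of the ``$12$ representatives, one double point each'' structure that your counting argument relies on is available for the class you wrote down.

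Beyond this, your proposal defers essentially everything to the ``at most $9$ break badly'' count, but that count is not the whole proof and does not do the work you assign to it. In the paper the possible negative asymptotics of $C_U$ are first pinned down by an action/grading argument (Lemma~\ref{le:bottomend}, using Lemma~\ref{lem:actconsid}) to exactly three orbit sets; the bound of $9$ in Proposition~\ref{prop:boundcurv} concerns only the two unwanted orbit sets $\{(\beta_1,\ell_n),(\beta_2,\ell_n)\}$ and $\{(\beta_2,h_{2n+1})\}$. The requirement that the surviving curves have a \emph{single} negative end on $\beta_1^{h_{2n+3}}$ — which is part of the statement you are proving — is a separate writhe/adjunction argument (Proposition~\ref{prop:oneend}), not another ``bad case'' absorbed into the count; you never address it. Relatedly, your claim that $n=0$ is ``somewhat easier'' is backwards: Proposition~\ref{prop:oneend} fails there (curves with two ends of multiplicities $1,7$ genuinely occur), and the paper needs the separate neck-writhe analysis of Proposition~\ref{prop:n=0} to show at least $4$ of the $12$ curves have one end. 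Finally, a small slip: $\lceil P_n/x\rceil = Q_n$ (not $Q_n+1$) for $x=b_n+\eps$, which is what makes the index in \eqref{eq:indC} vanish; as written your formula gives a negative index.
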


This immediately implies our main result.

\begin{corollary}  $c_k(b_n) = \frac{3b_n}{b_n+1}$ for all $n\ge 0$.
\end{corollary}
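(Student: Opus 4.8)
The plan is to deduce the Corollary formally from Proposition~\ref{prop:goodcurves} by feeding its curves into Proposition~\ref{prop:goodC} and then invoking the folding bound \eqref{eqn:foldingbound}. Fix $n\ge 0$ and write $b_n = \frac{P_n}{Q_n} = \frac{h_{2n+3}}{h_{2n+1}}$, so in the notation of Proposition~\ref{prop:goodC} we take $p = h_{2n+3}$, $q = h_{2n+1}$, $\mu_\star = \mu_n+\eps'$, and candidate degree $d = h_{2n+2}$. First I would record the two coprimality facts required. Since $h_\bullet$ obeys $h_{k+1} = 3h_k - h_{k-1}$, consecutive terms are coprime, so $\gcd(d,p) = \gcd(h_{2n+2},h_{2n+3}) = 1$. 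For $\gcd(p,q)$: any common divisor $g$ of $p$ and $q$ divides $p+q = 3h_{2n+2} = 3d$ by \eqref{eq:Fib1}, hence is coprime to $d$ and so divides $3$; but reducing the recursion mod $3$ shows $3\nmid h_{2n+1}$ (only the even-index terms of $h_\bullet$ are divisible by $3$), so $g = 1$. Thus $b_n = p/q$ is in lowest terms with $\gcd(d,p) = 1$, exactly as Proposition~\ref{prop:goodC} requires.

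Next I would check that the curve $C\in\mathcal{M}(\overline X, h_{2n+2}L, 1, \be_1^{h_{2n+3}})$ furnished by Proposition~\ref{prop:goodcurves} has Fredholm index zero. It is a genus zero degree-$d$ curve with a single negative end on $\be_1$ of multiplicity $m = h_{2n+3} = p$, so by \eqref{eq:indC},
\[
\tfrac12\ind(C) \;=\; 3d - p - \Bigl\lceil\tfrac{p}{b_n+\eps}\Bigr\rceil \;=\; 3h_{2n+2} - h_{2n+3} - \Bigl\lceil\tfrac{h_{2n+3}}{b_n+\eps}\Bigr\rceil .
\]
Because $h_{2n+3}/b_n = h_{2n+1}$ exactly, for all sufficiently small $\eps>0$ we have $h_{2n+1}-1 < h_{2n+3}/(b_n+\eps) < h_{2n+1}$, so the ceiling equals $h_{2n+1}$; combined with the Fibonacci identity $3h_{2n+2} = h_{2n+1} + h_{2n+3}$ of \eqref{eq:Fib1} this gives $\ind(C) = 0$. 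Hence $C$ satisfies all the hypotheses of Proposition~\ref{prop:goodC} for every sufficiently small irrational $\eps>0$ and generic admissible $J$.

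Applying Proposition~\ref{prop:goodC} then yields $3d = p+q$ (consistent with the above) and, for every $k\ge 0$,
\[
c_k(b_n)\;\ge\;\frac pd\;=\;\frac{h_{2n+3}}{h_{2n+2}}\;=\;\frac{3b_n}{b_n+1},
\]
the last equality because $b_n + 1 = \frac{h_{2n+3}+h_{2n+1}}{h_{2n+1}} = \frac{3h_{2n+2}}{h_{2n+1}}$. On the other hand, the $b_n$ decrease to $\tau^4$, so each $b_n > \tau^4$ and the symplectic folding bound \eqref{eqn:foldingbound} gives $c_k(b_n)\le\frac{3b_n}{b_n+1}$ for every $k\ge 1$. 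Combining the two inequalities proves $c_k(b_n) = \frac{3b_n}{b_n+1}$ for $k\ge 1$.

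All of the real work lies in Proposition~\ref{prop:goodcurves} --- producing, for each $n\ge 0$, a genus zero index-zero curve in class $h_{2n+2}L$ with a single negative end of multiplicity $h_{2n+3}$ on $\be_1$. That is the main obstacle, and its proof (stretching modified McDuff--Schlenk obstruction curves and controlling the top levels of the resulting buildings, using the refined index inequality of Proposition~\ref{prop:indineq} and the area estimates of the later sections) is carried out in \S\ref{sec:noconn}--\S\ref{sec:heart}; the reduction above is purely bookkeeping in Fibonacci identities once that input is available.
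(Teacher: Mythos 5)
Your proposal is correct and follows the paper's own route exactly: the Fibonacci identity $3h_{2n+2}=h_{2n+1}+h_{2n+3}$ together with \eqref{eq:indC} gives $\ind(C)=0$ for the curves of Proposition~\ref{prop:goodcurves}, Proposition~\ref{prop:goodC} then yields the lower bound $c_k(b_n)\ge\frac{3b_n}{b_n+1}$, and the folding bound \eqref{eqn:foldingbound} supplies the matching upper bound. Your explicit verification of the coprimality hypotheses ($\gcd(d,p)=1$, $\gcd(p,q)=1$) and of the ceiling computation is a harmless elaboration of details the paper leaves implicit.
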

\begin{proof}   Since $3h_{2n+2} = h_{2n+1}  + h_{2n+3} $ by \eqref{eq:Fib1}, it follows from \eqref{eq:indC} that  the curves $C$ in Proposition~\ref{prop:goodcurves}
have index $0$.  Hence we may apply
 Proposition~\ref{prop:goodC}, which gives  $c_k(b_n) \ge \frac{h_{2n+3}}{h_{2n+2}} = \frac{3b_n}{b_n+1}$.  The result follows from this together with the folding bound \eqref{eqn:foldingbound}.
\end{proof}

To prove Proposition~\ref{prop:goodcurves}, we first blow up $\C P^2$ in the
interior of the ellipsoid $\Ee_n$,
denoting the blown-up manifold by $\widehat{\C P}\!\,^2$.
For each $n$ we consider a class $B$ (described below) that is represented in $\widehat{\C P}\!\,^2$ by a finite number of genus zero curves with one double point, and then consider what happens to these representatives when we stretch the neck along the boundary of the ellipsoid $\p \Ee_n$. When we do this, we get a sequence of curves that converge in a suitable sense to a limiting building, with top level in $\ov X$, bottom level in $\widehat{\Ee}: =
 \widehat {\Ee}_n$, the positive completion  of the blown up ellipsoid, and perhaps also some intermediate levels in the symplectization $\p\Ee_n\times \R$ (usually called the \lq \lq neck").    Our aim is to show that at least one of the resulting  top level curves lies in
$\mathcal{M}(\overline{X},h_{2n+2}L,1, \beta_1^{h_{2n+3}})$.  We will assume that the reader
is familiar with this stretching process; for details see  for example \cite[\S2.3]{HK}.  For convenience, we will sometimes say that a building obtained in this way
is a {\bf breaking} of the $B$-curve.

Here are more details.  For each $n$, consider the
 weight sequence
 $$
 w(b_{n}) \eqdef (w_1,\ldots,w_m)
 $$
  defined in \eqref{eq:weight}; it satisfies  $\sum w_i^2 = b_n$.
  Also
  recall the normalized weight
   sequence
$W(b_{n})=(W_1,\ldots, W_m)
\eqdef h_{2n+1} w(b_{n}).$
By Proposition~\ref{prop:ell}, it is possible to  remove almost all of the interior of the ellipsoid $\Ee_n = \Phi(E(1,b_n+\eps))$ by a sequence of blowups of weights almost equal\footnote
{
The actual weights of the blowup are $(1-\eps'')w_i$ where $\eps''>0$ is very small: see Remark~\ref{rmk:eps}.}
 to $w_1,\dots, w_m$, to obtain a manifold $\widehat{\C P}\!\,^2$ that contains the boundary $\p \Ee_n$ and has
 symplectic form $\Tilde\om$  such that $\Tilde\om(E_i) \approx w_i$.
The elements of the normalized weight sequence are integers, so we can consider the homology class
\begin{align}\label{eq:B}
B = h_{2n+2}L - W_1 E_1 - \ldots -
W_m E_m=: 3\ell_{n+1} L - E(b_{n}) \in H_2(\widehat{\C P}\,\!^2).
\end{align}
We will want to record some information about the class $B$.

Using \eqref{eq:weight} and \eqref{eq:Fib2}, we find that
\begin{align}
\label{eqn:selfintersection}
B \cdot B &= h_{2n+2}^2 - h_{2n+1}h_{2n+3}=1,\\ \notag
c_1(B)& = 3h_{2n+2}-h_{2n+1}-h_{2n+3}+1 = 1,
\end{align}
where the last equality holds by  \eqref{eq:Fib1}.
  Thus, spheres in class $B$ have Fredholm index zero.

Finally, note that for each $n>0$ we have
\begin{equation}
\label{eqn:area}
\omega(B)\;=\;  h_{2n+2} \om(L) - \sum_i W_i\,\om(E_i) \; \approx \frac{h_{2n+2}^2-h_{2n+1}h_{2n+3}}{h_{2n+1}}\;=\;\frac{1}{h_{2n+1}}.
\end{equation}
We will frequently use the fact that when we stretch, the symplectic area $\omega(B)$ is the sum of the {\bf action} of each curve in any level of the resulting building.
Here we define the action for each part of the building using the formulas \eqref{eqn:action2}, \eqref{eqn:action3}, and \eqref{eqn:action4}; the claim about the action of the limit follows  immediately from the fact that  contributions to the action of the building from matching pairs of ends cancel.

We now claim that there is a sequence of Cremona transforms taking the class $B$ to the class $3L - E_1 - \ldots - E_8$.  This essentially follows from \cite[Prop. 4.2.7]{MS}.  To elaborate, there the authors consider a vector $v$ which is given by modifying $\widetilde{v}:=(h_{2n+2}; W_1,\ldots,W_{\ell})$ by replacing two of its   entries of $1$
by a single entry with value $2$ (note that by \cite[Eq. 4.12]{MS}, there are $7$ ones at the end of $\widetilde{v}$; in this regard, it is helpful to note that the $b_{2n+1}$ in our notation correspond to the ``$v_n(7)$" in the notation used there.) They then show that there is a sequence of Cremona transforms taking $v$ to the vector $(1;1,1)$.  The sequence of moves they describe first transforms this vector to the vector $(3;2,1,1,1,1,1,1)=: (3;2,1^{\times 6})$, and then one reduces further to $(1;1,1)$;  see \cite[Lem. 4.2.9]{MS}. Since the first set of moves does not affect any of the last $7$ entries in $v$, when we apply these moves to
   $\widetilde{v}$ we obtain $(3;1^{\times 8})$, as required.

Since Cremona transforms preserve the deformation class of the symplectic form on a blow-up $\widehat{\C P}\!\,^2$, the classes $B$ and  $3L - E_1 - \ldots- E_8$ have the same (genus $0$) Gromov-Witten invariant.  Thus, the Gromov-Witten invariant of the class $B$ is $12$.  For a generic choice of compatible $J$, the relative adjunction formula then implies
that the class $B$ is represented by $12$ immersed spheres, each with one nodal point.  The idea is now to stretch
these curves, and show that some of them must break in such a way that the moduli space $\mathcal{M}$ in Proposition~\ref{prop:goodcurves} is nonempty.

\begin{defn} \label{def:CU}{\it
We denote by $C_U$  the {\bf top level} of the building that arises when we stretch, and by $C_L$ its {\bf lower part}, i..e the union of all the other levels of the limiting building.  Further we denote by $C_{LL}$ its {\it lowest level}.  Thus $C_{LL}\subset \widehat\Ee$. }
\end{defn}
 Since the blowing up operations all take place inside the ellipsoid $\Ee_n$,  the curve $C_U$ lies in the negative completion
$\overline{X}$
of $\C P^2(\mu_n + \eps')\less \Ee_n$, while the lowest  level $C_{LL}$ lies in the positive completion ${\widehat \Ee}_n$ of the blown up ellipsoid.  The building $C_L$ consists of $C_{LL}$ together (possibly) with some curves
 lying in the {\it neck}, i.e. in the symplectization of $\p \Ee_n$.  Those of our arguments that involve $C_L$ will only consider its  topological properties.  Hence later we will consider it to be a union of {\it matched components}: see Definition~\ref{def:conn}.

  \begin{lemma}\label{le:bottomend} When $n>0$ and $\eps,\eps'>0$ are sufficiently small,
there are only three possibilities for the lower end of $C_U$, namely  the orbit sets $\lbrace (\beta_1,h_{2n+3} ) \rbrace,$ $\lbrace (\beta_2,h_{2n+1}) \rbrace,$ and $\lbrace (\beta_1, \ell_{n}), (\beta_2, \ell_{n}) \rbrace$.
\end{lemma}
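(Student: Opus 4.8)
The strategy is that this lemma follows from the action (symplectic area) constraint alone, with no input from the ECH or Fredholm index. Since $\p\Ee_n$ is the boundary of an irrational ellipsoid it carries exactly the two embedded Reeb orbits $\be_1,\be_2$, so the lower end of $C_U$ is an orbit set $\be=\{(\be_1,m_1),(\be_2,m_2)\}$ with integers $m_1,m_2\ge 0$. All of the line class of $B$ is carried by the top level, since the neck levels and the bottom piece in $\widehat\Ee_n$ see no $L$; thus $[C_U]=h_{2n+2}L$ in $H_2(\ov X,\emptyset,\be)$, and \eqref{eqn:action3} gives
\[
\mathcal A(C_U)=h_{2n+2}(\mu_n+\eps')-m_1-m_2(b_n+\eps),
\]
using $\mathcal A(\be_1)=1$ and $\mathcal A(\be_2)=b_n+\eps$.

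Two general facts about actions then pin $\be$ down. First, $\mathcal A(C_U)\ge 0$, since $\ov X$ becomes exact once the line is removed. Second, $\mathcal A(C_U)\le\omega(B)$: when we stretch, $\omega(B)$ is the sum of the actions of all levels of the resulting building, and each of these is nonnegative (the lowest level sits in $\widehat\Ee_n$, which is exact after deleting the exceptional divisors, and the neck levels are symplectizations). Multiplying the displayed formula by $h_{2n+1}$ and using $h_{2n+1}\mu_n=h_{2n+2}$ and $h_{2n+1}b_n=h_{2n+3}$ yields
\[
h_{2n+1}\,\mathcal A(C_U)=N+h_{2n+1}\bigl(h_{2n+2}\eps'-m_2\eps\bigr),\qquad N:=h_{2n+2}^2-m_1h_{2n+1}-m_2h_{2n+3},
\]
while an easy variant of the area computation \eqref{eqn:area} gives $h_{2n+1}\,\omega(B)=1+h_{2n+1}\bigl(h_{2n+2}\eps'+\eps''h_{2n+3}\bigr)$, using $\sum_iW_iw_i=h_{2n+1}\sum_iw_i^2=h_{2n+3}$.

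The delicate point — and the one place where ``sufficiently small'' is genuinely used — is the order of quantifiers on the three perturbation parameters. From $\mathcal A(C_U)\ge0$ one first gets that $m_1$ and $m_2$ are at most $h_{2n+2}(\mu_n+\eps')$, a bound uniform in $\eps,\eps',\eps''$; only then does one shrink $\eps,\eps',\eps''$ so that both error terms above have absolute value $<\tfrac12$. The two inequalities $0\le\mathcal A(C_U)\le\omega(B)$ now force the integer $N$ into $\{0,1\}$ (in particular $\be\ne\emptyset$). Without the a priori bound on $m_1,m_2$ this step would be circular; everything else is elementary arithmetic of the even-index Fibonacci numbers.

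It remains to solve $m_1h_{2n+1}+m_2h_{2n+3}=h_{2n+2}^2-N$ in nonnegative integers for $N\in\{0,1\}$, using only \eqref{eq:Fib1}, \eqref{eq:Fib2} and the coprimality $\gcd(h_{2n+1},h_{2n+3})=1$ (a consequence of $\gcd(h_{2n+1},h_{2n+2})=1$, of $h_{2n+1}+h_{2n+3}=3h_{2n+2}$, and of the fact that $3\nmid h_{2n+1}$). For $N=1$ the right-hand side is $h_{2n+1}h_{2n+3}$ by \eqref{eq:Fib2}; reducing mod $h_{2n+3}$ forces $h_{2n+3}\mid m_1$, leaving only $(m_1,m_2)=(h_{2n+3},0)$ and $(0,h_{2n+1})$. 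For $N=0$ the right-hand side is $h_{2n+2}^2$; since $h_{2n+1}+h_{2n+3}=3h_{2n+2}$ and $h_{2n+2}=3\ell_n$, the pair $(\ell_n,\ell_n)$ is a solution, and reducing mod $h_{2n+3}$ (using $h_{2n+2}^2\equiv1$) pins $m_1\bmod h_{2n+3}$, while any larger choice of $m_1$ already overshoots $h_{2n+2}^2$ — so $(\ell_n,\ell_n)$ is the only nonnegative solution. These three solutions are exactly the orbit sets $\{(\be_1,h_{2n+3})\}$, $\{(\be_2,h_{2n+1})\}$, and $\{(\be_1,\ell_n),(\be_2,\ell_n)\}$, as claimed.
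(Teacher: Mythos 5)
Your proof is correct and takes essentially the same route as the paper: the upper bound on the asymptotics comes from $\mathcal A(C_U)\ge 0$, the lower bound from $\omega(B)\approx \tfrac1{h_{2n+1}}$ together with nonnegativity of the actions of the lower levels, and the remaining Diophantine analysis (which the paper packages as Lemma~\ref{lem:actconsid}) rests on the same Fibonacci identities and the coprimality of $h_{2n+1}$ and $h_{2n+3}$. Your explicit bookkeeping of the perturbation parameters, with the a priori bound on $m_1,m_2$ fixed before shrinking $\eps,\eps',\eps''$, is just a more careful rendering of the paper's ``choose $\eps,\eps'$ so small'' step.
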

\begin{proof}
Note that by \eqref{eq:ell}, \eqref{eq:mu0} and \eqref{eq:Fib2},
  the maximal action
(i.e. symplectic area)
of the lower end of $C_U$ is
$$
\ell_n(\mu_n+ \eps) \approx
\frac{h_{2n+2} h_{2n+2}}{h_{2n+1}} =h_{2n+3}+\frac{1}{h_{2n+1}}.
$$
Now the action of $\be_1$ is $1$, while that of $\be_2$ is $b_n + \eps'\approx \frac{h_{2n+3}}{h_{2n+1}}$.
Because $b_n$ is rational, we may choose $\eps, \eps'$ so small that the orbit set
$\lbrace (\beta_1, s), (\beta_2, t) \rbrace$ at the bottom of $C_U$ satisfies $$
s + t b_n = s + \frac {t h_{2n+3}}{h_{2n+1}} \le h_{2n+3}+\frac{1}{h_{2n+1}}.
$$
On the other hand, the estimate for $\om(B)$ in \eqref{eqn:area} implies that (modulo $\eps,\eps'$) the action of the bottom of $C_U$ must be at least $h_{2n+3}$.  Thus the proof of the lemma is completed by Lemma~\ref{lem:actconsid} below.\end{proof}

\begin{lemma}
\label{lem:actconsid}
\begin{itemize}
 \item[(i)]   There are precisely two orbit sets of action $\approx h_{2n+3}$, namely $\{(\be_1, h_{2n+3})\}$ and  $\{(\be_2, h_{2n+1})\}$.
 \item[(ii)]   There is a  unique orbit set of action $\approx h_{2n+3}+\frac{1}{h_{2n+1}}$, namely
 $\lbrace(\beta_1, \ell_{n}),(\beta_2, \ell_{n}) \rbrace$.
 \item[(iii)] For any $0 \le x < h_{2n+3}$, there is at most one orbit set of action $x$.
\end{itemize}
\end{lemma}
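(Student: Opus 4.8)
The plan is to reduce all three parts to elementary arithmetic of the coprime pair $(P_n,Q_n)=(h_{2n+3},h_{2n+1})$. Coprimality holds because $P_n=Q_{n+1}$ and, by \eqref{eqn:basicrecursion}, consecutive terms of the sequence $Q_\bullet$ are coprime (this also follows from \eqref{eq:Fib2}). An orbit set on $\p\Ee_n$ is simply a pair $\{(\be_1,s),(\be_2,t)\}$ with $s,t\in\Z_{\ge 0}$, and — modulo the tiny perturbations $\eps,\eps'$, which is exactly what the symbol ``$\approx$'' records — its action equals
\[
s\,\Aa(\be_1)+t\,\Aa(\be_2)\;\approx\; s+t\,b_n\;=\;\frac{sQ_n+tP_n}{Q_n}.
\]
Since any orbit set of action $\le h_{2n+3}+1$ automatically has $t\le Q_n$, only finitely many orbit sets are in play and the perturbations neither create nor destroy solutions; thus it suffices to describe the fibres of the map $\Psi\colon(s,t)\mapsto sQ_n+tP_n$ on $\Z_{\ge 0}^2$ near the value $P_nQ_n$.

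The one substantive point is the claim that if $(s,t)\neq(s',t')$ in $\Z_{\ge 0}^2$ satisfy $\Psi(s,t)=\Psi(s',t')$, then this common value is $\ge P_nQ_n$. I would prove it as one expects: writing $(s-s')Q_n=(t'-t)P_n$ and assuming $t'>t$, coprimality of $P_n,Q_n$ forces $Q_n\mid(t'-t)$, hence $t'-t\ge Q_n$ and $s-s'=(t'-t)P_n/Q_n\ge P_n$, so $s\ge P_n$ and $\Psi(s,t)\ge sQ_n\ge P_nQ_n$. Part (iii) is then immediate, since an orbit set of action strictly below $h_{2n+3}$ corresponds to $\Psi(s,t)<P_nQ_n$.

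Parts (i) and (ii) I would obtain by solving the two relevant equations directly. For (i): $\Psi(s,t)=P_nQ_n$ reads $sQ_n=P_n(Q_n-t)$, so coprimality gives $Q_n\mid(Q_n-t)$, while $s\ge 0$ forces $0\le t\le Q_n$; this leaves $t\in\{0,Q_n\}$, i.e. the two orbit sets $\{(\be_1,h_{2n+3})\}$ and $\{(\be_2,h_{2n+1})\}$. For (ii): reducing $\Psi(s,t)=P_nQ_n+1$ modulo $Q_n$ gives $tP_n\equiv1\pmod{Q_n}$, and the first identity in \eqref{eq:recurrel} (recall $P_n=Q_{n+1}$) says $\ell_nP_n\equiv1\pmod{Q_n}$; since $\ell_n<Q_n$ by \eqref{eq:Fibq} and, again via $s\ge 0$, $t\le Q_n$, we get $t=\ell_n$. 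Then $sQ_n=P_n(Q_n-\ell_n)+1=P_n\ell_{n-1}+1$, which equals $\ell_nQ_n$ by the identity $\ell_nQ_n-\ell_{n-1}P_n=1$ (an instance of Lemma~\ref{lem:fibid0}(i) with $x_n=\ell_n$, $y_n=Q_n$, evaluated at $n=1$). Hence $s=\ell_n$ and the unique orbit set is $\{(\be_1,\ell_n),(\be_2,\ell_n)\}$.

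I do not anticipate a genuine obstacle here. The only point needing care is the bookkeeping of $\eps,\eps'$ when passing between ``action $x$'' and the integer equation $\Psi(s,t)=xQ_n$, and the a priori bound $t\le Q_n$ makes this routine. The entire content is the coprimality argument above together with the two Fibonacci facts $\ell_nP_n\equiv1\pmod{Q_n}$ and $\ell_nQ_n-\ell_{n-1}P_n=1$, both already recorded in \S\ref{sec:prelim}.
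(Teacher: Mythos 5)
Your proof is correct and takes essentially the same route as the paper: the key point in both is that coprimality of $h_{2n+1}$ and $h_{2n+3}$ forces any two distinct orbit sets with equal action to have action at least $h_{2n+3}$, after which (i) and (ii) follow by solving the corresponding linear equations in nonnegative integers. The only (harmless) difference is in (ii), where you solve the congruence $tP_n\equiv 1\pmod{Q_n}$ explicitly via $\ell_nP_n\equiv 1\pmod{Q_n}$ and thereby also re-verify that $\{(\be_1,\ell_n),(\be_2,\ell_n)\}$ has the stated action, whereas the paper deduces uniqueness directly from the divisibility bound; note that both arguments implicitly assume $n\ge 1$ (for $n=0$ the orbit set $\{(\be_1,9)\}$ also has action $h_3+1/h_1=9$), which is consistent with how the lemma is applied.
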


\begin{proof}
Assume that we have two distinct orbit sets of the same action, and write
\[ a + b \frac{h_{2n+3}}{h_{2n+1}} = a' + b' \frac{h_{2n+3}}{h_{2n+1}},\]
for $a, a', b, b'$ nonnegative integers with $b\ge b'$.  We can assume without loss of generality that $b > b'$, else $b = b'$, then $a = a'$.  We know from above that
\[(b - b') \frac{h_{2n+3}}{h_{2n+1}} \]
is an integer.  We also know that $h_{2n+1}$ and $h_{2n+3}$ are relatively prime.  Hence, $(b-b')$ must be divisible by  $h_{2n+1}$, and so $a + b \frac{h_{2n+3}}{h_{2n+1}}$ must be at least $h_{2n+3}$.  This proves (iii).
Moreover,
the equation $k + \ell \frac{h_{2n+3}}{h_{2n+1}} = h_{2n+3}$  does have precisely two solutions,  namely
$(h_{2n+3},0)$
 and $(0,1)$, which proves (i).

This argument also shows that the orbit set of action $h_{2n+3} + \frac{1}{h_{2n+1}}$ must be unique.  Otherwise, there would be a solution to $a + b \frac{h_{2n+3}}{h_{2n+1}} = h_{2n+3} + \frac{1}{h_{2n+3}}$ with $b \ge h_{2n+1}$, which is impossible.
This
proves (ii).
\end{proof}

To compute the gradings of these three orbit sets as in \eqref{eq:gr0}, note that by Pick's Theorem,\footnote
{
This says that the area of a lattice triangle is $i+\frac b2 -1$ where $i$ is the number of interior lattice points and $b$ is the number of lattice points on the boundary.} the number of lattice points in the triangle with vertices $(0,0), (h_{2n+1},0)$ and $(0,h_{2n+3})$ is $\frac12 (h_{2n+1}+1)(h_{2n+3}+1)+1$.  This implies that we have
\begin{align}\label{eq:gr}
& \op{gr}(\lbrace(\beta_1,h_{2n+3})\rbrace) =(h_{2n+1}+1)(h_{2n+3}+1)-2, \\ \notag
& \op{gr}(\lbrace(\beta_2,h_{2n+1})\rbrace) = (h_{2n+1}+1)(h_{2n+3}+1), \\ \notag
&  \op{gr}(\lbrace(\beta_1, \ell_{n}),(\beta_2, \ell_{n}) \rbrace) = (h_{2n+1}+1)(h_{2n+3}+1)+2.
\end{align}
To prove Proposition~\ref{prop:goodcurves} in the case $n\ge 1$, we want to show that when we stretch we get at least one building such that $C_U$ has negative asymptotics $\lbrace (\beta_1,h_{2n+3}) \rbrace$.  To  this end, we prove the following:

\begin{proposition}
\label{prop:boundcurv}  If $n\ge 1$, then
when we stretch the curves in class $B$, there are at most $9$ such that $C_U$ has negative asymptotics $\lbrace (\beta_1,\ell_n), (\beta_2,\ell_n) \rbrace$ or $\lbrace (\beta_2,h_{2n+1} ) \rbrace$.
\end{proposition}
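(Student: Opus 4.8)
The plan is to treat the two ``problematic'' orbit sets
$\al_1:=\{(\be_1,\ell_n),(\be_2,\ell_n)\}$ and $\al_2:=\{(\be_2,h_{2n+1})\}$ separately, bounding in each case the number of the twelve stretched $B$-curves whose limiting building has top level $C_U$ with negative end $\al_i$; the two bounds will combine to give $9$. Since by Lemma~\ref{le:bottomend} the only other possibility for the lower end of $C_U$ is $\{(\be_1,h_{2n+3})\}$, this leaves at least three curves breaking in the way needed for Proposition~\ref{prop:goodcurves}. Throughout we are in the case $n\ge1$, where $\mu_n=\tfrac{1+b_n}{3}=c_0(b_n)$ since $b_n<7$; the case $n=0$ (with $b_0=8>7$) requires a separate argument.

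The first step is index and area bookkeeping. Stretching preserves the relative homology class, so the ECH index is additive over the building and totals $c_1(B)+B\cdot B=2$ by \eqref{eqn:selfintersection}; moreover the coefficient of the line class is carried entirely by the top level, so $C_U$ lies in class $h_{2n+2}L$ in $\ov X$ while $C_L$ (viewed topologically as a union of matched components in the sense of Definition~\ref{def:conn}) lies in $\widehat{\Ee}_n$ in the complementary class built from the exceptional divisors. For each $\al_i$ I would compute $I(C_U)$ and $\ind(C_U)$ from \eqref{eqn:cp2calc} and \eqref{eqn:Xind}, do the same for the components of $C_L$ via \eqref{eqn:Eei} and \eqref{eqn:indEE}, and feed these into the refined index inequality (Proposition~\ref{prop:indineq}) and the grading estimate of Lemma~\ref{lem:fundest}. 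In the case $\al_2$ the decisive input is Remark~\ref{rmk:p+}(ii): because the best approximation to $b_n$ from above is $\ell_n/\ell_{n-1}$ and $7\ell_{n-1}<h_{2n+1}$, the ECH partition $p^-_{\be_2}(h_{2n+1})$ has at least eight terms, so a genus zero, Fredholm index zero curve $C_U$ cannot carry the ECH partition; hence $A(C_U)>0$, and together with the total ECH index $2$ this pins the type-$\al_2$ building down to essentially one configuration, so only a small, explicitly bounded number of the twelve curves degenerates this way. This is carried out in \S\ref{sec:noconn}.

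The bulk of the work is the case $\al_1$, done in \S\ref{sec:heart}. Here the expected degeneration is a \emph{model building} whose top level is a genus zero degree $h_{2n+2}$ curve in $\ov X$ with negative end $\al_1$ and whose lower part is the configuration in $\widehat{\Ee}_n$ recorded by the vector $z_M(n)$ of Lemma~\ref{lem:WcM} --- the identity \eqref{eqn:WcM} is exactly what makes the symplectic areas match, so that a curve of this shape has the area predicted by \eqref{eqn:area}. To bound how many of the twelve $B$-curves can limit to a building of this shape I would use relative intersection theory: for two such curves the relative intersection number of their top levels is $Q_\tau(h_{2n+2}L,h_{2n+2}L)+L_\tau=h_{2n+2}^2+L_\tau$, and this must be compatible with the bound $B\cdot B=1$ coming from positivity of intersections of the full buildings; this forces the asymptotic linking numbers of their braids at $\be_1$ and $\be_2$ to take very specific values, which limits both how many such $C_U$ can coexist and how each can be smoothed back to a $B$-curve. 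Combining this with the bound from the $\al_2$ case yields the stated total bound $9$.

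The step I expect to be the main obstacle is controlling the lower part $C_L$, which very probably does contain multiply covered components of negative Fredholm index (cf.\ Remark~\ref{rmk:model} and Remark~\ref{rmk:8}), so one cannot simply invoke $\ind(C_L)\ge0$ to rule out the bad buildings. The way around this is to exploit the fact that the class $B$ has \emph{exactly} twelve genus zero representatives and to argue combinatorially --- via the refined index inequality together with the writhe computations for curves close to breaking --- that at most nine of them can break compatibly with $\al_1$ or $\al_2$. A further complication, absent in the Fibonacci-stairs analogue of Remark~\ref{rmk:n=0}(ii), is the exceptional sphere in $\widehat{\C P}\!\,^2$ underlying the four-dimensional obstruction $c_0(x)=\tfrac{x+1}{3}$ valid for $\tau^4<x<7$: its intersections with the $B$-curves and with the pieces of the limiting buildings have to be tracked throughout, and it is what makes the ghost-stairs case genuinely harder than the Fibonacci one.
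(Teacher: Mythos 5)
Your outline assembles many of the right tools, but it misses the two steps that actually produce the number $9$, and the step you do sketch for $\{(\beta_2,h_{2n+1})\}$ does not close. For that orbit set the paper shows the case simply cannot occur: since $\gcd(h_{2n+1},h_{2n+2})=1$ and no low action component can end only on $\beta_2$, such a $C_U$ would be irreducible and somewhere injective with $I(C_U)=0$ by the grading computation \eqref{eq:gr}; equality $\ind(C_U)=I(C_U)=0$ then \emph{forces} the ECH partition at the negative end, which by Remark~\ref{rmk:p+}~(ii) has $s\ge 8$ ends, while the index formula \eqref{eqn:Xind} together with \eqref{eqn:CZQ} gives $\tfrac12\ind(C_U)=s-2$, a contradiction (Lemma~\ref{lem:nozero}). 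Your version inverts this logic (you conclude $A(C_U)>0$ rather than that the ECH partition must hold and then contradicts the index) and ends with ``a small, explicitly bounded number'' that is never specified, so your two partial bounds cannot be summed to $9$.

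For the end $\{(\beta_1,\ell_n),(\beta_2,\ell_n)\}$, the count in the paper comes from a dichotomy on the lower part $C_L$ that is absent from your plan: either $C_L$ has no connector (Definition~\ref{def:conn}), in which case the constraints split as $z+y=W(b_n)$ between $D_1$ and $D_2$, the relation $B\cdot B=1$ plus area considerations bounds the number of admissible vectors $z$ by $8$ (Lemma~\ref{lem:keylemma}), and a Cauchy--Schwarz type estimate shows $D_1$ and $D_2$ must intersect (Lemma~\ref{lem:keylemma2}) so that each constraint distribution is realized by at most one $B$-curve; or $C_L$ has a connector, and the whole of \S\ref{sec:heart} is needed to show there is at most one such breaking (Proposition~\ref{prop:up}), giving $0+8+1=9$. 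Your proposed replacement --- comparing relative intersection numbers of the \emph{top} levels against $B\cdot B=1$ --- cannot work here, because in this case every such $C_U$ is a multiple cover of the same low action curve $C_0$ (Remark~\ref{rmk:C0}), so the tops carry no information distinguishing the breakings; the counting must be done through the distribution of the blow-up constraints among the components of the lower level, which is exactly what the connector/no-connector analysis and the vector $z_M$ of Lemma~\ref{lem:WcM} are for. (Also, the $\al_2$ analysis is not in \S\ref{sec:noconn}; that section treats the connector-free part of the $\al_1$ case.)
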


To be precise, for a sequence of almost-complex structures stretched to length $R_i$ we can label the holomorphic curves in class $B$ by $C^i_k$ for $k=1, \dots ,12$. By choosing a subsequence of $i \to \infty$ we may assume that each of the $C^k_i$ converge to a holomorphic building. The proposition claims that at most $9$ of these $12$ buildings have $C_U$ with asymptotics $\lbrace (\beta_1,\ell_n), (\beta_2,\ell_n) \rbrace$ or $\lbrace (\beta_2,h_{2n+1} ) \rbrace$.

\begin{cor} \label{cor:main}  {\it Proposition~\ref{prop:goodcurves}  holds when $n>0$.}
\end{cor}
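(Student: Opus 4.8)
The plan is a counting argument: Proposition~\ref{prop:boundcurv} together with Lemma~\ref{le:bottomend} already pins down the negative asymptotics of the top level $C_U$ for enough of the twelve buildings. Concretely, I stretch the neck along $\p\Ee_n$ applied to the twelve genus-zero curves in class $B$; passing to a subsequence, each converges to a holomorphic building whose top level $C_U$, by Lemma~\ref{le:bottomend}, has negative asymptotics one of the three orbit sets $\{(\be_1,h_{2n+3})\}$, $\{(\be_2,h_{2n+1})\}$, $\{(\be_1,\ell_n),(\be_2,\ell_n)\}$. By Proposition~\ref{prop:boundcurv} at most nine of the twelve buildings realize one of the two latter orbit sets, so at least $12-9=3$ of them have $C_U$ with negative asymptotics exactly $\{(\be_1,h_{2n+3})\}$. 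Fix such a building.

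Next I would verify that for this building $C_U\in\Mm(\ov X, h_{2n+2}L, 1, \be_1^{h_{2n+3}})$. Since the blow-ups forming $\widehat{\C P}\!\,^2$ all occur inside $\Ee_n$, the curve $C_U$ lies in $\ov X$ and its negative ends lie only over $\be_1$; its degree along the line class is $h_{2n+2}$ because the remaining levels lie in $\widehat\Ee_n$ and the neck and so carry no line class; and it has genus zero because a genus-zero curve degenerates to a building all of whose components are genus-zero. It is connected: a disconnected closed component would be a curve in $\C P^2(\mu_n+\eps')\less\Ee_n$ of positive degree, hence of symplectic area at least $\mu_n$, contradicting the bound $\om(B)\approx\tfrac1{h_{2n+1}}$ from \eqref{eqn:area} on the total action of the building. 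And $C_U$ is somewhere injective: a $q$-fold cover would force $q\mid\gcd(h_{2n+2},h_{2n+3})=1$.

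The one remaining point — that $C_U$ has a \emph{single} negative end — is where I expect the real work. Since the negative asymptotics are $\{(\be_1,h_{2n+3})\}$ with $\gcd(h_{2n+1},h_{2n+3})=1$, the negative ECH partition of an $h_{2n+3}$-fold cover of $\be_1$ (whose mod-$1$ monodromy angle is close to $h_{2n+1}/h_{2n+3}$) is the trivial partition $(h_{2n+3})$ by Remark~\ref{rmk:p+}~(ii); so it suffices to show $C_U$ realizes its ECH partitions, which by Remark~\ref{rmk:writhexact}~(iii) (equality in Proposition~\ref{prop:indineq}) holds once $\de(C_U)=1$. Here $\ind(C_U)=0$ by \eqref{eq:indC} (using $3h_{2n+2}=h_{2n+1}+h_{2n+3}$) and $I(C_U)=2$ from the formulas of \S\ref{sec:ECH}, so one needs the single double point of the $B$-curve to be inherited by $C_U$ rather than lost to the lower part $C_L$ or to linking at the breaking orbit. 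I would establish this by analyzing $C_L$: the action identity forces $\Aa(C_L)=0$, which I expect pins $C_L$ down to the standard embedded configuration of planes and cylinders joining $\be_1$ to the exceptional divisors (together with trivial cylinders in the neck), so $\de(C_L)=0$ and no intersection is lost in the neck, giving $\de(C_U)=1$. Granting this, $C_U$ has exactly one negative end, so it lies in $\Mm(\ov X, h_{2n+2}L, 1, \be_1^{h_{2n+3}})$ and Proposition~\ref{prop:goodcurves} follows for $n>0$. The pigeonhole step is immediate; the structural analysis of $C_L$ and the resulting single-end conclusion is the main obstacle, and it is exactly the kind of holomorphic-building bookkeeping (relative intersection numbers, the refined index inequality, writhe estimates near breaking) that the paper's machinery is designed to handle.
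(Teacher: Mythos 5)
Your pigeonhole step is exactly the paper's: Proposition~\ref{prop:boundcurv} leaves at least $12-9=3$ buildings whose top $C_U$ has negative asymptotics $\lbrace(\be_1,h_{2n+3})\rbrace$, and the degree, genus and (essentially) the somewhere-injectivity claims are fine. One small gap in the soft part: to rule out a disconnected $C_U$ you only exclude \emph{closed} extra components, but a second component could itself have negative ends on $\be_1$; the paper excludes this (as in Lemma~\ref{lem:nozero}) because such a component would have to have low action, and by Proposition~\ref{prop:lowenergy} a low action curve of degree $<h_{2n+2}$ must have ends on both $\be_1$ and $\be_2$.

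The genuine gap is in your route to the single negative end. You propose to show $\de(C_U)=1$ by arguing that $C_L$ has low action, is therefore ``the standard embedded configuration'' with $\de(C_L)=0$, and that ``no intersection is lost in the neck.'' Neither step is justified, and the second is precisely the phenomenon the paper is built to control: there is no conservation law $\de(C_U)+\de(C_L)=1$ for a limiting building, since the double point of a $B$-curve can disappear into the neck (this is exactly what happens in the $n=0$ analysis, where Lemma~\ref{le:CL1} locates the double point \emph{in the neck} when $C_U$ has two ends, and it is why Remark~\ref{rmk:n=0} warns that the node ``may well disappear into the neck or lower part''). In fact, the paper's own computation in Proposition~\ref{prop:oneend} shows that if $C_U$ had $s\ge 2$ ends then necessarily $\de(C_U)=0$, so the statement $\de(C_U)=1$ is logically equivalent to the single-end claim itself and cannot be obtained by the bookkeeping you sketch. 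The paper avoids this entirely by working directly on $C_U$: the relative adjunction formula \eqref{eqn:writheJ0} pins down $w_\tau^-(C_U)$ in terms of $s$ and $\de(C_U)$, the writhe lower bound \eqref{eqn:writheb} for negative ends gives $w_\tau^-(C_U)\ge h_{2n+1}(h_{2n+3}-1)$ with strict inequality once $s\ge 2$, and the refined estimate \eqref{eqn:s=2} bumps this up by one more integer, contradicting \eqref{eqn:writheJ0}; no information about $C_L$ or about where the node goes is needed. (Your final deduction ``ECH partitions $\Rightarrow$ one end'' is correct, since $p^-_{\be_1}(h_{2n+3})=(h_{2n+3})$, but you never get to invoke it.) So as written the proposal establishes the counting step but not Proposition~\ref{prop:oneend}, which is the substantive content of the corollary.
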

\begin{proof}[Proof of Corollary]
Since there are $12$ curves in class $B$, Proposition~\ref{prop:boundcurv}  implies that there are at least three curves with negative asymptotics $\lbrace (\beta_1,h_{2n+3}) \rbrace.$  We  show in Proposition~\ref{prop:oneend} that these curves must have exactly one end.
Thus Proposition~\ref{prop:goodcurves} holds.
\end{proof}

The proof of Proposition~\ref{prop:boundcurv}  is  complicated and occupies most of the rest of this paper.  This section  considers the easier parts of the proof, that investigate what happens when $C_U$ has ends either just on $\be_1$ or just on $\be_2$.  Also, we  show in \S\ref{sec:stablzn}  how to deduce
the stabilization result Proposition~\ref{prop:goodC} using the arguments in  \cite{HK,CGH}.
Note however, that when $\tau^4 < x < 7$ the $4$-dimensional embedding obstruction for $E(1,x)\;\se\; B^4(\mu)$ is a curve $C_0$ in $\ov X$ of degree $3$ with two ends of multiplicity $1$, one on $\be_1$ and the other on $\be_2$; see Remark~\ref{rmk:C0}.  This has essentially zero action (i.e. it is a low action curve in the sense of \S\ref{sec:lowen} below) and,  as we saw in Lemma~\ref{le:bottomend},  $C_U$ might well be
one of its  multiple covers.  Understanding the structure of limiting buildings whose top is a multiple cover of $C_0$ requires much of the ECH machinery explained in \S\ref{sec:ECH}, and takes up \S\ref{sec:noconn} and \S\ref{sec:heart}.

\begin{rmk}\label{rmk:n=0} (i)  {\bf (The case $n=0$)}
The  formulas \eqref{eq:gr} hold for all $n\ge 0$.  Further if $n=0$ we can still use the same formula for $B$, i.e. we
have  $B: = 3L-E_1-\dots- E_8$.  However, the calculation for the action (or symplectic area) no longer works because we now
 can only embed $E(1,8+\eps)$  into $B^4(\frac{17}6+ \eps')$.  Thus  the class $B: = 3L-E_1-\dots - E_8$ has area $\approx \frac 12$.
More significantly, the curve $C_0$ no longer exists generically (since it has negative Fredholm index; see Remark~\ref{rmk:C0}), and the proof of Proposition~\ref{prop:oneend}  fails: indeed there can now be curves $C_U$ with more than one end on $\be_1$.
 For further details of this case see \S\ref{sec:n=0}.\MS

 \NI (ii) {\bf (The Fibonacci stairs)}   The proof outlined above is markedly easier at the points $a_n: = \frac{g_{n+2}}{g_n}$ considered in \cite{CGH}, where $g_\bullet = (1,2,5,13,34,\dots)$ is the sequence of odd-placed Fibonacci numbers.  In this case, the  curve to be stretched lies in class $B = g_{n+1}L - W(\frac{g_{n+2}}{g_n})$.  Since this is the class of an exceptional curve, it is represented by an embedded sphere, with ECH index equal to zero.
 Hence  ECH theory applies  to show that the top curve $C_U$ has just one end on $\be_1^{g_{n+2}}$, since the partition conditions imply that
 $p_{\be_1}^-(g_{n+2}) = (g_{n+2})$.
 Further details are left to the interested reader.   All the complications in our argument are caused by the fact that we start with a curve with one double point which may well disappear into the neck or lower part of the curve when we stretch, yielding limits whose top is a multiple of $C_0$. Notice also that when $x<\tau^4$, the calculation of the embedding function $c_0$ in \cite{MS}  shows that we can choose $\mu$ so small that a curve in class $C_0$ would have negative action, and so could not exist.
 \hfill$\er$
\end{rmk}

\subsection{The low action curves}\label{sec:lowen}

We begin by classifying the top level curves with action on the order of $\epsilon$, which we will call {\bf low action curves}.
 (In \S\ref{sec:heart} we  sometimes call these {\bf light } curves.)  The discussion after \eqref{eqn:area} implies that in any building in class $B$ all but one curve has low action in this sense.  This holds
 because $\om(B)\approx \frac 1{Q_{n}}$ (where $Q_n: = h_{2n+1}$) and the symplectic areas of the classes $L, E_i$ as well as the actions of the orbits on $E(1,x)$ also are approximately equal to multiples of $\frac 1{Q_n}$.

\begin{rmk}\label{rmk:eps}\rm
 For each $n$, the proof of Theorem~\ref{thm:main} given below involves a finite number of strict inequalities.
Throughout we assume that $\eps,\eps'$ (and any other similar constant) are so small
that each of these finite number of inequalities that holds when $\eps,\eps'=0$ also holds when they are  positive.
We will indicate that an irrational quantity $\theta$ is an approximation to some  (usually rational) quantity $b$ by writing
$\theta =_\eps b$ rather than $\theta\approx b$ as in \S\ref{sec:stablesetup}. \hfill$\er$
\end{rmk}

Throughout  we assume that $J$ is admissible and generic as explained at the beginning of \S\ref{sec:ECH}.

\begin{proposition}
\label{prop:lowenergy}
When $n=0$ there are no low action curves in class $dL$ with $d < h_{2n+2}$.  If $n>0$ the only such curves
occur when $d= 3m$ for some positive integer $m$ and have negative end on the orbit set $\lbrace(\beta_1,m),(\beta_2,m)\rbrace.$
\end{proposition}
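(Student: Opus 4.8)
The plan is to convert ``low action'' into a Diophantine condition on the negative asymptotics via the action formula, to solve that condition under the hypothesis $d<h_{2n+2}$, and then to discard the extraneous solutions using the ECH index inequality. First I would write the negative asymptotics of $C$ as $\{(\be_1,n_1),(\be_2,n_2)\}$, so that by \eqref{eqn:action3}, $\Aa(C)=d(\mu_n+\eps')-n_1-n_2(b_n+\eps)$. Since $\ov X$ becomes exact once the line is deleted, $\Aa(C)\ge 0$, and $\Aa(C)\to d\mu_n-n_1-n_2b_n$ as $\eps,\eps'\to 0$; thus $C$ is a low action curve precisely when $d\mu_n=n_1+n_2b_n$. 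When $n=0$ this reads $17d=6(n_1+8n_2)$ because $\mu_0=\tfrac{17}{6}$ and $b_0=8$, and $\gcd(17,6)=1$ forces $6\mid d$, so $d\ge 6>3=h_2$: there are no low action curves with $d<h_2$.

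For $n>0$, substituting $\mu_n=h_{2n+2}/h_{2n+1}$ and $b_n=h_{2n+3}/h_{2n+1}$ and clearing denominators, the condition becomes $d\,h_{2n+2}=n_1h_{2n+1}+n_2h_{2n+3}$. Reducing mod $h_{2n+2}$ and using $h_{2n+1}+h_{2n+3}=3h_{2n+2}$ from \eqref{eq:Fib1} gives $(n_1-n_2)h_{2n+1}\equiv 0\pmod{h_{2n+2}}$, hence $n_1\equiv n_2\pmod{h_{2n+2}}$ since consecutive terms of $h_\bullet$ are coprime. If $n_1\ge h_{2n+3}$ then $n_1h_{2n+1}\ge h_{2n+1}h_{2n+3}=h_{2n+2}^2-1$ by \eqref{eq:Fib2}, forcing $d\ge h_{2n+2}$, and the same argument gives $n_2<h_{2n+1}$; hence $n_1-n_2\in\{0,h_{2n+2},2h_{2n+2}\}$. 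If $n_1=n_2=:m$ the equation reads $3m\,h_{2n+2}=d\,h_{2n+2}$, so $d=3m$ and $C$ has exactly the asserted asymptotics $\{(\be_1,m),(\be_2,m)\}$. It remains to rule out the two ``off-diagonal'' families $n_1-n_2=c\,h_{2n+2}$ with $c\in\{1,2\}$, for which the equation gives $d=3n_2+c\,h_{2n+1}$; in particular such solutions have $d\ge h_{2n+1}$.

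To eliminate the off-diagonal solutions I would argue by contradiction, first reducing to the case that $C$ is somewhere injective: a $k$-fold cover has an underlying simple curve of degree $d/k<h_{2n+2}$ with negative end $\{(\be_1,n_1/k),(\be_2,n_2/k)\}$, which again satisfies the low action equation and so is diagonal (impossible, since $n_1\ne n_2$) or off-diagonal. For a simple curve, combining \eqref{eqn:cp2calc}, \eqref{eq:Frind2} and \eqref{eq:gr0} gives
\[ I([C])=d^2+3d-\op{gr}\bigl(\{(\be_1,n_1),(\be_2,n_2)\}\bigr)=d^2+3d+2-2\,\#N(n_1,n_2), \]
where $\#N(n_1,n_2)$ counts the integral points in the first-quadrant region cut off by the line $X+xY=n_1+xn_2$. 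I would then show $I([C])=-2$ on every off-diagonal solution. The heuristic for why $I([C])$ is small and non-positive is that, using $n_1+xn_2=d\mu_n$ and $\mu_n^2/x=h_{2n+2}^2/(h_{2n+1}h_{2n+3})=h_{2n+2}^2/(h_{2n+2}^2-1)$ from \eqref{eq:Fib2}, the area $\tfrac1{2x}(n_1+xn_2)^2$ of the region equals $\tfrac{d^2}{2}\cdot\tfrac{h_{2n+2}^2}{h_{2n+2}^2-1}$, whose doubling nearly cancels $d^2$, while the lattice points on the two coordinate legs of the region account for the $3d$; the residual $O(1)$ terms — from fractional parts and from the slant edge, controlled via the Fibonacci continued-fraction expansions of $x$ and $1/x$ recorded in Corollary~\ref{cor:weight} and Examples~\ref{ex:bigmono}--\ref{ex:smallmono} — then pin the value at $-2$. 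Granting $I([C])<0$, the index inequality \eqref{eqn:indexinequality} yields $\ind(C)\le I([C])-2\delta(C)<0$, which contradicts the fact that for generic admissible $J$ on the fixed cobordism $\ov X$ somewhere injective $J$-holomorphic curves are Fredholm regular and hence have nonnegative index. The hardest part will be this last lattice-point computation — making $\#N(n_1,n_2)$ precise enough to pin the sign of $I([C])$ for the possibly large multiplicities $n_1,n_2\le h_{2n+3}$ that occur when $d\ge h_{2n+1}$; by contrast, for the diagonal curves the same formula gives $I([C])=\ind(C)=0$, as expected since those are $m$-fold covers of the index-zero curve $C_0$ of Remark~\ref{rmk:C0}.
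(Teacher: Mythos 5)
Your set-up is sound and in places cleaner than the paper's: the $n=0$ arithmetic, the reduction to somewhere injective curves, and the classification of solutions of $d\,h_{2n+2}=n_1h_{2n+1}+n_2h_{2n+3}$ into the diagonal family $n_1=n_2=m$, $d=3m$ and the two off-diagonal families $n_1-n_2=c\,h_{2n+2}$, $d=3n_2+c\,h_{2n+1}$ ($c=1,2$) are all correct, and this Diophantine sorting is a genuinely different (and more elementary) first step than the paper's, which phrases everything in terms of the sequences $\Nn(1,1)$ and $\Nn(1,\frac{h_{2n+3}}{h_{2n+1}})$. Your guess for the endpoint is also right: on the off-diagonal solutions one does have $I([C])=-2$, i.e. $\#N(n_1,n_2)=\tfrac12(d^2+3d)+2$ (e.g. $(n,d,n_1,n_2)=(1,8,21,0)$ gives $\#N=46$).

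The genuine gap is that this last claim is exactly the hard content of the proposition, and you have not proved it; you explicitly defer "making $\#N(n_1,n_2)$ precise enough to pin the sign of $I([C])$". The heuristic you give (area term $\tfrac{d^2}{2}\cdot\tfrac{h_{2n+2}^2}{h_{2n+2}^2-1}$ nearly cancelling $d^2$, legs accounting for $3d$) cannot by itself finish the argument, because the diagonal and off-diagonal solutions satisfy the very same area identity $n_1+xn_2=d\mu_n$: the entire distinction between $I=0$ and $I=-2$ lives in the $O(1)$ boundary and fractional-part contributions along the slant edge, uniformly in $n$ and for multiplicities as large as $n_1\le h_{2n+3}-1$. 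Controlling those terms is precisely where the paper works: it first gets the one-sided bound $I\le 0$ essentially for free from the ECH Monotonicity Axiom applied to the embedding $E(1,b_n)\se B^4(\tfrac{h_{2n+2}}{h_{2n+1}})$ (together with the no-repetition statement of Remark~\ref{rmk:uniqueness}), and then rules out equality for off-diagonal asymptotics by Lemma~\ref{le:comb}, whose proof needs the period-collapse result of \cite{CGLS} and an explicit injection of lattice points between the triangles $\mathcal{T}_1,\mathcal{T}_2$. Your route, as stated, proposes to establish the exact count unconditionally by hand, which is at least as hard as Lemma~\ref{le:comb}; until that lattice-point estimate (or an equivalent, e.g. monotonicity plus an analogue of Lemma~\ref{le:comb}) is actually supplied, the proof of the $n>0$ case is incomplete.
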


\begin{proof}    If $n=0$, we have $h_{2n+2} = 3$ and  we use the embedding $E(1,8+\eps')\se B^4(17/6 + \eps)$ as in Remark~\ref{rmk:n=0}.
Hence $d<3$ so that the action of the class $dL$ is not an integer, while the action of the negative end is (approximately) an integer.  Hence there can be no low action curves in this case.

From now on we suppose $n>0$.
It suffices to prove
the proposition  when  $C$ is somewhere injective and hence has nonnegative ECH index.  If $C$ is in class $dL$, asymptotic to $\beta=\lbrace (\beta_1,\ell), (\beta_2, m) \rbrace$ then, by  \eqref{eq:gr00},  the ECH index $I(C)$ and action  of $C$ are given by
\begin{equation}\label{eq:iact}
I(C) = d^2+3d-\op{gr}(\beta), \qquad  {\rm action}(C)  =_\eps d\, \frac{h_{2n+2}}{h_{2n+1}}-\ell- m \frac{h_{2n+3}}{h_{2n+1}}.
\end{equation}
 To better understand the relationship between the grading and the action
 it is convenient to introduce the $\Nn(a,b)$ sequence,\footnote
 {
 This is the sequence  obtained by listing the numbers $\ell a + m b, \ell, m\ge 0$ in increasing order with repetitions. Note that if $\frac ba$ is irrational then the numbers $\ell a + m b$ are all distinct.}
that has  the following important property:
\begin{itemize}\item[$(\star)$]  if $\Nn_s(a,b) = \ell a + m b$ where $\frac ba$ is irrational,  then the orbit set $\{(\be_1,\ell),(\be_2,m)\}$ on $\p E(a,b)$ has grading $2s$.
 \end{itemize}
  The following combinatorial lemma is now key:
\MS

\NI
\begin{lemma}\label{le:comb}  Let $n>0$, $d < h_{2n+2}$, and $k=\frac 12 (d^2+3d)$.  Then $$
\Nn_k\bigl(1,\frac{h_{2n+3}}{h_{2n+1}}\bigr) = \frac{h_{2n+2}}{h_{2n+1}} \; \Nn_k(1,1)
$$
 only if
 $n>0$ and
 $\Nn_k(1, \frac{h_{2n+3}}{h_{2n+1}}) = m  + m \frac{h_{2n+3}}{h_{2n+1}}$ for some integer $m>0$.
 \end{lemma}
\begin{remark}
\label{rmk:uniqueness}
The following observation is helpful for understanding this lemma.   If $n>0$ and $k, k' \le \frac 12(d^2 + 3d)$ where $d = h_{2n+1}-1$, then
$$
\Nn_k\bigl(1,\frac{h_{2n+3}}{h_{2n+1}}\bigr)=\Nn_{k'}\bigl(1, \frac{h_{2n+3}}{h_{2n+1}}\bigr)\;  \Longrightarrow\; k=k'.
$$
In other words there are no repeated  entries in this sequence until we get to the two points on the line
$x + \frac{h_{2n+3}}{h_{2n+1}} y = h_{2n+3}$ that each give entries $\Nn_\bullet(1,\frac{h_{2n+3}}{h_{2n+1}}) =  h_{2n+3}$ and these occur at  places $k>
\frac 12 (d^2 + 3d)$.
We saw in Lemma~\ref{lem:actconsid} that there are no repeated entries in $\Nn(1, \frac{h_{2n+3}}{h_{2n+1}})$ until we get to the two entries of $h_{2n+3}$.  Hence all we need to check is that these points occur for  sufficiently large $k$.

 To see this, note that $\Nn(1,1)=(0,1,1,2,2,2,\dots, \ell,\dots,\ell,\dots)$, with $\ell+1$ entries of $\ell$ for each $\ell$.  Therefore
 for each $s$ there are $\sum_{\ell=1}^s (\ell+1) = \frac 12 (s^2+3s)+1$ entries that are $\le s$.
Hence, for any $k \le \frac 12(s^2+3s),$ we have $ \Nn_k(1,1) \le s$ (recall that our convention is that the sequence $\Nn_k(a,b)$ is indexed starting at $k=0$).  Applying this with $s=h_{2n+2}-1$ together with
 the Monotonicity Axiom for ECH capacities from \cite{qech},\footnote
{
This says that if ${\rm int }\, (E(a,b))\se E(c,d)$ then $\Nn(a,b)$ is termwise no larger than $\Nn(c,d)$; we apply this to the embedding
${\rm int }\, (E(1, b_n))\se B^4( \frac{h_{2n+2}}{h_{2n+1}})$ from \eqref{eq:mu0}.
}
 we find
that  for $k$ in this range
\begin{equation}\label{eq:MONO}
\Nn_k\bigl(1, \frac{h_{2n+3}}{h_{2n+1}}\bigr)\;\le\; \frac{h_{2n+3}}{h_{2n+2}}\, \Nn_k(1,1) \; \le\;   \frac{h_{2n+3}}{h_{2n+2}} ( h_{2n+2} - 1) < h_{2n+3}
\end{equation}
as required.
  \hfill$\er$
\end{remark}

\NI {\bf Proof of Proposition~\ref{prop:lowenergy} for the case $n>0$, assuming  Lemma~\ref{le:comb}.}

  If $C$ has low action and bottom asymptotic to the orbit set $\be: = \{(\be_1,\ell),(\be_2,m)\}$,  then by \eqref{eq:iact}
we must have $d\frac{h_{2n+3}}{h_{2n+2}} = \ell + m \frac{h_{2n+3}}{h_{2n+1}}$.  The
term on the right is an entry in  $ \Nn(1, \frac{h_{2n+3}}{h_{2n+1}})$, while the
term on the left is an entry in
the sequence  $\frac{h_{2n+3}}{h_{2n+2}}\Nn(1,1)$ which as explained in Remark~\ref{rmk:uniqueness} we may take to occur
at the place $k=\frac 12(d^2 + 3d)$.  Therefore,
with $k =\frac 12(d^2 + 3d)$, there is $k'$ such that
$$
\frac{h_{2n+2}}{h_{2n+1}}\ \Nn_k(1,1) = \Nn_{k'}\bigl(1, \frac{h_{2n+3}}{h_{2n+1}}\bigr).
$$
But, if $n>0$  the ECH Monotonicity Axiom  implies that as in \eqref{eq:MONO}
$$
\Nn_{\bullet}\bigl(1, \frac{h_{2n+3}}{h_{2n+1}}\bigr) \; \le\; \frac{h_{2n+2}}{h_{2n+1}} \Nn_{\bullet}(1,1),
$$
so that $ \Nn_{k}(1,1) \le  \Nn_{k'}(1,1)$.
Because the sequence $\Nn(1,1)$ has $d+1$ entries of $d$ with the last one at place $k = \frac 12(d^2+3d)$, we could have $k'< k$, but if we do
we find that
$$
\frac{h_{2n+2}}{h_{2n+1}}\ \Nn_{k'+1}(1,1)\; =\; \frac{h_{2n+2}}{h_{2n+1}}\ \Nn_k(1,1)  \;=\;
 \Nn_{k'}\bigl(1, \frac{h_{2n+3}}{h_{2n+1}}\bigr) \; < \;  \Nn_{k'+1}\bigl(1, \frac{h_{2n+3}}{h_{2n+1}}\bigr),
 $$
 where at the last step we use the result in Remark~\ref{rmk:uniqueness}.  But this
 contradicts the Monotonicity Axiom.   Hence $k'\ge k$.    On the other hand, property
  $(\star)$ for $\Nn$ implies that
$\op{gr}(\be) = 2 k'$ and we know $I(C)= d^2 + 3d -2k'  = 2k-2k'\ge 0$.  Hence $k=k'$.
We now apply Lemma~\ref{le:comb}  to deduce that
$$
\Nn_k\bigl(1,\frac{h_{2n+3}}{h_{2n+1}}\bigr) = m + m\frac{h_{2n+3}}{h_{2n+1}}.
$$
Hence the negative asymptotics of $C$ are $\lbrace (\beta_1,m),(\beta_2,m) \rbrace$. It follows that  its action is
$=_\eps  d(\frac{h_{2n+2}}{h_{2n+1}}) - m(1+\frac{h_{2n+3}}{h_{2n+1}})$.  But by  \eqref{eq:Fib1} this is zero  only if $m=\frac d3\in \Z$.
\end{proof}

It remains to prove Lemma~\ref{le:comb}.

\begin{figure}
\label{fig:fig1}
\centering
\includegraphics[scale=.5]{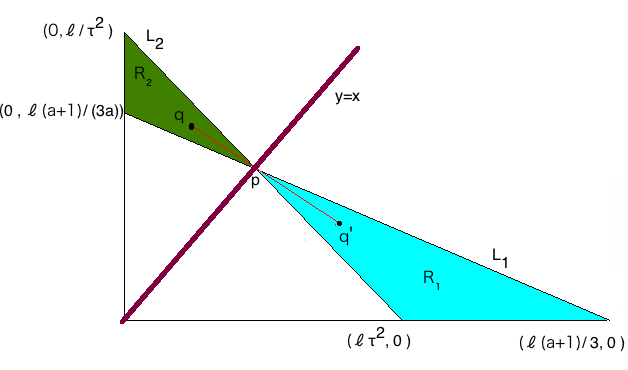}
\caption{The injection of lattice points used to prove Lemma~\ref{le:comb}.  We illustrate the case where $\ell$ is divisible by $3$; the other cases are similar.}
\end{figure}

\begin{proof}[Proof of Lemma~\ref{le:comb}]
The assumption on $k$ implies that $N(1,1)_{k+1} > N(1,1)_k$.  Define $\ell = N(1,1)_k$.  Then $k$ is equal to the number of lattice points in the triangle with vertices $(0,0),(\ell,0)$ and $(0,\ell)$, minus $1$.
Since $n>0$ we have   $ \tau^4< b_n < 7$, so that we  may apply \cite[Prop~2.4]{CGLS}  which states that
 $k$ is exactly the number of lattice points in the triangle $\mathcal{T}_1$ with vertices $(0,0), (0,\ell/\tau^2), (\ell \tau^2,0)$, where $\tau=\frac 12 (1+\sqrt{5})$.\footnote
 {
 To see that this fits into the discussion in \cite{CGLS} note that $\tau^2 + \frac 1{\tau^2} = 3$ so that $\mathcal{T}_1$ is a rescaling of the triangle with $\al,\be = (3,3)$ considered in  \cite[Thm~1.1]{CGLS}.
 }
Denote by $\mathcal{T}_2$ the triangle with vertices $(0,0), (0,\frac{ \ell(a+1)}{3a}), (\ell \frac{a+1}3,0)$, thus with slant edge of slope $-\frac 1a$, where $a: = b_n$.
\MS

 \NI {\bf Claim:}  {\it The number of lattice points in the triangle $\mathcal{T}_1$ is less than or equal to the number of lattice points that are both within $\mathcal{T}_2$ and strictly underneath the part of the upper boundary of $\mathcal{T}_2$ that is to the right of the line $y=x$ (this is the line $L_1$ in Fig.~1).}

\begin{proof}[Proof of Claim]     The line $y=x$ intersects the line $\frac{3}{a+1}x+\frac{3a}{a+1}y = \ell$ at $P=(\ell/3,\ell/3).$ Note that the point $P$ is independent of $a> \tau^4$; hence, the lines $L_1$ and $L_2$ given by $\frac{3}{a+1}x+\frac{3a}{a+1}y = \ell$ and $(1/\tau^2)x + \tau^2y = \ell$ respectively intersect at the point $P$.  We therefore have
\begin{equation}
\label{eqn:countingequation}
\# \lbrace \mathbb{Z}^2 \cap \mathcal{T}_a(\ell) \rbrace =  \# \lbrace \mathbb{Z}^2 \cap \mathcal{T}_{\tau^4}(\ell) \rbrace + D - U,
\end{equation}
where $D$ is the number of points in the region $R_1$ bounded by the lines $L_1$, $L_2$ and the $x$-axis (the blue region), not including lattice points on the left boundary, and $U$ is the number of lattice points in the region $R_2$ bounded by the lines $L_1, L_2$, and the $y$-axis (the green region), not including lattice points on the lower boundary, see Figure~\ref{fig:fig1}.

We must prove that   $U \le D$.
We now show that
there is a simple explicit injection of the lattice points counted by $U$ into the lattice points counted by $D$, as illustrated in the figure.
This is easiest to see in:

\MS

\NI
{\bf Case 1.} {\it  $\ell \equiv 0$, mod $3$. }

In this case, the point $P$ is a lattice point.  So, if $Q$ is a lattice point counted by $U$, let $V = P-Q$ and consider $Q' = P+V$. This is also a lattice point, see Fig.~\ref{fig:fig1}.  It lies on the line of slope $V$ passing through $P$, so to see that $Q'$ is counted by $D$, we need to show that the $y$-coordinate of $Q'$ is nonnegative, and $Q'$ is not on the left most boundary of $R_1$.  The second condition is immediately verified, since the line $L_2$ has irrational slope, hence $P$ is the unique lattice point on it.  For the first condition, observe that if $Q$ is in $R_2$, then the $y$-coordinate of $Q$ is no larger than $\ell/\tau^2$, hence the $y$-coordinate of $Q'$ is bounded from below by $\ell (2/3 - 1/\tau^2) > 0$.

Since the assignation $Q \to Q'$ is an injection, the claim holds in this case.
\MS

\NI
{\bf Case 2.}  $\ell \in \lbrace 1, 2 \rbrace$, mod $3$.

In this case, the point $P=(x,x)$ for $x$ satisfying ${\rm frac(x)} \in \lbrace 1/3, 2/3 \rbrace$ (here, ${\rm frac(x)}$ denotes the fractional part).  We now argue similarly as in the previous case.  Let $Q$ be a lattice point counted by $U$, let $V=P-Q$, and consider $Q'=P + 2V$.  This is a lattice point, on the line of slope $V$ passing through $P$; it is not on the leftmost boundary of $R_1$, because the line $L_2$ has no lattice points on it at all, being a line of irrational slope passing through a nonintegral rational point.  To see that the $y$-coordinate of $Q'$ is nonnegative, we observe similarly to above that if $Q$ is in $R_2$, then the $y$-coordinate of $Q$ is no larger than $\ell/\tau^2$, hence the $y$-coordinate of $Q'$ is bounded from below by $\ell (1 - 2/\tau^2) > 0$.
Since the assignation $Q \to Q'$ is an injection as above, the claim holds in this second case as well.

This completes the proof of the Claim.
\end{proof}

Now if $\Nn_k(1,\frac{h_{2n+3}}{h_{2n+1}})=\frac{h_{2n+2}}{h_{2n+1}}\Nn_k(1,1)=\frac{h_{2n+2}}{h_{2n+1}}\ell$, then by Remark~\ref{rmk:uniqueness}, we can find a unique lattice point $(m,n)$ satisfying $$
m \frac{h_{2n+1}}{h_{2n+2}}+ n \frac{h_{2n+3}}{h_{2n+2}} =\ell.
$$
Thus $(m,n)$ is the unique lattice point on the upper boundary of the triangle $\mathcal{T}_2$.  To prove the lemma we must show  that $(m,n)$ is on the line $y=x$.

To see this, assume that $(m,n)$ is strictly to the right of the line $y=x$.  Then by what was said previously, there are strictly more lattice points in the triangle $\mathcal{T}_2$ than in the triangle $\mathcal{T}_1$.  This implies that
$$
\Nn_{k'}\bigl(1,\frac{h_{2n+3}}{h_{2n+1}}\bigr)=\frac{h_{2n+2}}{h_{2n+1}}\; \Nn_k(1,1)=
\Nn_k\bigl(1,\frac{h_{2n+3}}{h_{2n+1}}\bigr)\;\; \mbox { for some }k' > k,
$$
 a contradiction.

Now assume that $(m,n)$ is strictly to the left of the line $y=x$.  In fact, there are no such lattice points.  To see this, observe that the intersection of the line $y=x$ with the slant edge of the triangle $\mathcal{T}_2$ occurs at the point $(\frac \ell3,\frac\ell 3)$.  Since the slant edge has slope $-\frac{h_{2n+1}}{h_{2n+3}}$, if there is such a point $(m,n)$, it has integral nonnegative  coordinates of the form
$$
\bigl(\frac \ell 3-\delta, \frac\ell 3+\delta\frac{h_{2n+1}}{h_{2n+3}}\bigr) \quad\mbox{ for some } \de>0.
$$
But then  $3 \delta\frac{h_{2n+1}}{h_{2n+3}}\in \Z$, which, because $\gcd(h_{2n+1}, h_{2n+3}) = 1$, implies $\delta \ge \frac{h_{2n+3}}3 >\frac \ell 3$.  Thus
 $\frac \ell 3-\delta$ must be negative, a contradiction.

It follows that $(m,n)$ must be on the line $y=x$, which completes the proof of  Lemma~\ref{le:comb} and hence also of
Proposition~\ref{prop:lowenergy}.
\end{proof}

\begin{rmk}\label{rmk:C0}\rm
The numerical arguments in the above proof of Proposition~\ref{prop:lowenergy} leave open the possibility that $C_U$ could be a multiple cover of a curve $C_0$ in class $3L$ with two ends, one on $\be_1$ and the other on $\be_2$, both of multiplicity one.
When $\tau^4 < x < 7$, such
 a curve has index $2\bigl(-1 + 9 - 1 - (1 + \lfloor x\rfloor)\bigr) = 0$ if $6<x<7$, and one can check that its ECH index is also $0$ for these $x$.
Further, one can show that $C_0$ exists either by considering what happens to an
 exceptional sphere in the class $A=3L-2E_1-E_2 - \dots E_8$ when  the neck  is stretched, or
  by considering the ECH cobordism map from the boundary of the (distorted) ball $\p B^4(\mu)$ to the ellipsoid $\p E(1,b_n)$ as in \cite{CGH}.   The corresponding embedding obstruction  is $ 3\mu > 1+x$, which gives $c_0(x) \ge \frac{1+x}3$.  Hence by
  \cite{MS}, this obstruction is sharp in dimension $4$.

  But because $C_0$ has two negative ends, the stabilization arguments in Proposition~\ref{prop:goodC1} do not apply; and indeed this obstruction cannot persist in higher dimensions because we know from \cite{Hi} that $c_k(x) \le \frac {3x}{1+x}$ for $k\ge 1$. \hfill$\er$
\end{rmk}

\begin{remark}
In the proof of the Claim above, the only relevant property needed of the $b_n$ is the inequality $b_n > \tau^4$.  Hence, the argument also shows that $\mathcal{N}(1,a) \le \frac{a+1}{3} \mathcal{N}(1,1)$,
which implies that
there is a symplectic embedding $E(1,a) \se B^4(\frac{a+1}{3})$ by McDuff's
proof of the Hofer conjecture in~\cite[Thm. 1.1]{M1}.  This gives a new and probably simpler proof of the computation of $c_0(x)$ for $\tau^4 \le x \le 7$ in \cite[\S 4]{MS}.  (The reverse inequality needed for this computation, i.e. that $\mathcal{N}(1,a) \le \lambda \cdot \mathcal{N}(1,1)$ implies  $\lambda \ge \frac{a+1}{3}$, follows directly by looking at the tenth term in each sequence.)
\end{remark}

\subsection{Analysing $C_U$ when $n>0$}

This section contains the proof of Proposition~\ref{prop:boundcurv} when $n>0$, modulo some results that are deferred to \S\ref{sec:noconn}
and \S\ref{sec:heart}.
By Lemma~\ref{le:bottomend} there are three possibilities for the negative asymptotics of $C_U$, namely
 $\lbrace (\beta_2,h_{2n+1}) \rbrace,$   $\lbrace (\beta_1, \ell_{n}), (\beta_2, \ell_{n}) \rbrace$ and $\lbrace (\beta_1,h_{2n+3} ) \rbrace$.
We discuss these cases in turn.

\begin{lemma}
\label{lem:nozero}
The negative end of $C_U$ cannot be $\lbrace (\beta_2,h_{2n+1}) \rbrace$.
\end{lemma}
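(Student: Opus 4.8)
The approach is to assume for contradiction that $C_U$ has negative asymptotics $\lbrace (\be_2, h_{2n+1}) \rbrace$, show that $C_U$ is then forced to be somewhere injective with ECH index zero, and derive a contradiction from the fact that the ECH negative partition of $h_{2n+1}$ at the long orbit $\be_2$ is ``too long'': by Remark~\ref{rmk:p+}(ii) it has at least eight terms, and eight or more negative ends make the Fredholm index strictly positive, which is incompatible with equality in the index inequality. Throughout, $J$ on $\ov X$ is generic, and $[C_U] = h_{2n+2}L$ since the blow-ups take place inside $\Ee_n$.

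First I would pin down the structure of $C_U$. By \eqref{eqn:area} and \eqref{eqn:action3} its action is $\approx \om(B) \approx \tfrac{1}{h_{2n+1}}$, so every other curve of the building, and every component of $C_U$ except possibly one, is a low action curve. I would then rule out $C_U$ being disconnected: its non-constant components all have positive degree (by positivity of action) and hence degree $< h_{2n+2}$, so any low action component would, by Proposition~\ref{prop:lowenergy}, have negative asymptotics $\lbrace (\be_1,m),(\be_2,m) \rbrace$ with $m\ge 1$; but the multiplicities on $\be_1$ of all components must sum to the $\be_1$-multiplicity of $C_U$, namely $0$, a contradiction. Thus $C_U$ is connected, of degree $h_{2n+2}$. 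Since $h_{2n+2}^2 - h_{2n+1}h_{2n+3} = 1$ by \eqref{eq:Fib2}, we have $\gcd(h_{2n+2},h_{2n+1}) = 1$, so $C_U$ cannot be a nontrivial multiple cover of a somewhere injective curve, as the covering number would have to divide both the degree $h_{2n+2}$ and the multiplicity $h_{2n+1}$; hence $C_U$ is somewhere injective.

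Next I would compute, using \eqref{eq:iact}, \eqref{eq:gr} and the Fibonacci identities \eqref{eq:Fib1} and \eqref{eq:Fib2}, that $I(C_U) = h_{2n+2}^2 + 3h_{2n+2} - (h_{2n+1}+1)(h_{2n+3}+1) = 0$. By the index inequality \eqref{eqn:indexinequality} and genericity of $J$, $0 \le \ind(C_U) \le I(C_U) = 0$, so $\ind(C_U) = 0$, equality holds, $\delta(C_U) = 0$, and $C_U$ has the ECH partitions at its negative ends. By Remark~\ref{rmk:p+}(ii), since the best approximation to $x = b_n + \eps$ from above is $\tfrac{\ell_n}{\ell_{n-1}}$ and $7\ell_{n-1} < Q_n = h_{2n+1}$, the partition $p^-_{\be_2}(h_{2n+1})$ has $s \ge 8$ terms. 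Feeding this partition into the Fredholm index formula \eqref{eqn:Xind} (with $k=1$ and no ends on $\be_1$), and evaluating $\sum_j \lfloor b_j x \rfloor = h_{2n+3} + 1 - s$ by reducing $x$ modulo its integer part $6$ and applying \eqref{eqn:CZQ}, gives $\tfrac12 \ind(C_U) = -1 + 3h_{2n+2} - (h_{2n+1} + h_{2n+3} + 1 - s) = s - 2 \ge 6$, contradicting $\ind(C_U) = 0$. The main obstacle is the interplay in this last step between the partition condition forcing many negative ends and the index computation forcing the index to grow with the number of ends; the bookkeeping around \eqref{eqn:CZQ} is the most error-prone piece, but it is routine once $x$ is reduced modulo $6$.
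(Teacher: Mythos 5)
Your proposal is correct and follows essentially the same route as the paper: rule out reducibility via Proposition~\ref{prop:lowenergy}, use $\gcd(h_{2n+1},h_{2n+2})=1$ to get somewhere injectivity, compute $I(C_U)=0$ so that $\ind(C_U)=0$ and the ECH partition conditions hold, and then derive $\tfrac12\ind(C_U)=s-2$ with $s\ge 8$ from Remark~\ref{rmk:p+}~(ii), a contradiction. The only difference is expository detail (e.g.\ spelling out the covering-number divisibility), not mathematical substance.
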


\begin{proof}
If $C_U$ were
reducible, then one of its irreducible components would have to have low action so that by Proposition~\ref{prop:lowenergy}
its lower end would have to involve the orbit $\be_1$ as well as $\be_2$.  So this cannot happen.  Hence, since  $C_U$ cannot be
irreducible and multiply covered because $\gcd(h_{2n+1},h_{2n+2}) = 1$,
we conclude that
$C_U$ must be irreducible and somewhere injective.  By the grading calculations in \eqref{eq:gr},
we also must have $I(C_U)=0$, and thus $\op{ind}(C_U)=0;$ so the negative ends of $C_U$ must satisfy the partition conditions.
These are described in Remark~\ref{rmk:p+}~(ii).  By  \eqref{eqn:CZQ},
 when we compute $\op{ind}(C_U)$ using the index formula \eqref{eqn:Xind},
 we find
 $$
\tfrac 12 \ind(C_U) = - 1  + 3\ell_{2n+2}  - (h_{2n+1} + h_{2n+3} +1 - s) = s-2,
 $$
where $s$ denotes the number of
negative ends and we have applied \eqref{eq:Fib1}.
But  Remark~\ref{rmk:p+}~(ii) shows that $s\ge 8$, contradicting the fact that   $\ind(C_U) =0$.
\end{proof}

\NI {\bf The case with negative end $\lbrace (\beta_1, \ell_{n}), (\beta_2, \ell_{n}) \rbrace$:}

  In this case $C_U$
has ECH index $-2$ by \eqref{eq:gr}  and hence exists for generic $J$ only if it is a multiple cover.  Since it could be a multiple cover of the curve $C_0$ mentioned in Remark~\ref{rmk:C0}, we cannot ignore this possibility.    This case is analyzed by looking at the structure of the building $C_L$ formed by all but the top level of the limiting building (see Definition~\ref{def:CU}).  Thus $C_L$ consists  of some curves
 in the neck  together with some curves  $C_{LL}$  in the completion $\widehat\Ee_n$ of the blown up ellipsoid.  Note that, for generic $J$,  $C_L$ intersects the exceptional divisors transversally in $\sum_i W_i =  h_{2n+1} + h_{2n+3} - 1$ points; we think of each such intersection point as a {\bf constraint}.

 Many of our arguments are essentially topological rather than analytical  in nature, and it is convenient to introduce the following terminology.
 By a {\bf curve} or {\bf trajectory}  we mean the image of some $J$-holomorphic map $u$; thus curves lie in a single level of the building and  need not be connected.
  A curve with a connected domain is called {\bf irreducible}.
  We may join these curves along pairs of  positive and negative ends that match in the sense that each limits on the same multiple $\be_i^r$ of the same orbit, thereby decomposing the building into a union of connected pieces that we call {\bf matched components}, or {\bf components} for short.

Here is the key definition.

\begin{defn}\label{def:conn}  {\it A matched component in  $C_L$ is called a {\bf connector} if its top has ends both on $\be_1$ and on $\be_2$.}
\end{defn}

Thus a  connector might have two levels, the top (lying in the neck) consisting of a trivial cylinder over $\be_1^r$ together with a cylinder from $\be_2^s$ to $\be_1^m$, joined by an irreducible curve in $\widehat{\Ee}_n$ with two top ends, one on $\be_1^r$  and the other on $\be_1^m$.
\MS

\NI{\bf Proof of Proposition~\ref{prop:boundcurv}:}
By Lemma~\ref{lem:nozero}, Proposition~\ref{prop:boundcurv} will hold if we show that there are at most $8$
 $B$-curves that limit on a building such that $C_U$ has negative end $\lbrace (\beta_1, \ell_{n}), (\beta_2, \ell_{n}) \rbrace$ and $C_L$ has no connectors, and at most one more
 in which $C_L$ has a connector.  The first claim is proved in \S\ref{sec:noconn},  and the second (which is considerably harder) is proved in Proposition~\ref{prop:up} in \S\ref{sec:heart}.
\hfill$\Box$
\MS

\begin{example}\label{ex:558}\rm {\bf (The case $n=1$, i.e. with  $b_1 = \frac {55}8$)}  We illustrate what can happen  in the first nontrivial case.
Here $\ell_1 = 7$ and we have constraints
$$
E(\tfrac {55}8) =  8 (E_1 + \dots + E_6) + 7E_7 + E_8 + \dots + E_{14} =: 8E_{1\dots6} + 7E_7 + E_{8\dots (14)}.
$$
If $C_L$ has no connectors, we may divide its matched components into two groups  $D_1, D_2$, where, for $i=1,2$,  $D_i$ is a union of planes with top on some multiple of $\be_i$.
We will see in \S\ref{sec:noconn} that in this case there are eight possibilities for $C_L$, one  for each $k = 7,\dots,14$, that may be distinguished by the distribution of the constraints.
When $k=7$ then $D_1$ goes through $E_1,\dots, E_7$ (and has action $=_\eps \frac 18$), while $D_2$ goes through all the others and has low action.  On the other hand, if $k>7$ then
$D_1$ goes through the $8$ constraints $E_1,\dots,E_7, E_k$ and has low action, while $D_2$ goes through all the others.  Notice that in the first case in order for $D_2$ to have nonnegative index  it must be connected with a single top end on $\be_2^7$, while in the second case $D_1$ must be connected with one top end on $\be_1^7$.
Thus in both cases $C_U$ is a connected $7$-fold cover of $C_0$, with at least one end of multiplicity $7$.

If $C_L$ has a connector, then we show in \S\ref{sec:heart}  that $C_U$ is the union of $C_0$ with a $6$-fold cover of $C_0$,  and
that there is a unique connector $D_{12}$, with  two top ends on
 $\be_1, \be_2^6$, and action $=_\eps \frac 18$.  In this case $D_1$ consists of the $6$ planes  with top $\be_1$, each through a single constraint $E_i, 1\le i\le 6$, while $D_2$ has top $\be_2$ and goes through $E_1,\dots, E_7$.  All the other constraints  $6E_{1\dots 7} + E_{8\dots(14)}$ lie on $D_{12}$.

 As we will see, there are analogous decompositions for all $n>1$.  In particular, if there is a connector it is unique and has essentially all the action.   It has two top ends, one on $\be_1^{\ell_{n-1}}$ and the other on $\be_2^{\ell_n-\ell_{n-1}}$, and there is a  formula for its constraints in terms of weight expansions; see
 Proposition~\ref{prop:improva}.   As in the proof of Proposition~\ref{prop:n=0}  below, we use the fact that $B\cdot B = 1$ to show that for each such distribution of constraints  there is at most one $B$-curve that breaks this way.\hfill$\er$
  \end{example}

\NI {\bf The case with negative end $\lbrace (\beta_1,h_{2n+3}) \rbrace$:}

\begin{proposition}
\label{prop:oneend}  Let $n>0$.   Then, if $C_U$
 has negative ends on $\lbrace (\beta_1,h_{2n+3}) \rbrace$,  it must have a single negative end.
\end{proposition}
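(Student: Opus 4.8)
The plan is to show that $C_U$ is somewhere injective, pin down its ECH and Fredholm indices, and then combine the relative adjunction formula, the writhe bound for negative ends, and Pick's theorem to force a single negative end; the one configuration that survives those general bounds will be eliminated by a Fibonacci identity.

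First I would check that $C_U$ is somewhere injective. It is not reducible: by the action bookkeeping after \eqref{eqn:area}, all but one irreducible component of a building in class $B$ has action on the order of $\epsilon$, and Proposition~\ref{prop:lowenergy} says every such low action component has a negative end meeting $\beta_2$ (while a closed component of positive degree has action $\ge\mu_n>1$); since the negative asymptotics of $C_U$ lie only on $\beta_1$ this is impossible. Hence $C_U$ is irreducible, so not multiply covered because $\gcd(h_{2n+2},h_{2n+3})=1$ by \eqref{eq:Fib2}, and it has genus zero as a connected limit of genus zero curves. Writing $(a_1,\dots,a_s)$ for the multiplicities of its negative ends on $\beta_1$, so $\sum_i a_i=h_{2n+3}$, a direct calculation with \eqref{eqn:cp2calc}, \eqref{eq:gr}, \eqref{eq:Fib1} and \eqref{eq:Fib2} gives $I(C_U)=2$, while \eqref{eqn:Xind} gives $\tfrac12\op{ind}(C_U)=h_{2n+1}-1-\sum_i\lfloor a_i/x\rfloor\in\{0,1\}$, using that $\sum_i\lfloor a_i/x\rfloor\le\lfloor h_{2n+3}/x\rfloor=h_{2n+1}-1$ for $\epsilon$ small together with $\op{ind}(C_U)\le I(C_U)-2\delta(C_U)\le 2$ from \eqref{eqn:indexinequality}. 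If $\op{ind}(C_U)=2$ then $\delta(C_U)=0$ and equality holds in \eqref{eqn:indexinequality}, so by Remark~\ref{rmk:writhexact}(iii) $C_U$ has the ECH partitions; but the negative ECH partition of $\beta_1$ at multiplicity $h_{2n+3}$ is the single term $(h_{2n+3})$ --- the monodromy angle is $1/x$, $\lceil h_{2n+3}/x\rceil=h_{2n+1}$, and by Example~\ref{ex:smallmono} and $\gcd(h_{2n+1},h_{2n+3})=1$ there is no lattice point strictly between the segment from $(0,0)$ to $(h_{2n+3},h_{2n+1})$ and the line $v=u/x$ when $\epsilon$ is small --- which would force $\op{ind}(C_U)=0$, a contradiction. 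So $\op{ind}(C_U)=0$, giving $\sum_i\lfloor a_i/x\rfloor=h_{2n+1}-1$ and $\delta(C_U)\le 1$.

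Next I would bound $s$. The negative partition path $\Lambda_{C_U}$, with edge vectors $(a_i,\lceil a_i/x\rceil)$ ordered by increasing slope, ends at $\bigl(h_{2n+3},\sum_i\lceil a_i/x\rceil\bigr)=(h_{2n+3},h_{2n+1}-1+s)$ and lies on or above $v=u/x$. Since $\delta(C_U)\in\{0,1\}$, \eqref{eqn:adjunction} with $\chi(C_U)=2-s$ and \eqref{eqn:cp2calc} shows that the writhe of the $\beta_1$-braid of $C_U$ equals $h_{2n+1}h_{2n+3}-h_{2n+1}+3-s-2\delta(C_U)$; feeding this into the writhe lower bound \eqref{eqn:writheb3}, whose right-hand side is $2\,\mathcal{A}-\sum_i\lceil a_i/x\rceil$ with $\mathcal{A}$ the area under $\Lambda_{C_U}$, and using $\sum_i\lceil a_i/x\rceil=h_{2n+1}-1+s$, gives $\mathcal{A}\le\tfrac12 h_{2n+1}h_{2n+3}+1-\delta(C_U)$. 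On the other hand, Pick's theorem applied to the lattice polygon under $\Lambda_{C_U}$ --- which contains the $\tfrac12(h_{2n+1}-1)(h_{2n+3}-1)=\sum_{u=1}^{h_{2n+3}-1}\lfloor u/x\rfloor$ lattice points strictly below $v=u/x$ in its interior, and has at least $h_{2n+1}+h_{2n+3}+2s-1$ boundary lattice points --- gives $\mathcal{A}\ge\tfrac12 h_{2n+1}h_{2n+3}+s-1$. Hence $s+\delta(C_U)\le 2$, so if $s\ge 2$ then $s=2$ and $\delta(C_U)=0$.

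Finally I would eliminate $s=2$. Now adjunction pins the $\beta_1$-braid writhe to $h_{2n+1}(h_{2n+3}-1)+1$, so \eqref{eqn:writheb3} forces $\mathcal{A}\le\tfrac12 h_{2n+1}h_{2n+3}+1$. Writing the two edge vectors as $(a_1,p_1)$ and $(a_2,p_2)$ with $a_1+a_2=h_{2n+3}$, $p_1=\lceil a_1/x\rceil$, and evaluating $\mathcal{A}$ as the area below the chord from $(0,0)$ to $(h_{2n+3},h_{2n+1}+1)$ minus the triangle cut off at the interior vertex $(a_1,p_1)$, this bound reduces to $h_{2n+3}(1+p_1)-a_1(h_{2n+1}+1)\le 2$. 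For $\epsilon$ small $p_1=\lceil a_1 h_{2n+1}/h_{2n+3}\rceil$, so $r:=p_1 h_{2n+3}-a_1 h_{2n+1}$ lies in $\{1,\dots,h_{2n+3}-1\}$ and the bound becomes $r+h_{2n+3}-a_1\le 2$, forcing $a_1=h_{2n+3}-1$ and $r=1$; but $r\equiv -a_1 h_{2n+1}\equiv h_{2n+1}\pmod{h_{2n+3}}$, so $r=h_{2n+1}>1$ for $n\ge 1$, a contradiction. Therefore $C_U$ has exactly one negative end. I expect the real work to be the constant-chasing in the last two steps (getting the adjunction/writhe/Pick bookkeeping exactly right) rather than any conceptual hurdle; it is reassuring that the argument collapses precisely at $n=0$, where $h_{2n+1}=1$ and, as Remark~\ref{rmk:n=0}(i) notes, the statement is genuinely false.
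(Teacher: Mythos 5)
Your proof is correct, and its skeleton is the same as the paper's: establish that $C_U$ is irreducible and somewhere injective, compute the writhe of the negative braid exactly via the relative adjunction formula \eqref{eqn:adjunction} and \eqref{eqn:cp2calc}, bound it from below via \eqref{eqn:writheb3}, conclude $s=2$, $\delta=0$ if $s\ge 2$, and then rule out $s=2$. The differences are in the quantitative bookkeeping. The paper never needs $I(C_U)$ or $\op{ind}(C_U)$: it bounds the right-hand side of \eqref{eqn:writheb3} below by $h_{2n+1}(h_{2n+3}-1)$ directly from $p_i\ge a_i h_{2n+1}/h_{2n+3}$, notes strictness when $s\ge2$, and then kills $s=2$ with the fractional-part estimate \eqref{eqn:s=2}. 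You instead first prove $\op{ind}(C_U)=0$ (computing $I(C_U)=2$ and excluding $\op{ind}=2$ by the ECH-partition argument, which is valid since equality in \eqref{eqn:indexinequality} forces the partitions and $p^-_{\beta_1}(h_{2n+3})=(h_{2n+3})$), which pins $\sum_i\lceil a_i/x\rceil=h_{2n+1}-1+s$; this is load-bearing in your route, since it fixes the endpoint of the braid path so that Pick's theorem yields the clean inequality $s+\delta(C_U)\le 2$. Your elimination of $s=2$ by the exact two-edge area identity, reducing to $r+a_2\le 2$ with $r=p_1h_{2n+3}-a_1h_{2n+1}$, followed by the congruence $r\equiv h_{2n+1}\pmod{h_{2n+3}}$, is a genuinely different (and arguably cleaner) finish than the paper's case analysis on $\op{frac}(a_1h_{2n+1}/h_{2n+3})$, at the cost of the extra index computation that the paper's argument shows can be avoided. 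Your closing observation that the argument degenerates exactly at $n=0$ (where $h_{2n+1}=1$, so $r=1$ is consistent) correctly matches the paper's treatment of that case in \S\ref{sec:n=0}.
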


\begin{proof}
First note that
because the bottom constraint involves only $\be_1$  it follows as
in the proof of Lemma~\ref{lem:nozero} that
$C_U$  is
irreducible
and hence somewhere injective since $3\ell_{n+1} = h_{2n+2}$ and $h_{2n+3}$ are mutually prime.
By the relative adjunction formula~\eqref{eqn:adjunction} and the formulas in \eqref{eqn:cp2calc},
we must have
\begin{align}
\label{eqn:writheJ0}
w_{\tau}(C_U)  = - w_{\tau}^-(C_U)&  = 2 - s+ h_{2n+2}^2 - (3 h_{2n+2} -  h_{2n+3}) -2\delta(C),\\ \notag
& = 2-s + h_{2n+1} (h_{2n+3}- 1)+1 - 2\de(C),
\end{align}
where $s$ denotes the number of negative ends, and we have used the Fibonacci identity~\eqref{eq:Fibh}.
On the other hand,  if $(a_1,\ldots,a_s)$ is the partition given by the negative ends of $C_U$, then by Remark~\ref{rmk:writhexact}~(i) we have
\begin{equation}
\label{eqn:writheb}
w_{\tau}^-(C_U) \ge \sum_{i \ne j} \min(a_ip_j,a_jp_i) + \sum_{i=1}^s (a_i-1)p_i
\end{equation}
where
 $p_i = \lceil \frac{a_i h_{2n+1}}{h_{2n+3}} \rceil$.
We can therefore bound the right hand side of \eqref{eqn:writheb} from below by
\begin{align}
\label{eqn:lowerb}
\frac{h_{2n+1}}{h_{2n+3}}\, \bigl(\sum_{i \ne j} a_ia_j + \sum_{i=1}^s (a_i^2-a_i)\bigr) &= \frac{h_{2n+1}}{h_{2n+3}}\left( (\sum_i a_i)^2 - (\sum_i a_i)\right)\\ \notag
& =
h_{2n+1} (h_{2n+3}- 1),
\end{align}
since $\sum a_i = h_{2n+3}$.

  So regardless of the $a_i$, the right hand side of \eqref{eqn:writheb} is some integer bounded from below by $h_{2n+1} (h_{2n+3}- 1)$.   In fact, \eqref{eqn:writheb} must be strictly greater than $h_{2n+1} (h_{2n+3}- 1)$ as long as $s \ge 2$, since the bound \eqref{eqn:lowerb} comes from throwing away some fractional parts, which must be positive.

So, assume for the sake of contradiction that $s \ge 2$.  Then the right hand side of \eqref{eqn:writheb} must be at least $h_{2n+1} (h_{2n+3}- 1)+1$, so
 \eqref{eqn:writheJ0} implies that
 we must have
$s=2$ and $\de(C) = 0$.

We show below that  when $s=2$
\begin{equation}
\label{eqn:s=2}
\sum_{i=1}^2 (a_i-1)p_i\ge \frac{h_{2n+1} }{h_{2n+3}}\ \sum_{i=1}^2 (a_i^2-a_i) + 1.
\end{equation}
By~\eqref{eqn:writheb} this implies that
\begin{align*}
w_{\tau}^-(C_U)  & \ge \sum_{i \ne j} \min(a_ip_j,a_jp_i) + \frac{h_{2n+1} }{h_{2n+3}}\ \sum_{i=1}^2 (a_i^2-a_i) + 1\\
& > h_{2n+1}(h_{2n+3} -1) + 1.
\end{align*}
Hence, because $w_{\tau}^-(C_U)$ is an integer, we must have $w_{\tau}^-(C_U)\ge  h_{2n+1}(h_{2n+3} -1) + 2$, which contradicts \eqref{eqn:writheJ0}.

It remains to prove~\eqref{eqn:s=2}.  To this end, note first that because
 $a_1$ and $a_2$ are additive inverses modulo $h_{2n+3}$, without loss of generality we  have $\op{frac}(\frac{a_1 h_{2n+1} }{h_{2n+3}}) < \frac 12$.\footnote
 {
 To get strict inequality, we also use the fact that $\Tilde\theta $  is slightly larger than $\frac{h_{2n+1} }{h_{2n+3}}$.}
 This implies
  that
 $p_1 - \frac{h_{2n+1} }{h_{2n+3}} a_1 >\frac 12$ so that
  $$
  (a_1-1)p_1 - \frac{h_{2n+1} }{h_{2n+3}}(a_1^2-a_1) \;>\; \tfrac 12(a_1-1)\; \ge\; 1
  $$
 unless $a_1 \le 2$.  If $a_1\le 2$, then $$
 h_{2n+3}> a_2 \ge h_{2n+3}-2, \quad\mbox{ and }
1-\op{frac}\bigl( \frac{a_2h_{2n+1} }{h_{2n+3}}\bigr)\; \ge\; \frac{h_{2n+1} }{h_{2n+3}},
$$
 which implies
 that
 \begin{align*}
 (a_2-1)p_2 - \frac{h_{2n+1}}{h_{2n+3}}\bigl(a_2^2-a_2\bigr) &=  (a_2-1)\bigl(p_2-  \frac{a_2 h_{2n+1} }{h_{2n+3}}\bigr)\\
 & = (a_2-1)\bigl(1-\op{frac}(\frac{a_2  h_{2n+1} }{h_{2n+3}})\bigr) \\
  & \ge (a_2-1)\, \frac{h_{2n+1}}{h_{2n+3}}  > 1.
\end{align*}
   In either case, then, the claim is true, hence the proposition.
\end{proof}

\subsection{The case when $C_L$ has no connectors} \label{sec:noconn}

We now consider what happens when $n>0$, the negative end of $C_U$  is $\lbrace (\beta_1,\ell_{n}), (\beta_2,\ell_{n}) \rbrace$, and there are no connectors in the sense of Definition~\ref{def:conn}.
 We write $C_L = D_1 \cup D_2$ where $D_1$ denotes the union of matched components
  with ends only at $\beta_1$, and $D_2$ denotes the union of components  with ends only at $\beta_2$.
Thus a component of $D_2$ might consist of a cylinder in the neck with top end on some multiple of $\be_1$ and bottom on a multiple of $\be_2$,
completed by a union of planes in the blown up ellipsoid.
 Notice that if a $B$-curve is close to breaking into a building whose top is an $\ell_n$-fold cover of $C_0$ and whose bottom has no connectors, then we may cut the $B$-curve just {\it above} the neck into three pieces that approximate $\ell_n C_0, D_1,$ and $D_2$.

We record the homology classes of
$D_1$ and $D_2$  via integer vectors.  Here, as in \eqref{eq:B}, we order the exceptional classes in decreasing order of size, and we write
\[ (z_1, \ldots, z_k)\]
for the homology class
\[ -(z_1E_1 + \ldots + z_k E_k).\]
So, with this notation the homology class of $C_L$ is given by the normalized weight expansion
$$
W(b_{n}) = \bigl( W_1^{\times 6},  W_2, W_3^{\times 5} ,\dots, 7, 1^{\times 7}\bigr),
 $$
which by~\eqref{eq:weight} is a vector of total length $6n+8$ whose entries occur in blocks $\Bb = (\Bb_0, \Bb_1,\dots)$ whose lengths are given by the entries in the continued fraction expansion $[6, (1,5)^{\times (n-1)}, 1, 7]$ of $b_{n}=\frac{P_n}{Q_n}$.  The
symplectic areas (or actions)
of the exceptional classes are given (modulo $\eps$) by the vector $w(b_n)$.

Consider some representative of $B$, and let $z$ denote the homology class of $D_1$ and $y$ the homology class of $D_2$.  Thus, since we are assuming that there are no connectors, $z + y = W(b_{n})$.

\begin{proposition}
\label{prop:bound}
In any neck stretching, at most $8$ representatives of $B$ limit on a building with no connectors and
where $C_U$ has
negative end $\lbrace (\beta_1,\ell_{n}), (\beta_2,\ell_{n}) \rbrace$.
\end{proposition}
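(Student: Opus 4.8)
The plan is to reduce the statement to a finite combinatorial classification: first show that a building of the stated form is determined, up to eight possibilities, by the homology class of the $\be_1$-part $D_1$ of $C_L$, and then show, using $B\cdot B=1$, that at most one of the twelve curves in class $B$ can break into a building realizing each possibility.

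First I would fix the structure of the lower building. By Lemma~\ref{le:bottomend} and the no-connector hypothesis of Definition~\ref{def:conn}, write $C_L=D_1\sqcup D_2$, where all top ends of $D_i$ lie over $\be_i$ and have total multiplicity $\ell_n$. Since $C_L$ is homologous in the blown-up ellipsoid to $W(b_n)=(W_1,\dots,W_m)$, writing $z$ and $y$ for the classes of $D_1$ and $D_2$ in the $E_i$-basis gives $z+y=W(b_n)$ with $z,y\ge 0$ entrywise, so $y$ is determined by $z$. Next I would exploit the action: by \eqref{eqn:action4} we have $\mathcal A(D_1)=\ell_n-z\cdot w(b_n)\ge 0$, $\mathcal A(D_2)=\ell_n b_n-y\cdot w(b_n)\ge 0$, and $\mathcal A(D_1)+\mathcal A(D_2)=\mathcal A(C_L)\le\om(B)=_\eps 1/Q_n$ since $C_U$ has nonnegative action. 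Clearing denominators and using integrality, together with the Fibonacci identities \eqref{eq:recurrel} and \eqref{eq:Fibq} — in particular $Q_n=\ell_n+\ell_{n-1}$ and $\ell_{n-1}^2-\ell_n\ell_{n-2}=1$, which give $\ell_nQ_n-\ell_{n-1}P_n=1$ — this forces $z\cdot W(b_n)$ into the two-element set $\{\ell_nQ_n-1,\ \ell_nQ_n\}$.

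The heart of the matter is then to pin down $z$ itself, not just $z\cdot W(b_n)$. Each matched component of $D_1$, after collapsing its internal neck levels, is a curve in $\widehat\Ee_n$ with positive ends only over $\be_1$, and similarly $D_2$ over $\be_2$. For generic admissible $J$ a somewhere injective such curve has $\ind\ge 0$, so by the index formula \eqref{eqn:indEE}, the ECH index formula \eqref{eqn:Eei}, and the lattice-point description of the grading \eqref{eq:gr0} — combined with the end-partition data from Remark~\ref{rmk:p+} and the best-approximation computations of Examples~\ref{ex:bigmono} and \ref{ex:smallmono} — the class of each component is severely restricted; a multiply covered component must cover such a curve, and here (unlike in \S\ref{sec:heart}, where a connector makes the bookkeeping genuinely hard) the action window above and the constraint $z+y=W(b_n)$ should suffice to kill every configuration except the expected ones. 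Using the arithmetic of $Q_n,\ell_n,t_n$ packaged in Lemmas~\ref{lem:fibid0}, \ref{lem:WcM} and \ref{lem:R} and the continued-fraction data of Corollary~\ref{cor:weight}, I expect the outcome to be that $z$ is either $\widetilde z_n$ — the weight vector $W(\ell_n/\ell_{n-1})$, of length $6n+1$, padded by zeros to length $m=6n+8$, which for $n=1$ is $(1^{\times 7},0^{\times 7})$ — or $\widetilde z_n+e_k$ for one of the seven indices $k$ in the terminal block $1^{\times 7}$ of $W(b_n)$ (here $e_k$ is the $k$-th standard basis vector): a total of $1+7=8$ classes, with the partitions of the ends of $D_1$ and $D_2$ then forced by the index-zero condition exactly as in Example~\ref{ex:558}.

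Finally I would argue that for each of these eight combinatorial types at most one of the twelve representatives of $B$ limits to a building of that type, following the method in the proof of Proposition~\ref{prop:n=0}: if $C$ and $C'$ were two such, then $[C]=[C']=B$ and $B\cdot B=1$ give $C\cdot C'=1$ by positivity of intersections, so $C$ and $C'$ meet in a single point; but $D_1,D_1'$ (resp. $D_2,D_2'$) then carry identical homology and pass through the same exceptional divisors, while $C_U$ and $C_U'$ cover the same low-action curve, and a relative-intersection bookkeeping in the spirit of \S\ref{sec:n=0}, using \eqref{eqn:interstheq} and \eqref{eqn:blowupcalc}, produces at least two forced points of $C\cdot C'$, a contradiction. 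This gives at most eight curves breaking in the stated way. The main obstacle is the classification in the third paragraph — ruling out every subclass $z\le W(b_n)$ in the action window other than the eight expected ones, which requires playing the ECH and Fredholm index inequalities off against the rigid combinatorics of the Fibonacci-type sequences and carefully checking that no problematic multiple cover can occur.
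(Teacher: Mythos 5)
Your top-level architecture (bound the possible constraint distributions for $D_1$ by $8$, then show at most one $B$-curve breaks per distribution using $B\cdot B=1$) agrees with the paper, but the route you take to the ``$8$'' has a genuine gap. You propose to classify the class $z$ of $D_1$ absolutely — claiming it must be $W(\ell_n/\ell_{n-1})$ padded by zeros, or that vector with one extra $1$ in the terminal block — by playing generic-$J$ index nonnegativity and the action window against the Fibonacci combinatorics. That is exactly what those tools cannot deliver: the matched components of $D_1$ and $D_2$ may be multiply covered, and multiple covers of index-zero curves evade the generic-$J$ index bound entirely (the paper stresses that such problematic covers ``most probably do occur''; see Remark~\ref{rmk:model} and Remark~\ref{rmk:8}, where the analogous bottom level for the class $A$ is genuinely non-simple). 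For instance, for $n=1$ nothing in your sketch excludes a distribution in which $D_1$ is a $7$-fold cover of an index-zero plane through $E_1$ (so $z$ has a single entry $7$ and zero action) while $D_2$ carries all the remaining constraints with action $\tfrac18$: the action bookkeeping and the Fredholm count for the underlying simple curves are consistent, and the cover itself cannot be ruled out by regularity. The paper never attempts such a classification. Instead, Step 1 of Lemma~\ref{lem:keylemma} proves a \emph{pairwise} statement about the classes that actually occur: if $z$ and $z'$ arise from two distinct $B$-curves in the same stretching, then positivity of intersections applied to $D_{1}\cap D_2'$, $D_1'\cap D_2$, etc., together with $B\cdot B=1$, forces $(z-z')\cdot(y'-y)=\sum\eps_i^2\le 2$ as in \eqref{eqn:dotequation}, i.e.\ any two occurring classes differ in at most two entries and by at most one; the bound $|\Cc|\le 8$ then follows from the block structure of $W(b_n)$ and the area constraint, without ever identifying the classes. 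This pairwise rigidity, obtained by intersecting the lower parts of \emph{different} near-broken $B$-curves, is the key idea missing from your argument, and it is what makes the whole classification problem avoidable.

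There is a second, related gap in your uniqueness step. To force two intersection points between two $B$-curves that are close to breaking with the same constraint distribution, you must know that $D_{1,i}$ actually meets $D_{2,i}'$ (and vice versa). In the paper this is Lemma~\ref{lem:keylemma2}: $D_1\cdot D_2=\ell_n^2-z\cdot y>0$, proved by the optimization estimate $z\cdot z+y\cdot y\ge \ell_n^2\bigl(\tfrac{Q_n}{P_n}+\tfrac{P_n}{Q_n}-1\bigr)$, integrality, and the identity $P_n^2+Q_n^2-7P_nQ_n=9$. Your appeal to ``relative-intersection bookkeeping in the spirit of \S\ref{sec:n=0}'' does not supply this; the two forced intersection points in Proposition~\ref{prop:n=0} come from a writhe computation specific to the $n=0$ neck picture. (If your explicit classification were available one could verify $\ell_n^2-z\cdot y>0$ directly on the eight classes, but since that classification is the unproven crux, the gap propagates to this step as well.)
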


Proposition~\ref{prop:bound} will follow easily from the
 next two lemmas. Before reading the proof of Lemma~\ref{lem:keylemma},  it might be useful to refer back to Example~\ref{ex:558}.

\begin{lemma}
\label{lem:keylemma}  There are at most $8$ elements in  the collection  $\Cc$ of vectors $z$ that occur as possible homology classes for $D_1$.
\end{lemma}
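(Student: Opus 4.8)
The plan is to reduce Lemma~\ref{lem:keylemma} to a combinatorial statement about the decompositions $z+y=W(b_n)$ and then to cut the list of admissible $z$ down to eight, using positivity of symplectic area together with the ECH-index constraints that genericity of $J$ imposes on the pieces $D_1$ and $D_2$ of $C_L$.

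First I would record the constraints that hold for every $z\in\Cc$. Since $C_L$ has no connectors, $D_1$ has its positive ends only on $\be_1$, of total multiplicity $\ell_n$, and $D_2$ has its positive ends only on $\be_2$, of total multiplicity $\ell_n$; moreover $z+y=W(b_n)$ with $0\le z_i\le W_i$ for all $i$. As $D_1$ and $D_2$ are genuine $J$-holomorphic buildings in $\widehat\Ee_n$, each has nonnegative action, so \eqref{eqn:action4} gives, writing $w(b_n)=\tfrac1{Q_n}W(b_n)$ and $z\cdot W(b_n):=\sum_i z_iW_i$ (and similarly for $y$),
\[
 \Aa(D_1)=\ell_n-\tfrac1{Q_n}\,z\cdot W(b_n)\ \ge\ 0,\qquad \Aa(D_2)=\ell_n b_n-\tfrac1{Q_n}\,y\cdot W(b_n)\ \ge\ 0.
\]
Using $W(b_n)\cdot W(b_n)=\sum_iW_i^2=P_nQ_n$ from \eqref{eq:weight}, $b_nQ_n=P_n$, and $Q_n=\ell_n+\ell_{n-1}$ from \eqref{eq:Fibq}, these two inequalities sandwich the integer $z\cdot W(b_n)$ into the closed interval $[\,P_n\ell_{n-1},\ \ell_nQ_n\,]$. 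By Lemma~\ref{lem:fibid0}(i), applied with $x_n=\ell_n$ and $y_n=Q_n$, the quantity $\ell_nQ_n-\ell_{n-1}Q_{n+1}=\ell_nQ_n-P_n\ell_{n-1}$ is independent of $n$, hence equals its value $\ell_1Q_1-\ell_0Q_2=1$ at $n=1$; so the interval has length exactly $1$ and
\[
 z\cdot W(b_n)\ \in\ \{\,\ell_nQ_n-1,\ \ell_nQ_n\,\}.
\]

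This pins the total area of $D_1$ but not $z$ itself, and the heart of the argument is to pin the individual continued-fraction blocks of $z$. For generic admissible $J$ every somewhere injective component of $D_1$ or $D_2$ has nonnegative ECH index, so passing to the underlying somewhere injective curves of any multiple covers and using \eqref{eqn:Eei} one finds $I([D_1])\ge0$ and $I([D_2])\ge0$, i.e.
\[
 \sum_i(z_i+z_i^2)\ \le\ \gr\bigl(\{(\be_1,\ell_n)\}\bigr),\qquad \sum_i(y_i+y_i^2)\ \le\ \gr\bigl(\{(\be_2,\ell_n)\}\bigr).
\]
Feeding these back together with the displayed area constraint, the relation $z+y=W(b_n)$, positivity of intersections (which controls the relative-intersection cross terms between distinct components and between $D_1$ and $D_2$), and the fact that the limiting building --- being a limit of the irreducible $B$-curves, which satisfy $B\cdot B=1$ --- can carry only a single node in total, one runs down the blocks of $W(b_n)=(\,W_1^{\times6},W_2,W_3^{\times5},\dots,7,1^{\times7})$ in decreasing order of weight, in the inductive style of Lemmas~\ref{lem:WcM} and~\ref{lem:R} (and using \eqref{eq:recurrel}), and shows that on every block other than the final block $1^{\times7}$ the value of $z$ is forced: any extra exceptional multiplicity on a divisor of weight exceeding $\tfrac1{Q_n}$ either overspends the area budget or violates one of the ECH-index or adjunction constraints. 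Thus $z$ is completely determined except on the block $1^{\times7}$, where it may carry at most one extra unit of multiplicity, which gives $1+7=8$ possibilities --- compare the explicit list for $n=1$ in Example~\ref{ex:558}.

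The main obstacle is the step just described. The components of $D_1$ and $D_2$ can (and probably do) include multiply covered configurations of negative Fredholm or ECH index --- see Remarks~\ref{rmk:model} and~\ref{rmk:8} --- so the inequalities $I([D_1]),I([D_2])\ge0$ must be applied with care to the underlying somewhere injective curves, while tracking how the covers contribute to the homology class, to the area, and to the asymptotic partitions; and one must use the refined index inequality Proposition~\ref{prop:indineq} and the relative adjunction formula to rule out the configurations in which a wasteful distribution of constraints over the large exceptional divisors is exactly compensated inside such a cover. Carrying this analysis out against the continued fraction of $b_n$ is where essentially all the work of this lemma lies.
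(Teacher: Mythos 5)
Your bookkeeping at the start is fine: the two action inequalities do pin $z\cdot W(b_n)$ into $\{\ell_nQ_n-1,\ \ell_nQ_n\}$ (this is just the statement that one of $D_1,D_2$ has low action and the other has action $\approx\frac1{Q_n}$). But the actual content of the lemma --- the bound $|\Cc|\le 8$ --- is not proved; it is asserted. The step ``one runs down the blocks \dots and shows that on every block other than the final block the value of $z$ is forced'' is exactly the part that needs an argument, and the mechanisms you invoke cannot deliver it. Within a single block all the exceptional weights are equal, so redistributing multiplicity inside a block changes neither the area budget nor (in any controlled way) the total ECH index of the class $[D_1]$; area and index constraints on a \emph{single} building therefore cannot force a unique $z$ on the large blocks. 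Indeed the paper's proof never claims such forcing: its Case 1 explicitly allows all elements of $\Cc$ to differ on one non-final block and merely bounds their number by $6$ there. Moreover, your route through $I([D_1]),I([D_2])\ge 0$ is unreliable for precisely the reason flagged in Remarks~\ref{rmk:model} and~\ref{rmk:8}: $D_1$ and $D_2$ may contain multiply covered components of negative ECH/Fredholm index, and passing to underlying simple curves loses control of how the homology class and constraints are distributed --- you name this as ``the main obstacle'' but do not resolve it.

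The paper's argument avoids all of this by comparing \emph{pairs} of limits rather than constraining a single one. If $z,z'\in\Cc$ arise from two sequences of $B$-curves, then for large $i$ the four intersection numbers of the (nearly broken) pieces $D_{1}\cap D_2'$, $D_1'\cap D_2$, $D_1\cap D_2$, $D_1'\cap D_2'$ are each nonnegative by positivity of intersections and each at most $1$ because $B\cdot B=1$; since $z+y=z'+y'=W$ this gives
$0\le (z-z')\cdot(z-z')=(z-z')\cdot(y'-y)\le 2$,
so any two elements of $\Cc$ differ in at most two entries, each by $\pm1$ (Step 1 of the paper's proof). The bound $8$ then follows from a purely combinatorial case analysis of such families of vectors, organized by which block the differences occur on and using the area dichotomy (only the last block has weight $\frac1{Q_n}$, only the second- and third-to-last blocks differ by $\frac1{Q_n}$). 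Note also that the conclusion you aim for (``$z$ determined except for at most one extra unit on the last block'') is stronger than what the lemma asserts or what its proof establishes; Example~\ref{ex:558} shows that picture is what happens for $n=1$, but the lemma only needs, and only proves, the counting bound. To repair your write-up you would need to replace the block-forcing claim by the pairwise intersection argument (or supply a genuinely new proof of the forcing claim that does not rely on index positivity for possibly multiply covered buildings).
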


\begin{proof}
{\bf Step 1.}  {\it Two elements $z, z'\in \Cc$ differ in at most two places.  Moreover any two entries differ by at most one.}

 Let $z$ and $z'$ be two homology classes with representatives $D_1, D_1'$, and consider $(z-z') \cdot (y' - y);$ let $D_2$ and $D_2'$ denote the representatives for $y$ and $y'$.  We have
\begin{equation}
\label{eqn:dotproductequation1}
(z-z') \cdot (y' - y) = z\cdot y' + z'\cdot y - z'\cdot y' -z\cdot y .
\end{equation}

We can understand the right hand side of \eqref{eqn:dotproductequation1} by observing that,
 because the orbits $\be_1, \be_2$ on $\p \Ee$ can be filled by discs that intersect once,
  the number of intersections between $D_1$ and $D_2'$ is given by $\ell_{n}^2-z\cdot y'$, and similarly for the other relevant pairings of the $D_i'$.  On the other hand,
  as we pointed out above there are $J$-holomorphic $B$-curves whose lower parts approximate $D_1\cup D_2$ and $D_1'\cup D_2'$ arbitrarily closely.  Hence all these intersection numbers are nonnegative, and bounded from above by $1$ because $B\cdot B = 1$.  Hence $|(z-z') \cdot (y' - y)|\le 2$.

In addition, because $z+y=z'+y'=W$
the vectors $z-z', y'-y$ are equal. Thus, if $z - z': = (\eps_i)$ we have
\begin{equation}
\label{eqn:dotequation}
0\le  \sum \eps_i^2 = (z-z') \cdot (y'-y)  \le 2.
\end{equation}
Step 1 follows readily.

\vspace{2 mm}

\NI
{\bf Step 2.}  {\it Completion of the proof.}

 Now recall that each of the classes $E_i$ have a definite area, and these areas come in blocks; no block has length greater than $7$.  We also know that the total action of
each $D_i$ must be close to either $0$ or $\frac 1{Q_n}$.

Also recall that only the last block  of the weight vector $w$ has
entries of
 size $\frac 1{Q_n}$; further, the only two blocks
 whose entries
 differ by $\frac 1{Q_n}$ are the second and third to last.
 We now prove that $\Cc$ has at most $8$ elements by a case by case analysis.

\vspace{2 mm}

\NI
{\bf Claim.} {\it  If any two $z$ and $z'$ in $\mathcal{C}$ differ on any block $\mathcal{B}$ other than the last three, then all the representatives in $\mathcal{C}$ differ
on this block.}

\begin{proof}[Proof of Claim]
Because the entries in $\Bb$ have size $> \frac 1{Q_n}$ and the total area of $D_1$ is at most $\frac 1{Q_n}$ it follows from Step 1 that the difference vector $z - z'$ must have exactly two entries, one $+1$ and one $-1$.  Further these must occur on the same block because $\Bb$ is not one of the last three blocks.
So, if $z''$ is any other representative in $\mathcal{C}$, $z''$ must have the same entries as $z$ and $z'$ on all other blocks: otherwise, either $z''$ would differ from one of $z$ and $z'$ in more than $2$ places, violating Step $1$, or it would have the wrong area.
\end{proof}

 \NI
{\em Case 1:}  With this claim in mind, consider the case where all of the elements in $\mathcal{C}$ differ on $\mathcal{B}$,
 which by assumption has length $\ell$, where $\ell = 5$ or $6$.
  Label the places in $\mathcal{B}$ (i.e. the entries in any vector in $\mathcal{C}$ corresponding to the block $\mathcal{B}$) $p_1, \ldots, p_\ell$, and fix an element $z$, with entries $a_1,\dots, a_\ell$ on this block.  Now consider another element $z' \in \mathcal{C}$.  Then $z'$ and $z$ must differ in two places, and without loss of generality we may assume that
$z' = (a_1+\eps_1, a_2-\eps_1, \dots,a_\ell)$, where $|\eps_1|=1$.  A third element $z''$ can differ from both $z,z'$ in the first place by at most $1$.  Hence the first place of $z''$ must be either $a_1$ or $a_1+\eps_1$; and similarly, its second place is either $a_2$ or $a_2-\eps_1$.  Further, if $z''$ is different from both $z,z'$ we may assume it differs from $z$ (and hence also $z'$) in place $3$, and hence is either
$$
(a_1+\eps_1, a_2, a_3-\eps_1, a_4,\dots, a_\ell) \quad\mbox{ or} \quad (a_1, a_2-\eps_1, a_3+\eps_1, a_4,\dots,a_\ell).
$$
Since the two third entries above differ by $2$, only one of these possibilities can occur.
Thus there are at most three elements in $\Cc$
whose entries differ only in  the places $1,2,3$ of the block.
Moreover, by interchanging $z,z'$ we may assume that $z''= z$ in its first place,
so that $z,z'$ differs only in the first two places, while $z',z''$ differ only in the second and third places.
If there is another element $z'''\in \Cc$ we may assume that it differs from $z,z',z''$ in the fourth place.  Again there are two possibilities for this element, only one of which occurs.  Finally there might be one or two more  elements in $\Cc$ that differs from the previously found elements in places $5$ and or $6$.  Thus $|\Cc|\le 6$, so that  Lemma~\ref{lem:keylemma} holds in this case.
\MS

\NI
{\em Case 2:}  The other case to consider, then, is the case where all the representatives in $\mathcal{C}$ differ only on the last three blocks.

\vspace{2 mm}

Note first of all that in this case, by area considerations, all of the elements in $\mathcal{C}$ either differ on the second and third to last blocks, or on just the last block.  In the case where the elements differ on the second and third to last blocks, we can repeat the argument from Case $1$, to conclude that there are no more representatives in $\mathcal{C}$ than the sum of the lengths of these two blocks.  Since the sum of these lengths is no more than $7$, this proves Lemma~\ref{lem:keylemma} in this case.

We can assume, then, that all of the entries in $\mathcal{C}$ differ on the last block.  With this in mind, label the places in this block
 $p_1, \ldots, p_7$, as above.  As above, first fix an element $z \in \mathcal{C}$, and consider another element $z' \in \mathcal{C}$.  Note that because $z + y = W$ has entries $1$ on the last block, all entries of $z$
are $0$ or $1$.  Since this is true for all elements in $\Cc$, any other element $z'\in \Cc$ is determined by
the places at which it differs from $z$.  We choose $z$   so that it has the smaller of the two areas of the elements in $\Cc$, and then write any $z'\in \Cc$ as $z+\eps$, where $\eps$ is a vector of length $7$ with entries $0$ or $1$ and we add modulo $2$.  Thus $\eps$ belongs to a collection $\Ee\subset \Z_2^7$ of vectors that
each have at most two nonzero entries.  Let $\Ee_1\subset \Ee$ be the subset of vectors of length $1$.
If $\Ee_1=\emptyset$, then the argument in Step 1 shows that $|\Ee| = |\Cc| \le 7$.   If $ \Ee_1\ne \emptyset $,
then our choice of $z$ implies that $area(z+\eps)>area(z)$ for $\eps\in \Ee_1$.
Therefore the places where $\eps\ne 0$ form a subset of the places where $z= 0$.
Therefore if $z$ has $k$ zero places, assumed w.l.o.g. to be the first $k$, there is a subset
$\Ii\subset \{1,\dots, k\}$ such that  for all $i\in \Ii$ there is an element
$z_i$ in $\Cc$ that agrees with $z$ except at the $i$th place where
$z$ has zero while $z_i$ has one.   Any another element $z$ in $\Cc$  must have a zero in some place $k$ where $z$ is one.  Hence because $z$ has minimal area, $z$ also has to have a one in some place $j$  that $z$ is zero.
But if  $i\in \Ii\less \{j\}$, then $z_i$ differs from $z$ in all three places $ i,j, k$.
Therefore, if $|\Ii|>1$ there are no such elements $z$, which implies that $|\Cc| = |\Ii| + 1 \le 8$. On the other hand if $|\Ii|=1$,  there are at most $7 - |\Ii|$ elements $z$, one for each place where $z$ has entry $1$.  Thus in all cases there are at most $8$ elements in $\Cc$.
This completes the proof of Lemma~\ref{lem:keylemma}. \end{proof}
\vspace{2 mm}

\begin{lemma}\label{lem:keylemma2} $D_1$ and $D_2$ must intersect.
\end{lemma}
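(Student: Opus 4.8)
The plan is to compute the relative intersection number $D_1\cdot D_2$ inside the completed blow-up $\widehat{\Ee}_n$ and to show it is strictly positive; by positivity of intersections this forces $D_1$ and $D_2$ to have a common point. Since every end of $D_1$ lies on $\be_1$ and every end of $D_2$ lies on $\be_2$, the two curves share no asymptotic Reeb orbit, so they have no irreducible component in common and the linking term $L_\tau(D_1,D_2)$ in \eqref{eqn:interstheq} vanishes; thus $D_1\cdot D_2=Q_\tau([D_1],[D_2])$. Polarizing the $Q_\tau$-formula in \eqref{eqn:blowupcalc} and using that the top ends of $D_1$ (resp.\ $D_2$) cover $\be_1$ (resp.\ $\be_2$) with total multiplicity $\ell_n$ --- exactly the computation already made in the proof of Lemma~\ref{lem:keylemma} --- gives
\[
D_1\cdot D_2 \;=\; \ell_n^2 - z\cdot y,\qquad z\cdot y:=\sum_i z_iy_i ,
\]
so everything reduces to the estimate $z\cdot y\le\ell_n^2-1$.

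Next I would pin down $\sum_i z_iW_i$ via the action bookkeeping. As observed after \eqref{eqn:area}, in any breaking of a $B$-curve exactly one matched component has action $=_\eps\tfrac1{Q_n}$ and every other has action $=_\eps0$; a direct computation with \eqref{eqn:action3} (using $\mu_n=\tfrac{1+b_n}3$ and $h_{2n+2}=3\ell_n$) shows that here $\mathcal A(C_U)=_\eps0$, so this distinguished component lies in $C_L=D_1\cup D_2$, hence in one of $D_1,D_2$. Consequently the total action of each $D_i$ is, modulo $\eps$, either $0$ or $\tfrac1{Q_n}$. Since the weights satisfy $w(b_n)=\tfrac1{Q_n}W(b_n)$, the value of $\mathcal A(D_1)=\ell_n-\tfrac1{Q_n}\sum_i z_iW_i$ given by \eqref{eqn:action4} (with $\mathcal A(\be_1)=1$) then forces, by integrality, the exact identity $\sum_i z_iW_i=\ell_nQ_n$ in the first case and $\sum_i z_iW_i=\ell_nQ_n-1$ in the second (and the two possibilities for $D_2$ give the same two values of $\sum_i z_iW_i$). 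Writing $y_i=W_i-z_i$ and using $\sum_i W_i^2=P_nQ_n$ from \eqref{eq:weight} we get $z\cdot y=\sum_i z_iW_i-\sum_i z_i^2$, whence by Cauchy--Schwarz
\[
z\cdot y\;\le\; \frac{\bigl(\sum_i z_iW_i\bigr)\bigl(P_nQ_n-\sum_i z_iW_i\bigr)}{P_nQ_n}.
\]

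It remains to see that this upper bound is strictly less than $\ell_n^2$, which by integrality of $z\cdot y$ will give $z\cdot y\le\ell_n^2-1$. Here I would invoke the Fibonacci identities $Q_{n+1}\ell_n=Q_n\ell_{n+1}+1$ (this is \eqref{eq:recurrel}) and $\ell_nQ_n=\ell_{n-1}Q_{n+1}+1$ (from Lemma~\ref{lem:fibid0}(i) applied with $x_n=\ell_n,\,y_n=Q_n$, the constant value being $\ell_1Q_1-\ell_0Q_2=1$), together with $Q_n=\ell_n+\ell_{n-1}$ and $P_n=Q_{n+1}=\ell_{n+1}+\ell_n$ from \eqref{eq:Fibq}. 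When $\sum_i z_iW_i=\ell_nQ_n$ these rewrite the right-hand side above as $\ell_n^2-\tfrac{\ell_n}{P_n}$; when $\sum_i z_iW_i=\ell_nQ_n-1=\ell_{n-1}P_n$ they rewrite it as $\ell_n^2-\tfrac{\ell_n}{Q_n}$. In either case the bound equals $\ell_n^2$ minus a number in $(0,1)$, so the integer $z\cdot y$ is at most $\ell_n^2-1$, and hence $D_1\cdot D_2\ge1$.

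I expect the genuinely delicate point to be exactly this tightness: Cauchy--Schwarz misses the needed inequality by strictly less than one, so one cannot bypass the integrality step, and the latter relies on the precise Fibonacci identities rather than on the asymptotics $Q_n/P_n\to\tau^{-4}$, $\ell_n/\ell_{n-1}\to\tau^4$ (which only yield the borderline limiting ratio $1$). A subsidiary technicality --- of the sort dealt with routinely via Remark~\ref{rmk:eps} --- is the promotion of the approximate action equalities to exact ones; this is legitimate because, modulo $\eps$, $\mathcal A(D_1)$ is an integer multiple of $\tfrac1{Q_n}$, since $w(b_n)$ has entries in $\tfrac1{Q_n}\Z$ and $b_n\in\tfrac1{Q_n}\Z$.
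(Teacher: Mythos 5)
Your proposal is correct and follows essentially the same route as the paper: both reduce the lemma to showing $z\cdot y\le \ell_n^2-1$ for the intersection count $\ell_n^2-z\cdot y$, via a Cauchy--Schwarz/optimization estimate against the weight vector combined with the action bookkeeping, an integrality step, and Fibonacci identities. The only difference is organizational: you pin down $z\cdot W$ exactly (as $\ell_nQ_n$ or $\ell_nQ_n-1$) and apply integrality to $z\cdot y$ using the linear identities $Q_{n+1}\ell_n-Q_n\ell_{n+1}=1$ and $\ell_nQ_n-\ell_{n-1}Q_{n+1}=1$, whereas the paper bounds $z\cdot z+y\cdot y$ from below with an error less than $2$, rounds that integer, and finishes with the quadratic identity $P_n^2+Q_n^2-7P_nQ_n=9$ (equivalently $9\ell_n^2-P_nQ_n=1$).
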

\begin{proof}
We know that $z+y=W$,
 the normalized weight  vector; below, we also make use of the unnormalized weight  vector  $w$. Observe  that
\begin{equation}
\label{eqn:weightsequenceequation}
 P_nQ_n=W \cdot W = (z+y)\cdot(z+y) = z\cdot z+ y \cdot y + 2 z \cdot y .
\end{equation}
\NI
{\bf Step 1.}   {\it
The following key estimate holds:}
\begin{equation}
\label{eqn:estimate}
z\cdot z + y\cdot y \; \ge \;
\ell_{n}^2\, \bigl(\frac{Q_n}{P_n}+\frac{P_n}{Q_n}-1\bigr).
\end{equation}
We prove this using
an optimization argument.  Recall that one of the $D_i$ has area close to $0$, and the other has area close to $1/h_{2n+1}$.  If $D_1$ has area close to $0$, then $z$ is subject to the area constraint
\begin{equation}
\label{eqn:constraint}
z \cdot w = \ell_{n}.
\end{equation}
Hence $z \cdot z$ is minimized when $z=\lambda w$, where $\lambda$ is a scalar satisfying
\begin{equation}
\label{eqn:lambdaeq}
\lambda =
\frac{\ell_{n}}{w\cdot w} = \ell_{n}\frac{Q_n}{P_n}
\end{equation}
so that
\begin{equation}
\label{eqn:xoptimizationeq}
z \cdot z =
\ell_{n}^2\,  \frac{Q_n}{P_n}
\end{equation}
Similarly, the same argument gives that if $D_2$ has area close to $0$, then $y \cdot y$ is minimized when
\begin{equation}
\label{eqn:yoptimizationeq}
y \cdot y =
\ell_{n}^2\, \frac{P_n}{Q_n}.
\end{equation}
Thus, if $D_1$ and $D_2$ both had area close to $0$, then combining \eqref{eqn:xoptimizationeq} and \eqref{eqn:yoptimizationeq} would give \eqref{eqn:estimate}.   The only difference when $D_1$ has area close to
$\frac{1}{Q_n}$
is that the right hand side of \eqref{eqn:constraint} is smaller by
$\frac{1}{Q_n}$
and analogously for $D_2$.  This will weaken our estimate for $z \cdot z + y \cdot y$.  The difference is greatest when $D_2$ has area close to $\frac{1}{Q_n}$
 In this case, the right hand side of the analog of \eqref{eqn:constraint} is smaller by $\frac{1}{Q_n}$
so that  the right hand side of the analog of \eqref{eqn:lambdaeq} is smaller by $\frac{1}{P_n}$.
Hence, the right hand side of the analog of \eqref{eqn:yoptimizationeq} is smaller by an amount that is bounded from below by
$$
2 \ell_{n}\,  \frac{P_n}{Q_n} \cdot \frac1{P_n} = 2 \cdot \frac{\ell_{n}}{Q_n} < 2.
$$
Since  $z\cdot z + y\cdot y$ is an integer, we obtain  \eqref{eqn:estimate}.

\vspace{2 mm}

\NI
{\bf Step 2.}  {\it Completion of the proof.}

Given \eqref{eqn:estimate}, we can now prove the claim by using \eqref{eqn:weightsequenceequation}.
We know that $D_1$ and $D_2$ must intersect
\[ \ell_{n}^2 - z \cdot y \]
times.  By using \eqref{eqn:estimate} and \eqref{eqn:weightsequenceequation}, we get that $z\cdot y$ is bounded from above by
\[ \ka: = \tfrac 12 \left( P_nQ_n - \ell_n^2 \bigl(\frac{Q_n}{P_n} + \frac{P_n}{Q_n}\bigr)\right) + \tfrac 12.\]
Thus, the number of intersection points is bounded from below by $\ell_{n}^2 - \ka$.
But  $P_n^2 + Q_n^2 - 7P_nQ_n = 9$, which implies $\frac{Q_n}{P_n} + \frac{P_n}{Q_n}>7$.  Thus
$$
2(\ell_{n}^2 - \ka) > 9\ell_n^2 -  P_nQ_n = \frac 19((P_n + Q_n)^2 - 9P_nQ_n) = 1.
$$
Thus the intersection number is positive, as claimed.  \end{proof}

\NI \begin{proof}[Proof of  Proposition~\ref{prop:bound}]   For generic $J$ on the blow up $\widehat{\C P^2 }$, the class $B$ is represented by precisely $12$ disjoint embedded curves $C_\al, 1\le \al\le 12$.
 When we stretch the neck via a generic  family $J^R, R\to \infty,$ of almost complex structures,  we may choose an increasing unbounded sequence $R_i$  and choose labels for these $B$-curves so that, for each $\al$,  the sequence $(C_{i\al})_{i\ge 1}$ converges to some limiting building as $i\to \infty$.
 In view of Lemma~\ref{lem:keylemma},
it suffices to show that for each splitting $z,y$ of the constraints  there is at most one such sequence $(C_{i\al})_{i\ge 1}$  whose limiting building
has these constraints.

Suppose to the contrary that there were two such sequences.  Then as we remarked earlier for very large $i$ we may cut
the spheres $C_{i}$ and $C_{i}'$ just above the neck in such a way that the lower parts of these curves are unions $D_{1i}\cup D_{2i}$ and $D_{1i}'\cup D_{2i}'$,
where  $D_{1i}$ and $D_{1i}'$ are compact curves with   constraints $z$ and boundaries that are both are very close to $\be_1$, while $D_{2i}$ and $D_{2i}'$   have constraints $y$ and  boundaries  very close to $\be_2$.   But
 Lemma~\ref{lem:keylemma2} implies that  for large $i$ both  intersections $D_{1i}\cap D_{2i}' $ and $D_{1i}'\cap D_{2i}$ are nonempty.  Since these  are intersections of $J$-holomorphic curves, both intersections are positively oriented.  It follows that $C_i\cdot C_i'\ge 2$.  Since  these curves both represent the class $B$ which has $B\cdot B = 1$, this is impossible.
\end{proof}

\subsection{The case $n=0$}\label{sec:n=0}

As in Remark~\ref{rmk:n=0}, we take
\begin{equation}\label{eq:mu00}
\mu_* = \frac{17}6 + \eps',
\end{equation}
and start from the embedding
$$
\Phi: E(1,8+\eps)\;\se\; \C P^2(\frac {17}6 + \eps'),
$$
defining $\ov X$ to be the completed complement of its image. The class $B$ is now $3L-E_1 -\dots - E_8$,
 and we are interested in the structure of   the top part $C_U$ of the  limits of genus $0$ representatives of $B$ as the neck $\Phi(\p E(1,8+\eps))$ is stretched.  One main difference from the case $n>0$ is that now
 $C_U$ must limit on the orbit set  $\{(\be_1,8)\}$.  Indeed,
 since $C_U$ must have positive action it cannot be asymptotic to $\{(\be_1,1),(\be_2,1)\}$; because its action is $<1$ it cannot
 end on $\{(\be_1,r)\}, r<8$;
  while if it were asymptotic to $(\be_2,1) $ it would by \eqref{eqn:Xind} have index $2(-1 + 9 - 1-8)<0$ which is also impossible.

On the other hand,  it
is now possible for $C_U$ to have two negative ends on the orbit set
$\lbrace (\beta_1,8) \rbrace$.
  To see this, note that  if $C_U$ has a single negative end on $\lbrace (\beta_1,8) \rbrace$ then, by Remark~\ref{rmk:writhexact}~(ii) the writhe bound in \eqref{eqn:writhebound2} is an equality, and one can use it together with Proposition~\ref{prop:indineq} to calculate that
$2\de(C_U) = 2$, i.e. $C_U$ must have a double point.   On the other hand, the top part $C_U^A$ of the building obtained from   the exceptional sphere in class $A = 3L-2E_1-E_2-\dots E_7$  by stretching the neck  $\p E(1,8+\eps)$  must be embedded.  Therefore, $C_U^A$  must have more than one end on
$\lbrace (\beta_1,8) \rbrace$;  cf. Remark~\ref{rmk:8} below.

Here is our main result.

\begin{prop} \label{prop:n=0} When $n=0$ all representatives of $B$ break into a building whose top $C_U$ is connected and has negative end on
the orbit set
$\lbrace (\beta_1,8) \rbrace$. There are at most $8$ representatives with more than one end, and hence at least $4$ with just one end.
\end{prop}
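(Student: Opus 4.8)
The plan is to establish the three assertions in turn, mirroring the analysis for $n>0$ but keeping in mind that here the curve $C_0$ does not exist and $\om(B)=_\eps\tfrac12$. First I would recall, as already observed in the discussion preceding the proposition, that the negative end of $C_U$ must be $\{(\beta_1,8)\}$. Since the lower part of the building lies in $\widehat{\Ee}_0$ (and the neck), it carries no class of positive degree, so $C_U$ has class $3L$; its action $3\mu_*-\mathcal{A}(\beta)$ is therefore positive but, as $\om(B)=_\eps\tfrac12$, less than $1$. This excludes $\{(\beta_1,1),(\beta_2,1)\}$ and $\{(\beta_1,r)\}$ with $r<8$ (whose actions are $=_\eps 9$ and $=_\eps r\le7$), and by \eqref{eqn:Xind} a class $3L$ curve with a negative end on a positive power of $\beta_2$ would have $\tfrac12\ind\le-1+9-1-8<0$, which is impossible for generic $J$; so the only option is $\{(\beta_1,8)\}$.

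Next, $C_U$ is connected. If it were reducible its components would lie in classes $d_iL$ with $d_i\ge1$ and $\sum d_i=3$ — none can be closed, since a closed curve of degree $\ge1$ has action $\ge\mu_*>\om(B)$ — and hence each is negatively asymptotic to $\{(\beta_1,k_i)\}$ with $k_i\ge1$ and $\sum k_i=8$. Nonnegativity of the action $d_i\mu_*-k_i$ then forces $k_i\le2$ when $d_i=1$ and $k_i\le5$ when $d_i=2$, so $\sum k_i\le7<8$ for either partition $(1,1,1)$ or $(1,2)$ of $3$ — a contradiction. The same estimate excludes multiply covered components, and $C_U$ cannot itself be an irreducible multiple cover since $3\nmid8$; so $C_U$ is connected and somewhere injective, with $\ind(C_U)=0$ and $I(C_U)=2$ by \eqref{eqn:Xind}, \eqref{eqn:cp2calc} and the ECH index formula \eqref{eqn:echdefn}.

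It remains to bound by $8$ the number of representatives of $B$ that break with $C_U$ having $s\ge2$ negative ends. Since $I(C_U)-\ind(C_U)=2$, the improved index inequality \eqref{eqn:indineq} gives $\delta(C_U)\le1$, and equality would force $C_U$ to carry the ECH partition $(8)$ of its negative end by Remark~\ref{rmk:writhexact}~(iii); as $s\ge2$ this cannot happen, so $\delta(C_U)=0$ and $C_U$ is embedded. I would then analyze the lower part $C_L$, which meets the eight exceptional divisors $E_1,\dots,E_8$ (all of area $=_\eps1$) once each: since $\om(B)=_\eps\tfrac12$ is entirely used up by $C_U$, every matched component of $C_L$ has action $=_\eps0$, and expressing such a component through its top multiplicities on $\beta_1$ and its class $-\sum_{i\in S}E_i$ and applying the relative adjunction formula \eqref{eqn:adjunction} in $\widehat{\Ee}_0$ together with the writhe bound \eqref{eqn:writhebound2} (note that $\lfloor a\theta\rfloor=0$ for $a\le8$, where $\theta=\tfrac18+\mathrm{O}(\eps)$ is the monodromy angle of $\beta_1$) severely restricts both the components and the way the eight constraints are distributed among them. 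The remaining bookkeeping — enumerating the admissible distributions and then showing, exactly as in the proof of Proposition~\ref{prop:bound} via $B\cdot B=1$ and positivity of intersections (cut two candidate $B$-curves just above the neck and compare their lower parts), that each distribution supports at most one representative of $B$ — yields the bound $8$, and hence at least $12-8=4$ representatives with a single end. I expect this last step to be the main obstacle: unlike the case $n>0$ treated in \S\ref{sec:noconn}, all the exceptional divisors here have the same area, so the pieces of $C_L$ cannot be separated by area blocks as in Lemma~\ref{lem:keylemma}, and the enumeration must be driven throughout by the writhe and adjunction estimates together with the intersection theory.
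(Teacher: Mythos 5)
Your first two paragraphs are sound and essentially reproduce what the paper establishes in and around Lemma~\ref{le:CL}: the negative end is $\lbrace(\beta_1,8)\rbrace$, $C_U$ is connected and somewhere injective, $I(C_U)=2$, $\ind(C_U)=0$, and $\delta(C_U)=0$ when there are $s\ge 2$ ends. The gap is in the third paragraph, which is where the actual content of the proposition (the bound of $8$) lives, and it occurs in two places. First, you never determine which partitions of $8$ can occur at the negative end of $C_U$; counting constraint distributions only gives $8$ if the partition is $(1,7)$, whereas a $(2,6)$ or $(4,4)$ splitting would already admit $\binom{8}{2}=28$ or $\binom{8}{4}/2=35$ distributions. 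This restriction cannot be extracted, as you propose, from adjunction and writhe applied to the matched components of $C_L$: the bottom level consists of eight disjoint embedded planes, one through each constraint, which satisfy \eqref{eqn:adjunction} with no slack regardless of how the ends of $C_U$ are partitioned, and the curves in the neck joining a multiplicity-$a$ end of $C_U$ to $a$ of these planes are multiply covered (branched) covers of the trivial cylinder over $\beta_1$, to which \eqref{eqn:adjunction} and \eqref{eqn:writhebound2} do not apply. The paper obtains $(1,7)$ in Lemma~\ref{le:CL1} by applying the writhe estimates to the neck portions of the nearly broken, somewhere injective $B$-curve (using simplicity of $C_U$ and $C_{LL}$ to control the braids at the two ends of the neck), and comparing the forced number \eqref{eq:neck2} of neck double points with $\delta=1$ for a $B$-curve. (Alternatively, the negative-end writhe bound \eqref{eqn:writheb3} together with adjunction applied directly to $C_U$, where all $p_i=1$, gives $\sum_{i,j}\min(a_i,a_j)\le 10$ and hence $(1,7)$; but some such computation is indispensable and is absent from your outline.)

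Second, the uniqueness-per-distribution step cannot be run ``exactly as in Proposition~\ref{prop:bound}''. In that argument the two groups of lower curves are asymptotic to the two \emph{different} orbits $\beta_1,\beta_2$, so mixed intersection numbers are $\ell_n^2-z\cdot y'$ and Lemma~\ref{lem:keylemma2} forces them to be positive. Here both groups of bottom-level planes are asymptotic to the same orbit $\beta_1$, are disjoint and embedded, and may well be the same eight planes in every one of the twelve limits, so comparing lower parts produces no intersection points at all. What makes the $B\cdot B=1$ argument work is that the neck writhe computation \eqref{eq:neck} locates the unique double point of the $B$-curve as an intersection of the two neck components $C_1^{neck}$ and $C_7^{neck}$; two distinct $B$-curves splitting with the same distinguished constraint then have both mixed neck intersections nonempty, giving intersection number at least $2$, which is impossible. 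Locating the double point in the neck, between two different components, is the missing idea; without it neither the count of $8$ assignments (one per choice of the constraint below the multiplicity-one end) nor the ``at most one curve per assignment'' claim is justified.
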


The proof below shows that if $C_U$ has more than one end, it must have two ends of multiplicities $1,7$, and that there are at most $8$ such possibilities; cf. Remark~\ref{rmk:8}.  Our main tool is a writhe calculation for neck components.

  \begin{lemma}\label{le:CL}  When $n=0$, $C_U$ is connected and simple with negative end on
  $\lbrace (\beta_1,8)\rbrace$.  Moreover, the bottom level $C_{LL}$ consists of  eight disjoint and embedded components, each with top $(\be_1,1)$ and going through one constraint.
  \end{lemma}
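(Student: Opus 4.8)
The plan is to run an area budget together with the refined index inequality. First I would record that $\om(B)=_\eps 3\cdot\tfrac{17}{6}-8=\tfrac12$; since the discussion preceding the lemma already identifies the negative asymptotics of $C_U$ with the orbit set $\{(\be_1,8)\}$, of action exactly $8$, we get $\om(C_U)=3\mu_*-8=_\eps\tfrac12$, so the total action of the neck levels together with $C_{LL}$ is only $O(\eps)$ (in fact $8\eps''$). Since this total is the sum of the neck action $\mathcal{A}(C_U\text{'s end})-\mathcal{A}(C_{LL}\text{'s positive asymptotics})$ and $\om(C_{LL})=\mathcal{A}(C_{LL}\text{'s positive asymptotics})-8(1-\eps'')$, each of which is $\ge 0$, integrality forces the positive asymptotics of $C_{LL}$ to be exactly $\{(\be_1,8)\}$: any end on $\be_2$ would push $\mathcal{A}(C_{LL}\text{'s positive asymptotics})$ strictly above $8$. (It follows that the neck has zero action and consists of branched covers of the trivial cylinder over $\be_1$, which is why $C_U$ need not have a single end; that finer point is settled later by the writhe computation, not here.)

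Next I would check that $C_U$ is connected and somewhere injective. If $C_U=\bigsqcup_j C_U^{(j)}$ with $C_U^{(j)}$ of degree $d_j\ge 1$, $\sum d_j=3$, carrying total multiplicity $m_j$ on $\be_1$ with $\sum m_j=8$, then positivity of the action $d_j\mu_*-m_j$ gives $m_j<\tfrac{17}{6}d_j$, hence $m_j\le 2$ when $d_j=1$ and $m_j\le 5$ when $d_j=2$; the partitions $(1,1,1)$ and $(1,2)$ of $3$ then give $\sum m_j\le 6$ or $\le 7$, both $<8$, so $C_U$ is connected. It cannot be multiply covered, since a cover of degree $\ge 2$ in class $3L$ would have to cover a curve of class $L$, which would carry $\be_1$-multiplicity $\tfrac83\notin\Z$.

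Then I would analyze $C_{LL}$ component by component. Write $C_{LL}=\bigsqcup_D D$ into connected components; positivity of intersection with the (unique $J$-holomorphic) exceptional spheres $\mathcal{E}_i$ forces $[D]=-\sum_{i\in S_D}E_i$ for subsets $S_D$ partitioning $\{1,\dots,8\}$, and each such class being primitive, $D$ is somewhere injective. Comparing $\sum_D|S_D|=8$ with $\sum_D(\be_1\text{-multiplicity of }D)=8$ and the positivity of $\mathcal{A}(D)=(\be_1\text{-mult of }D)-|S_D|(1-\eps'')$ forces the $\be_1$-multiplicity of $D$ to equal $k_D\eqdef|S_D|$, with no $\be_2$-end. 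Since $\gr(\{(\be_1,k)\})=2k$ for $k\le 8$ (Pick's theorem on a triangle of height $<1$), \eqref{eqn:Eei} gives $I(D)=2k_D-2k_D=0$, while \eqref{eqn:indEE} (every monodromy angle involved is $<1$, and $D$ has genus zero as the building does) gives $\tfrac12\ind(D)=r_D-1$, where $r_D$ is the number of positive ends of $D$. Now Proposition~\ref{prop:indineq} reads $0=I(D)\ge\ind(D)+2\delta(D)+2A(D)=2(r_D-1)+2\delta(D)+2A(D)$, forcing $r_D=1$, $\delta(D)=0$, $A(D)=0$. But with a single positive end, of multiplicity $k_D$, the path $\Lambda_D$ in the definition of $A$ is the single edge from $(0,0)$ to $(k_D,0)$, which carries $k_D-1$ interior lattice points, so $A(D)=\tfrac12(k_D-1)=0$ forces $k_D=1$. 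Hence $C_{LL}$ has exactly eight components $D_1,\dots,D_8$ with $[D_i]=-E_i$, each an embedded genus-zero plane with a single positive end $(\be_1,1)$, each meeting $\mathcal{E}_i$ once and no other $\mathcal{E}_j$. Finally, a short relative-intersection computation gives $D_i\cdot D_j=0$ for $i\ne j$, so the $D_i$ are pairwise disjoint by positivity of intersections, which proves the lemma.

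The main obstacle is making the area bookkeeping watertight: every statement about asymptotics is extracted from inequalities that hold only up to the small perturbations $\eps,\eps',\eps''$, so one has to verify (as in Remark~\ref{rmk:eps}) that each relevant strict inequality survives them — in particular the vanishing of the $\be_2$-multiplicity of $C_{LL}$, which underlies everything. A secondary point is justifying that Proposition~\ref{prop:indineq} applies to each piece of $C_{LL}$, i.e. that each piece is somewhere injective of genus zero; this is where primitivity of the classes $-\sum_{i\in S}E_i$ and the genus-zero property of the limiting building are used. No writhe estimate enters this lemma; the writhe computation for neck components is needed only afterwards, to see that at least four of the twelve buildings have $C_U$ with just one end.
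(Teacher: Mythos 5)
Your argument is correct, but it is organized rather differently from the paper's proof, so let me compare. For connectedness and simplicity of $C_U$ you use nonnegativity of the action ($m_j\le d_j\mu_*$, hence $m_j\le 2$ for $d_j=1$ and $m_j\le 5$ for $d_j=2$), where the paper gets exactly the same numerical bounds from the index formula \eqref{eqn:Xind}; the two are interchangeable here, and your preliminary action budget (top of $C_{LL}$ equal to $\{(\be_1,8)\}$, trivial neck) is implicit in the paper as well. The genuine divergence is in the treatment of $C_{LL}$: the paper applies Proposition~\ref{prop:indineq} once to the whole somewhere injective curve $C_{LL}$ — $I(C_{LL})=\ind(C_{LL})=0$ forces ECH partitions, so $p^+_{\be_1}(8)=(1^{\times 8})$ gives eight multiplicity-one ends, the connectedness of $C_U$ plus genus zero of the $B$-curve shows no component carries two positive ends, and $\de(C_{LL})=0$ gives disjointness all at once — whereas you decompose $C_{LL}$ into components first, pin down their classes $-\sum_{i\in S_D}E_i$ and multiplicities $m_D=|S_D|$ via positivity of intersections with the exceptional divisors and the $O(\eps'')$ action budget, and then run Proposition~\ref{prop:indineq} per component, getting $r_D=1$, $\de(D)=0$ and $k_D=1$ from $I(D)=0$, $\tfrac12\ind(D)=r_D-1$ and $A(D)=\tfrac12(k_D-1)$. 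What your route buys is independence from the genus/connectivity argument (one end per component falls out of the index inequality itself); what it costs is extra bookkeeping and the need to prove pairwise disjointness separately: your ``short relative-intersection computation'' is really $Q_\tau([D_i],[D_j])=0$ together with the linking bound $L_\tau\le \lfloor \widetilde{\theta}_0\rfloor=0$ for two distinct multiplicity-one positive ends at $\be_1$, which you should state explicitly, since the paper gets disjointness for free from $\de$ of the union. Two small points to add in a write-up: you tacitly assume no component of the limit is (a cover of) an exceptional sphere, so that all $E_i$-coefficients are nonnegative — the paper makes the same tacit assumption when it assigns $C_{LL}$ the class $-E_1-\dots-E_8$ — and you should observe $r_D\ge 1$ (a nonconstant closed component would have nonpositive area), so that the index inequality yields $r_D=1$ rather than merely $r_D\le 1$.
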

\begin{proof}  We saw above that $C_U$ must have negative end on    $\lbrace (\beta_1,8)\rbrace$.
A curve of nonnegative index in $\ov X$  of degree $1$  has bottom end on $\lbrace (\beta_1,m)\rbrace$ for $m\le 2$ while
a similar curve of degree two has   bottom end on $\lbrace (\beta_1,m)\rbrace$ for $m\le 5$.  Hence $C_U$ must be connected,
and hence somewhere injective because $\gcd(3,8) = 1$.

The bottom level $C_{LL} $ of the limiting building lies in the completed blown up ellipsoid,  has top on $\lbrace(\beta_1,8)\rbrace$ with grading $16$ and goes through the constraints $E_1,\dots, E_8$.  Therefore,
 \eqref{eqn:Eei}  implies that $I(C_{LL}) = 0 = \ind(C_{LL})$.
 Therefore by  Proposition~\ref{prop:indineq},  $C_{LL}$ is embedded with ECH partitions.
Since $p^+\bigl((\be_1,8)\bigr) = (1^{\times 8})$,    $C_{LL}$ must have eight positive ends.
 But no component of  $C_{LL}$ can have more than one positive end because $C_U$ is connected and the original curve in class $B$ has genus zero.  Hence $C_{LL}$ must have $8$ components, which are disjoint, since their union has no double points.
 \end{proof}

\begin{lemma}\label{le:CL1}  When $n=0$, $C_U$ has either one negative end or two negative ends of multiplicities $1,7$.
Moreover, in the latter case there is one double point in the neck just before breaking.
 \end{lemma}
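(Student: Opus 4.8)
The plan is to run the argument of Proposition~\ref{prop:oneend} — combining the relative adjunction formula \eqref{eqn:adjunction} with the negative writhe bound of Remark~\ref{rmk:writhexact}(i) — but now keeping careful track of the singularity count $\de(C_U)$, since when $n=0$ we can no longer expect $C_U$ to be embedded. By Lemma~\ref{le:CL}, $C_U$ is connected, somewhere injective, represents $3L$ in $\ov X$, and has all of its ends negative, asymptotic to $\lbrace(\be_1,8)\rbrace$; write $(a_1,\dots,a_s)$ with $a_1\ge\dots\ge a_s\ge 1$ and $\sum_i a_i=8$ for the partition of $8$ recording the multiplicities of these ends. Since the monodromy angle of $\be_1$ on $\p E(1,8+\eps)$ is $\theta=\tfrac1{8+\eps}$ we have $k\theta<1$ for every $k\le 8$, so \eqref{eqn:cp2calc} and \eqref{eq:Frind2} give $c_\tau([C_U])=1$, $Q_\tau([C_U])=9$ and $CZ_\tau(\be_1^k)=1$ for $k\le 8$, hence $I(C_U)=1+9-8=2$, while \eqref{eqn:Xind} gives $\tfrac12\ind(C_U)=-1+9-8=0$ regardless of the partition. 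As $C_U$ has genus zero and exactly $s$ punctures, $\chi(C_U)=2-s$, so \eqref{eqn:adjunction} becomes
\[ w^-_\tau(C_U)\;=\;10-s-2\de(C_U). \]

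Next I would feed this into the writhe estimate of Remark~\ref{rmk:writhexact}(i). Since $p_i:=\lceil a_i\theta\rceil=1$ for every $i$, that estimate reads $w^-_\tau(C_U)\ge \sum_{i,j}\min(a_i,a_j)-s=8+2\sum_{j=2}^s(j-1)a_j-s$. Comparing this with the identity above and cancelling the $s$'s yields the key inequality
\[ \sum_{j=2}^s (j-1)\,a_j\;\le\;1-\de(C_U)\;\le\;1. \]
If $s\ge 3$ the left side is at least $a_2+2a_3\ge 3$, which is impossible; hence $s\le 2$. If $s=2$, the inequality forces $a_2=1$, hence $a_1=7$, and also $\de(C_U)=0$; so $C_U$ is embedded with negative ends of multiplicities $1$ and $7$. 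This proves the first assertion of the lemma.

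It remains to locate the double point when $s=2$. Let $C_i$ be the genus zero curves in class $B$ converging to the building in question. Each $C_i$ has exactly one node: since $g=0$, adjunction gives $2\de(C_i)=B\cdot B-c_1(B)+2=1-1+2=2$. Now $C_U$ is embedded, and by Lemma~\ref{le:CL} the bottom level $C_{LL}$ is a disjoint union of embedded planes, so as $i\to\infty$ the node of $C_i$ converges neither to a node of $C_U$, nor to a node or mutual intersection of the planes of $C_{LL}$; the only remaining possibility is that it converges to a hidden singularity — a branch point of a multiply covered component — in the neck. To see concretely that such a branch point exists, note that the partition $(7,1)$ at the bottom of $C_U$ does not match the partition $1^{\times 8}$ at the top of $C_{LL}$, so somewhere below the $\be_1^7$-end of $C_U$ the building must contain a connected component $u$ of a symplectization level with a single positive end $\be_1^k$ ($k\ge 2$) and at least two negative ends on $\be_1$ — it cannot be a chain of trivial cylinders over $\be_1^7$ all the way down, since $C_{LL}$ has no end on $\be_1$ of multiplicity greater than $1$. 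Since $C_U$ already accounts for essentially all the area $\om(B)\approx\tfrac12$ of the building (Remark~\ref{rmk:n=0}~(i)), the component $u$ has action $\approx 0$; as that action equals $k$ minus the total negative multiplicity and is a nonnegative integer, it is $0$, so $u$ has vanishing $d\la$-energy and hence maps into the trivial cylinder $\R\times\be_1$, making it a degree-$k$ branched cover of that cylinder with $\chi(u)<0$ and therefore at least one branch point. For large $i$ this branch point is resolved into a transverse double point of $C_i$ lying in the neck, which must then be the unique node of $C_i$.

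The step I expect to be the main obstacle is precisely this last one: passing rigorously from a branch point of the limiting building to a genuine double point of the approximating curves $C_i$, and confirming there is exactly one such point. This is a local statement about perturbations of branched covers of trivial cylinders, in the spirit of the writhe-near-breaking analysis of Hutchings and Nelson~\cite{HN}, and it is the point that must be argued with care; the rest of the proof is the bookkeeping above.
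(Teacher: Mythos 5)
Your first step is correct, and it takes a genuinely different route from the paper. You run the adjunction/writhe argument of Proposition~\ref{prop:oneend} directly on $C_U$ (using Lemma~\ref{le:CL} to know $C_U$ is simple, so \eqref{eqn:adjunction} and the negative-end bound \eqref{eqn:writheb3} apply), keep track of $\de(C_U)$, and arrive at $\sum_{j\ge 2}(j-1)a_j\le 1-\de(C_U)$; this forces either one end, or two ends of multiplicities $(7,1)$ with $\de(C_U)=0$, and your numerics check out. The paper gets the same multiplicity constraint the other way around: it applies adjunction not to $C_U$ but to the union of the neck components of a nearly broken $B$-curve, bounding the writhe at the top of the neck by the negative-end estimate for the simple curve $C_U$ and at the bottom by the positive-end estimate for $C_{LL}$ (whose positive ends are all simple by Lemma~\ref{le:CL}); this gives $2\de(C_{s_1}^{neck}\cup C_{s_2}^{neck})\ge 2s_1$ as in \eqref{eq:neck}, and more generally \eqref{eq:neck2}, so the single node of the $B$-curve forces $r\le 2$ and $a_1=1$. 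Thus the paper locates the node quantitatively in the neck and obtains the partition as a byproduct, while also showing each neck component is embedded and that the two neck components meet exactly once --- facts that are then used in the proof of Proposition~\ref{prop:n=0}; your route constrains the partition first and must then locate the node separately.

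That second step is where your proposal has a genuine gap. The inference ``a branch point of a branched cover of the trivial cylinder in the limit building is resolved, for large $i$, into a transverse double point of $C_i$ in the neck'' is false in general: limits of embedded curves routinely contain branched covers of trivial cylinders with branch points, and the nearby glued curves acquire no double points at all --- this is precisely the situation analyzed in Hutchings--Taubes gluing \cite{HT}. Whether double points appear near the breaking orbits is controlled by asymptotic writhes, not by the topology of the cover, so the step you yourself flag as the main obstacle is the crux and cannot be supplied by a local perturbation argument of the kind you sketch. Your softer elimination argument --- the unique node of $C_i$ cannot converge into the top or bottom pieces because, by your computation and Lemma~\ref{le:CL}, $C_U$ is embedded and $C_{LL}$ is a disjoint union of embedded planes, hence for large $i$ the node must lie in the neck region --- is the salvageable version of this step, but as written it is subordinated to the branch-point mechanism and is not carried out; it also would not by itself give the embeddedness of each neck component needed later. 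The paper's writhe computation \eqref{eq:neck} avoids these limiting subtleties entirely and delivers both the location of the double point and the extra embeddedness statements in one stroke.
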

\begin{proof}
Consider the part  in the neck region of a stretched $B$-curve that is very close to breaking into a curve with top $C_U$.  This neck part is a union of
connected curve pieces that for short we call components,
one for each negative end of  $C_U$.  By Lemma~\ref{le:CL}, its bottom ends all have multiplicity one,
i.e. they approximate the simple orbit $\be_1$.
We denote by $C_s^{neck}$ a component  with one positive end
very close to $\be_1^s$,
and therefore $s$ negative ends each of multiplicity $1$.
Consider the union $C^{neck}: = C_{s_1}^{neck}\cup C_{s_2}^{neck}$ of two such neck components where $s_1\le s_2$.
Because $C_U$ and $C_{LL}$ are simple, we may estimate the writhe $w^+(C^{neck})$ at the top of $C^{neck}$ by using the writhe estimate for the appropriate {\it negative} ends of $C_U$ (which  is a lower bound), and the writhe $w^-(C^{neck})$ at its bottom by the formula for the {\it positive} end of $C_{LL}$ (which is an upper bound).  Thus  at the positive end we have $a_1=s_1, a_2 = s_2$ and $p_1=p_2=1$ so that
 $w^+(C^{neck}) \ge (s_1-1) + (s_2-1) + 2s_1$, while $w^-(C^{neck})\le 0$ since each end has $a_i=1,p_i=0$.  Thus the adjunction formula gives
\begin{equation}
\label{eq:neck}
2 \de(C_{s_1}^{neck}\cup C_{s_2}^{neck}) \ge 4-2-(s_1+s_2) + (s_1-1) + (s_2-1) + 2s_1 = 2s_1,
\end{equation}
i.e. there are at least  $s_1$ double  points in the neck.
More generally,  if
 the bottom of $C_U$ has $r$ ends with multiplicities $a_1\le a_2\le \dots \le a_r$,  then we have
\begin{equation}\label{eq:neck2}
n \ge  (r-1)a_1 + (r-2)a_2  + \dots +a_{r-1}
\end{equation}
double points in the neck.   Since we must have $n\le 1$ we find that $r=2$ and $a_1=1$ as claimed.
\end{proof}

\begin{proof}[Proof of Proposition~\ref{prop:n=0}]  Let us suppose that $C_U$ has two negative ends. By Lemma~\ref{le:CL} these have multiplicities $1,7$, and the curve $C_{1}^{neck}\cup C_{7}^{neck}$ has one double point.  We claim that each neck component is embedded. (Recall these are components of our curves mapping to the neck region just before breaking.) To see this, notice that the right hand expression  in formula~\eqref{eq:neck} decomposes as a sum of three terms,
the  terms $2-1-s_i + (s_i-1)$ that give  lower bounds for $2\de(C_{s_i}^{neck})$ and the term $2s_1$ that bounds twice the intersection number of the components.  Since there is at most one double point,  we must have $2\de(C_{s_i}^{neck})=0$ for each $i$.

To complete the argument, observe now that because
 there is a distinguished constraint that is attached to $C_1^{neck}$, there are eight ways to assign the constraints.
Moreover, as in the proof of Proposition~\ref{prop:bound} given at the end of \S\ref{sec:noconn}, for each such assignment   there is at most one  $B$-curve that is close to splitting in this way.
To check the latter statement,  note that  any  two such distinct curves $C,C'$ which are close to splitting would have
to intersect in the neck region  in at least two points, namely  $(C')_1^{neck}\cap C_7^{neck}$ and $C_1^{neck}\cap (C')_7^{neck}$ (which both consist of a single point by the calculation above).  But this is impossible
because all intersections count positively (since the curves are $J$-holomorphic) and $B\cdot B = 1$.
\end{proof}

\begin{rmk}\label{rmk:8}  (i) One can similarly study the possible splittings of the exceptional sphere $C^A$ in class $A =
3L-2E_1-E_2-\dots- E_7$. Again it must split along the orbit set $\lbrace (\be_1,8)\rbrace$, but now the grading of this orbit set is smaller than the contribution to the ECH index
of the  bottom constraint $\sum m_i E_i= 2E_1+E_2+\dots+ E_7$; indeed,  by \eqref{eqn:Eei}, this   is $2\sum_i m_i^2 + m_i$.

Hence $C_{LL}$ cannot be simple: indeed it consists of a two-fold cover of a cover of a component
$C_{LL}^1$ through $E_1$ together with six other components through $E_i, 2\le i\le 7$.
It is  plausible that $C^A$ splits into a building whose top level has two negative ends of multiplicities $1,7$, and that the component of $C_{LL}$ attached to $C_1^{neck}$  is $C_{LL}^1$ and so goes though $\al_{11}$, while the $7$ components attached to the $7$ negative ends of $C_7^{neck}$ go through $\al_{11},\dots,\al_{17}$.
For in this case one can imagine that the bottoms of  $C_1^{neck}$  and $C_7^{neck}$ are distorted by the attached copies of  $C_{LL}^1$  in such a way that the corresponding  neck components of an approximating $A$-curve no longer intersect.  Although we do not attempt to prove here that this  is what happens,  the fact that $C_{LL}$ is not simple does mean that the writhe calculation in Lemma~\ref{le:CL1}  is no longer valid.  This situation was misunderstood in \cite{HK}; see \cite{HK2}.
\MS

 \NI (ii)
 One can try to generalize this argument to ellipsoids of the form $E(1,3k-1), k>3.$  One would now start from  a curve in class $B = 3k - E_1-\cdots - E_{3k-1}$, with genus zero and $\de(B) = \frac 12(k-1)(k-2)$ double points,  stretch the neck, and then hope to find among the resulting buildings  at least one whose top level is a genus zero trajectory $C_U$ with just one negative end on $\be_1^{3k-1}$.  One can argue as above that the top level of each such building must have negative ends on the orbit set
  $\lbrace (\be_1,{3k-1})\rbrace$.  However, because $B$-curves have more double points, there are now more possibilities for the partitions of $3k-1$ that occur.  In particular the double points may not all lie in the neck and so be detectable by looking at the distribution of the constraints; instead one has to count the maximum number of curves $C_U$ for each possible partition of $3k-1$.
   One can attempt to do this by considering analogs for the exceptional class $A$ in (i) above.  For example, if $k=4$ the following classes (all with $c_1=1$ and $B\cdot A_i = 5$) are
  relevant:
  \[
    \begin{array}{lll}
   A_1 :  = 4L-2E_{1}-E_{2\dots 10}& A_2 :  = 4L-2E_{12}-E_{3\dots 9}&\mbox{ with } A_1^2 = 3, \; A_2^2 = 1\\
   A_3:  = 4L-3E_{1}-E_{2\dots 9} & A_4 :  = 4L-2E_{123}-E_{4\dots 8} &\mbox{ with } A_3^2=A_4^2 = -1.
   \end{array}
  \]
  Here $A_3,A_4$ are classes of exceptional spheres, while $A_2$ has one double point and $A_3$ has three.
 This approach is barely possible when $k=4$ (though to make a proof one would have to substantiate the claims in (i) above), and seems to show that there are some trajectories $C_U$ with just one negative end.  The argument uses the fact that there
 are $620$  $B$-curves, $12$ $A_2$-curves and   $96$  $A_1$-curves through a generic set of points.
Since  the genus zero Gromov--Witten invariant of the class $B$ and the relevant classes $A$ grows very rapidly with $k$,  this method does not seem feasible for large $k$. \hfill$\er$
 \end{rmk}

 \subsection{The stabilization process}\label{sec:stablzn}

We conclude  this section with a proof of the following sharpened version of Proposition~\ref{prop:goodC},  which gives conditions under which embedding obstructions in dimension $4$ persist under stabilization.

\begin{prop}\label{prop:goodC1}  Let $\ov X_{\mu_*,x}$ be the completion of $\C P^2(\mu_*) \less \Phi(E(1, x))$, where
 $x =b + \eps$, where $b=\frac pq$ for some relatively prime integers $p,q$ and   very small and  irrational $\eps>0$.
Suppose that for all sufficiently small $\eps>0$ and generic  admissible $J$ there is a  genus zero curve $C$ in $\ov X_{\mu_*,x}$ with degree $d$, Fredholm index zero, and one negative end on $(\be_1, p)$, where $gcd(d,p)=1$.  Then there is a constant $S(d,x)$ such that for any $S\ge S(d,x), k\ge 0$, and $\mu> 0$,
the existence of a symplectic embedding
\begin{equation}
\label{eqn:potentialemb}
E(1,x, \underbrace{S,\dots, S}_k) \;\se \; \C P^2(\mu) \times \R^{2k}
\end{equation}
implies that $\mu \ge \frac d{ p} = \frac{3b}{b+1}$
\end{prop}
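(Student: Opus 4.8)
The plan is to follow the stabilization strategy of \cite{HK,CGH}: transport the four-dimensional curve provided by the hypothesis into the completed cobordism attached to the putative embedding, where genus and index constraints make it rigid, and then conclude from positivity of action. I will argue by contradiction, so suppose there is a symplectic embedding $E(1,x,S,\dots,S)\se\C P^2(\mu)\times\R^{2k}$ with $\mu<\tfrac{3b}{b+1}$. The hypothesis $\ind(C)=0$ together with \eqref{eq:indC} forces $3d=p+\lceil p/x\rceil=p+q$ once $\eps$ is small, so $\tfrac{3b}{b+1}=\tfrac{3p}{p+q}=\tfrac pd$; moreover $\mu_*>\tfrac pd$ already, since the four-dimensional curve $C$ is nontrivial and so has positive action $d\mu_*-p$. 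We may assume $\mu\le\mu_*$, since otherwise $\mu>\mu_*>\tfrac pd$ and there is nothing to prove.

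First I would set up a family of cobordisms. Since ball and projective embeddings of an ellipsoid are equivalent (\cite{Mell}) and ball embeddings are monotone, there is an embedding $E(1,x,S,\dots,S)\se\C P^2(\mu')\times\R^{2k}$ for every $\mu'\in[\mu,\mu_*]$; removing the line at infinity and the image of the ellipsoid and attaching a negative cylindrical end over $\p E(1,x,S,\dots,S)$ yields exact completed cobordisms $W_{\mu'}$, all with the same negative end. On each $W_{\mu'}$ I would choose an admissible $J$ of split form near the negative end --- so the Reeb orbit $\ga_1$ of action $1$, the stabilization of $\be_1$, is elliptic in the first two coordinates and Morse--Bott in the remaining $2k$ --- and standard near infinity in the $\R^{2k}$ factor, so that by the maximum-principle argument of \cite{Hi,HK} every $J$-holomorphic curve of bounded action lies in a fixed compact set (this also lets the $W_{\mu'}$ be treated inside one ambient manifold, so that a uniform compactness statement holds). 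The object of study is the moduli space $\Mm\bigl(W_{\mu'},dL,1,\{(\ga_1,p)\}\bigr)$ of genus-zero, degree-$d$ curves with one negative end on $\ga_1^p$.

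The proof then rests on three points. \textbf{(1) The Fredholm index is preserved.} A curve $C'$ in this moduli space has $\chi(C')=1$; since $S\ge S(d,x)>p$, the end on $\ga_1^p$ contributes to $CZ_\tau^{ind}$, beyond its four-dimensional term, exactly $k$ further terms with monodromy angles $p/S_i\in(0,1)$, each equal to $1$, so comparing \eqref{eqn:freddefnN} with its four-dimensional specialization and using $\chi(C')=\chi(C)$ gives $\ind(C')=\ind(C)+k(\chi(C')-1)=0$. Thus these moduli spaces are compact (SFT compactness and confinement) rigid $0$-manifolds with a well-defined mod-$2$ count $n(\mu')$. \textbf{(2) The count does not vanish.} Stabilizing the four-dimensional embedding $E(1,x)\se\C P^2(\mu_*)$ realizes $E(1,x,S,\dots,S)\se\C P^2(\mu_*)\times\C^k$, and for $S$ large the curve $C$ glues to a standard connecting piece near $\ga_1^p$ to give an element of $\Mm(W_{\mu_*},dL,1,\{(\ga_1,p)\})$ for the corresponding split $J$; since $\gcd(d,p)=1$ forbids multiply-covered top-level curves and the index formulas \eqref{eqn:Xind} and \eqref{eqn:indEE} forbid the remaining configurations in a limiting building, the argument of \cite{HK,CGH} shows this contribution is not cancelled, so $n(\mu_*)\ne0$. \textbf{(3) Deformation invariance.} For $\mu'\in(p/d,\mu_*]$ the curves in $\Mm(W_{\mu'},\dots)$ have action $d\mu'-p>0$, while for $S\ge S(d,x)$ every other Reeb orbit of $\p E(1,x,S,\dots,S)$ has action exceeding $d\mu_*-p$; hence no bubbling or breaking can occur along the parametrized moduli space over any $[\mu'_0,\mu_*]\subset(p/d,\mu_*]$, which is then a compact $1$-manifold with boundary $\Mm(W_{\mu'_0},\dots)\sqcup\Mm(W_{\mu_*},\dots)$, giving $n(\mu'_0)=n(\mu_*)\ne0$. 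Consequently $\Mm(W_{\mu'},\dots)\ne\emptyset$ for all $\mu'\in(p/d,\mu_*]$. Choosing curves $C_j$ for a sequence $\mu'_j\downarrow p/d$, the confinement and the action bound give a subsequential SFT limit, which is a stable building of action $d\cdot\tfrac pd-p=0$; but then every component has action $0$, hence is a ghost or a trivial cylinder, and none of these carries the class $dL$ with $d>0$ --- a contradiction. Therefore $\mu\ge\tfrac{3b}{b+1}=\tfrac pd$.

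The main obstacle is point (3): establishing that $n(\mu')$ is a genuine deformation invariant, i.e. ruling out every bubbling and breaking of the family of genus-zero, index-zero, one-ended curves as $\mu'$ and $J$ vary. This is precisely the technical content of the stabilization method of \cite{HK,CGH}, and each hypothesis of the proposition is used there: genus zero together with a single negative end makes $\chi(C')=1$, which keeps $\ind(C')$ equal to $0$ after stabilization; $\gcd(d,p)=1$ excludes multiply-covered top-level components from the limiting buildings; and $S\ge S(d,x)$ pushes the Reeb orbits created by stabilization out of the relevant action range, so they cannot appear in those buildings.
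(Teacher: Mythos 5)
Your architecture is essentially the paper's: stabilize the four-dimensional curve, show its index stays zero, deform from the standard embedding to the given one while keeping the genus-zero, one-ended, degree-$d$ moduli problem rigid, and finish with positivity of action (the paper rescales the domain and varies the embedding/almost complex structure at fixed target $\C P^2(\mu_*)\times\R^{2k}$ rather than varying $\mu'$, and concludes directly from nonnegativity of action instead of a limiting building at $\mu'=p/d$, but these are repackagings). The genuine gap is in your step (3). The mechanism you offer --- that for $S\ge S(d,x)$ ``every other Reeb orbit of $\p E(1,x,S,\dots,S)$ has action exceeding $d\mu_*-p$, hence no bubbling or breaking can occur'' --- does not do the job, and is not even true for general $\mu_*$ since $\be_1,\be_2$ have actions $1$ and $\approx p/q$. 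Large $S$ only removes the Morse--Bott orbits of action $\ge S$ from the picture (that is Lemma~\ref{lem:largeS}); the dangerous degenerations occur along $\be_1$ and $\be_2$ and cost essentially no action: the top level can split into several pieces with negative ends $\be_1^{p_1},\be_1^{p_2},\dots$ joined below by a branched cover of the trivial cylinder over $\be_1$ (a symplectization level of action zero), or can have a negative end on $\be_2^{q}$ joined to $\be_1^{p}$ by a level of action $q\eps$. No action threshold excludes these; they are excluded only by the index bookkeeping for matched components --- Lemmas~\ref{negcurves} and~\ref{lem:cpct}, including the comparison \eqref{cpct4} of $\ind(u)$ with $m\,\ind(\widetilde u)$ for multiple covers in $\ov M$ --- which uses exactly the hypotheses you list but never deploy: that the negative end has multiplicity precisely $p$ with $x=\frac pq+\eps$ (so $\lceil p/x\rceil=q$ barely exceeds $p/x$ while $\lceil r/x\rceil$ is substantially larger for $r<p$), and $\gcd(d,p)=1$. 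Deferring this to \cite{HK,CGH} does not close it: those arguments are written for the Fibonacci classes and must be extended to the present multiplicities (indeed Remark~\ref{rmk:counterex}~(i) shows the statement genuinely fails for other end multiplicities, so some computation specific to $m=p$ is unavoidable), and \cite{HK} itself contains an error on a closely related compactness point (cf.\ Remark~\ref{rmk:8}).

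Two further points are under-justified. Your step (2) is not merely ``the four-dimensional curve glues in and is not cancelled'': in the paper the nonvanishing of the count requires choosing a $\T^k$-invariant $J$, showing that index-zero curves must lie in the slice $\ov X$ so that Wendl's automatic transversality applies, and --- to produce a regular invariant $J$ at all --- a second neck-stretching along $\Si = \Phi\bigl(\p(1+\de)E(1,x)\bigr)\times\C P^k(2T)$ to rule out non-orbitally-simple curves, which again leans on Lemma~\ref{lem:cpct}. And your family of cobordisms $W_{\mu'}$ presumes an embedding $E(1,x,S,\dots,S)\se\C P^2(\mu')\times\R^{2k}$ for every intermediate $\mu'$; this monotonicity is not immediate (the image may meet the line at infinity), which is why the paper instead rescales the domain and connects the two embeddings of scaled ellipsoids through a family via \cite[Lem.~3.1]{HK}. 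Your step (1), the persistence of $\ind=0$ under stabilization for a genus-zero curve with one negative end, is correct and matches the paper.
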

\begin{proof}
 This is proved by slightly extending the arguments given in \cite[\S3,4,5]{CGH} that establish the corresponding result for the Fibonacci staircase $b= \frac{g_{n+2}}{g_n}$.
We start from the  stabilized  embedding
 $$
 \Tilde{\Phi}_0: = \Phi\times \iota:  E(1,x,S,\dots,S)\;\se\; \C P^2(\mu_*)  \times  \R^{2k}
 $$
where $\Phi$ is as in \eqref{eq:phi}, $\mu_*$ is given by \eqref
{eq:mu0} or \eqref{eq:mu00},
and $\iota$ is the obvious inclusion on the last $2k$-dimensions.  Consider the space $\ov{M}\,\!'$ obtained by removing the ellipsoid ${\rm im\,}
  (\Tilde{\Phi}_0)$ and completing at the negative end.
 By the argument in \cite[Lem. 3.1]{HK}, any embedding $\widetilde\Phi_1$ of
 $r \cdot E(1,x,S,\dots,S)$
 into $\C P^2(\mu_*)  \times  \R^{2k}$
  may be connected to $\Tilde{\Phi}_0$ by a $1$-parameter family $$
 \Tilde{\Phi}_t:\la(t)E(1,x,S,\dots,S)\to \C P^2(\mu_*)  \times  \R^{2k},\qquad t\in [0,1],
 $$
  of embeddings, for a suitable function $\la(t) \in (0,\op{max}(r,1)]$ with $\la(t) = 1$ for  $t$ near $0$ and $\la(t) = r$ for $t$ near $1$.  In particular, we may apply this to the embedding \eqref{eqn:potentialemb}, rescaled by $\frac{\mu_*}{\mu}$.  The idea is now to translate this family of embeddings into a family of compatible almost complex structures on a fixed manifold, in order to find a $J$-holomorphic curve that gives a constraint on $\mu$.

To elaborate, choose an identification of the completed spaces obtained by removing $\im (\Tilde{\Phi}_t)$ with $(\ov{M}\,\!', \om_t')$, where $\om_t'$ is a suitable family of symplectic forms; we assume that the identification is the  identity outside  $\C P^2(\mu_*)  \times  B^{2k}(T/2)$ for a suitably large $T$.
 We consider the space $\Jj'(T)$ of
 almost complex structures $J$ on $\ov{M}\,\!'$ that are admissible for some $ \om_t'$, and have product form  outside $\C P^2(\mu_*)  \times  B^{2k}(T/2)$ with projection  to $\R^{2k}$ equal to the standard complex structure.
  Our aim is to show that for generic $J\in \Jj'(T)$ that is admissible with respect to $\om_1'$
  there is at least one degree $d$ curve in $\ov{M}\,\!'$ with a single end on
 $\be_1^{p}$,
  since then the required bound comes from  the positivity of  its action.

  We may enlarge $T$ so that  the monotonicity theory  implies that every degree $d$ $J$-holomorphic curve
  lies inside $\C P^2(\mu_*)  \times  B^{2k}(T)$, and then compactify $\ov{M}\,\!'$ at its positive end, obtaining a family of symplectic manifolds $(\ov{M},\om_t)$ that are identified with completions of  subsets of $\C P^2(\mu_*)  \times  \C P^{k}(2T)$: see \cite[Lemma~3.3]{HK}.\footnote
 {In \cite{CGH}, the divisor $({\rm line}) \times  \C P^{k}(2T)$ is removed from the target,  one slightly distorts $\C P^2(\mu_*)\less ({\rm line})$ to $E(\mu_*,\mu_*+\eps)$, and then completes $M$ at its positive end as well.  However, how one treats the positive end makes no difference to the index calculations or stabilization arguments.}

 It therefore suffices to analyze curves in $(\ov M,\om_t)$ for $J\in \Jj(T)$, where $\Jj(T)$ is the obvious analog of $\Jj'(T)$.
 Because  $\im(\Tilde{\Phi}_0)$ and the symplectic form $\om_0$ on $\ov{M}$ are invariant with respect to a suitable $\T^k$-action that rotates the second factor  $\R^{2k}$,
  there is an $\om_0$-compatible element $J_M\in \Jj(T)$ which is also $\T^k$ invariant and restricts to an almost-complex structure $J_X$ on $\ov X$.
Thus, the initial curve $C$ in $\ov X$ whose existence we assume may be considered as an element of
 the moduli space $\Mm_{J_M}(\ov M, dL, \be_1^{p})$,  of somewhere injective
 genus zero curves of degree $d$
 and with one negative end on the short orbit $\be_1$ of multiplicity
 $p$.
 The index formulas in \cite[Prop.~13]{CGH} show that  $C$ has index zero for any $k$; cf. \cite[Lemma~14]{CGH} and \eqref{eqn:highdimindexformula} below.  The proof then consists of the following steps.
 \begin{itemize}
\item We  show that
the moduli space
$\bigcup_{t\in [0,1]}\Mm_{J_t}(\ov M, dL, \be_1^{p})$
is compact, where   $J_t\in \Jj, t\in [0,1],$
is  a generic path of almost complex structures.
\item
We show that the count of curves in
$\Mm_{J_M}(\ov M, dL,  \be_1^{ p})$
is positive.
\end{itemize}
The proposition then follows  immediately: for more details see \cite[\S5]{CGH}.

\MS

\NI {\bf The compactness argument}.

Consider a generic  $1$-parameter family $J_t\in \Jj(T),t\in [0,1],$ of almost complex structures on the completed $(2k+4)$-dimensional manifold $\ov M$.
 There are two possibilities for loss of compactness.

 The first is convergence to a multiply covered curve in $\overline{M}$.
  This can be excluded using the fact that $p$ and $d$ are coprime by hypothesis.  The other possibility is that there is a limiting building $C_{\infty}$.
Any such limiting building has a top component in $\ov M$ and lower levels in the symplectization $\p E(1,x,S,\dots,S)\times \R$.  To deal with this case we argue much as in \cite{CGH}, the crucial point being that the limiting building must have index zero.  It is convenient to think of $C_\infty$ as a union of matched components (see Definition~\ref{def:conn}) as follows.   We first match all possible curves in symplectization levels.  This
defines  a single component\footnote
{
This is called $S_0$ in \cite{CGH}.
}
$C_*$ with a negative end on $\be_1^{p}$, and
 other matched components
 in the symplectization
  with no negative ends.
  We then group the curves in $C_\infty\less C_*$  into matched components;
  in other words, we match each symplectization component with any compatible curves $\im (u_i)$ in $\overline{M}$ forming larger components.
We
 define the {\bf index of a  matched component}
 to be the sum of the deformation indices of the constituent curves minus the dimension of the orbit spaces where matching occurs (for ends asymptotic to covers $\beta_1$ or $\beta_2$ this dimension is just $0$, otherwise it is $2(k-1)$, the dimension of the Morse-Bott family).  By the index formulas for curves in symplectizations, the Fredholm index of any component formed by joining curves in symplectizations can be calculated as if the component was a curve in a single level, since the contributions to the Fredholm index of pairs of matching ends in symplectizations cancel.
 Similarly, if the component contains curves in the top level, then we can calculate its index as though all its components were in the top level.

We begin with the following.

\begin{lemma}\label{lem:largeS}
If $k\ge 1$, $S\ge S(d,x)$ is sufficiently large and the path $J_t$ is generic, then any  curve  in the limiting building  $C_\infty$
that lies in $\ov M$
has the following properties.

\begin{itemize}
\item Any ends on $ \p E(1,x,S,\dots,S)$ must be asymptotic to covers of  $\be_1$ or $\be_2$;
\item Every end must have multiplicity less than $S/x$.
\end{itemize}
\end{lemma}

\begin{proof}
In addition to $\beta_1$ and $\beta_2$, we have a Morse-Bott orbit $\gamma$.
To prove the first bullet point, it suffices to consider irreducible somewhere injective curves. Since $J_t$ is generic,
any irreducible somewhere injective curve $C$ in $C_\infty$ asymptotic to $\gamma$ has index $\ge -1$, and hence (because all indices are even) index $\ge 0$.  If $C$ has degree $d'$, ends on $\beta_1^{r_i}$ for $1 \le i \le n_1$, on $\beta_2^{s_j}$ for $1 \le j \le n_2$, and on $\gamma^{t_{\ell}}$ for $1 \le \ell  \le n_3$,  we find using the discussion following \eqref{eqn:freddefnN}  that

\begin{align}
\label{eqn:highdimindexformula}
\op{ind}(C) &  = (k-1)(2 - n_1 - n_2 - n_3) + 6d'  -
 \sum^{n_1}_{i=1} \Bigl(2 r_i + (2 \lfloor r_i / x \rfloor + 1)  + k(2 \lfloor r_i / S \rfloor + 1)\Bigr) \\ \notag
 & \qquad  - \sum_{j=1}^{n_2} \Bigl(2 s_j + (2 \lfloor s_j x \rfloor + 1) + k(2 \lfloor (s_j x) / S \rfloor + 1)\Bigr) \\ \notag
& \qquad \qquad   - \sum_{\ell=1}^{n_3} \Bigl( 2t_\ell + (2 \lfloor t_\ell S \rfloor+1)
+ (2 \lfloor t_\ell S/x \rfloor +1) + (k-1)(2t_\ell-1) \Bigr).
\end{align}
We must have $d' \le d$, and, if there are any ends on $\ga$, the term $ \lfloor t_\ell S \rfloor\ge \lfloor  S \rfloor$ is large while the very first term combined with the final term in the sum over $\ell$ together give a nonpositive contribution.

Therefore if $S\ge 3d$  the right hand side of \eqref{eqn:highdimindexformula} must be negative if $C$ has any ends on $\gamma$.  This gives a contradiction.

Similarly, from \eqref{eqn:highdimindexformula} no irreducible somewhere injective curve of degree $d'$  in $C_\infty$ can have ends of multiplicity more than $3d'$.
Since an arbitrary degree $d'$ curve is a sum of $m_i$-fold covers of irreducible somewhere injective degree $d_i$ curves, where $\sum m_i d_i = d'$,
the maximum multiplicity of an end of such a curve is $\sum_i m_i 3d_i = 3d' \le 3d$ which is $< \frac Sx$ if $S\ge S(d,x) : = \max(3 x d^2, 3d) = 3x d^2$.
Therefore we may take $S(d,x) = 3x d^2$.
\end{proof}

The first point in the next lemma is a slight generalization of \cite[Lemma~18]{CGH}.  Note that the proof uses the fact that we consider curves with bottom multiplicity $p$
rather than a more general $m$.

\begin{lemma}\label{negcurves}
As above, let $C_*$ be the unique matched component of $C_\infty$ in the symplectization
with a negative end on $\beta_1^{p}$,
and suppose that $S\ge S(x,d)$
Then $\mathrm{index}(C_*) \ge 0$
with equality if and only if $C_*$ is
an unbranched  cover of a trivial cylinder over $\beta_1$, and hence has one positive end of multiplicity $p$.
\end{lemma}
\begin{proof}
Every positive end of $C_*$ is matched by the negative end of some curve in $C_\infty$ that lies in $\ov M$.  Hence,
because $S\ge S(d,x)$
Lemma~\ref{lem:largeS} shows that $C_*$ has no  ends on the Morse--Bott orbit $\ga$.

Suppose that the  positive ends of $C_*$ are asymptotic to  $\beta_1^{r_i}, 1\le i\le n_1,$  and $\beta_2^{s_j}, 1\le j\le n_2$.
Then
by \cite[Prop.~17]{CGH} and  \cite[Lemma~18]{CGH} (see also the discussion after \eqref{eqn:freddefnN}), we have
\begin{align}\label{eq:Fredkind}
\tfrac{1}{2}\mathrm{index}(C_*) \ & = -1 + n_1 + n_2 + \sum_{i=1}^{n_1}(r_i + \lfloor \frac{r_i}{x} \rfloor) + \sum_{j=1}^{n_2}(s_j + \lfloor s_j (x) \rfloor) -  p - \lfloor \frac{ p}{x} \rfloor \\ \notag
&\  =   \sum(r_i + \lceil \frac{r_i}{x} \rceil) + \sum(s_j + \lceil s_j  x \rceil) -  p -    \lceil \frac { p} x \rceil.
\end{align}

 As $C_*$ has nonnegative area, we have
 $\sum r_i + \sum x s_j \ge  p$.  Hence
\begin{align}\label{eq:Fredkind1}
& \sum r_i + \sum \lceil x s_j \rceil\; \ge\;  p,
\end{align}
 and
\begin{align}\label{eq:Fredkind2}
&  \sum \lceil \frac{r_i}{x} \rceil + \sum s_j \ge \lceil \sum \frac{r_i}{x} \rceil + \sum s_j\; \ge\; \lceil \frac{ p}x \rceil.
\end{align}
These estimates establish the inequality.

Since $x$ is irrational, $\sum \lceil x s_j \rceil> xs_j$ whenever $s_j\ne 0$.  Therefore, because
 $\sum r_i + \sum x s_j \ge  p$,
there is equality in \eqref{eq:Fredkind1}  only if  $s_j=0$ for all $j$ and $\sum_i r_i = p$.
As for the second estimate,  first observe that because $x = \frac pq + \eps$, we have
$ \lceil \frac{ p}x \rceil = q$, so that $ \lceil \frac{ p}x \rceil$  is just larger that $\frac{ p}x $.  On the other hand
if $r<p$, then $\frac {r}b \notin \Z$ so that $ \lceil \frac{ r}x \rceil$ is significantly larger than $\frac {r}x$.
 Hence there is equality in  \eqref{eq:Fredkind2} only if $n_1=1$ and $r_1 = p$.
\end{proof}

We now complete the compactness argument by dividing into cases.
\MS

\NI  {\bf Case 1:}   {\it $C_*$ has positive index.}  In this case,  Lemma \ref{negcurves} implies that at least one matched component of the building $C_\infty \less C_*$ has negative index.  But this is ruled out by the next lemma.

\begin{lemma}\label{lem:cpct} Let $C$ be a matched  component of the  limiting  building $C_\infty\less C_*$. Then $\mathrm{ind}(C) \ge 0$, with equality only if $C$ consists of a single curve (hence with a single negative end matched with $C_*$).
\end{lemma}

\begin{proof}
We first recall some index formulas.
It follows from \cite[Prop.~17]{CGH}
that if $S\ge S(d,x)$  and
 $v$  is  a connected curve in the symplectization $\partial E(1,b+\eps,S, \dots ,S)\times \R$ with $n_1 + n_2$ positive ends on $\beta_1^{r_i}, 1\le i\le n_1,$
 and  on $\beta_2^{s_j}, 1\le j\le n_2$,  and no negative ends,  then
\begin{equation}
\label{cpct1}
\tfrac{1}{2} \mathrm{ind}(v) = k-1+ n_1 + n_2 + \sum_{i=1}^{n_1} \bigl(r_i +  \lfloor \frac{r_i}x \rfloor\bigr) + \sum_{j=1}^{n_2}\bigl( s_j + \lfloor s_jx \rfloor\bigr),
\end{equation}
where $k\ge 0$ is the
stabilization dimension\footnote
{
Note that $k = N-2$ in the notation of \cite{CGH}.}.  In particular, this index is always positive.
(This index formula can  be obtained using the observations after \eqref{eqn:freddefnN}; notice that  $
\lfloor\frac{r_j}{S}\rfloor =\lfloor\frac{s_j x}{S}\rfloor = 0$ by the second claim in  Lemma~\ref{lem:largeS}.)

Further, if  $\im(u)$ is a curve in $\overline{M}$ with degree $d'$ and negative ends on $\beta_1^{r_i}$ for $1 \le i \le n_3$ and negative ends on $\beta_2^{s_j}$ for $1 \le j \le n_4$, then similarly by\footnote
{
Note that this index is the same as the index of a curve with $\ell$ positive ends, each of multiplicity one, on the  larger orbit of
$\p B^4(\mu_*,\mu_*+\eps')\times \C P^k(2T)$.}
\cite[Prop.~13]{CGH},
\begin{equation}
\label{cpct3}
\tfrac{1}{2} \mathrm{ind}(u) = k-1+3d' - k(n_3+n_4) - \sum_{i=1}^{n_3} (r_i+\lfloor \frac{r_i}{x} \rfloor) - \sum_{j=1}^{n_4} (s_j+\lfloor s_j x \rfloor).
\end{equation}
If $u$ is  an $m$-fold multiple cover of a curve $\widetilde{u}$ with degree $\widetilde{d}$ and negative ends on $\beta_1^{\widetilde{r}_i}$ for $1 \le i \le \widetilde{n}_3$ and negative ends on $\beta_2^{\widetilde{s}_j}$ for $1 \le j \le \widetilde{n}_4$, then we have
 $d' = m\widetilde{d}$, $\sum r_i = m\sum \widetilde{r}_i$ and $\sum s_j = m \sum \widetilde{s}_j$. Therefore using equation \eqref{cpct3} we obtain
\begin{align}
\label{cpct4}
\tfrac{1}{2} \mathrm{ind}(u) - \tfrac{m}{2} \mathrm{ind}(\widetilde{u}) & = (k-1)(1-m) - k(n_3+n_4-m\widetilde{n}_3-m\widetilde{n}_4) \\
&\quad  - \sum_{i=1}^{n_3} \lfloor \frac{r_i}{x} \rfloor - \sum_{j=1}^{n_4} \lfloor s_j x \rfloor
  + m\sum_{i=1}^{\widetilde{n}_3} \lfloor \frac{\widetilde{r}_i}{x} \rfloor + m\sum_{j=1}^{\widetilde{n}_4} \lfloor \widetilde{s}_jx \rfloor. \nonumber
\end{align}

We now divide $C$ into the unique curve $u$ with a negative end matching $C_*$ and a collection of connected planar components. Such planar components necessarily have strictly positive index. (Indeed, the matched index is given by \eqref{cpct1} in the case where there is just one positive end, perhaps with an additional positive contribution if the matched  component
intersects $\ov M$ and hence
has positive degree.)

Suppose that $u$ is a multiple cover $m\widetilde{u}$ for some $m>1$.   Assume first that $u$ is  attached to $C_*$ along $\be_1^{r_1}$.
 Since all the other ends of $u$ are matched by planar components in the symplectization, we may combine
 the indices of planar components calculated using  \eqref{cpct1}
 with the  formula \eqref{cpct4} for the multiply covered curve and use the fact that
 $\ind(\widetilde{u})\ge 0$
 to obtain
\begin{align}
\tfrac{1}{2} \mathrm{ind}(C) & \ge
(k-1)(1-m) - k(n_3+n_4-m\widetilde{n}_3-m\widetilde{n}_4) \\
& \qquad  - \sum_{i=1}^{n_3} \lfloor \frac{r_i}{x} \rfloor - \sum_{j=1}^{n_4} \lfloor s_jx \rfloor
 + m\sum_{i=1}^{\widetilde{n}_3} \lfloor \frac{\widetilde{r}_i}{x} \rfloor + m\sum_{j=1}^{\widetilde{n}_4} \lfloor \widetilde{s}_jx \rfloor \nonumber\\
&\qquad\quad    + (n_3-1)k + \sum_{i=2}^{n_3}(r_i+\lfloor \frac{r_i}{x} \rfloor) + kn_4 + \sum_{j=1}^{n_4} (s_j +\lfloor s_jx \rfloor) \nonumber \\
& =(k-1)(1-m) - k + km(\widetilde{n}_3 +\widetilde{n}_4) \nonumber \\
&\qquad + \sum_{i=2}^{n_3} r_i - \lfloor \frac{r_1}{x} \rfloor   + \sum_{j=1}^{n_4} s_j + m\sum_{i=1}^{\widetilde{n}_3} \lfloor \frac{\widetilde{r}_i}{x} \rfloor + m\sum_{j=1}^{\widetilde{n}_4} \lfloor \widetilde{s}_j(x) \rfloor. \nonumber
\end{align}
Since $\widetilde{n}_3 +\widetilde{n}_4 \ge 1$, we see that
$$
\tfrac{1}{2} \mathrm{index}(C)\; \ge\; (m-1) +  \sum_{i=2}^{n_3} r_i +  m\sum_{i=1}^{\widetilde{n}_3} \lfloor \frac{\widetilde{r}_i}{x} \rfloor - \lfloor \frac{r_1}{x} \rfloor \; \ge  \;  \sum_{i=2}^{n_3} r_i  +
m \sum_{i=1}^{\widetilde{n}_3} \lceil \frac{\widetilde{r}_i}{x} \rceil  -  \lceil \frac{r_1}{x} \rceil.
$$
Now the end, say  $\beta_1^{\widetilde{r}_1}$, of $\widetilde{u}$ that is covered by the end of $u$ asymptotic to $\beta_1^{r_1}$ satisfies
$m \widetilde{r}_1 \ge r_1$.
Therefore
$$
m \sum_{i=1}^{\widetilde{n}_3} \lceil \frac{\widetilde{r}_i}{x} \rceil \ge  \lceil m \sum_{i=1}^{\widetilde{n}_3} \frac{\widetilde{r}_i}{x} \rceil \ge \lceil \frac{r_1}{x} \rceil
$$
which implies that $ \mathrm{index}(C)\ge 0$.
Moreover, $ \mathrm{ind}(C)>0$ unless $\widetilde{n}_3 = n_3 = 1, \widetilde{n}_4=0$ and $r_1 = m \widetilde{r}_1$, i.e. unless  $u$ has just one negative end of multiplicity $r_1$, in which case it is the unique curve in $C$.

If $u$ is attached to $C_*$ along $\beta_2$, then the same argument shows that $\op{ind}(C) \ge 0$, with equality only if $C$ has one negative end.
\end{proof}

This completes the proof of compactness in Case 1.
\MS

\NI  {\bf Case 2:}   {\it  $C_*$ has index $0$.}
In this case, Lemma~\ref{negcurves} shows that $C_*$ is a multiple cover of $\be_1$ with just one positive end.  Then,  if the limit $C_{\infty}$ is nontrivial, there must be other nontrivial  curves in the symplectization, which implies that the top of  end of $C_*$ must attach to a curve
 $\im(u)$ with more than one negative end.   But by Lemma~\ref{lem:cpct} a component $C$ containing such $\im(u)$  must have positive index.
Since again no component has negative index,  this scenario is impossible.

This completes the proof of the compactness argument.
 \MS

 \NI

\NI {\bf The counting argument.}
This is proved much as in \cite[\S3]{CGH}.
There are two steps here.
We identify $\ov M$ with the completion of the complement of $\im (\widetilde{\Phi}_0)$ so that it supports a $\T^k$ action, and consider
the space  $\Jj^{\T^k}_{reg}$   of all  $\T^k$-invariant and admissible almost complex structures
on $\ov M$ for which all  somewhere finite action curves in both $\ov X$ and $\ov M$ are regular.  One shows first that
\begin{itemize}\item  $\Jj^{\T^k}_{reg}$ is nonempty; and second that
\item when $J\in \Jj^{\T^k}_{reg}$
the count of $J$-holomorphic curves
in
$\Mm_{J}(\ov M, dL, \be_1^{ p})$   is nonzero.
\end{itemize}
The second step (Proposition~10 in \cite{CGH}) is proved as in \cite[\S3.1]{CGH}.  The idea is this:  since $J$ is
$\T^k$-invariant  and the elements of
$\Mm_{J}(\ov M, dL,  \be_1^{p})$
have index zero, the elements of this moduli space must lie in the $4$-dimensional manifold $\ov X$, so that one can appeal to Wendl's automatic transversality results.  In our case this argument is slightly easier than in \cite{CGH} since our curves
have no positive ends.

To establish the first step (Proposition~11 in \cite{CGH}), one first notes that it is immediate provided that there is $J\in \Jj^{\T^k}_{reg}$
such all elements in $\Mm_{J}(\ov M, dL, \be_1^p)$ that do not lie entirely  in $\ov X$ are {\it orbitally simple} (i.e. intersect at least one $\T^k$ orbit exactly once transversally), since then standard methods allow one to find an $\T^k$-invariant and regular perturbation of $J$.    To show that there is a suitable $J$
one considers a second neck stretching as in \cite[\S3.2]{CGH}, this time  along a product surface
\[\Si = \Phi\bigl(\p (1+\de) E(1,x)\bigr)  \times \C P^k(2T)\]
in $\ov M$. (Thus, one extends the initial embedding $\Phi:E(1,x)\to \C P^2(\mu_*)$ to a slightly larger  ellipsoid, extends it  trivially to the product, and then
 stretches
  by an amount $K$
 along the corresponding product boundary $\Si$.)  We consider $\T^k$-invariant
 almost complex structures $J^K$ on $\ov M$
 that are products both near and outside the region bounded by $\Si$, so that when one stretches the neck the top level is a product that we denote ${\ov X}\,\!' \times \C P^k(2T)$ while the
 rest  is a (possibly multi-level) cobordism
 from $\Si$ to the ellipsoid  $\p \Ee': = \widetilde\Phi(\p E(1,x,S,\dots,S))$.

If for some $K$ all $J^K$-holomorphic curves  $C^K$ in $\Mm_{J^K}(\ov M, dL,  \be_1^p)$ are orbitally simple, then we are done.
 Hence we only need to consider the case when there is a sequence of non-orbitally simple curves $C^K$ for $K\to \infty$.
In this case  there is a limiting building $C_\infty$,  whose top
level $C^{top}$ lies in  ${\ov X}\,\!'  \times \C P^k(2T)$.
Consider the projection $C'$  of $C^{top}$  to the $4$-dimensional space
 ${\ov X}\,\!' $.
By construction, $C'$ is the limit of the projection to  ${\ov X}\,\!' $ of (pieces of) non-orbitally simple curves, and so has at least one multiply covered component.
We must show that this is impossible.  The argument used to prove this in  \cite[Prop.~12]{CGH}  does not generalize since it exploits the fact that in their case
$C'$ has essentially zero action.   However,  it is possible to prove this in our more general situation by using Lemma~\ref{lem:cpct}.

 The projected curve $C'$ cannot consist of a single component with end on the orbit set $\lbrace (\beta_1, p) \rbrace$
which is an $m$-fold cover for $m > 1$, because then it would have to have degree $d$ and we are assuming that $\gcd(p, d) = 1$.

Therefore the limiting building $C_\infty$ is nontrivial, i.e. it cannot consist
 just of an index zero cylindrical cover of $\be_1$ (in the cobordism from $\Si$ to the negative end of $\ov M$) together with a single component in   ${\ov X}\,\!' \times \C P^k(2T)$.
We now argue much as
in the compactness argument.

 Note first that the index arguments used above can
 be adapted    essentially without change.
  Indeed, although $\Si \cong \p \bigl((1+\de) E(1,x)\bigr)  \times \C P^k(2T)$ is different from $\p E(1, x,S,\dots,S)$, the index formulas for curves positively asymptotic to the orbits $\be_1,\be_2$ on $\Si$ are the same as they are for  $\p E(1, x,S,\dots,S)$, as one sees by comparing the formulas in Propositions~13 and 17 in \cite{CGH}.  Similarly, the contribution to the index of negative ends on $\Si$ is just as in \eqref{cpct3}, except that
 the index has an additional positive contribution of $2k(n_3 + n_4)$ (see Proposition~15 in \cite{CGH}) which takes into account the fact that the negative ends lie on the product $\Si$ so that each end lies in a $2k$-dimensional family.  However, this additional contribution is
 cancelled out
  by the fact that when we match ends in this Morse-Bott situation we must subtract $2k$.  Therefore we may calculate the indices of matched components just as before.  In particular, if we define  $C_*$ to be the component of $C^{lower}$ with bottom end on   $\be_1^{p}$, then Lemma~\ref{negcurves} and   Lemma~\ref{lem:cpct} both hold.
   Therefore all the  components of $C_\infty\less C_*$ have nonnegative index, and have positive index if they have more
   than one negative end.
In particular, if the building is nontrivial,  either $C_*$ or
  some component of $C_\infty\less C_*$  has positive index.  But this is impossible.

 Together, these two steps complete the proof of Proposition~\ref{prop:goodC1}.
 \end{proof}

\begin{rmk}\label{rmk:counterex} (i)  One might try to generalize  Proposition~\ref{prop:goodC1} by considering curves of genus zero $C$
with one negative end on $\be_1^m$ where $m$ is chosen so that the index is zero.  Thus, if $C$ has degree $d$ we assume
$\gcd(d,m) = 1$ and $3d = m + \lceil mx\rceil$; see \eqref{eq:indC}.    However, in this case Lemma~\ref{negcurves} might fail, so that  compactness does not hold.
For example, if one can decompose $d = d_1 + d_2$ and  $m = m_1 + m_2$ in such a way that
$m_i + \lceil {m_i}x\rceil = 3d_i$ for $i= 1,2$, then a curve in $\ov M$  of degree $d$ and one negative end on $\be_1^{m}$
might split into a nontrivial building whose top has two components of degrees $d_1,d_2$ and bottom (in the symplectization) has two positive ends of multiplicities $m_1,m_2$.
In such a case, we would have  $ \lceil \frac{m_1}x\rceil +  \lceil \frac{m_2}x\rceil  =  \lceil \frac{m_1+m_2}x\rceil $ so that  the curve in the symplectization as well as the two components in $\ov M$ all have index zero.
As an example, take $x = \frac {55}8$, $m_1 = 5, m_2 = 18$  and $d_1= 2$ and $d_2 = 7$.

\MS
\NI (ii)  We claim that the statement of Proposition~\ref{prop:goodC1} also holds  if there is a  curve  in the moduli space
$\Mm(\ov X_{\mu_*,x}, dL, \be_1^{3p})$ where  $x = b+\eps = \frac pq + \eps$, provided we assume that each of  the pairs $(p,q)$, $(d,3p)$ and $(3,p+q)$ is  mutually prime.    By  \eqref{eq:indC},
the index condition then implies that  $d = p+q$ so that we again get the sharp bound $c_k(b)\ge \frac {3d}p = \frac{3b}{b+1}$.

To prove the claim we must first establish compactness. First of all, any curve in this moduli space cannot be multiply covered, since $d$ and $3p$ are coprime, and similarly convergence to a multiply covered curve in $\overline{M}$ can be excluded.  For the rest of the argument, the key ingredients Lemmas \ref{lem:largeS} and \ref{lem:cpct} hold as before, but in the case of equality in Lemma \ref{negcurves} we can only conclude that $C_*$ is a branched cover of the trivial cylinder over $\beta_1$, with a single negative end of multiplicity $p$ and positive ends which have multiplicities $r_1p,\dots, r_{n_1}p$ where $\sum_{i=1}^{n_1} r_i = 3$. But in the case when $C_*$ is branched, that is when $n_1>1$, the remainder of the limiting building consists of $n_1>1$ matched planar components of degrees $d_1,\dots d_{n_1}$, each attached to one of the top ends of $C_*$.  By Lemma~\ref{lem:cpct}, each such component has nonnegative index, and hence all must have index $0$.  But if $n_1>1$ then at least one $r_i = 1$, and the corresponding degree $d_i$  must then satisfy  $3d_i = p+q$, contradicting our assumption that $\gcd(3,p+q)= 1$.  The compactness then follows as before. A similar argument applies to the building in the counting argument and this proves the claim.

More generally, this argument proves the following.

\begin{prop}    Suppose that for some triple $(d,x,m)$ there is a genus zero curve in $\ov X_{\mu_*,x}$ of degree $d$, index zero, and with one negative end on $\be_1^m.$   Suppose
 further that there are no decompositions
$d = \sum_{i=1}^n d_i, m = \sum_{1=1}^n m_i$, with $n>1$ 
and $d_i>0$ for all $i$, and such that
$$
3d_i = m_i + \lceil\frac {m_i}x\rceil \qquad \forall i.
$$
 Then $c_k(x)\ge \frac dm$.
\end{prop}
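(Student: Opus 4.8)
The plan is to run the three-step scheme of the proof of Proposition~\ref{prop:goodC1} --- stabilize, establish compactness, establish that the relevant count is nonzero --- but with the coprimality hypothesis $\gcd(d,p)=1$ used there replaced systematically by the assumed absence of a decomposition $d=\sum_{i=1}^{n}d_i$, $m=\sum_{i=1}^{n}m_i$ with $n>1$, all $d_i>0$, and $3d_i=m_i+\lceil m_i/x\rceil$. This is exactly the generalization indicated in Remark~\ref{rmk:counterex}~(ii), now carried out for an arbitrary multiplicity $m$ in place of $m=3p$.

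First I would set up the stabilized picture just as in the proof of Proposition~\ref{prop:goodC1}: form the completed complement $\ov M$ of the stabilized ellipsoid $E(1,x,S,\dots,S)$ in $\C P^2(\mu_*)\times\R^{2k}$, equip it with its $\T^k$-action, and regard the given curve $C$ as an element of $\Mm_{J_M}(\ov M,dL,\be_1^m)$, the moduli space of genus-zero degree-$d$ curves with a single negative end on $\be_1^m$. The index formula \eqref{eqn:highdimindexformula} (equivalently \cite[Prop.~13]{CGH}) gives $\ind(C)=0$ for every $k$, since for a genus-zero curve with one negative end the stabilization terms cancel. As in Proposition~\ref{prop:goodC1}, it then suffices to show: (a) $\bigcup_{t\in[0,1]}\Mm_{J_t}(\ov M,dL,\be_1^m)$ is compact for a generic path $J_t$ of admissible almost complex structures joining $J_M$ to one adapted to a hypothetical embedding \eqref{eqn:potentialemb}; and (b) the signed count of curves in $\Mm_{J_M}(\ov M,dL,\be_1^m)$ is nonzero. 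The embedding obstruction, and hence the stated lower bound on $c_k(x)$, then follows from positivity of the action of the resulting curve, exactly as in the proof of Proposition~\ref{prop:goodC1}.

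The main obstacle is step (a), and this is the only place the no-decomposition hypothesis is needed. Suppose a sequence in $\Mm_{J_t}(\ov M,dL,\be_1^m)$ converged to a nontrivial building $C_\infty$; group it into matched components as in Proposition~\ref{prop:goodC1}. Lemma~\ref{lem:largeS} applies for $S\ge S(d,x)$, so no component in $\ov M$ meets the Morse--Bott orbit $\ga$ and all end multiplicities are $<S/x$; Lemma~\ref{lem:cpct} applies, so every matched component of $C_\infty\less C_*$ has index $\ge 0$, with equality only if it is a single curve with a single negative end. The step to rework is the equality case of Lemma~\ref{negcurves}: without coprimality, $\mathrm{index}(C_*)=0$ now only forces $C_*$ to be a (possibly branched) cover of the trivial cylinder over $\be_1$, with negative end $\be_1^m$ and positive ends $\be_1^{r_1},\dots,\be_1^{r_n}$ for some partition $m=r_1+\dots+r_n$ (the equality analysis in Lemma~\ref{negcurves} still rules out positive ends on $\be_2$, since $x$ is irrational). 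Because $\ind(C_\infty)=0$ and $\mathrm{index}(C_*)\ge 0$, the case $\mathrm{index}(C_*)>0$ is impossible as before, and when $\mathrm{index}(C_*)=0$ each matched component of $C_\infty\less C_*$ must have index $0$, hence is a single genus-zero curve $u_a$ in $\ov M$ attached to exactly one positive end $\be_1^{r_a}$ of $C_*$, of some degree $d_a\ge 1$ (degree zero is excluded, since then $\ind(u_a)<0$), and, by \eqref{eq:indC}, with $3d_a=r_a+\lceil r_a/x\rceil$. Setting $m_a:=r_a$, this gives $d=\sum_a d_a$ and $m=\sum_a m_a$ with $n>1$ when the building is nontrivial --- a decomposition of exactly the forbidden type, a contradiction. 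The same argument, with all $d_a$ and $r_a$ equal, rules out convergence to a multiply covered curve in $\ov M$, which is the only other way compactness can fail; in particular the hypothesis forces $C$ itself to be somewhere injective.

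Finally, step (b) goes through just as in the proof of Proposition~\ref{prop:goodC1}: one produces a regular $\T^k$-invariant admissible $J$ by a second neck stretching along $\Si=\Phi\bigl(\p(1+\de)E(1,x)\bigr)\times\C P^k(2T)$ and invokes Wendl's automatic transversality, using that index-zero curves for such $J$ lie in the four-dimensional slice $\ov X$. The only point needing the new hypothesis is the exclusion of a multiply covered component in the projection $C'$ of the top level of a limiting building; this is handled by the same analysis as in (a), since such a component --- or a nontrivial limiting building --- would again force a forbidden decomposition of $(d,m)$, playing the role that $\gcd(p,d)=1$ played in Proposition~\ref{prop:goodC1}. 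With (a) and (b) established, the conclusion follows.
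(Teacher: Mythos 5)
Your proposal is correct and follows essentially the same route as the paper: the paper's own (terse) argument likewise reruns the proof of Proposition~\ref{prop:goodC1}, using the no-decomposition hypothesis to exclude multiply covered limits and, in the equality case of Lemma~\ref{negcurves}, to rule out a branched cover $C_*$ with several positive ends whose attached index-zero planar components would yield a forbidden decomposition $3d_i=m_i+\lceil m_i/x\rceil$. Your write-up just makes explicit the details (exclusion of $d_a=0$, positivity of all intersection multiplicities, the counting step) that the paper leaves to the reader.
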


To prove compactness here, we first observe that the hypothesis excludes
the possibility  that the moduli space $\Mm(\ov X_{\mu_*,x}, dL, \be_1^m)$ contains an $n$-cover of an index $0$ curve for some $n$,
because such a cover would give rise to a decomposition with $m_i = \frac mn$ and $d_i = \frac{d}{n}$ for all $i$. 
Further, in this general situation,  equality in Lemma \ref{negcurves} only implies that $C_*$ is a branched cover of the trivial cylinder, with no restrictions on the positive ends except that their multiplicities $m_i$ must satisfy the condition
$3d = \sum _i(m_i + \lceil\frac {m_i}x\rceil)$.  However Lemma~\ref{lem:cpct} again implies that the remaining planar components have index $0$, and our hypotheses precisely exclude this if $C_*$ has multiple positive ends.

Notice that 
in such a situation 
the  inequality  $c_k(x)\ge \frac dm$ is in general  no longer sharp.\MS

\NI (iii)  Proposition~\ref{prop:goodC1} implies that we can prove that  $c_k(m) = \frac{3m}{m+1}$ for an integer of the form $3d-1$ by finding a  curve of degree $d$ and single end on  $\be_1^{3d-1}$ on the ellipsoid $\p E(1,3d-1)$.  However, to find suitable obstructions at the other integers  we need to use  the generalizations  in (ii) above.    For example, to show that
$c_k(7) = \frac{21}{8}$ it would suffice to find a degree $8$ curve in the completion of $\C P^2(\mu) \less \Phi(E(1,7+\eps))$ with one end on $\be_1^{21}$.  Constructing such curves is the subject of ongoing work.
\end{rmk}

\section{The case
when $C_L$ has  connectors.} \label{sec:heart}

We now assume that 
the limiting building $C_\infty$ has connectors (see Definition~\ref{def:conn}), and denote by $D_{12}$ the union of all the matched components of $C_L$ that contain a connector.
This section is devoted to showing that there is at most one representative of $B$ that is close to breaking into a building with $D_{12}$ nonempty.   This result is stated  in \S\ref{ss:rest}  as Proposition~\ref{prop:up}; its proof is at the end of  \S\ref{ss:rest}.

We recall that if $C_\infty$ has a connector then $C_U$ has negative end $\lbrace (\beta_1, \ell_{n}), (\beta_2, \ell_{n}) \rbrace$, and moreover, by Proposition \ref{prop:lowenergy}, any curve in the upper level has negative end on the orbit set $\lbrace(\beta_1,m),(\beta_2,m)\rbrace$ for some positive integer $m$.  It is relatively easy to find a model candidate $C_M$  for  
the connector (see Lemma~\ref{clm:homologyclass}), and to show that there is at most one $B$-curve that could  limit on a building with connector $C_M$ (see \S\ref{ss:rest}). 
What is difficult is to show that there is no other possible breaking with a connector.

 To this end, we begin by formulating in Proposition~\ref{prop:convertingtheproblemtoa} a fundamental inequality that translates the information on  $I$, $\ind$ and writhe (which are all integers)  that is contained in the index inequality~\eqref{eqn:indineq}   into numerical terms that are easier to calculate and manipulate.  For example, the quantity $A(C)$
in ~\eqref{eqn:indineq} that is a count of lattice points is converted via Pick's formula into a sum of terms  $A(\theta, s)$ that are areas.   This formula uses the fact the constraints on $C_L$ as well as the symplectic form come from related weight expansions and so  have simple numerical descriptions.
In \S\ref{ss:area}, we use the special arithmetic properties of the numbers $b_n$ to gain 
information on  the relevant areas $A(\theta, s)$.

The next step is to use the fact that, because the total action of $C_L$ is (approximately) $\frac 1{Q_n}$, all but one of its constituent curves have low action and so are \lq light'; the connected curve with nontrivial action is called {\bf heavy}.  The heavy curve cannot be multiply covered,
since  all connected curves in $C_L$ have action that is (approximately equal to) some multiple of $\frac 1{Q_n}$, so that its properties  can be analyzed using the machinery developed in \S\ref{ss:fund} and \S\ref{ss:area}.
 In \S\ref{ss:light}  we prove basic facts about light and heavy curves, including the  fact that the heavy curve is in the lowest level and must go through all the constraints on the last block.   Proposition~\ref{prop:improva} then shows that the heavy curve must
 be a connector with  
 the same constraints and asymptotics as the model $C_M$.
 The rather elaborate proof  compares the ECH and Fredholm indices, again using
 Proposition~\ref{prop:convertingtheproblemtoa} and the arithmetic properties of the numbers $b_n$.  The argument is completed in \S\ref{ss:rest}.

\subsection{The fundamental estimate}\label{ss:fund}

The basic estimate needed for the proof of Proposition~\ref{prop:up}  
is given by Proposition~\ref{prop:convertingtheproblemtoa} below.  To state it, we need to introduce some notation.

\begin{defn}\label{def:Atheta}
{\it Given a pair $(\theta,t)$, where $\theta$ is an irrational number and $t$ is a positive integer, we define $A(\theta,t)$ to be {\bf the area of the region} in the first quadrant formed by the line $y = \theta x$, 
the vertical line from $(t,\lfloor t \theta \rfloor)$ to $(t, t \theta)$, and
the maximal concave lattice path starting at the origin, ending at $(t, \lfloor t \theta \rfloor)$, and staying below the  line $y = \theta x$.  }
\end{defn}

The significance of $A(\theta,t)$ to our problem is given by the following.

\begin{lemma}\label{lem:fundest}
Let $\gamma$ be an elliptic orbit with monodromy angle $\theta$, and let $r$ be the length of $p^+_{\theta}(t)$.  Then
\begin{equation}
\label{eqn:agrading}
2A(\theta,t) = \theta t^2 - gr(\gamma^t) + t + \lfloor t \theta \rfloor + r.
\end{equation}
\end{lemma}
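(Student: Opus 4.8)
The plan is to identify the quantity $\op{gr}(\gamma^t)$ with a lattice-point count via the formula \eqref{eq:gr0}, convert that count into an area using Pick's theorem, and then bookkeep the resulting boundary and interior contributions against the combinatorial description of the ECH partition path. First I would recall that for an elliptic orbit $\gamma$ with monodromy angle $\theta$ (taken mod $1$ in $(0,1)$, so that $t\theta$ is genuinely larger than $\lfloor t\theta\rfloor$), the grading $\op{gr}(\gamma^t)$ counts (twice, minus $2$) the lattice points in the closed triangle $T$ with vertices $(0,0)$, $(t,0)$, $(0,t\theta)$ — equivalently, by a shear, the triangle bounded by the axes and the line through $(t,t\theta)$ of slope $\theta$. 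This is the same bookkeeping already used in the proof of Proposition~\ref{prop:indineq}: there the region $P$ under the partition path was analyzed by Pick, and here the relevant region is the triangle $T$ itself.

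Next I would set up the two regions carefully. Let $\Lambda$ be the maximal concave lattice path from $(0,0)$ to $(t,\lfloor t\theta\rfloor)$ staying below $y=\theta x$ (the path computing $p^+_\theta(t)$, with $r$ edges after the maximality subdivision of Remark~\ref{rmk:p+}(i)), and let $P$ be the region under $\Lambda$, bounded also by the segment from $(t,0)$ to $(t,\lfloor t\theta\rfloor)$ and the $x$-axis. Then $A(\theta,t)$ as in Definition~\ref{def:Atheta} is the area of $T\setminus P$, i.e. $\op{Area}(T)-\op{Area}(P)$. Now $\op{Area}(T) = \tfrac12 \theta t^2$, so $2A(\theta,t) = \theta t^2 - 2\op{Area}(P)$. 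It remains to show $2\op{Area}(P) = \op{gr}(\gamma^t) - t - \lfloor t\theta\rfloor - r$, which I would obtain by applying Pick to $P$: $2\op{Area}(P) = 2(\text{interior pts of }P) + (\text{boundary pts of }P) - 2$. The interior-plus-boundary count of $P$ differs from the lattice-point count of the triangle $T$ (which is $\tfrac12\op{gr}(\gamma^t)+1$) exactly by the lattice points lying strictly between $\Lambda$ and $y=\theta x$ and below the right edge — but for the maximal concave path $\Lambda$ there are none such, since any lattice point weakly above $\Lambda$ but on or below $y=\theta x$ would let us push the path up, contradicting maximality. So $P$ and $T$ contain the same lattice points; the difference between $2\op{Area}(P)$ and $2\op{Area}(T)-(\text{stuff})$ is purely a boundary/interior reclassification. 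Concretely, the boundary of $P$ consists of $t+1$ points on the $x$-axis, $\lfloor t\theta\rfloor$ further points on the right vertical edge, and the interior lattice points of the path $\Lambda$, of which (after maximal subdivision) there are exactly $r-1$ new vertices — wait, more precisely the lattice points on $\Lambda$ other than its two endpoints. I would count these against Pick for $T$ and collect terms to arrive at $2\op{Area}(P) = \op{gr}(\gamma^t) - t - \lfloor t\theta\rfloor - r$, hence \eqref{eqn:agrading}.

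The main obstacle I anticipate is the careful edge-case bookkeeping in the last step: correctly accounting for which lattice points on the common boundary of $T$ and $P$ (the $x$-axis segment, the two "diagonal-ish" boundaries, the shared vertex $(0,0)$ and the near-vertex behavior at $(t,\lfloor t\theta\rfloor)$) get counted as interior versus boundary in Pick's formula for $P$, and matching the count of interior lattice points of $\Lambda$ to the number $r$ of partition blocks. The cleanest route is probably to mimic exactly the computation \eqref{eqn:picks}–\eqref{eqn:Bequation} in the proof of Proposition~\ref{prop:indineq}, where the analogous region's interior and boundary counts were written as $T = m+1+\sum\lfloor i\theta\rfloor - \mathcal{L}(\Lambda_C)$ and $B = m+n+\sum p_i + b(\Lambda_C)$: specializing to a single partition block $(t)$ (so the partition path from the one-end analysis is trivial but the maximal subdivided path $\Lambda$ has $r$ edges) and using $\mathcal{L}=0$, $b(\Lambda)=r-1$ should drop out \eqref{eqn:agrading} after substituting $\op{gr}(\gamma^t) = 2(\#T - 1)$. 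I would present the proof in that form, citing \eqref{eq:gr0} and Pick's theorem, and flag that the maximality of $\Lambda$ is what forces $\mathcal{L}(\Lambda)=0$.
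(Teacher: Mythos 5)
Your argument is correct and is essentially the paper's own proof: write $2A(\theta,t)=\theta t^2-2\,\mathrm{Area}(P)$ where $P$ is the region under the maximal concave path, and apply Pick's theorem, using maximality to see that $P$ contains exactly the $\tfrac12\gr(\gamma^t)+1$ lattice points of the triangle (via \eqref{eq:gr0}) and that its boundary carries $t+\lfloor t\theta\rfloor+r$ lattice points. One small caution about your closing suggestion: in the notation of Proposition~\ref{prop:indineq} the maximal path has primitive edges, so one should take $b(\Lambda)=0$ there, the quantity $r-1$ entering instead as the number of internal vertices of $\Lambda$ (boundary points of $P$), exactly as in the count you give in your main paragraph.
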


\begin{proof}
Let $M(\theta,t)$ be the area underneath the maximal concave lattice path.  Then
\begin{equation}
\label{eqn:aequation}
A(\theta,t) = \tfrac{1}{2} \theta t^2 - M(\theta,t).
\end{equation}
We can compute $M(\theta,t)$ by using Pick's theorem.  It is the area of a region $R$ with $\frac{gr(\gamma^t)}{2} + 1$ total lattice points and $r + t + \lfloor t \theta \rfloor$ lattice points on the boundary.  Hence by Pick's theorem (cf. \eqref{eqn:picks}), we have
\begin{equation}
\label{eqn:grM}
2M(\theta,t) = gr(\gamma^t) - t - r - \lfloor t \theta \rfloor.
\end{equation}
The lemma follows from this together  with \eqref{eqn:aequation}.
\end{proof}

Given a preglued holomorphic building $C$, we now define a vector $\op{diff}_C$ that will be important in our estimates.  To motivate its definition, recall that for any vectors $z, w$
the quantity $z \cdot z$ is minimized subject to the constraint
\begin{equation}
\label{eqn:energyconstraint}
z \cdot w = \ka
\end{equation}
if $z = \lambda w$, where
\begin{equation*}
\lambda = \frac{\ka}{w \cdot w}.
\end{equation*}
Now let $[C]$ be any homology class in $H_2(\widehat{\Ee}_n,\alpha,\emptyset)$ corresponding to a preglued holomorphic building $C$.
As explained in \S\ref{sec:noconn},\footnote{Our notation is such that this vector does {\em not} include the asymptotics of $[C]$; it just records the coefficients along the exceptional classes.}
we can identify  the class $[C]$ with a vector $z$.  Then if $w =  w(b_n)$, the weight vector of $b_n$, the symplectic area of the constraints is given by $z \cdot w$, modulo an arbitrarily small error caused by the fact that we cannot completely fill the ellipsoid by balls.
Now consider the vector
\begin{equation}
\label{eqn:diffdefn}
\op{diff}_C \eqdef \lambda w - z,\qquad\mbox{where }  \lambda: = \frac{\ka}{w \cdot w}, \quad \ka: = z \cdot w.
\end{equation}
Then
$ w \cdot \op{diff}_C = 0$,
so that
\begin{equation}
\label{eqn:diffcontribution}
z\cdot z = (\lambda w - \op{diff}_C)\cdot (\lambda w - \op{diff}_C)  
 = \lambda^2 w \cdot w + \op{diff}_C  \cdot \op{diff}_C.
\end{equation}

Since some of the quantities in our estimates are exact, while others are only approximate it will be convenient to introduce the following notation.

\begin{defn}\label{def:approx}   {\it If $A_1(\eps), A_2(\eps)$ are quantities that depend on a finite number of arbitrarily small constants $\eps_i>0$,  
then we write
$$
A_1(\eps)\le _\de  A_2(\eps)
$$
if for all $\de>0$ we have $A_1(\eps)- A_2(\eps)< \de$ for all sufficiently small $\eps, \eps'$.   Further, we write
$$
A_1(\eps)>_\de  A_2(\eps)
$$
if there is $\de_0>0$ so that $A_1(\eps) - A_2(\eps)>  \de_0$  for all sufficiently small $\eps_i.$
    Further we write $A_1(\eps)=_\eps  A_2(\eps)$  if $A_1, A_2$ are continuous functions of the small parameters $\eps_i$ that are equal  when all $\eps_i = 0$.   }
\end{defn}
Note that the properties  $\le _\de$ and $>\de$ are mutually exclusive; that is, it is impossible that $A_1(\eps)\le _\de  A_2(\eps)$ and also $A_1(\eps)>_\de  A_2(\eps)$.

We can now state the crucial estimates.  To simplify the notation for what will follow, define
\begin{align}\label{eq:theta}  \theta_n \eqdef b_n + \epsilon_n, &\quad \quad \widetilde{\theta}_n \eqdef 1/\theta_n,
\end{align}
where $\epsilon_n$ is small and irrational.
We will apply this result when $s,t\le \ell_n$, so that the quantity
$\frac s{P_n} + \frac t{Q_n}$ on the RHS of
\eqref{eqn:weakerestimate} is at most $\ell_n(\frac 1{P_n} + \frac 1{Q_n})$, which by \eqref{eqn:lnpnlimit}  is a decreasing sequence
that converges to $1$.  Thus this RHS is approximately  $3$.

\begin{proposition}
\label{prop:convertingtheproblemtoa}
Let $C$ be a connected somewhere injective curve in $\widehat{\Ee}_n$, asymptotic to $\lbrace (\beta_1,s), (\beta_2,t) \rbrace$.  Then:
\begin{itemize}
\item If $C$ has low action, we have
\begin{equation}
\label{eqn:strongestimate}
2A(\widetilde{\theta}_n,s) + 2A(\theta_n,t) + \op{diff}_C \cdot \op{diff}_C \le_\de 1.
\end{equation}
\item Otherwise,
\begin{equation}
\label{eqn:weakerestimate}
2 A(\widetilde{\theta}_n,s) + 2A(\theta_n,t) + \op{diff}_C \cdot \op{diff}_C \le_\de 1 +  2(\frac s{P_n} + \frac t{Q_n}).
\end{equation}
\end{itemize}
\end{proposition}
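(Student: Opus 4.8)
The plan is to start from the refined index inequality of Proposition~\ref{prop:indineq} applied to the somewhere injective curve $C$ in $\widehat{\Ee}_n$, and then convert each of the three terms $I(C)$, $\ind(C)$, $A(C)$ (together with the self-intersection contribution hidden in $I(C)$) into the numerical quantities appearing in \eqref{eqn:strongestimate} and \eqref{eqn:weakerestimate}. Concretely, since $C$ is connected, somewhere injective, asymptotic to $\lbrace(\beta_1,s),(\beta_2,t)\rbrace$ and represents a class $z$ in $H_2(\widehat{\Ee}_n,\alpha,\emptyset)$, I would write out $I(C)$ using \eqref{eqn:Eei}, i.e. $I(C) = \gr(\alpha) - \sum_i(m_i + m_i^2)$, and $\ind(C)$ using \eqref{eqn:indEE}. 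The term $\sum_i m_i^2$ is exactly $z\cdot z$ minus lower-order pieces; applying \eqref{eqn:diffcontribution} rewrites it as $\lambda^2\, w\cdot w + \op{diff}_C\cdot\op{diff}_C$, where $\lambda = \ka/(w\cdot w)$ and $\ka = z\cdot w$ is (approximately) the symplectic area of the constraints. This is where the distinction between the two cases enters: $\ka$ is controlled by the action of $C$, which is $=_\eps$ some multiple of $1/Q_n$, so in the low-action case $\lambda^2 w\cdot w$ contributes a definite small quantity, while in the general case it can be as large as indicated by the $2(\tfrac{s}{P_n}+\tfrac{t}{Q_n})$ term. I would keep careful track of which relations are exact and which are only $=_\eps$ or $\le_\de$, using Definition~\ref{def:approx}.

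Next I would handle the grading terms. The orbit set at the top is $\lbrace(\beta_1,s),(\beta_2,t)\rbrace$ on $\partial E(1,b_n+\eps_n)$, so $\beta_1$ has monodromy angle $\widetilde{\theta}_n = 1/\theta_n$ and $\beta_2$ has monodromy angle $\theta_n$. The contribution of each orbit to $\gr(\alpha)$ and to $CZ^{ind}$ can be packaged using Lemma~\ref{lem:fundest}: the identity \eqref{eqn:agrading} says precisely that $2A(\widetilde{\theta}_n,s)$ equals $\widetilde{\theta}_n s^2 - \gr(\beta_1^s) + s + \lfloor s\widetilde{\theta}_n\rfloor + r_1$, where $r_1$ is the length of the positive partition $p^+_{\widetilde\theta_n}(s)$, and similarly for $2A(\theta_n,t)$ with the negative-end/positive-end bookkeeping. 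When I subtract $\ind(C)$ from $I(C)$, the terms $c_\tau$ cancel by \eqref{eqn:withadjunction}, leaving $CZ^I_\tau - CZ^{ind}_\tau - w_\tau$ plus $2\delta(C)$. Plugging in the writhe bound \eqref{eqn:writhebound2} (applied here to negative ends via Remark~\ref{rmk:writhexact}(i), or to positive ends depending on the orientation convention for $\widehat\Ee_n$) and using Pick's theorem exactly as in the proof of Proposition~\ref{prop:indineq}, the $\mathcal L(\Lambda_C)$ and $b(\Lambda_C)$ terms reassemble into the $A(\theta,t)$ areas of Definition~\ref{def:Atheta}. The key arithmetic input is that $Q_{n+1}\ell_n = Q_n\ell_{n+1}+1$, $\ell_n^2 = \ell_{n-1}\ell_{n+1}+1$, and the identity $P_n^2 + Q_n^2 - 7P_nQ_n = 9$ together with \eqref{eqn:lnpnlimit}, which are exactly what make the $1/Q_n$-scale error terms fit into a $\le_\de 1$ (resp. $\le_\de 1 + 2(\tfrac s{P_n}+\tfrac t{Q_n})$) bound rather than something weaker.

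The main obstacle I expect is bookkeeping discipline rather than any single hard idea: there are many $O(1/Q_n)$ corrections (incomplete ball packing, the $\eps,\eps'$ perturbations, the $\lceil\cdot\rceil$ versus $\lfloor\cdot\rfloor$ discrepancies in the partitions), and one has to verify that, after all the algebraic identities of Lemma~\ref{lem:fibid0}, \eqref{eqn:basicrecursion}, and \eqref{eqn:lnpnlimit} are used, the surviving error is strictly less than any fixed $\delta$ for small enough $\eps$, i.e. that the inequality is genuinely $\le_\de$ and not merely $\le_\de$ up to an additive constant. A secondary subtlety is the writhe term: since $C$ lies in $\widehat\Ee_n$ with its positive end on the ellipsoid, I need the correct version of the asymptotic writhe estimate for that configuration, and I should record (as in Remark~\ref{rmk:writhexact}) where equality can be expected, since the slack in \eqref{eqn:writhebound2} is what ultimately produces the $\op{diff}_C\cdot\op{diff}_C$ term with the right sign. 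Once these are pinned down, combining $I(C) - \ind(C) \ge 2\delta(C) + 2A(C) \ge 0$ with the rewritten expressions yields the two displayed inequalities directly.
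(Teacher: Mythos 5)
Your overall architecture is the same as the paper's: start from Proposition~\ref{prop:indineq}, expand $I(C)$ and $\ind(C)$, convert the grading terms into areas via Lemma~\ref{lem:fundest}, and extract $\op{diff}_C\cdot\op{diff}_C$ from $z\cdot z$ using the action dichotomy (low action versus action $\approx 1/Q_n$). The genuine gap is at the point you describe as the $\mathcal L(\Lambda_C)$ and $b(\Lambda_C)$ terms ``reassembling'' into the areas of Definition~\ref{def:Atheta}. Those areas $A(\widetilde\theta_n,s)$, $A(\theta_n,t)$ are defined by the \emph{maximal} concave lattice path for the total multiplicities $s,t$, whereas both $\ind(C)$ and the correction $A(C)$ in Proposition~\ref{prop:indineq} are built from the curve's actual ends: the Fredholm index contributes $n_1,n_2$ and $\sum_i\lfloor s_i\widetilde\theta_n\rfloor$, $\sum_j\lfloor t_j\theta_n\rfloor$, while Lemma~\ref{lem:fundest} produces the ECH-partition lengths $r_1,r_2$ and $\lfloor s\widetilde\theta_n\rfloor$, $\lfloor t\theta_n\rfloor$. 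After all substitutions one is left with the discrepancy $(r_1-n_1)+(r_2-n_2)+(\lfloor s\widetilde\theta_n\rfloor-\sum_i\lfloor s_i\widetilde\theta_n\rfloor)+(\lfloor t\theta_n\rfloor-\sum_j\lfloor t_j\theta_n\rfloor)$ on the harmful side of the inequality; the floor differences are nonnegative and can be strictly positive, so they cannot be discarded. The paper closes this with a separate geometric step (Step 3 of its proof, inequality \eqref{eqn:ACbound}), showing by a comparison of the curve's path $\Lambda_C$ with the maximal partition path --- counting lattice points on the vertical line and extra vertices --- that $2A(C)$ dominates exactly these discrepancy terms. This is the one step that is not bookkeeping, and it is precisely where the refinement $A(C)$ of the usual index inequality is used; alternatively you could carry the curve-dependent areas $A_C$ through the Pick computation and then invoke maximality of the partition path to get $A_C\ge A$ (as in the remark following the paper's proof), but either way this comparison must be made explicit, and your plan as written does not contain it.

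Two smaller corrections. The term $\op{diff}_C\cdot\op{diff}_C$ does not come from slack in the writhe bound \eqref{eqn:writhebound2}: it is produced exactly by the orthogonal decomposition \eqref{eqn:diffcontribution} of $z\cdot z$ relative to $\lambda w$, as you correctly say in your first paragraph; the writhe slack and $\delta(C)$ are simply nonnegative quantities that get discarded. Also, the Fibonacci identities you cite as the key arithmetic input ($Q_{n+1}\ell_n=Q_n\ell_{n+1}+1$, $P_n^2+Q_n^2-7P_nQ_n=9$, etc.) play no role in this proposition --- they are used elsewhere in the paper; here all that is needed is that $w\cdot w=b_n=_\eps\theta_n$, that the action of $C$ is $=_\eps$ a multiple of $1/Q_n$, and that $\frac1{Q_n}(s\widetilde\theta_n+t)=_\eps\frac s{P_n}+\frac t{Q_n}$. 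Finally, note that $C$ has only positive ends in $\widehat\Ee_n$, so it is the positive-end writhe bound \eqref{eqn:writhebound2} (equivalently, Proposition~\ref{prop:indineq} as stated) that applies; no orientation ambiguity remains.
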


\begin{proof}

Let $z=[C]$.  We prove Proposition~\ref{prop:convertingtheproblemtoa} in several steps.

\vspace{3 mm}

\NI
{\bf Step 1}: {\it Applying the (improved) index inequality}

\vspace{3 mm}

By Proposition~\ref{prop:indineq}, we have
\begin{equation}
\label{eqn:bound}
I(C) - \tfrac12 \op{ind}(C) \ge I(C) - \op{ind}(C) \ge 2A(C)
\end{equation}
where $A(C)$ is a certain count of lattice points.
We also have
\begin{equation}
\label{eqn:indexequation}
\tfrac 12 \op{ind}(C) = -1 + n_1 + n_2 + \sum_{i = 1}^{n_1} \lfloor s_i \widetilde{\theta}_n  \rfloor + \sum_{j = 1}^{n_2} \lfloor t_j \theta_n \rfloor + s + t - z \cdot 1,
\end{equation}
where $(s_1,\ldots,s_{n_1})$ is the partition of $s$ given by the ends of $C$, and $(t_1,\ldots,t_{n_2})$ is the partition of $t$.  In addition, by \eqref{eq:gr10} and \eqref{eqn:Eei} we have
\begin{equation}
\label{eqn:Iequation}
I(C) = 2st + gr(\beta_1^s) + gr(\beta_2^t) - z \cdot z - z \cdot 1.
\end{equation}
We can substitute for $gr(\beta_1^s)$ and $gr(\beta_2^t)$ in \eqref{eqn:Iequation} using \eqref{eqn:agrading} to get
\begin{equation}
\label{eqn:improvedIequation}
I(C) = -2A(\widetilde{\theta}_n,s) -2 A(\theta_n,t) + s +r_1 + \lfloor s \widetilde{\theta}_n \rfloor + t + r_2 + \lfloor t \theta_n \rfloor +  (\theta_n t^2 + \widetilde{\theta}_n  s^2 + 2st - z \cdot z) - z \cdot 1,
\end{equation}
where $r_1, r_2$ are the number of ends in the ECH partitions.
\vspace{3 mm}

\NI
{\bf Step 2: } {\it  Estimates}

\vspace{3 mm}

Let us first suppose that $C$ has  symplectic area (approximately)  $\frac 1{Q_n}$.  Then because the orbit $\be_1$ has action $1$, while $\be_2$ has action $\theta_n =_\eps \frac{P_n}{Q_n}$,
we have
$$
\om(C)\; =_\eps  \; s + t  \theta_n- z\cdot w \; = _\eps\; \frac 1 {Q_n}.
$$
so that by \eqref{eqn:diffdefn} we have
$$
\ka = z\cdot w   \;=_\eps\;  s + t  \theta_n - \frac 1 {Q_n}.
$$
Since $w\cdot w = \frac{P_n}{Q_n} =_\eps \theta_n$ we obtain from \eqref{eqn:diffcontribution} that
\begin{align*}
\la = & \frac {\ka}{w\cdot w} =_\eps\;  s\frac{Q_n}{P_n}  + t   - \frac 1 {P_n},\vspace{.2in} \\ \notag
z \cdot z  & \ge_\de
\theta_n   t^2 + \widetilde{\theta}_n  s^2 +
  2st - \frac{2}{Q_n}(s \widetilde{\theta}_n+ t) + \op{diff}_C \cdot \op{diff}_C.
\end{align*}
Moreover, we can improve this to
\begin{equation*}
z \cdot z\; \ge_\de\;  \theta_n t^2 + \widetilde{\theta}_n s^2 + 2st + \op{diff}_C \cdot \op{diff}_C
\end{equation*}
in the case where $C$ has low action.

Substitute this into \eqref{eqn:improvedIequation} to get
\begin{equation}
\label{eqn:equationwithbound}
I(C)\; \ge_\de\; 2A(\widetilde{\theta}_n,s) +2 A(\theta_n,t) + s +r_1 + \lfloor s \widetilde{\theta}_n \rfloor + t + r_2 + \lfloor t \theta_n \rfloor - z \cdot 1 + \frac{2}{Q_n}(s\widetilde{\theta}_n + t) - \op{diff}_C \cdot \op{diff}_C.
\end{equation}
with the improvement to
\begin{equation*}
I(C) \ge_\de 2A(\widetilde{\theta}_n,s) + 2A(\theta_n,t) + s +r_1 + \lfloor s \widetilde{\theta}_n \rfloor + t + r_2 + \lfloor t \theta_n \rfloor - z \cdot 1 - \op{diff}_C \cdot \op{diff}_C.
\end{equation*}
in the low action case.  Now substitute \eqref{eqn:equationwithbound} and \eqref{eqn:indexequation} into \eqref{eqn:bound}.  This gives
\begin{align}
& 1 - 2A(\widetilde{\theta}_n,s) -2 A(\theta_n,t) - \op{diff}_C \cdot \op{diff}_C + \frac{2}{Q_n}(s\widetilde{\theta}_n + t)  + (r_1 - n_1) + (r_2 - n_2)\nonumber \\
&\qquad\qquad \qquad  + ( \lfloor s\widetilde{\theta}_n \rfloor - \sum_{i=1}^{n_1} \lfloor s_i \widetilde{\theta}_n \rfloor) + ( \lfloor t \theta_n \rfloor - \sum_{i=1}^{n_2} \lfloor t_j \theta_n\rfloor) \;\; \ge_\de \;\; 2A(C), \label{eqn:bound2}
\end{align}
with the improvement to
\begin{align}
& 1 - 2A(\widetilde{\theta}_n,s)  -2A(\theta_n,t) - \op{diff}_C \cdot \op{diff}_C  + (r_1 - n_1) + (r_2 - n_2)  \nonumber \\ &
\qquad\qquad \qquad + ( \lfloor s \widetilde{\theta}_n \rfloor - \sum_{i=1}^{n_1} \lfloor s_i \widetilde{\theta}_n \rfloor) + ( \lfloor t \theta_n \rfloor - \sum_{i=1}^{n_2} \lfloor t_j \theta_n\rfloor) \;\; \ge_\de \;\; 2A(C), \label{eqn:bound3}
\end{align}
in the low action case.

\vspace{3 mm}

\NI
{\bf Step 3:} {\em  We prove}
\begin{align}
\label{eqn:ACbound}
2A(C) & \ge \max(r_1 - n_1,0) + \max(r_2 - n_2,0) + \\ \notag
&\qquad \qquad ( \lfloor s \widetilde{\theta}_n \rfloor - \sum_{i=1}^{n_1} \lfloor s_i \widetilde{\theta}_n \rfloor) + ( \lfloor t \theta_n \rfloor - \sum_{j=1}^{n_2} \lfloor t_j \theta_n\rfloor).
\end{align}
\begin{proof}
Recall from \eqref{eq:Adef1} that
$$
A(C) = A_C(\beta_1,s) + A_C(\beta_2,t) = 2\Ll(\La_C) + b(\La_C)
$$
 is a certain count of lattice points.
Consider  $A_C(\beta_1,s)$.  Denote the concave path determined by the ends of $C$ at $\beta_1$ by $\Lambda_C$, and let $\Lambda$ be the path determined by the partition conditions.  Counting in the vertical line $x = s$ gives $(\lfloor s \widetilde{\theta}_n \rfloor - \sum_{i=1}^{n_1} \lfloor s_i \widetilde{\theta}_n \rfloor )$ lattice points that contribute to $\mathcal{L}(\Lambda_C)$.

 Further, if any part of the paths $\La$ and $\La_C$ are geometrically the same (though perhaps with different subdivisions), then the maximality of the ECH path implies that it has at least as many vertices as $\La_C$, so that any extra  vertices on this part of $\La$ are interior lattice points that contribute to the term $\frac 12 b(\La_C)$ in \eqref{eq:Adef}.
On the other hand any vertex in $\La$ that does not lie on $\La_C$ contributes to the term $\Ll(\La_C)$.
 Therefore  the vertices of $\La$ that do not lie on  the  line $x = s$ and are not vertices of $\La_C$ contribute at least
  $\max(r_1 - n_1,0)$ to $2A(C)$.
  Combining this with the analogous analysis for $\beta_2$ gives \eqref{eqn:ACbound}.
\end{proof}

\vspace{3 mm}

\NI
{\em Step 4:  Completing the proof}

\vspace{3 mm}

Combine \eqref{eqn:ACbound} with \eqref{eqn:bound2} to get
\[
 1 - 2A(\widetilde{\theta}_n,s)  -2 A(\theta_n,t) + \frac{2}{Q_n}(s \widetilde{\theta}_n + t) - \op{diff}_C \cdot \op{diff}_C \ge_\de 0.\]
In the low action case, combine \eqref{eqn:ACbound} with \eqref{eqn:bound3}.  This gives
\[ 1 - 2A(\widetilde{\theta}_n,s) -2A(\theta_n,t) - \op{diff}_C \cdot \op{diff}_C \ge_\de 0.
\]
Since $\frac{1}{Q_n}(s \widetilde{\theta}_n + t)  =_\eps \frac s{P_n} + \frac t{Q_n}$, this
 proves Proposition~\ref{prop:convertingtheproblemtoa}.
 \end{proof}

\begin{remark}  Although we will not use this in the current paper, we note here that
the bounds in Proposition~\ref{prop:convertingtheproblemtoa} can  be improved if any segment of the concave paths $\La_C$ at the ends of $C$
does not lie on a maximal concave path.  Specifically, we can define quantities
$A_C(\tilde{\theta}_n,s), A_C(\theta_n,t)$ analogously to $A(\tilde{\theta}_n,s), A(\theta_n,s)$, but using the concave path $\Lambda_C$ formed from the ends of $C$ instead. By maximality of the partition path, we always have $A_C(\tilde{\theta}_n,s) \ge A(\tilde{\theta}_n,s), A_C(\theta_n,t) \ge A(\theta_n,t)$; moreover, the proof of Proposition~\ref{prop:convertingtheproblemtoa} shows that as in \eqref{eqn:strongestimate}
\[ 2A_C(\tilde{\theta}_n,s) + 2 A_C (\theta_n,t) + \op{diff}_C \cdot \op{diff}_C \le_{\delta} 1\]
for low action curves, and similarly for \eqref{eqn:weakerestimate}.  To see this,
assume first that at the end $\be_2$ the paths $\La$ and $\La_C$ have no common segments, and let $i$ be the number of lattice points lying strictly between them.
Then
$$
2  A_C (\be_2,t) = 2 r_2 - 2 + 2\bigl(\lfloor t \theta \rfloor - \sum_j \lfloor t_j \theta \rfloor \bigr) + 2i + b_\La,
$$
while Pick's Theorem gives
\begin{align*}
2  A_C (\theta_n,t) - 2 A(\theta_n,t) & = r_2 + n_2 + b + \bigl(\lfloor t \theta \rfloor - \sum_j \lfloor t_j \theta \rfloor \bigr) + 2 i - 2\\ \notag
& = 2A_C (\be_2,t) - r_2 + n_2 - \bigl(\lfloor t \theta \rfloor - \sum_j \lfloor t_j \theta \rfloor \bigr).
\end{align*}
This equality still holds if  the paths $\La$ and $\La_C$ do have  common segments.  Indeed, in this case
 the convexity condition implies that these occur at the beginning of the paths.  By additivity, it therefore suffices to consider the case when the two paths are geometrically the same.   But in this case the left hand side is clearly zero, while  the right hand side
 also vanishes because $2A_C (\be_2,t) = b_\La = r_2 - n_2$
by the maximality condition on ECH partitions.
Now substitute  this, together with the analogous identity for $\tilde{\theta}_n$, in the inequalities \eqref{eqn:bound3}  and  \eqref{eqn:bound2} to obtain the strengthened  versions of \eqref{eqn:strongestimate} and \eqref{eqn:weakerestimate}.
\end{remark}

\subsection{Area estimates}\label{ss:area}

In order to understand the asymptotics of the connector, we now
establish the following estimates for the area $A(\theta,t)$ defined in \eqref{eqn:aequation}.  The connector has top on the orbit set
$\{(\be_1,s)\}, \{(\be_2,t)\}$ where $0< s,t < \ell_n$.  Proposition~\ref{prop:convertingtheproblemtoa} shows that the areas $A(\theta,\cdot)$ at these ends must be rather small.  As we explain in more detail in Lemma~\ref{lem:partcond} below, these areas are closely related to the partition conditions and hence to best lower approximations to $\theta$.
We saw in Example~\ref{ex:bigmono} that the lower convergents to $\theta_n$ have denominators  $t_k, 0\le k\le n-1$.  Further
the best approximations for $\theta_n$ from below whose denominator  $t$ satisfies $\ell_{n-1} < t < \ell_n$
are given by the semiconvergents $c_{2n-2}\oplus {\bf r} c_{2n-1} = [6,(1,5)^{n-1}, 1,r]$ for $1\le r< 6$.  These have
denominators $t_{n-1} + r\ell_{n-1}$.    In particular $t_n = t_{n-1} + 5\ell_{n-1}$, while $\ell_n= t_{n-1} + 6\ell_{n-1}$.

  The following proposition summarizes the
results we shall need when $n>1$.   (For the case $n=1$, see Example~\ref{ex:n=1}.)  Recall the notation $\le_\de, <_\de$ from Definition~\ref{def:approx}.

\begin{proposition}
\label{prop:estimates}  Let $n>1$.
\begin{itemize}
\item[{\rm (i)}] 
We have $2A(\theta_n,t) >_\de \frac{5}{ \tau^4}> 0.729$ for all $t < \ell_n$.
\item[{\rm (ii)}] If $2A(\theta_n,t) < 2.67$ and $\ell_n > t > t_n$, then $t = t_n + t_k$ for some $k < n$.  In this case, $2A(\theta_n,t) \ge_\de 1.39 + 
\frac 5{\tau^4} > 2.11$.
\item[{\rm (iii)}] $2A(\theta_n,t_n) \ge 1.39$.
\item[{\rm (iv)}] $2A(\theta_n,t_n + t_{n-1}) \ge_\de 2.52$  \item[{\rm (v)}] If  $t_n - 2 \ell_{n-1} <  t < t_n$ and $t \ne t_n - \ell_{n-1}$, then $2A(\theta_n,t) \ge_\de 1.39 + \frac{5}{ \tau^4}> 2.11.$
\item[{\rm (vi)}] $2A(\theta_n,t_n - \ell_{n-1}) \ge_\de 1.39$.
\item[{\rm (vii)}] 
$2A(\widetilde{\theta}_n, s)  >_\de \frac 7{48}$ for all $1 \le s < \ell_n$.
\item[{\rm (viii)}]  $2A(\widetilde{\theta}_n, \ell_{n-1})  = \frac{8\ell_{n-1}}{P_n} <_\de \frac{8\si}{\tau^4}$.
\item[{\rm (xi)}] If $2A(\widetilde{\theta}_n, s) < \frac {7}{24}$ for some $1 \le s < \ell_n$ then $s = \ell_{k}$ for some $1 \le k < n.$

\end{itemize}
\end{proposition}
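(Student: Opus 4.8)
The plan is to reduce every clause of Proposition~\ref{prop:estimates} to an explicit computation with the maximal concave lattice path attached to $\theta_n$ (respectively $\widetilde\theta_n$), converting gradings into areas via Lemma~\ref{lem:fundest} and then evaluating the resulting areas in closed form using the Fibonacci identities of \S\ref{sec:bestapprox}. The first step is to pin down the combinatorial structure of these paths. By Remark~\ref{rmk:p+}(i) together with the lists of best rational approximations from below in Examples~\ref{ex:bigmono} and \ref{ex:smallmono}, the maximal concave path from $(0,0)$ to $(t,\lfloor t\theta\rfloor)$ is built greedily: one inserts the primitive edge $(q,p)$, where $p/q$ is the best lower approximation to $\theta$ whose denominator $q$ is as large as possible subject to $q\le$ (remaining horizontal width), repeated as many times as it fits, and then recurses on what is left. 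For $\theta=\theta_n$ the admissible denominators below $\ell_n$ are the $t_k$ and the semiconvergent denominators $t_{k-1}+r\ell_{k-1}$ ($1\le r\le 4$ for $k<n$ and $1\le r\le 6$ for $k=n$, so that in particular $t_{n-1}+5\ell_{n-1}=t_n$), while for $\theta=\widetilde\theta_n$ the only ones below $\ell_n$ are $\ell_0<\ell_1<\dots<\ell_{n-1}$, the edge over $\ell_k$ having slope $\ell_{k-1}/\ell_k$. I would record this structural statement as Lemma~\ref{lem:partcond}.

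The computational engine is the evaluation of a \emph{single-block} area. If the concave path to $(s,\lfloor s\theta\rfloor)$ consists of the primitive edge $(q,p)$ repeated $j$ times --- so $s=jq$ and $\lfloor s\theta\rfloor=jp$ --- then the area under it is $\tfrac12 j^{2}pq$, hence
\[
2A(\theta,s)\;=\;\theta s^{2}-j^{2}pq\;=\;j^{2}\,q\,(\theta q-p),
\]
so everything reduces to the ``defect'' $\theta q-p$. For the relevant denominators these defects are small rational multiples of $1/P_n$ or $1/Q_n$ that I would extract from the two- and three-term recursion checks of \S\ref{sec:bestapprox}: for example $P_nt_n-Q_nt_{n+1}=2(\ell_n^{2}-7\ell_n\ell_{n-1}+\ell_{n-1}^{2})=2$ gives $2A(\theta_n,t_n)=_\eps 2t_n/Q_n$ (this is clause (iii)), and the identity $Q_n\ell_k-P_n\ell_{k-1}=Q_{n-k}$ (again a recursion check on the two-step recursion) gives $2A(\widetilde\theta_n,\ell_k)=_\eps\ell_k Q_{n-k}/P_n$, which is precisely the value in (viii) and, via the limits \eqref{eqn:lnpnlimit}, yields the uniform bounds in (i), (vi) and (vii). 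The same recipe evaluates the single-block areas at the semiconvergent denominators, e.g.\ $P_n(t_{n-1}+4\ell_{n-1})-Q_n(t_n+4\ell_n)=3(\ell_n^{2}-7\ell_n\ell_{n-1}+\ell_{n-1}^{2})=3$ for clause (vi).

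When $t$ (respectively $s$) is \emph{not} of this good form, the concave path is either a single block of multiplicity $j\ge 2$ --- which multiplies the single-block area by $j^{2}\ge 4$ --- or a genuine concatenation of at least two blocks. Writing $2A(\theta,s)=\sum_i w_i\bigl[(\theta x_{i-1}-y_{i-1})+(\theta x_i-y_i)\bigr]$ as a sum over the primitive edges and discarding the nonnegative terms $w_i(\theta x_{i-1}-y_{i-1})$ that come from the accumulated gap bounds the total area below by the contribution of each block --- i.e.\ by a copy of the corresponding single-block value $2A(\theta,\ell_k)$ or $2A(\theta,q)$ computed at the origin --- plus strictly positive cross terms. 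Combining this superadditivity with the exact single-block values forces the stronger bounds in (ii), (iv) and (v), and proves (xi): if $2A(\widetilde\theta_n,s)<7/24$ with $1\le s<\ell_n$, then the path can contain neither a block of multiplicity $\ge2$ (which already contributes at least $4\cdot 2A(\widetilde\theta_n,\ell_k)>4\cdot\tfrac7{48}>7/24$ once $n>1$, using (vii)) nor two distinct $\ell_k$-blocks (each contributing more than $\tfrac7{48}$, and together, with the cross term, more than $\tfrac7{24}$), so it must be the single primitive edge $(\ell_k,\ell_{k-1})$, i.e.\ $s=\ell_k$.

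The step I expect to be the main obstacle is the bookkeeping in the concatenated cases: one must show that as soon as the path deviates from a single good edge the accumulated area jumps past the stated thresholds, and those thresholds ($1.39$, $2.52$, $\tfrac7{24}$, $5/\tau^{4}$, and so on) are tight enough that one needs both the superadditivity estimate for $A$ and sharp control of the defect of \emph{every} block from the Fibonacci identities, together with a direct check of the smallest case $n=2$ (and probably $n=3$), since the closed-form limits are attained only asymptotically. Assembling all nine clauses uniformly --- keeping track of which $\le_\de$ versus $>_\de$ statement is required in each, and making sure the small $\eps$-perturbations never reverse an inequality --- is where the care is needed, rather than in any single new idea.
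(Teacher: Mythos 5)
Your plan is essentially the paper's own proof: the structural description of the maximal concave path via best approximations from below is Lemma~\ref{lem:partcond}, the single-block identity $2A=\ka$ together with superadditivity and the two-block cross term is Lemma~\ref{lem:simpleestimates}, and the defect bounds at convergent and semiconvergent denominators obtained from the Fibonacci/continued-fraction identities are Lemma~\ref{lem:factsaboutc}, after which the nine clauses are assembled exactly as you describe. The only difference is cosmetic --- you evaluate several defects in closed form (e.g.\ $P_nt_n-Q_nt_{n+1}=2$, $Q_n\ell_k-P_n\ell_{k-1}=Q_{n-k}$) where the paper mostly derives inequalities from mediant relations and monotonicity of the ratios --- so this counts as the same argument.
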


We prove the proposition in several steps.
As a first step, we investigate  the relationship of the area $A(\theta, t)$ with the partition conditions.

\begin{lemma}
\label{lem:partcond}
Let $\theta \in (0,1)$ be any irrational number, let $m \ge 1$ be an integer, and let $\Lambda$ be the path corresponding to the partition conditions for $p^+_{\theta}(m)$; see Remark~\ref{rmk:p+}.
\begin{itemize}
\item If $m$ is the denominator of a best approximation $m'/m$ to $\theta$ from below, then $p^+_{\theta}(m) = m$ and $\Lambda$ is a straight line from the origin to $(m,m')$.
\item Otherwise, let $k < m$ be the largest possible denominator of a best approximation $k'/k$ from below.  Then $\Lambda$ is given by concatenating the straight line from the origin to $(k,k')$ with the maximal concave path for $(m-k)$, and
\[p^+_{\theta}(m)=p^+_{\theta}(m-k) \sqcup (k).\]
\end{itemize}
\end{lemma}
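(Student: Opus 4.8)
The plan is to realize $\Lambda$ as the boundary of a convex hull, locate its first edge using the theory of best approximations, and then peel that edge off to run an induction-free two-sided comparison with the path for $m-k$. First I would recall, from Remark~\ref{rmk:p+}~(i), that $\Lambda$ is the pointwise-largest concave lattice path from $(0,0)$ to $(m,\lfloor m\theta\rfloor)$ lying weakly below the line $y=\theta x$, with every edge primitive, and that $p^+_\theta(m)$ records its horizontal edge lengths. The key structural fact is that $\Lambda$ is exactly the upper boundary of the convex hull of $S_m:=\{(x,y)\in\Z^2 : 0\le x\le m,\ 0\le y\le\lfloor\theta x\rfloor\}$, with the hull edges subdivided into primitive segments: any concave lattice path below $y=\theta x$ has all of its vertices in $S_m$, hence lies under this hull boundary, while conversely the hull boundary is itself a concave lattice path from $(0,0)$ to $(m,\lfloor m\theta\rfloor)$ below $y=\theta x$, since a segment joining two points of $S_m$ lies below a line precisely when both of its endpoints do.

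Next I would analyse the first primitive edge of $\Lambda$. Leaving $(0,0)$, the hull boundary travels in the direction of the steepest slope realized in $S_m$, i.e.\ toward $(n,\lfloor n\theta\rfloor)$ with $\lfloor n\theta\rfloor/n=\max_{1\le j\le m}\lfloor j\theta\rfloor/j$. Here I would invoke the elementary fact, which is essentially the content of Lemma~\ref{le:fact} together with the definition of a best approximation from below, that the running maximum $n\mapsto\max_{1\le j\le n}\lfloor j\theta\rfloor/j$ strictly increases precisely at the denominators of the best approximations of $\theta$ from below, and that the fraction attaining it at such a denominator is automatically in lowest terms. Thus $\max_{1\le n\le m}\lfloor n\theta\rfloor/n=k'/k$, where $k$ is the largest denominator $\le m$ of a best approximation $k'/k$ from below, $k'=\lfloor k\theta\rfloor$, $\gcd(k,k')=1$, and the first primitive edge of $\Lambda$ is $(k,k')$. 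If $m$ itself is such a denominator then $k=m$: every $(n,\lfloor n\theta\rfloor)$ with $n<m$ is strictly below the segment from $(0,0)$ to $(m,\lfloor m\theta\rfloor)$, so $\Lambda$ is that single primitive edge and $p^+_\theta(m)=(m)$, which is the first bullet.

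In the remaining case $k<m$, I would peel the edge $(k,k')$ off. Write $\Lambda_{m-k}$ for the maximal concave path attached to $m-k$ and $\overline\Lambda$ for $\Lambda$ translated by $-(k,k')$. Since $\Lambda$ is concave with first-edge slope $k'/k$, all of its edges have slope $\le k'/k$, so $\overline\Lambda$ is a concave lattice path out of the origin all of whose edges have slope $\le k'/k<\theta$, hence lies below $y=(k'/k)x$ and a fortiori below $y=\theta x$. The one genuinely arithmetic input is the floor identity
\[
\lfloor m\theta\rfloor=\lfloor k\theta\rfloor+\lfloor (m-k)\theta\rfloor ,
\]
which I would prove by contradiction: if the left side exceeded the right by at least $1$, then, using $\lfloor m\theta\rfloor\le mk'/k$ (from $k'/k=\max_{n\le m}\lfloor n\theta\rfloor/n$), one would obtain $(\lfloor (m-k)\theta\rfloor+1)/(m-k)\le k'/k<\theta$, contradicting $(\lfloor (m-k)\theta\rfloor+1)/(m-k)>\theta$. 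Granting this, $\overline\Lambda$ runs from $(0,0)$ to $(m-k,\lfloor (m-k)\theta\rfloor)$ and so competes in the maximality defining $\Lambda_{m-k}$, giving $\overline\Lambda\le\Lambda_{m-k}$; conversely, concatenating the edge $(k,k')$ with $\Lambda_{m-k}$ produces a concave lattice path below $y=\theta x$ from $(0,0)$ to $(m,\lfloor m\theta\rfloor)$ — concave at the joint because the first slope of $\Lambda_{m-k}$ is $\max_{n\le m-k}\lfloor n\theta\rfloor/n\le k'/k$ — so it competes in the maximality defining $\Lambda$ and lies pointwise below $\Lambda$; since $\Lambda$ itself begins with the edge $(k,k')$, this forces $\Lambda_{m-k}\le\overline\Lambda$. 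Hence $\overline\Lambda=\Lambda_{m-k}$, $\Lambda$ is that concatenation, and comparing horizontal edge lengths yields $p^+_\theta(m)=p^+_\theta(m-k)\sqcup(k)$.

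I expect the main obstacle to be not any single hard estimate but the bookkeeping around the convex-hull description: turning "maximal" into a usable two-sided comparison, correctly handling the degenerate situation where the steepest slope is attained at a proper multiple of $k$ (so that $(k,k')$ is still the first \emph{primitive} edge but not the first hull edge), and isolating the floor identity above, which is the precise point where the hypothesis "best approximation from below" is used.
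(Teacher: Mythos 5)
Your proof is correct, but it is organized differently from the paper's. The paper proceeds by \emph{verifying the candidate}: it takes the concatenation of the segment to $(k,k')$ with the maximal path for $m-k$ and checks, by two contradiction arguments, that this path is concave (translating a putative steeper second edge back to the origin and using that $k$ is the largest best-approximation denominator $\le m$) and maximal (comparing with the true maximal path and using maximality of the path for $m-k$ together with \eqref{eqn:partitionequation}). You instead analyze the true maximal path directly: you identify $\Lambda$ with the upper boundary of the convex hull of the lattice points under $y=\theta x$, read off its first primitive edge from the fact that the running maximum of $\lfloor n\theta\rfloor/n$ jumps exactly at best-approximation denominators (the same arithmetic input the paper uses, just packaged as a slope statement), prove the floor identity $\lfloor m\theta\rfloor=\lfloor k\theta\rfloor+\lfloor(m-k)\theta\rfloor$ outright, and then sandwich the peeled path between $\Lambda_{m-k}$ and itself by a two-sided maximality comparison. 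What your route buys is that the endpoint matching --- i.e. that the concatenated path really terminates at $(m,\lfloor m\theta\rfloor)$, which the paper leaves implicit in its appeal to \eqref{eqn:partitionequation} --- is isolated and proved, and the hull description makes uniqueness and the meaning of ``maximal'' in Remark~\ref{rmk:p+} transparent, including the degenerate case where the extremal slope is attained at a proper multiple of $k$, which you correctly note only affects the subdivision into primitive edges; the paper's argument is shorter and avoids setting up the hull formalism. Your running-maximum characterization of best approximations from below (including the automatic lowest-terms point) is elementary and does not actually need Lemma~\ref{le:fact}, only the definition, so the proof is self-contained.
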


\begin{proof}
The first bullet point follows from the fact that this straight line is a lattice path, and it is maximal by the definition of a best approximation from below.

To prove the second bullet point, we have to show that the claimed path $\Lambda$ is concave and maximal.  This path is the concatenation of a line with a concave path, so to see that it is concave, we just have to check that the second segment of $\Lambda$ does not have strictly greater slope than the first.  Assume that it does, and translate this second segment to be at the origin.  This translated segment is part of the concave path giving $p^+_{\theta}(m-k)$, so in particular it must be below the line $y = \theta x$.  It cannot have $x$-coordinate less than or equal to $k$, since $k$ is assumed the denominator of a best approximation.  And it cannot have $x$-coordinate more than $k$, since $k$ was assumed the largest denominator of a best approximation.  This is a contradiction.

To see that $\Lambda$ is maximal, assume otherwise, and consider the actual maximal concave path.  As in the previous paragraph, the first segment of this path must agree with the first segment of $\Lambda$.  Now let $(k+\delta, \lfloor (k + \delta)\theta \rfloor)$ be the lattice point that is the endpoint of the first segment of this path that does not agree with $\Lambda$.  Then $(\delta, \lfloor (k + \delta) \theta \rfloor - \lfloor k \theta \rfloor)$ is above the maximal concave path for $(m-k)$, and by \eqref{eqn:partitionequation}, it is below the line $y = \theta x$.  This is a contradiction.
\end{proof}

\begin{example}\label{ex:partit}\rm
Recall from Examples~\ref{ex:bigmono} and~\ref{ex:smallmono} that the even convergents of $\theta_n$ have denominators $t_k, k<n$, while those of
$\widetilde{\theta_n}$ have denominators $\ell_k, k\le n$.  Further,  the best approximation to $\theta_n$ with denominator $< Q_n$ is $\ell_n$.  Therefore $p^+_{\theta}(m) = (m)$ when $m = \ell_n, t_k, k<n$, while $p^+_{\widetilde{\theta_n}}(m) = (m)$  when $m = \ell_k, k\le n$.  \hfill$\er$
\end{example}

This has the following consequences for estimating $A(\theta,s)$.  To simplify the notation, given $\theta$ and a positive integer $s$, define
\begin{align}\label{eq:ka}
\ka(\theta,s) = s \cdot ( s \theta - \lfloor s \theta \rfloor).
\end{align}
  Then the discussion above implies:

\begin{lemma}
\label{lem:simpleestimates}
\begin{itemize}
\item If $p^+_{\theta}(m) = (m)$, then $2 A(\theta,m) = \ka(\theta,m)$.
\item If $p^+_{\theta}(m) = (a_1,\ldots,a_n)$, then
\begin{equation}
\label{eqn:additivity}
2A(\theta,m) \ge \sum_{i=1}^n \ka(\theta,a_i)
\end{equation}
with strict inequality unless $n=1$.
\item If $p^+_{\theta}(m)=(a,b)$ with $a \ge b$ then
\begin{equation}
\label{eqn:concatenation}
2A(\theta,m) = \ka(\theta,a) +\ka(\theta,b) + 2 \tfrac ba \ka(\theta,a).
\end{equation}
\end{itemize}
\end{lemma}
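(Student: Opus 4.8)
The plan is to read off all three statements from the explicit description of the partition path $\La$ provided by Lemma~\ref{lem:partcond}, together with elementary planar geometry. The key observation is that $\ka(\theta,s)$ from \eqref{eq:ka} is exactly twice the area of the triangle $T_s$ with vertices $(0,0)$, $(s,\lfloor s\theta\rfloor)$ and $(s,s\theta)$, and that subdividing an edge of a lattice path into primitive pieces changes neither the path nor the region whose area is $A(\theta,\cdot)$; so throughout we may work with the unsubdivided edges of $\La$. By the recursive construction in Lemma~\ref{lem:partcond} each such edge is a translate of a segment $(0,0)$ to $(a_i,\lfloor a_i\theta\rfloor)$, so (using \eqref{eqn:partitionequation}) the vertices of $\La$ are the points $P_i=\bigl(\sum_{j\le i}a_j,\ \sum_{j\le i}\lfloor a_j\theta\rfloor\bigr)$.

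\textbf{First bullet.} If $p^+_{\theta}(m)=(m)$ then by the first bullet of Lemma~\ref{lem:partcond} the path $\La$ is the single segment from $(0,0)$ to $(m,\lfloor m\theta\rfloor)$, so the region measured by $A(\theta,m)$ is precisely the triangle $T_m$; hence $2A(\theta,m)=\ka(\theta,m)$. This also gives equality in the second bullet when $n=1$.

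\textbf{Second bullet.} I would slice the region $R$ bounded above by $y=\theta x$, below by $\La$, and on the right by $x=m$ into the vertical strips $\sum_{j<i}a_j\le x\le \sum_{j\le i}a_j$ cut out by the vertices $P_{i-1},P_i$ of $\La$. Translating $T_{a_i}$ so that its lower-left vertex sits at $P_{i-1}$ produces a triangle contained in $R$: its base is the $i$-th edge of $\La$, its right side is vertical, and its hypotenuse is the line through $P_{i-1}$ of slope $\theta$, which lies weakly below $y=\theta x$ because $P_{i-1}$ lies weakly below that line. Summing the areas of these interior-disjoint translates of $T_{a_i}$ gives $2A(\theta,m)\ge\sum_i\ka(\theta,a_i)$, which is \eqref{eqn:additivity}. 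The inequality is strict as soon as $n\ge 2$: for $i\ge 2$ the point $P_{i-1}$ lies \emph{strictly} below $y=\theta x$ since $\theta$ is irrational and $\sum_{j<i}a_j>0$, so the translated copy of $T_{a_i}$ misses a region of positive area inside $R$.

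\textbf{Third bullet.} By Lemma~\ref{lem:partcond}, when $p^+_{\theta}(m)=(a,b)$ the path $\La$ has exactly two edges, whose horizontal displacements are denominators of best approximations to $\theta$ from below; concavity forces the steeper edge to come first, and since larger denominators of best lower approximations give values closer to $\theta$, that steeper first edge has displacement $a$ (the larger entry) and the second has displacement $b$. The first strip contributes the triangle $T_a$, of area $\tfrac12\ka(\theta,a)$, as in the first bullet. The second strip is the trapezoid bounded by the second edge of $\La$, by $y=\theta x$, and by the verticals $x=a$ and $x=m$; writing $x=a+u$ with $0\le u\le b$ and using $a\theta-\lfloor a\theta\rfloor=\ka(\theta,a)/a$, its area is $\int_0^b\bigl(\theta(a+u)-\lfloor a\theta\rfloor-\tfrac{\lfloor b\theta\rfloor}{b}u\bigr)\,du=\tfrac12\ka(\theta,b)+\tfrac ba\,\ka(\theta,a)$. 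Adding the two contributions and multiplying by $2$ yields \eqref{eqn:concatenation}. The only mildly delicate point is the bookkeeping identifying which partition entry labels which edge of $\La$ in the third bullet; everything else is a direct area computation, so I do not anticipate a serious obstacle.
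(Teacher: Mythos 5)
Your proof is correct and takes essentially the same route as the paper: both read the shape of $\La$ off Lemma~\ref{lem:partcond} and compare areas, and your strip integral for the third bullet computes exactly the paper's observation that the excess $2A(\theta,m)-\ka(\theta,a)-\ka(\theta,b)$ is twice the area of the parallelogram spanned by $(0,a\theta-\lfloor a\theta\rfloor)$ and $(b,\lfloor b\theta\rfloor)$. Your additional care in identifying the first (steeper) edge with the larger entry $a$ and in justifying strictness when $n\ge 2$ simply fills in details the paper leaves implicit.
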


\begin{proof}
The first two bullet points follow from Lemma~\ref{lem:partcond} and the definition~\ref{eq:ka}. The third follows by observing that $2A(\theta,m) - \ka(\theta,a) - \ka(\theta,b)$ is given by twice the area of the parallelogram determined by the vectors $(0,a \theta - \lfloor a \theta \rfloor)$ and $(b, \lfloor m \theta \rfloor - \lfloor a \theta \rfloor),$  and computing this area with the two-dimensional cross product.
\end{proof}

\begin{example}\label{ex:n=1}
When $n=1$, we have $b_1 = \frac {55}8, \ell_1 = 7$ and $t_1 = 6, t_0=1$.    The integers $m \in \{2,\dots, 5\}$ are all lower semiconvergents to $\theta_1$, and  $p^+(\theta_1)(m) = (m)$ for $1\le m\le 7$, while
$p^+(\widetilde{\theta}_1)(m) = (1^{\times m})$  for $1\le m\le 6$.  Hence because $b_1 = 7-\frac 18$ we find that
\begin{align}\label{eq:n=1ka}
2A(\theta_1,m) & = \ka(\theta_1,m) = \frac {m(8-m)}8, \qquad 1\le m\le 7,\vspace{.1in} \\ \notag
 2A(\widetilde{\theta}_1,m) & =\ka(\widetilde{\theta}_1,m)  =  \frac{8m^2}{55}, \qquad 1\le m\le 6.
\end{align}
We use these calculations instead of Proposition~\ref{prop:estimates} in the case $n=1$. \hfill$\er$
\end{example}

The next lemma   estimates $\ka$ for $n>1$ by using some basic facts about continued fractions.

\begin{lemma}  Let $n>1$.
\label{lem:factsaboutc}
\begin{itemize}
\item[{\rm (a)}] If $m< \ell_n$ is the denominator of an even convergent of $\theta_n: = b_n+\eps$, then
\begin{equation}
\label{eqn:convergentestimate}
2A(\theta_n,m) = \ka(\theta_n,m) > 5/\tau^4 > 0.729.
\end{equation}
\item[{\rm (b)}]  If $t_{n-1} < m < \ell_n$ is the denominator of a lower semiconvergent of $\theta_n$, then
\begin{equation}
\label{eqn:semiconvergentestimate}
2A(\theta_n,m)= \ka(\theta_n,m) > 1.39.
\end{equation}
\item[{\rm (c)}]
If $m < \ell_n$ is the denominator of any lower semiconvergent of $\theta_n$, then
\begin{equation}
\label{eqn:semiconvergentestimate2}
2A(\theta_n,m)= \ka(\theta_n,m) > 1.28.
\end{equation}
\item[{\rm (d)}]  If $m < \ell_n$ is the denominator of an even convergent of $\widetilde{\theta}_n$ then
\begin{equation}
\label{eqn:convergentestimate2}
2A(\widetilde{\theta}_n,m) = \ka(\widetilde{\theta}_n,m) > \frac 7{48}> 0.1458 \quad \mbox{ if } n>1.
\end{equation}
\item[{\rm (e)}]\;   $\ka(\widetilde{\theta}_n,\ell_{n-1})=
{\displaystyle \frac{8\ell_{n-1}}{P_n} < \frac {48}{5\tau^8}}$.
\end{itemize}
\end{lemma}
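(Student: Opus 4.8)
The plan is to reduce each of (a)--(e) to an exact identity about the integer sequences $Q_\bullet,\ell_\bullet,t_\bullet$, and then to read off the numerical bounds from the monotonicity in Lemma~\ref{lem:fibid0}(ii) together with the limits in \eqref{eqn:lnpnlimit}. First I would dispose of the area: in every case the integer $m$ is, by Examples~\ref{ex:bigmono} and~\ref{ex:smallmono}, the denominator of a best lower approximation $m'/m$ to $\theta_n$ (resp.\ $\widetilde\theta_n$), so $p^+_\theta(m)=(m)$ by Lemma~\ref{lem:partcond}, and the first bullet of Lemma~\ref{lem:simpleestimates} gives $2A(\theta_n,m)=\ka(\theta_n,m)$, with $m'=\lfloor m\theta_n\rfloor$. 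Since $n$ is fixed, $m<\ell_n$ is bounded, and $\eps_n$ is arbitrarily small, $\ka(\theta_n,m)=m(m\theta_n-m')$ agrees up to an arbitrarily small error with $m(mb_n-m')$, so
\[
\ka(\theta_n,m)\;=_\eps\;\frac{m(mP_n-m'Q_n)}{Q_n},\qquad
\ka(\widetilde\theta_n,m)\;=_\eps\;\frac{m(mQ_n-m'P_n)}{P_n}.
\]
This is exactly why the Lemma is phrased with the relations $>_\de$ and $\le_\de$, and it reduces everything to the exact values of the small integers $mP_n-m'Q_n$ and $mQ_n-m'P_n$.

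Next I would read off the numerators $m'$ from the continued fractions (Examples~\ref{ex:bigmono}, \ref{ex:smallmono}): the even convergents of $\theta_n$ are $t_{k+1}/t_k$ with $0\le k<n$; the lower semiconvergents are $\tfrac{t_k+r\ell_k}{t_{k-1}+r\ell_{k-1}}$ with $1\le r\le 4$, $k<n$, and $\tfrac{t_n+r\ell_n}{t_{n-1}+r\ell_{n-1}}$ with $1\le r\le 6$ (the cases $r=5,6$ giving denominators $t_n$ and $\ell_n$); and the even convergents of $\widetilde\theta_n$ are $\ell_{k-1}/\ell_k$ with $1\le k\le n$ (together with $c_0=0/1$). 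To evaluate the numerators I would use the following cross-product principle for any two sequences $x_\bullet,y_\bullet$ satisfying \eqref{eqn:basicrecursion}: if $a+b=c+d$ then $x_ay_b-x_cy_d$ depends only on $a-b$ and $c-d$, hence may be computed by shifting $a,b,c,d$ down to small (possibly negative) indices; in particular $x_{b+j}y_b-x_by_{b+j}=(x_1y_0-x_0y_1)\,\ell_{j-1}$. Substituting the $Q,\ell,t$ sequences (and telescoping once when the two indices differ by $1$) produces closed forms such as $t_kP_n-t_{k+1}Q_n=t_{n-k}+t_{n-k-1}$, $t_nP_n-t_{n+1}Q_n=2$, $\ell_kQ_n-\ell_{k-1}P_n=Q_{n-k}$, and in particular $\ell_{n-1}Q_n-\ell_{n-2}P_n=8$, which gives (e) at once. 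For the partitions needed downstream in Proposition~\ref{prop:estimates}(iv)--(vi) I would first apply Lemma~\ref{lem:partcond} to identify the ECH partition of a non--best-approximation $m$ — e.g.\ $p^+_{\theta_n}(t_n+t_{n-1})=(t_n,t_{n-1})$ — and then build $A$ from the single-end values (a)--(c) via the third bullet of Lemma~\ref{lem:simpleestimates}.

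Finally, each $\ka$ is, up to $O(\eps)$, a quotient $\tfrac{m\,N}{Q_n}$ or $\tfrac{m\,N}{P_n}$ whose numerator and denominator are sums of $Q_\bullet,\ell_\bullet,t_\bullet$, so its dependence on $n$ is governed by the monotone ratios $t_n/Q_n$, $\ell_{n-1}/P_n$, $\ell_n/P_n$ of Lemma~\ref{lem:fibid0}(ii) and their limits $1-2\sigma$, $\sigma\tau^{-4}$, $\sigma$; the numerical inequalities then follow after simplifying with $\tau^4+\tau^{-4}=7$ and $\sigma(\tau^4+1)=1$. For (e), $\ell_{n-1}/P_n$ increases to $\sigma\tau^{-4}$, so $\ka(\widetilde\theta_n,\ell_{n-1})=_\eps\tfrac{8\ell_{n-1}}{P_n}<8\sigma\tau^{-4}=\tfrac{8(1-\sigma)}{\tau^8}<\tfrac{48}{5\tau^8}$; for (d) the same ratio is smallest at $n=2$, where $\tfrac{8\ell_1}{P_2}=\tfrac{56}{377}>\tfrac{7}{48}$, after first checking (via the cross-product principle) that $\ka(\widetilde\theta_n,\ell_k)$ is minimized over $1\le k\le n-1$ at $k=n-1$ and that $\widetilde\theta_n>\tfrac{7}{48}$ for the term $m=\ell_0=1$. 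The case $n=1$ is excluded throughout and is handled separately by the explicit calculations of Example~\ref{ex:n=1}.

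The main obstacle I anticipate is bookkeeping rather than ideas. There are many best approximations below $\ell_n$ — in particular the six semiconvergents of the last block $[\,\dots,1,7\,]$ all bunched just below $\ell_n$ — so the cases must be organized carefully; and several of the claimed constants (for instance $1.28$ in (c), $2.52$ in Proposition~\ref{prop:estimates}(iv), and $7/48$ in (d), the last forced precisely by the case $n=2$, $m=1$) are tight to within a percent or two, so each numerator identity must be pinned down exactly and each final inequality verified with care. The cross-product principle above is the conceptual ingredient that makes all of these computations tractable.
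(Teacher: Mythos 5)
Your proposal is sound and its skeleton matches the paper's: every admissible $m$ is the denominator of a best lower approximation (Example~\ref{ex:bigmono}, Example~\ref{ex:smallmono}, Lemma~\ref{le:fact}), so Lemma~\ref{lem:partcond} and Lemma~\ref{lem:simpleestimates} give $2A=\ka$, and the whole content is a lower bound on $\ka(\theta_n,m)=_\eps m(mP_n-m'Q_n)/Q_n$ (resp.\ $m(mQ_n-m'P_n)/P_n$), obtained from exact integer cross-products plus the monotone ratios of Lemma~\ref{lem:fibid0} and the limits \eqref{eqn:lnpnlimit}. Your identities are correct (I checked $t_kP_n-t_{k+1}Q_n=t_{n-k}+t_{n-k-1}$, $t_nP_n-t_{n+1}Q_n=2$, $\ell_kQ_n-\ell_{k-1}P_n=Q_{n-k}$, and $\sigma(\tau^4+1)=1$), and the special case $\ell_{n-1}Q_n-\ell_{n-2}P_n=8$ is exactly how the paper proves (e); your route to (d), minimizing $\ka(\widetilde\theta_n,\ell_k)=_\eps\ell_kQ_{n-k}/P_n$ at $k=n-1$ and then at $n=2$ ($56/377>7/48$, with the $m=1$ term $55/377>7/48$ done separately), is a genuine, if razor-thin, alternative to the paper's strict chain $\ka>\ell_k/\ell_{k+1}\ge 7/48$. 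Where the two arguments really part ways is in (a)--(c): the paper never writes the cross-products as closed functions of $(k,n)$; instead it uses the mediant relations \eqref{eqn:fundamentalrelation}, \eqref{eqn:anotherrelation} to get the uniform bound $b_n-c_{2k}>5/(q_{2k}q_{2k+2})$ for $k<n-1$ and the exact difference $(7-r)/(qq')$, resp.\ $(5-r)/(qq')$, within a fan, together with the coarse denominator estimates $q_{2k-1}\ge\tfrac76 q_{2k-2}$, $q'/q\ge\tfrac{6+7r}{56}$, resp.\ $q'/q>\tfrac r6+\tfrac17$; this collapses (a) to the single decreasing ratio $5t_k/t_{k+1}>5/\tau^4$ and (b),(c) to one-variable minimizations over $r$ (values $78/56>1.39$ at $r=1$ and $>1.28$ at $r=4$, with the $k=0$ and last-fan cases treated separately).

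The one caution I would raise about your version is that this is exactly the point where your plan is heavier than the ``bookkeeping'' you budget for: your closed forms leave genuinely two-parameter families, e.g.\ $t_k(t_{n-k}+t_{n-k-1})/Q_n>5/\tau^4$ for all $0\le k<n$ in (a), and three parameters $(k,r,n)$ in (c). These are not controlled by any single ratio in Lemma~\ref{lem:fibid0}(ii): you would need, say, to fix $j=n-k$, prove monotone decrease in $n$, and then bound the limit uniformly in $j$, which requires the asymptotic constant $\lim_j t_j\tau^{-4j}$ and not just the listed ratios. This is completable and I see no false step, but either carry out that extra layer explicitly or import the paper's mediant bound (which also follows from your cross-product principle, since $b_n>c_{2k}\oplus\mathbf{5}c_{2k+1}$ for $k<n-1$), which is the device that reduces everything to one-variable checks.
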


\begin{proof}
 To prove (a), first let $c_{2k} = \frac{t_k}{t_{k-1}} = \frac{p_{2k}}{q_{2k}}$ be an even convergent with $0 < k < n-1$. (Here we use the notation for $c_{2k}$ in \eqref{eq:convgt}.)  We want to estimate
$q_{2k}( q_{2k} \theta_n - p_{2k}).$  It suffices to estimate $q_{2k}( q_{2k} b_n - p_{2k}) = q_{2k}^2(b_n - c_{2k}).$
By \eqref{eqn:fundamentalrelation} we have
\[ c_{2k} \oplus {\bf 5} c_{2k+1} = c_{2k+2} < b_n.\]
Thus,
\[ b_n - c_{2k} > c_{2k} \oplus {\bf 5} c_{2k+1} - c_{2k} = \frac{p_{2k} + 5p_{2k+1}}{q_{2k}+5q_{2k+1}} - \frac{p_{2k}}{q_{2k}} = \frac{5}{{q_{2k}(q_{2k}+5q_{2k+1})}},\]
where in the last equation we have used \eqref{eqn:anotherrelation}.  However, by \eqref{eqn:fundamentalrelation}, we have
\[ q_{2k}+5q_{2k+1} = q_{2k+2}.\]
Hence, we have
\[ b_n - c_{2k} > \frac{5}{q_{2k}(q_{2k+2})}\]
so
\[ q_{2k}^2 (b_n - c_{2k}) > 5 \frac{q_{2k}}{q_{2k+2}}=5 \frac{t_k}{t_{k+1}}.\]
The fractions $\frac{t_k}{t_{k+1}}= \frac{1}{c_{2k+2}}$ are decreasing with $k$ by Lemma~\ref{lem:fibid0}~(ii) and limit to $\frac{1}{\tau^4}$.  This proves the first bullet point in the case where
$0 < k < n-1$.
The quantity that we want to estimate for $c_0 = 6$ is $\theta_n - 6 > \tau^4 - 6$; this is also bigger than $\frac{5}{\tau^4}.$
\MS

The case $k = n-1$ is similar.   As above, it suffices to estimate
$$
q_{2n-2}(q_{2n-2} b_n - p_{2n-2})=q_{2n-2}^2 ( b_n - c_{2n-2}).
$$
By \eqref{eqn:fundamentalrelation}, we have
\[ b_n = c_{2n-2} \oplus {\bf 7} c_{2n-1},\]
which implies
\begin{align*} b_n - c_{2n-2} = c_{2n-2} \oplus {\bf 7} c_{2n-1} - c_{2n-2} &= \frac{p_{2n-2} + 7p_{2n-1}}{q_{2n-2}+7q_{2n-1}} - \frac{p_{2n-2}}{q_{2n-2}} \\
 &= \frac{7}{{q_{2n-2}(q_{2n-2}+7q_{2n-1})}} =  \frac{7}{q_{2n-2}\ q_{2n}},
\end{align*}
where the third equality uses \eqref{eqn:anotherrelation}.
We know that $\frac{t_{n-1}}{Q_n}$ is decreasing by 
part (ii) of Lemma~\ref{lem:fibid0},
and  \eqref{eqn:lnpnlimit}
implies that  
\[\frac{t_{n-1}}{Q_n} = \frac{\ell_{n-1}}{P_{n-1}} - \frac {\ell_{n-2}}{P_{n-1}}\]
converges to $\sigma - \frac{\sigma}{\tau^4}.$  Since $7(\sigma - \frac{\sigma}{\tau^4}) > \frac{5}{\tau^4}$, this proves  (a).

\vspace{3 mm}

\NI
{\em Proof of {\rm (b)} and {\rm (c)}.}  Let $p'/q' = c_{2k} \oplus {\bf r} c_{2k+1}$ be a lower semiconvergent.  As above, it suffices to estimate $q'^2(b_n - p'/q')$.

Assume first that $k = n-1$; this is the case for $m>t_{n-1}$.  To simplify the notation, let
\[p: = P_n= p_{2n-2} + 7p_{2n-1}, \quad q = Q_n = q_{2n-2} + 7q_{2n-1}.\]
Since $b_n = c_{2n-2} \oplus {\bf 7} c_{2n-1}$, we have
\[
b_n - (c_{2n-2} \oplus {\bf r} c_{2n-1}) = (c_{2n-2} \oplus {\bf 7} c_{2n-1}) - (c_{2n-2} \oplus {\bf r} c_{2n-1}) = \frac{7 - r}{qq'}.
\]
We are interested in the case  $q' < \ell_n$.  Since $\ell_n = t_{n-1} + 6 \ell_{n-1}$, it follows that $r < 6$.
By \eqref{eqn:fundamentalrelation}, we have
\[ q_{2k-1} = q_{2k-2} + q_{2k-3}, \quad q_{2k-2} = q_{2k-4} + 5q_{2k-3} < 6 q_{2k-3},\qquad  2\le k\le n.
\]
It follows that
\begin{equation}
\label{eqn:coarsebound}
q_{2k-1} \ge \frac{7}{6} q_{2k-2},\qquad 1\le k \le n.
\end{equation}
Since we also have $q_{2n-2} + 7 q_{2n-1} <  8 q_{2n-1}$, we have
\[ \frac{q'}{q} = \frac{q_{2n-2} +  r q_{2n-1}}{q_{2n-2} +  7 q_{2n-1}} \ge \frac{6 + 7r}{56}.\]
Hence
\[ \frac{ (7-r)q'}{q} \ge \frac{1}{56} (7-r)(6+7r).\]
This is minimized over integers $1 \le r \le 5$ for $r = 1$, in which case its value is larger than $1.39$.  This proves  (b).

If  $0 < k < n-1$ we are in case (c), and the proof is similar.  By \eqref{eqn:fundamentalrelation}, we have
\[ b_n - (c_{2k} \oplus {\bf r} c_{2k+1}) = (b_n - c_{2k+2}) + (c_{2k} \oplus {\bf 5} c_{2k+1} - c_{2k} \oplus {\bf r} c_{2k+1}).\]
As above, let $p = p_{2k} + 5 p_{2k+1}$ and $q = q_{2k} + 5 q_{2k+1}$.  As in the proof of  (a) we can bound
\[ q^2(b_n - c_{2k+2}) > \frac{5}{\tau^4},\]
so
\begin{equation}
\label{eqn:firstbound}
q'^2(b_n - c_{2k+2}) > \frac{5}{\tau^4} \frac{q'^2}{q^2}.
\end{equation}
Similarily to above, we also have
\begin{equation}
\label{eqn:secondbound}
(c_{2k} \oplus {\bf 5} c_{2k+1} - c_{2k} \oplus {\bf r} c_{2k+1}) = \frac{5-r}{qq'}.
\end{equation}
Since we also have $q_{2k} + 5q_{2k+1} < 6 q_{2k+1}$, and since \eqref{eqn:coarsebound} still applies, we have
\[ \frac{q'}{q} = \frac{q_{2k} + r q_{2k+1}}{q_{2k} + 5 q_{2k+1}} > \frac{r}{6} + \frac{1}{7}.\]
Putting this all together, we therefore have
\[ q'^2(b_n - c_{2k} \oplus {\bf r} c_{2k+1}) > \frac{5}{\tau^4}(\frac{q'}{q})^2 + (5-r) \frac{q'}{q} > \frac{5}{\tau^4} (\frac{r}{6} + \frac{1}{7})^2 + (5-r) (\frac{r}{6} + \frac{1}{7}).\]
The quantity $\frac{5}{\tau^4} (\frac{r}{6} + \frac{1}{7})^2 + (5-r) (\frac{r}{6} + \frac{1}{7})$ is minimized for $r\in \{1,2,3,4\}$ when $r = 4$, in which case it is larger than $1.28$.  This completes the proof of (c) in all cases except $k = 0$.

We have to treat the case where $k = 0$ slightly differently, because the estimate \eqref{eqn:coarsebound} no longer applies.  The estimates \eqref{eqn:firstbound} and \eqref{eqn:secondbound} still hold, so we have
\[ q'^2(b_n - c_{2k} \oplus {\bf r} c_{2k+1}) > \frac{5}{\tau^4}(\frac{q'}{q})^2 + (5-r) \frac{q'}{q}.\]
Since $q_0 = q_1 = 1$, we have $\frac{q'}{q} = \frac{1+r}{6}$.  Thus, we have
\[\frac{5}{\tau^4}(\frac{q'}{q})^2 + (5-r) \frac{q'}{q} = \frac{5}{\tau^4}\bigl(\frac {r+1}6\bigr)^2 + (5-r)\frac{r+1}6.\]
This is minimized over $r\in \{1,2,3,4\}$ when $r = 4$, in which case it is greater than $1.33$.

This completes the proof of (b) and (c).

\vspace{3 mm}

\NI {\it Proof of {\rm (d)}:}  Now consider convergents  to $\widetilde{\theta}_n: = \frac 1{b_n+\eps}$, that we also denote by $c_i = \frac{p_i}{q_i}$.
 We want to estimate  $q_{2k}^2(b_n - c_{2k})$.  Because $m < \ell_n$, we know that $k < n$.
Assume first that $0 < k$.  Then
\[ c_{2k} \oplus c_{2k+1} = c_{2k+2} < \frac 1{b_n}.\]
Thus,
\[ 1/b_n - c_{2k} > c_{2k} \oplus c_{2k+1} - c_{2k} = \frac{p_{2k} + p_{2k+1}}{q_{2k}+q_{2k+1}} - \frac{p_{2k}}{q_{2k}} = \frac{1}{{q_{2k}(q_{2k}+q_{2k+1})}},\]
where in the last equation we have used \eqref{eqn:anotherrelation}.  However, by \eqref{eqn:fundamentalrelation}, we have
\[ q_{2k}+q_{2k+1} = q_{2k+2},\]
so
\[ q_{2k}^2 (b_n - c_{2k}) >  \frac{q_{2k}}{q_{2k+2}}= \frac{\ell_k}{\ell_{k+1}}.\]

The fractions $\frac{\ell_k}{\ell_{k+1}}$ increase with $k$ by Lemma~\ref{lem:fibid0}  and so are $\ge \frac 7{48} \ge 0.1458$ for $k\ge 1$.
 When $k=0$,  because $n\ge 2$ 
 we have
$$ \ka(\widetilde{\theta}_n,k) = (\widetilde{\theta}_n - \lfloor \widetilde{\theta}_n \rfloor) \ge_\eps \frac {55}{377} > \frac 7{48}.
$$
\vspace{3 mm}

\NI {\it Proof of {\rm (e)}:}
We want to compute
\[ \ell_{n-1} \bigl( \frac{\ell_{n-1} }{b_n} - \lfloor \frac{\ell_{n-1} }{ b_n}  \rfloor\bigr),\quad n\ge 1.\]
But  $\ell_{n-1}P_{n-1}  - \ell_{n-2}P_n = 8$ by Lemma~\ref{lem:fibid0}~(i)  which implies that
\begin{align}\label{eq:lnbn}
\lfloor \frac{\ell_{n-1} }{b_n}  \rfloor = \lfloor  \frac{\ell_{n-1}P_{n-1} }{P_n} \rfloor =  \ell_{n-2}
\end{align}
and also gives the equality in (e).
The estimate on $\frac{8 \ell_{n-1}}{P_n}$ follows by observing that this is an increasing function of $n$
by Lemma~\ref{lem:fibid0}~(i)
with limit
$\frac{8\sigma}{\tau^4}< \frac 6{5\tau^4}$.
This completes the proof of Lemma~\ref{lem:factsaboutc}.
\end{proof}

\begin{proof}{Proof of Proposition~\ref{prop:estimates}.}
Part (i)  follows from additivity (see \eqref{eqn:additivity}),  together with the fact that by Lemma~\ref{lem:factsaboutc}~(a), each of the convergents and semiconvergents of $\theta_n$ contribute at least $5/\tau^4$ to the function $\ka$.

To prove (ii), note first that by Lemma~\ref{lem:partcond}, since $t > t_n$, the partition for $t$ must start with $t_n$.
Since $t_n$ is the denominator of a lower semiconvergent to $\theta_n$ we have $\ka(\theta_n, t_n)> 1.39$ by
Lemma~\ref{lem:factsaboutc}~(b).
Assume first that $t-t_n$ is not a convergent.  If it is a semiconvergent, then $p^+_{\theta_n}(t)=(t_n,t-t_n)$, and, by part (c) of Lemma~\ref{lem:factsaboutc}, $\ka(\theta_n,t-t_n)$ is more than $1.28$, so that by additivity, $2A(\theta_n,t) > 2.67.$  If $t-t_n$ is the sum of at least two convergents or semiconvergents, then we still have $2A(\theta_n,t) > 2.67$, by Lemma~\ref{lem:factsaboutc}~(b,c) and additivity.    Thus, if $2A(\theta_n,t) < 2.67$, then $t - t_n$ must be a convergent, hence $t = t_n + t_k$ for some $k < n$.  In this case, we have $2A(\theta_n,t) > 1.39 + 5/\tau^4,$ since $\ka(\theta_n,t_n) > 1.39$, by Lemma~\ref{lem:factsaboutc}.  This proves (ii).  The bound (iii) also follows from this, again by Lemma~\ref{lem:factsaboutc}~(b).

To prove (iii), note that $p^+_{\theta_n}(t)=(t_n,t_{n-1})$ by Lemma~\ref{lem:partcond}.  Now by \eqref{eqn:concatenation} and the discussions above, we have
\[
2A(\theta_n,t_n + t_{n-1}) > 1.39 + 5/\tau^4 + 2 \frac{t_{n-1}}{t_n} 1.39.
\]
But the fraction $ \frac{t_{n-1}}{t_n}$ is decreasing  by Lemma~\ref{lem:fibid0}, and limits to $1/\tau^4$.  Since $$
\frac{(2)(1.39)+5}{\tau^4} + 1.39 > 2.52,
$$
 this proves (iii).

Parts (iv) and (v)  follows from the fact that if $t_n - 2\ell_{n-1} < t < t_n$, then $t$ is a semiconvergent larger than $t_{n-1}$; one now applies Lemma~\ref{lem:factsaboutc} and \eqref{eqn:additivity} as above.

Parts (vi), (vii), and (viii)  follow from Lemma~\ref{lem:factsaboutc}~(d), (e), together with \eqref{eqn:additivity}.
\end{proof}

\subsection{Facts about the curves in the lowest level }\label{ss:light}

Assume throughout this section that $D_{12}$ is nonempty, i.e. there is at least one connector component.  The main result we prove here is that the heavy curve must
lie  in  $C_{LL}$ and
have exactly one end on $\beta_2$, of multiplicity $t = t_n$.

We start by showing that all curves in any neck level are covers of trivial cylinders.   For this, it is helpful to keep in mind that, as reviewed in \S\ref{sec:stablesetup}, for any curve $C$ in a symplectization level, the symplectic form $d\lambda$ is pointwise nonnegative on $C$, with equality at a point $y \in C$ if and only if the tangent space to $C$ at $y$ is the span of the Reeb vector field and $\partial_s$.  Therefore, any curve with zero action  is
{\bf trivial}, i.e. a union of covers of $\mathbb{R}$-invariant cylinders.
 Further any low action curve in the neck must have top and bottom with almost the same action, and hence, if its  top is
 $\{(\be_1,s),(\be_2,t)\}$ with $s,t< \ell_n$,   must in fact
have zero action  by Lemma~\ref{lem:actconsid},  and so be trivial.

\begin{lemma}\label{lem:conn}  Any symplectization level of $C_L$ must be a union of covers of trivial cylinders;  in particular $C_{LL}$ has top
$\{(\be_1,\ell_n),(\be_2,\ell_n)\}$.
\end{lemma}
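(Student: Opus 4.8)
The plan is to first pin down, by an energy bookkeeping argument, the orbit set $Z$ at the top of $C_{LL}$, showing that it must be one of $\{(\beta_1,h_{2n+3})\}$, $\{(\beta_2,h_{2n+1})\}$ or $\{(\beta_1,\ell_n),(\beta_2,\ell_n)\}$, and then to rule out the first two. Recall from the discussion following \eqref{eqn:area} that $C_L$ has total action $=_\eps\frac1{Q_n}$ and that every connected curve occurring in $C_L$ has action $=_\eps$ a nonnegative integer multiple of $\frac1{Q_n}$; hence at most one connected curve in $C_L$ — the heavy curve — has nonzero action (necessarily $=_\eps\frac1{Q_n}$), and every other constituent curve has zero action. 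Since $d\lambda$ is pointwise nonnegative on curves in a symplectization level, such a zero-action curve is a union of covers of $\mathbb R$-invariant cylinders; so the lemma is equivalent to showing that the heavy curve does not lie in any symplectization level (the trivial cylinders then carry $Z$ unchanged down from the negative end $\{(\beta_1,\ell_n),(\beta_2,\ell_n)\}$ of $C_U$, giving the ``in particular'').

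For the bookkeeping: since all the blow-ups take place inside $\Ee_n$, both $C_U$ and the neck miss the exceptional divisors, so $C_{LL}$ meets $E_i$ with total multiplicity $W_i$, and by \eqref{eqn:action4} its action is $\mathcal{A}(Z)-\sum_i W_iw_i=_\eps\mathcal{A}(Z)-h_{2n+3}$. Positivity of action gives $\mathcal{A}(Z)\ge_\eps h_{2n+3}$, while $\mathcal{A}(Z)\le_\eps\mathcal{A}(\{(\beta_1,\ell_n),(\beta_2,\ell_n)\})=_\eps h_{2n+3}+\frac1{Q_n}$ since action cannot increase going down the neck. No integer combination $a+b\tfrac{h_{2n+3}}{h_{2n+1}}$ lies strictly between $h_{2n+3}$ and $h_{2n+3}+\frac1{Q_n}$, so Lemma~\ref{lem:actconsid}~(i),(ii) forces $Z$ to be one of the three orbit sets listed. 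If $Z=\{(\beta_1,\ell_n),(\beta_2,\ell_n)\}$ the whole neck has zero action and we are done.

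So suppose $Z$ is one of the two ``bad'' options; then the heavy curve $H$ lies in the neck. It is somewhere injective (an $m$-fold cover of an action-$a$ curve would have $ma=_\eps\frac1{Q_n}$ with $a$ a positive multiple of $\frac1{Q_n}$, forcing $m=1$), and matching its positive ends through the trivial cylinders above shows its positive asymptotics form a sub-orbit-set of $\{(\beta_1,\ell_n),(\beta_2,\ell_n)\}$. Combining this with the requirement that the orbit set below $H$'s level be $Z$, the identity $\mathcal{A}(H)=_\eps\frac1{Q_n}$, and the Fibonacci identities \eqref{eq:recurrel} and \eqref{eq:Fibq}, one finds that when $Z=\{(\beta_1,h_{2n+3})\}$ the curve $H$ has positive asymptotics $\{(\beta_1,a),(\beta_2,\ell_n)\}$ and a single negative end on $\beta_1^{\ell_{n+1}+a}$ for some $0\le a\le\ell_n$, and symmetrically with $\beta_1,\beta_2$ interchanged when $Z=\{(\beta_2,h_{2n+1})\}$. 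Using \eqref{eq:gr0} and \eqref{eq:gr} one then computes that $I(H)=\op{gr}(\text{top})-\op{gr}(\text{bottom})$ is small. In the case $Z=\{(\beta_2,h_{2n+1})\}$ one gets $I(H)=2$, so the improved index inequality (Proposition~\ref{prop:indineq}) together with genericity ($H$ is a nontrivial somewhere injective curve in a symplectization, hence $\op{ind}(H)\ge 2$) forces $\op{ind}(H)=2$, $\delta(H)=0$ and the ECH partitions; but Remark~\ref{rmk:p+}~(ii) shows $p^-_{\theta_n}(Q_n)$ has at least eight terms, so $H$ has at least ten ends, and then a direct computation from \eqref{eqn:freddefn} gives $\op{ind}(H)=2s\ge 16$, where $s$ is the number of negative ends — a contradiction.

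The remaining case $Z=\{(\beta_1,h_{2n+3})\}$ is, I expect, the main obstacle: here $I(H)$ is again small but $\op{ind}(H)\ge 2$ is not immediately contradicted, so one must bring in $C_{LL}$ and its junctions. The curve $C_{LL}$ lies in $\widehat{\Ee}_n$ with top on $\beta_1^{h_{2n+3}}$ and carries every constraint, so by \eqref{eqn:indEE} its ECH index is $0$; arguing as for the case $n=0$ in Lemma~\ref{le:CL} then forces each of its components to have a single positive end on $\beta_1$, with multiplicities realizing the ECH partition $p^+_{\widetilde\theta_n}(h_{2n+3})$, which (using Example~\ref{ex:smallmono}) one checks has many terms. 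On the other hand, by \eqref{eqn:Xind} and the identity $\ell_{n-1}+\ell_{n+1}=7\ell_n$ a simple connected $C_U$ with a single negative end on each of $\beta_1^{\ell_n},\beta_2^{\ell_n}$ would have $\op{ind}(C_U)=-2$, so $C_U$ must involve covers of $C_0$ and hence realize the same partition on its $\beta_1$- and $\beta_2$-ends. Matching this common partition against $H$'s asymptotics at the $C_U$–$H$ junction and against the long partition $p^+_{\widetilde\theta_n}(h_{2n+3})$ at the $H$–$C_{LL}$ junction, compatibly with the genus-zero condition on the whole building and with the available index budget, should produce the required contradiction; carrying this out is the delicate point, while the earlier steps are routine applications of Lemma~\ref{lem:actconsid} and the index formulas.
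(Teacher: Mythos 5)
Your opening reduction is fine: the action bookkeeping pinning the orbit set $Z$ at the top of $C_{LL}$ to one of $\{(\be_1,P_n)\}$, $\{(\be_2,Q_n)\}$, $\{(\be_1,\ell_n),(\be_2,\ell_n)\}$ parallels Lemma~\ref{le:bottomend} (the paper does the same reduction using nonnegativity of the ECH index of neck curves together with \eqref{eq:gr}), and the observation that zero-action symplectization curves are covers of trivial cylinders is exactly the paper's. The gap is in excluding the two ``bad'' cases, which is the real content of the lemma. In the case $Z=\{(\be_2,Q_n)\}$ you apply the $\ge 8$-term partition $p^-_{\theta_n}(Q_n)$ to the heavy neck curve $H$, but $Z$ is the asymptotics of the whole level below $H$, not of $H$ itself: trivial cylinders over $\be_2$ alongside $H$ can carry part of the multiplicity $Q_n$, and by your own accounting $H$'s negative ends on $\be_2$ have total multiplicity only $\ell_{n-1}+a$, where $a\le\ell_n$ is its positive $\be_2$-multiplicity (not $Q_n$ unless $a=\ell_n$). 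For $a=0$ one gets a cylinder with one positive end on $\be_1^{\ell_n}$ and one negative end on $\be_2^{\ell_{n-1}}$; a short computation gives $I=\ind=2$, $\delta=0$, ECH partitions of length one at both ends, so it passes every numerical test you invoke and no contradiction arises. (Neck cylinders running from a cover of one orbit to a cover of the other are precisely the configurations that do occur, e.g.\ inside connectors as described after Definition~\ref{def:conn} and in the no-connector analysis of \S\ref{sec:noconn}.) The case $Z=\{(\be_1,P_n)\}$ you explicitly leave as a hope, and it suffers from the same problem: index, grading and writhe arithmetic alone cannot exclude the heavy neck curve.

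The missing ingredient is the standing hypothesis of this section, which your argument never uses: $D_{12}\ne\emptyset$. The paper's exclusion of the bad cases is topological rather than numerical. Since a connector exists, $C_U$ has at least two irreducible components, so by genus zero every irreducible neck curve has top multiplicity $<\ell_n$ on at least one of the $\be_i$; Lemma~\ref{lem:actconsid}(iii) then shows any neck curve whose top and bottom orbit sets differ cannot have low action, so there is a unique nontrivial matched component in the neck. That component would have to absorb the full multiplicity $\ell_n$ on one of the orbits, whence (genus zero again) it can have no positive ends on the other orbit; but the connector's neck pieces have positive ends on that other orbit and must attach to this same component, contradicting genus zero. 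Without bringing in the connector and the genus-zero structure of the entire building in some such way, the cases $Z=\{(\be_1,P_n)\}$ and $Z=\{(\be_2,Q_n)\}$ cannot be ruled out, so as it stands your proposal does not prove the lemma.
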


\begin{proof}
By equations \eqref{eq:gr},  \eqref{eqn:Eei}, and \eqref{eq:weight}, a curve $C$ in $\widehat{\Ee}$ with top asymptotic to $\{(\be_1, P_n)\}$ passing through the constraints $W(\frac{P_n}{Q_n})$   has $I(C) = 0$.  On the other hand, we know that $I(C_L) = 4$,
and
because $J$ is generic, we know from \cite[Prop. 3.7]{hlecture} that the ECH index
$I$ of every curve in the neck
 is nonnegative.  Hence, because
 $$
 \gr(\{(\be_1,\ell_n),(\be_2,\ell_n)\})= \gr\{(\be_2,Q_n)\}+2 = \gr\{(\be_1,P_n)\}+4
 $$
  by  \eqref{eq:gr}, the top of $C_{LL}$ must be asymptotic to one of the orbit sets
  $\{(\be_1,\ell_n),(\be_2,\ell_n)\}$, $\{(\be_2,Q_n)\}$ or $\{(\be_1,P_n)\}$.  If the top level of $C_{LL}$ is $\{(\be_1,\ell_n),(\be_2,\ell_n)\}$ then by the action considerations explained above we are done. (Recall that, since we have a connector, $C_U$ necessarily has negative ends $\lbrace (\beta_1, \ell_{n}), (\beta_2, \ell_{n}) \rbrace$.) So, we can assume that the top level of $C_{LL}$ is either $\{(\be_2,Q_n)\}$ or $\{(\be_1,P_n)\}$.  In fact, our arguments for both of these cases will only use the fact that the asymptotics for this top level 
  are supported by only one of the $\beta_i$.
Assume this, and without loss of generality assume that $i=2$.

Since  there is a connector by assumption, $C_U$ must consist of at least $2$ irreducible curves.  Because we are stretching curves of genus $0$, it then follows that any irreducible curve in the neck has upper asymptotics at an orbit set
$\{(\be_1,s), (\be_2,t)\}$ with at least one of $s,t < \ell_n$.  Hence, by Lemma~\ref{lem:actconsid}, its action $\Aa(\{(\be_1,s), (\be_2,t)\})$ is strictly less than $P_n = \Aa\bigl(\{(\be_1,P_n)\}\bigr) =_\de \Aa\bigl(\{(\be_2, Q_n)\}\bigr))$.  Therefore,  if its top and bottom have different asymptotics, part (iii) of Lemma~\ref{lem:actconsid} implies that it cannot have low action.
In other words, any nontrivial  irreducible curve in the neck is heavy, which implies that there can be only one nontrivial matched component in the neck.

Call this component $C_{neck}$.   The curve $C_{neck}$ must have positive ends on $\beta_1$ of the maximum multiplicity $\ell_n$, since otherwise there would have to be another component in the neck with ends on both $\beta_1$ and $\beta_2$, which is therefore nontrivial.
Given this, $C_{neck}$ cannot have any positive ends on $\beta_2$ at all since our building must have genus $0$.

These positive asymptotics for $C_{neck}$ are not possible, however.  This is because the lowest level of any connector would have to meet $C_{neck}$, and by definition the neck components for the connector would also have positive ends on $\beta_2$.  But this also contradicts the fact that our building has genus $0$.
\end{proof}

  \begin{cor} \label{cor:conn} {\it  If an irreducible curve $C$ in $C_{LL}$  has top on  $\{(\be_1,s),(\be_2,t)\}$ then
  $s,t< \ell_n$ and $s + t \le \ell_n$.}
  \end{cor}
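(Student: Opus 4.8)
The plan is to deduce Corollary~\ref{cor:conn} directly from Lemma~\ref{lem:conn} together with the genus zero constraint. First I would recall that by Lemma~\ref{lem:conn}, the top level of $C_{LL}$ (equivalently the bottom asymptotics of the lowest neck level) is exactly $\{(\be_1,\ell_n),(\be_2,\ell_n)\}$, and that every other level of $C_L$ in the neck is a union of covers of trivial cylinders. Since $C$ is an irreducible component of $C_{LL}\subset \widehat{\Ee}_n$, its positive ends on $\be_1$ contribute to a partition of $\ell_n$ and its positive ends on $\be_2$ contribute to a partition of $\ell_n$; in particular the total multiplicity $s$ of the $\be_1$-ends of $C$ satisfies $s\le \ell_n$, and similarly $t\le \ell_n$, with equality ruled out as follows.

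Next I would observe that if $s=\ell_n$, then $C$ absorbs \emph{all} of the $\be_1$-multiplicity of the top of $C_{LL}$, so no other component of $C_{LL}$ has a positive end on $\be_1$. But we are assuming $D_{12}\ne\emptyset$, so there is a connector component, whose defining property (Definition~\ref{def:conn}) forces its top to have ends on both $\be_1$ and $\be_2$; tracing down through the trivial-cylinder neck levels, the connector must meet $C_{LL}$ in a component having a positive end on $\be_1$. If the connector's $C_{LL}$-part coincides with $C$, then $C$ has ends on both $\be_1$ and $\be_2$, and since our building has genus zero it cannot also be that some \emph{other} component shares the orbit $\be_2$ — but this is compatible; the actual contradiction comes from the genus computation exactly as in the last paragraph of the proof of Lemma~\ref{lem:conn}: having $C$ carry all of $\be_1$ forces a cycle in the dual graph of the building. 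Hence $s<\ell_n$, and symmetrically $t<\ell_n$.

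For the bound $s+t\le \ell_n$, I would argue by area/action. By Lemma~\ref{lem:conn} the relevant symplectization levels are trivial, so the action of $C$ equals the action of the orbit set $\{(\be_1,s),(\be_2,t)\}$ minus the action absorbed by the exceptional constraints; since the total action of $C_L$ is only $\approx \frac{1}{Q_n}$ and each constituent curve of $C_L$ has nonnegative action, $C$ has action $\le_\de \frac{1}{Q_n}$. On the other hand, the portion of $\{(\be_1,\ell_n),(\be_2,\ell_n)\}$ \emph{not} used by $C$, namely $\{(\be_1,\ell_n-s),(\be_2,\ell_n-t)\}$, is the top asymptotics of the union of the remaining components of $C_{LL}$, which again have nonnegative total action and go through the remaining constraints. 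Combining the two constraints $s\cdot w + (\text{stuff}) \le_\de \ell_n$-worth of area and $t\cdot \theta_n \le_\de \ell_n$-worth, one gets $s + t \le \ell_n$ after noting the areas $\om(E_i)$ and the actions of $\be_1,\be_2$ are all commensurable with $\frac{1}{Q_n}$ and clearing the small $\eps$; more precisely, the sum of the actions of all components of $C_{LL}$ is $\le_\de \ell_n\cdot\frac{P_n}{Q_n}=_\eps \ell_n\theta_n$, while each component has $\be_i$-multiplicities partitioning the totals, so the homological identity forces $s+t\le \ell_n$.

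The main obstacle I expect is making the last inequality $s+t\le\ell_n$ genuinely rigorous rather than heuristic: one must be careful that the area absorbed by $C$ together with the area absorbed by the complementary components of $C_{LL}$ exactly accounts for $\om$ of the appropriate piece, and that the small errors coming from the incomplete ball packing (the $\eps''$ in Remark~\ref{rmk:eps}) and from replacing $\theta_n$ by $b_n$ do not push an integer bound over the edge. This is exactly the kind of $=_\eps$/$\le_\de$ bookkeeping that Proposition~\ref{prop:convertingtheproblemtoa} is designed to handle, so I would phrase the area comparison in that language; the genus zero contradiction in the first part is routine once one draws the dual graph of the building, paralleling the final paragraph of Lemma~\ref{lem:conn}'s proof.
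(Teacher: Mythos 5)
Your first step (ruling out $s=\ell_n$, and symmetrically $t=\ell_n$, via the connector and a cycle in the dual graph of the genus zero building) is in the right spirit and close to what the paper does. The genuine gap is in your third paragraph: the inequality $s+t\le\ell_n$ is \emph{not} an area statement and cannot be extracted from the action bookkeeping. Nonnegativity of action plus the total budget $\approx\frac{1}{Q_n}$ only constrains $z\cdot w$ to be $=_\eps s+t\,b_n$ up to $\frac1{Q_n}$, and the constraint vector $z$ of $C$ is free to absorb as much of the blow-up area as needed (subject only to $0\le z\le W$); for instance the configuration $s=t=\ell_n$ with $z=W$ is perfectly consistent with all the area identities, and it is excluded only by topology, not by area. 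So ``combining the two constraints'' does not yield $s+t\le\ell_n$, and no amount of $=_\eps/\le_\de$ bookkeeping will fix this, because the statement genuinely couples the distribution of $C$'s top ends to the structure of the upper level.

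The correct argument (the paper's) is purely combinatorial. When $s,t>0$, the curve $C$ is part of a connector, and genus zero forces the matched components of $C_U$ that attach to $C$ (through the trivial cylinders of the neck, using Lemma~\ref{lem:conn}) along covers of $\be_1$ to be \emph{disjoint} from those that attach to $C$ along covers of $\be_2$ --- this is exactly your dual-graph observation, but its real use is here, not just at $s=\ell_n$. Now invoke Proposition~\ref{prop:lowenergy}: every curve in the upper level has negative end on an orbit set of the form $\lbrace(\be_1,m),(\be_2,m)\rbrace$, so each component of $C_U$ carries equal multiplicities on $\be_1$ and $\be_2$, and these total $\ell_n$ on each orbit. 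The components attached to $C$ along $\be_1$ carry $\be_1$-multiplicity at least $s$, those attached along $\be_2$ carry $\be_2$-multiplicity, hence also $\be_1$-multiplicity, at least $t$; disjointness then gives $s+t\le\ell_n$, and since $s,t>0$ this already yields $s,t<\ell_n$. If instead $\min(s,t)=0$, then $C$ is not part of a connector, its top is disjoint from the top of the (nonempty) connector, which uses positive multiplicity on both orbits, so $s,t<\ell_n$ and $s+t\le\ell_n$ is immediate. In short: keep your genus-zero/dual-graph idea, apply it to prove the disjointness statement for arbitrary $s,t>0$, feed in Proposition~\ref{prop:lowenergy}, and drop the area argument entirely.
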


  \begin{proof}  If $C$ has $s,t>0$ so that it is part of a connector then the components of $C_U$ that are attached via trivial cylinders to $C$ along covers of $\be_1$ must be different from the components  of $C_U$ that are attached to $C$ via $\be_2$.  Hence $s+t\le \ell_n$.  Thus if $C$ is part of a connector it must have $s,t< \ell_n$. But if $C$ is not part of a connector, its top has to be disjoint from the top of any part of a connector.  Hence its top must have multiplicity $<\ell_n$.
  \end{proof}

 We next  analyze curves in the lowest level, i.e. those in the completion $\widehat\Ee$ of the blown up ellipsoid.

\begin{lemma}
\label{lem:nolowenergy} If a curve $C$ in $\widehat\Ee$ has low action then it cannot go through any constraints on the last block.
\end{lemma}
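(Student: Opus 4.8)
The plan is to translate ``low action'' into an exact Diophantine identity and play it off against the improved index inequality of Proposition~\ref{prop:convertingtheproblemtoa}, using the fact that a single unit of intersection with a last-block exceptional sphere is extremely inefficient. We are in the case $D_{12}\ne\emptyset$, so Corollary~\ref{cor:conn} is available.

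First I would record $[C]$ as the vector $z$ of exceptional multiplicities; positivity of intersections with each exceptional sphere and with $C_L\less C$ gives $0\le z_i\le W_i$. If $C$ meets an exceptional sphere $E_j$ of the last block $(1^{\times 7})$ of $W(b_n)$, then $z_j=1$, so $C$ cannot be a nontrivial multiple cover, and (replacing $C$ by the irreducible component through $E_j$ if $C$ is reducible) we may assume $C$ is irreducible and somewhere injective with positive end $\{(\be_1,s),(\be_2,t)\}$; by Corollary~\ref{cor:conn} $s,t<\ell_n$, while $s+t\ge 1$ since closed curves in $\widehat\Ee_n$ do not have action of order $\eps$. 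By \eqref{eqn:action4}, ``low action'' says exactly $z\cdot W(b_n)=sQ_n+tP_n$, equivalently $z\cdot w=_\eps s+tb_n$ with $w\cdot w=_\eps b_n$ and $w_j=_\eps 1/Q_n$. Writing $z=e_j+z'$ with $z'_j=0$ and applying Cauchy--Schwarz to $z'$ against $w$ in the identity $\op{diff}_C\cdot\op{diff}_C=z\cdot z-(z\cdot w)^2/(w\cdot w)$ from \eqref{eqn:diffcontribution}, one gets
\[
\op{diff}_C\cdot\op{diff}_C\ \ge\ 1-\frac{w_j\bigl(2(z\cdot w)-w_j\bigr)}{w\cdot w}\ =_\eps\ 1-\frac{2(s+tb_n)}{Q_nb_n}\ =\ 1-\frac{2s}{P_n}-\frac{2t}{Q_n}.
\]

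On the area side, every part $a$ of the ECH partition of $s$ has $1\le a<\ell_n$, and the convergent identity $Q_n\ell_n-\ell_{n-1}P_n=1$ together with $2\ell_n<P_n$ forces $aQ_n\not\equiv 0,1,2\pmod{P_n}$; hence $\ka(\widetilde\theta_n,a)\ge 3a/P_n$ (modulo $\eps$) and, summing via Lemma~\ref{lem:simpleestimates}, $2A(\widetilde\theta_n,s)\ge 3s/P_n$ for $s\ge 1$. Likewise $2A(\theta_n,t)\ge 2t/Q_n$ for $t\ge 1$, improving to $3t/Q_n$ unless the partition of $t$ is the single part $t_n$ --- the unique $b<\ell_n$ with $bP_n\equiv 2\pmod{Q_n}$. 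Comparing the sum of these three lower bounds with the low action case $2A(\widetilde\theta_n,s)+2A(\theta_n,t)+\op{diff}_C\cdot\op{diff}_C\le_\de 1$ of Proposition~\ref{prop:convertingtheproblemtoa} then produces the contradiction: if $s\ge 1$ the sum exceeds $1$ by at least $\sim s/P_n$; if $s=0$ and $t\ne t_n$ it exceeds $1$ by at least $\sim t/Q_n$; and if $s=0$, $t=t_n$, where the Cauchy--Schwarz bound is vacuous, one instead uses integrality of $z\cdot z$, deriving from $\sum z_i^2\ge\lceil t_n^2 b_n\rceil$ and $t_n^2P_n\equiv 2t_n\pmod{Q_n}$ that $\op{diff}_C\cdot\op{diff}_C\ge 4\ell_{n-1}/Q_n$, whence $2A(\theta_n,t_n)+\op{diff}_C\cdot\op{diff}_C\ge(2t_n+4\ell_{n-1})/Q_n=2$. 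I expect the real work to be bookkeeping rather than conceptual: extracting the residue facts $aQ_n\bmod P_n\ge 3$ and $bP_n\bmod Q_n\ge 2$ (and their exceptions) cleanly from the Fibonacci identities of \S\ref{sec:prelim}, isolating the borderline value $t=t_n$, and --- the one genuinely delicate point --- checking in each case that the surplus over $1$ is bounded below by a fixed (if $n$-dependent) positive constant, so that it really does contradict the $\le_\de$ bound in the sense of Definition~\ref{def:approx}.
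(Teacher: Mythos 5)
Your proposal is essentially correct, but it follows a genuinely different route from the paper's proof. The paper also starts from Proposition~\ref{prop:convertingtheproblemtoa}, but then argues numerically: it splits into the cases $t=0$, $s=0$, and $s,t>0$, invokes the decimal area bounds of Proposition~\ref{prop:estimates} (with $n=1$ handled separately via the explicit formulas of Example~\ref{ex:n=1}), and bounds $\op{diff}_C\cdot\op{diff}_C$ from below by writing out the last-block contribution $(7-r)\al^2+r(1-\al)^2$, where $\al$ is the (small or moderate) value of $\la w$ on that block, concluding first that at most one last-block constraint could be hit and then that even one is too expensive. You instead isolate the single unit entry by the refined Cauchy--Schwarz step $\op{diff}_C\cdot\op{diff}_C\ge 1-w_j\bigl(2\,z\cdot w-w_j\bigr)/(w\cdot w)$, and you replace the decimal estimates by exact congruence bounds: $aQ_n\bmod P_n\ge 3$ for $1\le a<\ell_n$, and $bP_n\bmod Q_n\ge 2$ with equality only at $b=t_n$; these do follow from the identities of \S\ref{sec:bestapprox} (Lemma~\ref{lem:fibid0} gives $Q_n\ell_n-P_n\ell_{n-1}=1$, $P_n\ell_n\equiv 1$ and $P_nt_n\equiv 2\pmod{Q_n}$), and via Lemmas~\ref{lem:partcond} and \ref{lem:simpleestimates} they yield your area lower bounds. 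The resulting surpluses $s/P_n$, $1/Q_n$, $1$ are fixed positive constants for fixed $n$, so they genuinely contradict the $\le_\de 1$ of \eqref{eqn:strongestimate}, as you flag. Two small remarks: (i) your statement that $2A(\theta_n,t)\ge 3t/Q_n$ unless the partition of $t$ is the single part $t_n$ is slightly overstated --- if the partition is $(t_n,\dots)$ with $t>t_n$ you only get $2t/Q_n+(t-t_n)/Q_n$ --- but the surplus $(t-t_n)/Q_n\ge 1/Q_n$ is all you need, so the case analysis survives; (ii) your integrality argument in the borderline case $(s,t)=(0,t_n)$ is correct ($\op{frac}(t_n^2b_n)=(\ell_n-3\ell_{n-1})/Q_n$, hence $z\cdot z\ge t_n^2b_n+4\ell_{n-1}/Q_n$ and the total is $\ge 2$), and it never uses the last-block hypothesis, so it in fact excludes any low action curve asymptotic to $\lbrace(\be_2,t_n)\rbrace$. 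Your approach buys uniformity in $n\ge 1$ and an arithmetic rather than numerical argument; the paper's version has the advantage of reusing the estimates of \S\ref{ss:area}, which it needs anyway for the heavier analysis later in \S\ref{sec:heart}.
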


\begin{proof}  By Corollary~\ref{cor:conn}, we may suppose that $C$ has top end on the orbit set $\{(\be_1,s),(\be_2,t)\}$, where $s,t<\ell_n$ and $s+t\le \ell_n$.  
As above, we denote its constraint vector by $z$.
  \MS

  \NI
{\bf Case 1:}  {\it $C$ has ends just on $\be_1$, i.e. $t=0$.}

If $C$ has ends on $\{(\be_1,s)\}$  then \eqref{eqn:strongestimate}  implies that
\begin{align}\label{eqn:bound1}
 2A(\widetilde{\theta}_n,s) + \op{diff}_{C} \cdot \op{diff}_{C} &\le 1,
 \end{align}
where $\op{diff}_{C_1} $ is as in \eqref{eqn:diffdefn}.
Since $s<\ell_n$, when $n>1$ we may apply Proposition~\ref{prop:estimates}~(vi) to obtain
 \[ \op{diff}_{C} \cdot \op{diff}_{C} \le 1- 8/55.\]
 The same inequality holds when $n=1$ by Example~\ref{ex:n=1}.
Since $z\cdot w \le s< \ell_n$,
the final block of $\la w$ has value at most 
\[ \al: = \widetilde{\theta}_n\frac{s}{Q_n}  \le \frac{\ell_n}{P_n} < 0. 128,\]
where we use the fact that  $\frac{\ell_n}{P_n}$ is an increasing function of $n$ by Lemma~\ref{lem:fibid0}
that converges to $\sigma < .128$ by \eqref{eqn:lnpnlimit}.
If $C$ passes through $r$ of the constraints corresponding to the  last block,
the contribution to $ \op{diff}_{C} \cdot \op{diff}_{C}$ from this block is $(7-r)\al^2 + r(1-\al)^2$.  Since $\al<0.128$ we have $2(1-\al)^2>1$, so that $C$ can pass through at most one of these constraints.    But the minimum of  $6\al^2 + (1-\al)^2$ is taken when $\al = \frac 17$ and is $\frac 67$.  Since $\frac 67 + \frac 8{55}>1$ this case cannot occur. 
\MS

\NI
 {\bf Case 2:}  {\it $C$ has ends just on $\be_2$.}

The argument is essentially the same.
 As above we must have $2A(\theta_n, t) <1$.     Suppose first that $n>1$.
 Since $t< \ell_n$, it follows from Proposition~\ref{prop:estimates}~(ii) that $t\le t_n$.
 Hence the entries $\al$ of $\la w$ on the final block are at most $\frac{t_n}{Q_n} = \frac{t_n}{P_{n-1}}$, which is
a decreasing function of $n$  by Lemma~\ref{lem:fibid0}.   Hence
 $$
 \al < \frac{t_n}{Q_n} \le\frac{t_1}{Q_1} = \frac 34.
 $$
 If $\al\ge \frac 12$ then $ \op{diff}_{C} \cdot \op{diff}_{C}$ is minimized if $x=1$ on the last block.
 But then, using  Proposition~\ref{prop:estimates}~(i), we have
\[ 2A(\theta_n,t) +  \op{diff}_{C} \cdot \op{diff}_{C} \ge \frac{5}{\tau^4} + 7\cdot (1-.75)^2 > 1,\]
which is impossible.  Hence we must have $\al<\frac 12$.
But now the argument in Case 1 shows that the error is too large unless $z=0$ on the last block.
When $n=1$ the argument is similar, since we may use the estimates in Example~\ref{ex:n=1}.
\MS

\NI
 {\bf Case 3:}  {\it $C$ has ends  on both  $\be_1$ and $\be_2$.}

In this case \eqref{eqn:strongestimate} implies that
$$
\op{diff}_{C} \cdot \op{diff}_{C} \le 1 - 2A(\widetilde{\theta}_n,s) - 2A({\theta}_n, t) \le 1 - \frac 8{55} - \frac {5}{\tau^4} \approx 0.125.
$$
As in Case 2, since $t< \ell_n$, we must in fact have $t\le t_n$, since otherwise   $2A({\theta}_n, t)>1$.
Further  the value of $\la w$ on the last block is $\al = \frac{1}{Q_n}(s \widetilde{\theta}_n + t)$, which
is a maximum when $t = t_n$ and $s = \ell_n - t_n$.  Hence
\[\al  \le_\de  \frac{1}{Q_n}\bigl((\ell_n - t_n)\widetilde{\theta}_n + t_{n}\bigr) = \frac {\ell_{n-1}}{P_n} + \frac {t_n}{Q_n}.\]
By Lemma~\ref{lem:fibid0}, $\frac{\ell_{n-1}}{P_n}$ increases with limit $\frac {\sigma}{\tau^4} < 0.02$ while $ \frac {t_n}{Q_n}$  decreases and is $\le  \frac {t_1}{Q_1} = \frac 34$.  Since the contribution of the new term $\frac{\ell_{n-1}}{P_n}$ is so small, we can complete the argument as in Case 2.

This completes the proof.  \end{proof}

\begin{corollary}   The heavy curve lies in $C_{LL}$ and goes through all the constraints on the last block.  Moreover, it has some ends on $\be_2$.
\end{corollary}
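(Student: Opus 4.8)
The plan is to pin down first the location of the heavy curve, then the constraints it carries, and finally its asymptotics, using the structural lemmas of this section together with the fundamental inequality Proposition~\ref{prop:convertingtheproblemtoa}.

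\emph{Step 1: the heavy curve lies in $C_{LL}$.} By Lemma~\ref{lem:conn} every symplectization (``neck'') level of $C_L$ is a union of covers of trivial cylinders; such curves have zero action, hence are light. Since all of $C_L$ except one constituent curve --- the heavy one --- is light, the heavy curve cannot lie in any neck level, so it lies in the lowest level $C_{LL}\subset\widehat{\Ee}_n$. As recalled at the start of \S\ref{sec:heart}, the heavy curve is somewhere injective (an $m$-fold cover would have action equal to $1/m$ times a multiple of $1/Q_n$, which is impossible), so Proposition~\ref{prop:convertingtheproblemtoa} applies to it; and because its action is $=_\eps 1/Q_n$ by \eqref{eqn:area}, the ``otherwise'' (nontrivial-action) alternative of that proposition is the one in force.

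\emph{Step 2: the heavy curve carries every constraint on the last block.} Since the blow-ups producing $\widehat{\C P}\!\,^2$ are performed inside $\Ee_n$, the exceptional divisors $E_1,\dots,E_m$ lie in the compact part of $\widehat{\Ee}_n$: a curve in a neck level maps into $\p\Ee_n\times\R$ and meets no $E_i$, and $C_U\subset\ov X$ meets none either. Hence all $\sum_i W_i$ constraint intersections of $C_L$ occur along curves in $C_{LL}$. From the continued fraction $[6,(1,5)^{\times(n-1)},1,7]$ the last block of $W(b_n)$ is $1^{\times 7}$, i.e.\ it consists of the $7$ constraints $E_{m-6},\dots,E_m$, each met transversally in a single point for generic $J$. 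By Lemma~\ref{lem:nolowenergy} no light curve in $\widehat{\Ee}_n$ passes through a last-block constraint, and the heavy curve is the only non-light curve in $C_{LL}$; therefore it passes through all seven of them, and its constraint vector $z$ has entry $1$ in each last-block position.

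\emph{Step 3: the heavy curve has an end on $\be_2$.} Suppose instead that it is asymptotic to $\{(\be_1,s)\}$ only. Then $1\le s<\ell_n$ by Corollary~\ref{cor:conn} (the heavy curve is not a connector, so its top is disjoint from the $\be_1$-ends of the connector), and $z\cdot w<s$ because $\Aa(C)=s-z\cdot w>0$. With $\la=\tfrac{z\cdot w}{w\cdot w}$ and $w\cdot w=\tfrac{P_n}{Q_n}$, the entry of $\la w$ on each last-block position equals $\al:=\tfrac{z\cdot w}{P_n}<\tfrac{s}{P_n}\le\tfrac{\ell_n}{P_n}<\si<0.128$, the last inequalities following from \eqref{eqn:lnpnlimit} and the monotonicity in Lemma~\ref{lem:fibid0}~(ii). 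Consequently the last block contributes $7(1-\al)^2>7(0.872)^2>5$ to $\op{diff}_C\cdot\op{diff}_C$. On the other hand, the ``otherwise'' case of Proposition~\ref{prop:convertingtheproblemtoa} with $t=0$ gives $2A(\widetilde{\theta}_n,s)+\op{diff}_C\cdot\op{diff}_C\le_\de 1+\tfrac{2s}{P_n}<1.3$, and since $2A(\widetilde{\theta}_n,s)\ge 0$ this contradicts the previous estimate. Hence the heavy curve must have at least one end on $\be_2$.

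The step I expect to require the most care is Step~2: verifying cleanly that every exceptional-divisor intersection of the entire lower building is realized in $C_{LL}$, and that no light curve in $C_{LL}$ can touch the last block, together with the observation (from the \S\ref{sec:heart} introduction) that the heavy curve is a single somewhere injective curve so that the index machinery of \S\ref{ss:fund} is applicable; granted all this, the numerical contradiction in Step~3 is immediate, and in fact does not even use any lower bound on the area $A(\widetilde\theta_n,s)$.
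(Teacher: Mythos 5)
Your argument is correct and is essentially the paper's own: the first two claims follow from Lemma~\ref{lem:conn}, Lemma~\ref{lem:nolowenergy} and the uniqueness of the heavy curve, and the ends-on-$\be_2$ claim follows by running Case 1 of Lemma~\ref{lem:nolowenergy} with the weaker estimate \eqref{eqn:weakerestimate}, where the last block forces a contribution of about $7(1-\al)^2>5$ to $\op{diff}_C\cdot\op{diff}_C$ against a right-hand side below $1.3$. No gaps; the extra detail you supply (constraints all lie in $C_{LL}$, somewhere injectivity of the heavy curve) matches what the paper leaves implicit.
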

\begin{proof}  The first claim holds because there is only one heavy curve.  The second holds by a slight generalization of Case 1 in the previous proof.  Since the curve is heavy, we need to use \eqref{eqn:weakerestimate} rather than \eqref{eqn:strongestimate}, which means that we need to add the term $2\frac{\ell_n}{Q_n} \widetilde{\theta}_n = 2\frac {\ell_n}{P_n}$  to the RHS of \eqref{eqn:bound1}.  As $\frac {\ell_n}{P_n}$ is an increasing sequence with limit less than $.13$, this term is also less than $.13$.  On the other hand, if the curve passed through all $7$ of the smallest constraints, then by the computations in Case 1, we would have the contribution from $\op{diff}_C \cdot \op{diff}_C$ at least $7(1-.13)^2$, which is far too large.
\end{proof}

We next establish the asymptotics of the heavy curve along $\be_2$.   Notice that we make no claims about $s$, which could be zero.

\begin{proposition}
\label{prop:hcomponent}
The heavy curve has just one end on $\be_2$ of multiplicity $t_n = \ell_n - \ell_{n-1}$.
\end{proposition}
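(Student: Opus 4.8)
The plan is to feed the fundamental inequality of Proposition~\ref{prop:convertingtheproblemtoa} to the heavy curve $C$. By the corollary just proved, $C$ lies in $C_{LL}$, passes through all seven of the smallest constraints, and has at least one end on $\be_2$; and, as already observed, $C$ is not multiply covered (its action is $=_\eps\tfrac1{Q_n}$, while every connected curve in $C_L$ has action a multiple of $\tfrac1{Q_n}$), hence somewhere injective, so the proposition applies. Write the top of $C$ as $\{(\be_1,s),(\be_2,t)\}$; by Corollary~\ref{cor:conn} we have $1\le t<\ell_n$, $s<\ell_n$ and $s+t\le\ell_n$. Since $C$ meets all seven constraints on the last block, the corresponding entries of $\op{diff}_C$ each equal $\alpha-1$, where $\alpha:=\tfrac1{Q_n}(s\widetilde{\theta}_n+t)=_\eps\tfrac s{P_n}+\tfrac t{Q_n}$, so this block contributes $7(1-\alpha)^2$ to $\op{diff}_C\cdot\op{diff}_C$. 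Plugging this into the weaker estimate \eqref{eqn:weakerestimate} (using $\tfrac s{P_n}+\tfrac t{Q_n}=_\eps\alpha$) yields the master inequality
\[
2A(\widetilde{\theta}_n,s)+2A(\theta_n,t)+7(1-\alpha)^2\;\le_\de\;1+2\alpha .
\]

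First I would prove $t=t_n$ by eliminating every other $t$ with $1\le t<\ell_n$. From $s+t\le\ell_n$ one gets $\alpha\le\tfrac{\ell_n}{P_n}+t\bigl(\tfrac1{Q_n}-\tfrac1{P_n}\bigr)$, and $\tfrac{\ell_n}{P_n}$ increases to $\sigma<0.128$ by \eqref{eqn:lnpnlimit} and Lemma~\ref{lem:fibid0}, so $\alpha$ is controlled in each range of $t$. If $t\le t_{n-1}$, then $\alpha$ is small (roughly $\le 0.25$), $7(1-\alpha)^2$ is close to $4$, and $2A(\theta_n,t)\ge_\de 5/\tau^4$ by Proposition~\ref{prop:estimates}(i): the master inequality fails by a wide margin. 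If $t_{n-1}<t<t_n$, then either $t$ is not the denominator of a best lower approximation, so $2A(\theta_n,t)\ge_\de 2.11$ by Proposition~\ref{prop:estimates}(v) while $\alpha$ is at most about $0.55$; or $t=t_n-j\ell_{n-1}$ is a semiconvergent denominator with $1\le j\le4$, in which case for $j\ge2$ the smallness of $\alpha$ together with $2A(\theta_n,t)\ge1.39$ already contradicts the master inequality, and for $j=1$ one invokes Proposition~\ref{prop:estimates}(vi) together with $2A(\widetilde{\theta}_n,s)\ge_\de 7/48$ when $s\ge1$ (Proposition~\ref{prop:estimates}(vii)), checking the endpoint $s=0$ separately. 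Finally, for $n\ge2$ and $t>t_n$, Proposition~\ref{prop:estimates}(ii),(iv) give that $2A(\theta_n,t)\ge2.67$ or else $t=t_n+t_k$ with $2A(\theta_n,t)=\kappa(\theta_n,t_n)+\kappa(\theta_n,t_k)+\tfrac{2t_k}{t_n}\kappa(\theta_n,t_n)$, where $\kappa(\theta_n,t_n)\ge_\de1.49$, and $\kappa(\theta_n,t_k)>5/\tau^4$ for $k\ge1$ while $\kappa(\theta_n,1)=\theta_n-6>_\de\tau^4-6$ for $k=0$; since here $s<\ell_{n-1}$, so $\alpha$ is at most about $0.77$, the master inequality fails again. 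The case $n=1$ (where automatically $t\le t_1=6$) follows at once from the explicit formulas $2A(\theta_1,t)=\tfrac{t(8-t)}8$ and $2A(\widetilde{\theta}_1,s)=\tfrac{8s^2}{55}$ of Example~\ref{ex:n=1}: a direct check leaves only $t=6=t_1$.

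Once $t=t_n$ is established, I would show that $C$ has a single end on $\be_2$. If the ends of $C$ on $\be_2$ had multiplicities $(a_1,\dots,a_{n_2})$ with $n_2\ge2$, then the concave path $\Lambda_C$ at $\be_2$ would differ from the maximal (ECH) path $(t_n)$ associated to the best approximation $\theta_n$, so by the sharpened form of Proposition~\ref{prop:convertingtheproblemtoa} recorded in the remark following its proof, the master inequality holds with $A(\theta_n,t_n)$ replaced by the strictly larger $A_C(\theta_n,t_n)$. By additivity \eqref{eqn:additivity} and Lemma~\ref{lem:simpleestimates}, $2A_C(\theta_n,t_n)\ge\sum_i\kappa(\theta_n,a_i)$ plus nonnegative cross terms; since the largest best-approximation denominator below $t_n$ is $t_n-\ell_{n-1}$, every nontrivial partition of $t_n$ forces some $\kappa(\theta_n,a_i)\ge_\de\kappa(\theta_n,t_n-\ell_{n-1})=3(t_n-\ell_{n-1})/Q_n\approx1.85$ (or an even larger contribution), so $2A_C(\theta_n,t_n)\ge_\de2.7$, which exceeds the slack $1+2\alpha-2A(\theta_n,t_n)-2A(\widetilde{\theta}_n,s)-7(1-\alpha)^2$ that the $t=t_n$ analysis bounds by roughly $\tfrac12$. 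This contradiction finishes the proof.

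The main obstacle will be the borderline cases---$t=t_n-\ell_{n-1}$, and $t=t_n+t_k$ with $t_k$ small---where the two sides of the master inequality differ by only a few hundredths; there the crude bounds $2A(\theta_n,t_n)\ge1.39$, etc., of Proposition~\ref{prop:estimates} do not by themselves suffice, and one must instead track the exact limits $\kappa(\theta_n,t_n)=\tfrac{2t_n}{Q_n}\to1-2\sigma$ and $\alpha\to1-2\sigma+\sigma/\tau^4$, using the monotonicity of $t_n/Q_n$ and $\ell_{n-1}/P_n$ from Lemma~\ref{lem:fibid0}, to verify that the margin stays strictly positive for every $n\ge1$. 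The same delicacy, now applied to the improved form of the estimate, governs the final ``one end on $\be_2$'' step.
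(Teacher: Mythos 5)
Your overall strategy is the paper's: feed the heavy curve into Proposition~\ref{prop:convertingtheproblemtoa}, use the seven last-block constraints to extract the $7(1-\al)^2$ contribution to $\op{diff}_C\cdot\op{diff}_C$, and eliminate all $t\neq t_n$ by the area estimates of Proposition~\ref{prop:estimates}. But your ``master inequality'' keeps only the last block, and this is not enough where the paper needs more. Concretely, in the subcase $t=t_n+t_{n-1}$ with $s=0$ your stated bound ``$\al$ is at most about $0.77$'' is false: there $\al=(t_n+t_{n-1})/Q_n\in(0.84,0.86)$, so $1+2\al-7(1-\al)^2\approx 2.56$, which \emph{exceeds} the bound $2A(\theta_n,t_n+t_{n-1})\ge_\de 2.52$ of Proposition~\ref{prop:estimates}~(iv); as written the case does not close. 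The paper's Case 1(B) closes it by also using the second- and third-to-last blocks, i.e.\ adding $(6-7\al)^2+5(7-8\al)^2$ to $\op{diff}_C\cdot\op{diff}_C$; alternatively one can compute $2A(\theta_n,t_n+t_{n-1})=\tfrac{2t_n}{Q_n}+\tfrac{11t_{n-1}}{Q_n}\approx 2.69$ exactly, but you do neither, and your final paragraph flags the wrong borderline cases ($t_k$ small rather than $t_k=t_{n-1}$). A similar slip occurs in the range $t_{n-1}<t<t_n$, where ``$\al$ at most about $0.55$'' is only valid for $t\le t_n-2\ell_{n-1}$ (that case happens to survive with the correct $\al<0.77$, but the justification you give is not the one that works).

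The single-end step also has a gap. Your key claim --- that every nontrivial partition of $t_n$ contains a part $a_i$ with $\ka(\theta_n,a_i)\ge\ka(\theta_n,t_n-\ell_{n-1})\approx 1.85$ --- is false: for $n=2$ one has $t_2=41=6\cdot 6+5$, and the partition $(6^{\times 6},5)$ has $\ka(\theta_2,6)=\tfrac{42}{55}\approx 0.76$ and $\ka(\theta_2,5)\approx 1.36$, all below $1.85$. (The total $\sum_i\ka$ is still large in that example, but then you need a genuine argument over all partitions, not the stated one, and your route also leans on the sharpened $A_C$ remark that the paper deliberately avoids.) The paper's argument is cleaner and purely formal: since $p^+_{\theta_n}(t_n)=(t_n)$ has length one, two or more ends at $\be_2$ force $r_2-n_2\le -1$ strictly, so comparing \eqref{eqn:bound2} with \eqref{eqn:ACbound} lets one lower the right-hand side of \eqref{eqn:weakerestimate} by $1$; with $\al<0.77$ from \eqref{eqn:ttnbound} the improved slack is below $1.39\le 2A(\theta_n,t_n)$ for every $s$, giving the contradiction. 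I recommend adopting the multi-block diff estimates and the integrality trick; with those two repairs your outline matches the paper's proof.
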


\begin{proof}
We suppose as before that the heavy curve  $C$  is asymptotic to $\lbrace (\beta_1,s), (\beta_2, t) \rbrace$.  By Corollary~\ref{cor:conn} we know that $s+t\le \ell_n$, with $s,t< t_n$ and $t>0$.
By
 Proposition~\ref{prop:convertingtheproblemtoa}~(ii) and the bounds from Proposition~\ref{prop:estimates}, we have for $n>1$ that
\begin{align}\label{eqn:conn1}
2A(\theta_n,t)  & \le 1 + \frac{2}{Q_n}(s \widetilde{\theta}_n + t) - \op{diff}_C \cdot \op{diff}_C \quad\,\mbox{ if }\; s=0\\
\label{eqn:conn2}
2A(\theta_n,t)  & \le 1 + \frac{2}{Q_n}(s \widetilde{\theta}_n + t) - \op{diff}_C \cdot \op{diff}_C - \frac 7{48}  \quad\,\mbox{ if }\; s>0.
\end{align}

We now consider various cases.  We will first show that $t = t_n$ and then will discuss why it has  just one end on $\be_2$.

\vspace{3 mm}

\NI
{\bf Case 1: $t > t_n$:}\;\;
First observe that when $n = 1$ we have $t_1 = 6$ and $\ell_1 = 7$.  Therefore we cannot have $t_1 < t< \ell_1$.  Hence we may assume $n\ge 2$.

Next, notice that because $t<\ell_n$,
the maximum possible value of $\frac{1}{Q_n}(s \widetilde{\theta}_n + t)$ occurs when $t = \ell_n -1$ and $s = 1$, so that
$$
\al : = \frac{1}{Q_n}(s \widetilde{\theta}_n + t) <  \frac{ \ell_n}{Q_n}.
$$
The right hand side of this inequality is a decreasing function of $n$ by Lemma~\ref{lem:fibid0},
so it is no more than $ \frac{ \ell_1}{Q_1} = \frac {7}{8}$.  Since $C$ goes through all the constraints on the last block
we find that $\op{diff}_C \cdot \op{diff}_C$ is at least $\frac 7{64}> 0.1.$  Thus, the RHS of \eqref{eqn:conn1} is at most $2.65$ so that by
Proposition~\ref{prop:estimates}~(ii), we must have $t = t_n + t_k$ for some $k < n$.
Moreover, if  $s>0$ then
RHS of \eqref{eqn:conn2} is at most $2.51$ so that
Proposition~\ref{prop:estimates}~(iii)   implies that $t = t_n + t_k$ for some $0< k < n-1$.
In other words, if $s>0$ we must have
$t \le t_n + t_{n-2}$.

\MS

\NI {\bf Case 1(A):}  {\it $t> t_n$ and $s>0$. }
We saw above that we must have $t \le t_n + t_{n-2}$.
The maximum value of $\al$ then occurs when $t = t_n + t_{n-2}$ and $s = \ell_n-t_n - t_{n-2}< \ell_n-t_n  = \ell_{n-1}$.
By Lemma~\ref{lem:fibid0} and \eqref{eqn:lnpnlimit}, $\frac{\ell_{n-1}}{P_n}$ increases with limit $\frac{\si}{\tau^4}\le 0.02$.
On the other hand, both $\frac {t_{n-2}}{Q_{n}}$ and  $\frac {t_{n}}{Q_{n}}$, decrease with $n$ by Lemma~\ref{lem:fibid0}.  Hence because $n\ge 2$ we have
 $$
 \al \le  \frac{ t_n + t_{n-2}}{Q_n} + \frac{\ell_{n-1}}{P_n} < \frac {42}{55} + 0.02 < 0.784,
 $$
 so that the contribution to $ \op{diff}_C \cdot \op{diff}_C$ from   the last block  is at least  $ 7(1-\al)^2 > 0.326$.
 Therefore \eqref{eqn:conn2} implies that
 $$
  2A(\theta_n,t) \le 1 + 2\al - 0.326 - 0.146 < 2.1,
  $$
 which is impossible for $t>t_n$ by Proposition~\ref{prop:estimates}~(ii).
\MS

\NI {\bf Case 1(B):}  {\it $t_n + t_{n-1} \le t< \ell_n$ and $s=0$. }

We saw above that we must have  $t = t_n + t_{n-1}$, so that  $\al = \frac{t_n + t_{n-1}}{Q_n}$.
Now $\frac{t_n+ t_{n-1}}{Q_n}$ decreases by Lemma~\ref{lem:fibid0}.  Moreover we saw in  \eqref{eqn:lnpnlimit} that $\lim \frac{t_n}{Q_n} = 1-2\si$,
so that
$
\lim_{n\to \infty} \frac{t_n+ t_{n-1}}{Q_n}\; =\;  (1-\si) - \frac \si{ \tau^4}.
$
  Therefore
   $$
0.84< 1-0.13 \bigl(1 + \frac 16\bigr)<   \frac{t_n+ t_{n-1}}{Q_n}=\al \le \frac{t_2+t_1}{Q_2} = \frac{47}{55}< 0.86.
$$
The values of $\la w$  on the third to last block are $8\al$ and so lie in the interval $[6.72, 6.84]$, while those on the penultimate block
are $7\al\approx 5.88$.
Therefore the contribution to $\op{diff}_C \cdot \op{diff}_C$ from the last three blocks is at least
$$
7(1-\al)^2  + (6-7\al)^2 + 5(7-8\al)^2 = 288 - 658\al + 376\al^2.
$$
Therefore the RHS of \eqref{eqn:conn1} is
$$
1 + 2\al  -   \op{diff}_C \cdot \op{diff}_C\le - 287 +660\al - 376 \al^2 \le 2.43
$$
for $\al$ in the given interval,
where the last inequality was obtained by evaluating the quadratic expression at $\frac {47}{55}$ since it increases over this interval.
Hence this is impossible by Proposition~\ref{prop:estimates}~(iv).

 \MS

 \NI {\bf Case 1(C):}  {\it $t_n < t< t_n + t_{n-1}$ and $s=0$. }

 We saw above that in this case  $t = t_n + t_k \le  t_n + t_{n-2}$.
As  above, Lemma~\ref{lem:fibid0} implies that  $
 \al =  \frac{ t_n + t_{n-2}}{Q_n} $ decreases with $n$  and so is $\le \frac{42}{55} \approx 0.7636$.
 Moreover, since $
 \lim  \frac{ t_n}{Q_n} =1-2\si >0.745,
 $ by \eqref{eqn:lnpnlimit} the quantity $\al$ lies in the range
 $0.745 < \al < \frac {42}{55}$,
 Hence $4.5<7\al <5.5$, so that by looking at the last two blocks we find that
\begin{align}\label{eq:estim}
1 + 2\al  -   \op{diff}_C \cdot \op{diff}_C & \le 1 + 2\al - 7(1-\al)^2 - (5-7\al)^2 \\ \notag
&   = -31 +86\al - 56\al^2  < 2.02   \;\;\mbox { if } \al \in [\tfrac 9{14},\tfrac {11}{14}].
\end{align}
Hence again this scenario is impossible by Proposition~\ref{prop:estimates}~(ii).
 \MS

 \NI {\bf Case 2:}  \; {\it  $0<t<t_n$}.

 We begin with some general remarks.

 We may use the estimate in \eqref{eq:estim} for
$ 1 + 2\al  -   \op{diff}_C \cdot \op{diff}_C $ if  $\al\in [\frac9{14},\frac{11}{14}]$.
 On the other hand, if $\al < \frac 9{14}$ we get even better estimates.  Indeed, if $3.5 \le 7\al \le 4.5$  we can use the estimate
 \begin{align}\label{eq:estim2}
1 + 2\al  -   \op{diff}_C \cdot \op{diff}_C & \le 1 + 2\al - 7(1-\al)^2 - (4-7\al)^2 \\ \notag
&   = -22 +72\al - 56\al^2  \le   1.143, \;\;\mbox { if } \al \in [\tfrac 12,\tfrac 9{14}].
\end{align}
where the maximum is taken precisely at $\al = \frac 9{14}$.
 Further if $\al \le 0.5$ we have
  \begin{align}\label{eq:estim3}
1 + 2\al  -   \op{diff}_C \cdot \op{diff}_C & \le 1 + 2\al - 7(1-\al)^2 \le \tfrac 14 \;\;\mbox { if }  \al\le \tfrac 12.
\end{align}
Since $2A(\theta_n,t) > \frac 14$ in all cases with $t>0$, the case $\al< \frac 12$ never occurs.

The following information will also be useful.
  \begin{align}\label{eq:estim4}
&\mbox{For  $1\le r \le 3$ the sequence $\frac{t_n - r\ell_{n-1}}{Q_n} = \frac{t_n}{Q_n}-r\frac{\ell_{n-1}}{Q_n}$ decreases,}\\ \notag
&\mbox{since,  by Lemma~\ref{lem:fibid0}, $ \frac{t_n}{Q_n}$ decreases and $\frac{\ell_{n-1}}{Q_n}=\frac{\ell_{n-1}}{P_{n-1}}$ increases.}
\end{align}
   With these preliminaries in place, we can now analyze various cases.
We begin with the case $n=1$.
\MS

\NI {\bf Case 2(A):}\, {\it $n=1$ and $t <  t_1 = 6$.  }

As we saw in Example~\ref{ex:n=1}, in this case we have exact formulas for the terms $2A(\theta_1,m)$ and
$2A(\widetilde{\theta}_1,m)$.    The maximum value for $\al = \frac{1}{8}(s \widetilde{\theta}_1 + t)$
occurs when $t=5$ and $s=1$, in which case it is approximately $\frac 1{55} + \frac 58\approx 0.644 > \frac 9{14}$.
We may estimate the value of $\alpha$ at  $\frac 1{55} + \frac 58$ by calculating its value at $0.65$, which is  $1.24$.  Therefore, because $2A(\theta_1, 5) = \frac {15}8$, this case does not occur.
Similarly, the case $t_1 = 5$  and $s=0$ is impossible, because now $\al < \frac 9{14}$ so that  \eqref{eq:estim2} implies we must have
$2A(\theta_1, 5) < 1.12$.
But if $t< 5$ then $\al < \frac 9{14}$ for all $s$, while $2A(\theta_1, t)\ge 2A(\theta_1, 5)$ except if $t=1, 2$.    But in this case $\al<\frac12$ for all $s$, which is also impossible as we explained above.
\MS

As we now see, the argument for $n>1$ is similar, but more elaborate.
\MS

\NI {\bf Case 2(B):} {\it  $n>1, t_n-2\ell_{n-1} < t < t_n$ and any $s$.}

The maximum  value of $\frac{1}{Q_n}(s \widetilde{\theta}_n + t)$ occurs when $t = t_n -1$ and $s = \ell_n - t_n + 1$.
Because $\frac{\ell_{n-1}}{P_n}$ increases with $n$, and $\frac{t_n}{Q_n}$ decreases by Lemma~\ref{lem:fibid0},  we have
\begin{align}
\label{eqn:ttnbound}
& \al \le \frac{1}{Q_n}\Bigl( (\ell_n - t_n +1)\widetilde{\theta}_n + t_n - 1\Bigr) 
<  \frac{\ell_{n-1}}{P_n} +  \frac{t_n}{Q_n} < \frac{\sigma}{\tau^4} +  \frac{t_1}{Q_1} < 0.77 < \frac{11}{14}.
\end{align}
 Therefore, by equations \eqref{eq:estim}, \eqref{eq:estim2}  and \eqref{eq:estim3}, we must have
$$
2A(\widetilde{\theta}_n,s) + 2A({\theta}_n,t) \le 2.02.
 $$
 Hence,  by Proposition~\ref{prop:estimates}~(v) we must have $t = t_n - \ell_{n-1}$.
However, if  $t = t_n - \ell_{n-1}$ then $s\le \ell_n - t_n + \ell_{n-1} = 2\ell_{n-1}$, and \eqref{eq:estim4}, \eqref{eqn:ttnbound} imply
\begin{align*}
 \frac{t_n - \ell_{n-1}}{Q_n} & \;\le\;  \al \le \frac{t_n - \ell_{n-1}}{Q_n} + 2  \frac{\ell_{n-1}}{P_n} \\
&\;\le\;
\frac{t_2 - \ell_1}{Q_2} + 2  \frac{\sigma}{\tau^4}  \;=\; \frac{34}{55} + \frac {12}{377} < 0.651.
\end{align*}
Since $\al< \frac{43}{56} < \frac{11}{14}$,  we may estimate $1 + 2\al  -   \op{diff}_C \cdot \op{diff}_C$  by evaluating
 the quadratic expression in \eqref{eq:estim} (which increases with $\al$ for $\al < \frac{43}{56}$) at $\al = 0.656$. This gives the upper bound
 $1.37$, which is smaller than the allowed bound from Proposition~\ref{prop:estimates}~(vi).  Hence this case does not occur.

 \MS

 \NI {\bf Case 2(C):} {\it  $n>1,  t\le t_n-2\ell_{n-1}$ and any $s$.}

As in Case 2(B), it follows from \eqref{eq:estim4}  that
\begin{align*}
& \al \le  \frac{3\ell_{n-1}}{P_n} +  \frac{t_n - 2\ell_{n-1}}{Q_n} < \frac{3\sigma}{\tau^4} +  \frac{t_2-2\ell_1}{Q_2} < 0.55 < \frac{9}{14}.
\end{align*}
Therefore, by evaluating \eqref{eq:estim2} at $\alpha = .55$ we have
$$1 + 2\al  -   \op{diff}_C \cdot \op{diff}_C\le 0.67 < \frac 5{\tau^4}.
$$
Hence this case cannot occur by Proposition~\ref{prop:estimates}~(i).
 \MS

This completes the proof that $t = t_n$.  It remains to show that this component $C$ has just one end on $\be_2$.
  To see this, first note that, because  $b_n$ is a semiconvergent to $\theta_n$, Lemma~\ref{lem:simpleestimates}
  implies that $p^+_{\theta_n}(t_n) = (t_n)$, namely the length of the partition conditions for this $t$ is $1$.
 If  $C$ has
  two or more ends, then the $(r_2-n_2)$ term on the left hand side of \eqref{eqn:bound2} is strictly negative,
and hence is less  than the corresponding term $\max(r_2-n_2,0)$ in \eqref{eqn:ACbound}.
 Therefore,
    we can improve \eqref{eqn:weakerestimate} by subtracting $1$ from the right hand side, and so can improve all of the estimates in Case $2$ above by at least $1$ as well.

However, there are no values of $s$ for which $(s,t_n)$ satisfies these new estimates.  This is because by \eqref{eqn:ttnbound}, regardless of the value of $s$, we have that a strengthened version of either \eqref{eq:estim}, or a strengthened version of one of the stronger estimates \eqref{eq:estim2}, \eqref{eq:estim3} holds, and this is impossible by Proposition~\ref{prop:estimates}~(iii). 
 This completes the proof of Proposition~\ref{prop:hcomponent}.
 \end{proof}

\subsection{The asymptotics of the heavy curve}\label{ss:heavy}

By Proposition~\ref{prop:hcomponent}, the heavy curve must
pass through all the smallest constraints, and have a single end on $\be_2$ of multiplicity $t_n$.
In this subsection we improve this result as follows.

\begin{prop}
\label{prop:improva}
The heavy curve is a connector with exactly two ends asymptotic to $\lbrace (\beta_1,\ell_{n-1}), (\beta_2,\ell_n - \ell_{n-1}) \rbrace$. It has homology class $z_M: = z_M(n)$ given by taking $6 \cdot W\left(\frac{\ell_n}{\ell_{n-1}}\right)$, and appending the last block of $1$s to the end.
\end{prop}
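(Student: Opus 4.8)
By Proposition~\ref{prop:hcomponent} we already know that the heavy curve $C$ lies in $C_{LL}$, passes through all $7$ constraints on the last block, and has exactly one negative end on $\beta_2$ of multiplicity $t_n=\ell_n-\ell_{n-1}$. It remains to pin down the multiplicity $s$ of its end on $\beta_1$, to show $C$ really is a connector (so in particular $s>0$), and then to identify its homology class $z$ with $z_M(n)$. The strategy is to run the machinery of Proposition~\ref{prop:convertingtheproblemtoa} once more, now with the much stronger arithmetic input that $t=t_n$ is fixed: since $2A(\theta_n,t_n)\ge_\de 1.39$ by Proposition~\ref{prop:estimates}~(iii), the inequality \eqref{eqn:weakerestimate} forces
$$
2A(\widetilde\theta_n,s)+\op{diff}_C\cdot\op{diff}_C \;\le_\de\; 1 + \tfrac{2}{Q_n}(s\widetilde\theta_n + t_n) - 1.39.
$$
First I would bound $\al:=\tfrac1{Q_n}(s\widetilde\theta_n+t_n)$: since $s\le \ell_n-t_n=\ell_{n-1}$ by Corollary~\ref{cor:conn}, and $\tfrac{\ell_{n-1}}{P_n}\to\tfrac{\sigma}{\tau^4}<0.02$ is increasing while $\tfrac{t_n}{Q_n}$ is decreasing (Lemma~\ref{lem:fibid0}), one gets $\al$ confined to a narrow window below $\tfrac34+0.02$. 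Because $C$ hits all $7$ last-block constraints, $\op{diff}_C\cdot\op{diff}_C\ge 7(1-\al)^2$, which combined with Proposition~\ref{prop:estimates}~(vii) and~(xi) leaves only the possibilities $s=\ell_k$ for some $k<n$ — and then checking the area constraint $z\cdot w=\ell_n$ against $\tfrac{2}{Q_n}(s\widetilde\theta_n+t_n)$ should eliminate all $k<n-1$ by the now-standard quadratic estimate, forcing $s=\ell_{n-1}$. The sub-case $s=0$ must be ruled out separately: if $C$ had no end on $\beta_1$ then it could not be part of a connector, but then (as in the proof of Lemma~\ref{lem:conn}) the top level attached to $C$ along $\beta_2^{t_n}$ would be a component of $C_U$ of the wrong genus/index type; alternatively one rules it out directly since with $s=0$ the bound $2A(\widetilde\theta_n,0)=0$ and $\al=\tfrac{t_n}{Q_n}$ gives $\op{diff}_C\cdot\op{diff}_C\le_\de \tfrac32 - 1.39 < 0.12$, contradicting $7(1-\al)^2>0.1$ when $\al\le\tfrac34$... so I expect $s=0$ to actually need the topological argument, since the pure area estimate is too weak there. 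This — cleanly excluding $s=0$ — is the step I expect to be the main obstacle, and the safest route is to invoke that a connector by definition has ends on both $\beta_1$ and $\beta_2$ on its top, so the heavy curve, being the only nontrivial component and the one that must meet every connector, is forced to have $s>0$.

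Once $s=\ell_{n-1}$ and $t=t_n=\ell_n-\ell_{n-1}$ are established, identifying the homology class $z$ with $z_M(n)$ proceeds by an extremal/rigidity argument. We know $z\cdot w =_\eps \ell_n$ (the heavy curve carries essentially all the action $\tfrac1{Q_n}$, and $z\cdot w = s + t\theta_n - \tfrac1{Q_n}$ after accounting for the $\beta_i$-actions, which simplifies to $\approx\ell_n$ by the Fibonacci identities in Lemma~\ref{lem:fibid0}), and from the proof of Proposition~\ref{prop:convertingtheproblemtoa} equality must hold throughout, which forces $\op{diff}_C\cdot\op{diff}_C$ to take its minimum consistent with the integrality of $z$ and with $C$ passing through all last-block constraints; equivalently, $z$ is as close to $\lambda w$ as an integer vector satisfying these constraints can be. The candidate $z_M(n)=\bigl((6\ell_{n-1})^{\times6},6t_{n-1},(6\ell_{n-2})^{\times5},\dots,6t_0,1^{\times7}\bigr)$ is exactly the rounding of $\lambda w$ obtained by replacing $w(b_n)=w$ on all but the last block by $6\,w\!\left(\tfrac{\ell_n}{\ell_{n-1}}\right)$ — this uses Corollary~\ref{cor:weight} to match the block structures of $W(b_n)$ and $W(\tfrac{\ell_n}{\ell_{n-1}})$ — and the identity \eqref{eqn:WcM}, $z_M(n)\cdot W(b_n)=\ell_{n-1}Q_n+t_nQ_{n+1}-1$, together with its reformulation \eqref{eqn:cmdotW}, shows $z_M(n)$ has precisely the area $\tfrac1{Q_n}$ of the class $B$ minus the trivial contributions, i.e. the heavy curve really does have the expected action. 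Finally, to see that $z$ must equal $z_M(n)$ and not some other integer vector with the same area and the same last block: any two such would differ by a vector $\eps$ with $w\cdot\eps=0$ and $\eps=0$ on the last block, and the adjunction/intersection bookkeeping (exactly as in Lemma~\ref{lem:keylemma}, Step~1, using $B\cdot B=1$) forces $\sum\eps_i^2\le 2$; one then checks that the block structure together with the area-minimality selected by equality in Proposition~\ref{prop:convertingtheproblemtoa} leaves $z_M(n)$ as the unique option. This last uniqueness step, and verifying that the ECH partition at $\beta_1^{\ell_{n-1}}$ is $(\ell_{n-1})$ (true since $\tfrac{\ell_n}{\ell_{n-1}}$ is the relevant best approximation, by Example~\ref{ex:bigmono} and Lemma~\ref{lem:partcond}) so that the curve genuinely has a single $\beta_1$-end making it a bona fide connector, complete the proof.
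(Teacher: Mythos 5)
There is a genuine gap: your plan to first pin down $s$ by running Proposition~\ref{prop:convertingtheproblemtoa} with $t=t_n$ fixed does not work quantitatively, and the paper's proof is built around a different, sharper mechanism. With $2A(\theta_n,t_n)\ge 1.39$, $\op{diff}_C\cdot\op{diff}_C\ge 7(1-\al)^2$ and $\al\le 0.77$ (cf.\ \eqref{eqn:ttnbound}), the inequality \eqref{eqn:weakerestimate} only yields $2A(\widetilde\theta_n,s)\lesssim 0.7$--$0.8$, far above the threshold $\tfrac{7}{24}$ in Proposition~\ref{prop:estimates}~(xi); since each convergent of $\widetilde\theta_n$ contributes only about $\tfrac{7}{48}$, this leaves $s$ equal to sums of several convergents/semiconvergents, not just $s=\ell_k$, and certainly does not isolate $s=\ell_{n-1}$. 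Your two fallbacks for $s=0$ also fail: the hypothesis is only that \emph{some} connector exists in $C_L$, so nothing forces the heavy curve itself to be a connector (a light curve could a priori carry the connector role), and there is no reason equality should hold in Proposition~\ref{prop:convertingtheproblemtoa}, so "area-minimality by equality'' cannot be used to force $z$ close to $\la w$. Likewise, the appeal to Lemma~\ref{lem:keylemma}, Step~1, misuses $B\cdot B=1$: that argument compares the lower parts of two \emph{different} $B$-curves and bounds how their constraint vectors can differ; it cannot identify the class of a single heavy curve with $z_M(n)$ inside one breaking (the paper does use such positivity-of-intersection arguments, but only later, in Lemma~\ref{lem:homologicaluniqueness}, for a different purpose).

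What the paper actually does, and what is missing from your proposal, is to determine $s$ and the homology class \emph{simultaneously} by comparison with a model class: it first checks (Lemma~\ref{clm:homologyclass}) that the candidate $C_M$ with ends $\lbrace(\beta_1,\ell_{n-1}),(\beta_2,t_n)\rbrace$ and class $z_M$ satisfies $I(C_M)=\ind(C_M)=0$, then writes $[C]=z_M-\De$ with $\De=0$ on the last block and $k=w\cdot\De$ (the case $s=0$ being $k=\ell_{n-1}$, handled uniformly), and computes $I(C)-\tfrac12\ind(C)$ exactly in terms of $k$, $\De$ and the areas $A(\widetilde\theta_n,\cdot)$. The two key inputs are the strict bound $\De\cdot\De>\widetilde\theta_nk^2$ unless $\De=0$, and the dot-product identity $\De\cdot z_M=6k\ell_{n-1}$ (Lemma~\ref{le:dotpr}), whose proof rests on the recursion vectors $R(A,B)$ of Lemma~\ref{lem:R} plus an integrality/divisibility step ($\De\cdot R(0,1)\equiv 0\pmod{Q_n}$ combined with a size bound $|\De\cdot R(0,1)|<Q_n$). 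If $(k,\De)\ne(0,0)$ these give $I(C)-\tfrac12\ind(C)$ strictly below the lower bound coming from \eqref{eqn:bound} and \eqref{eqn:ACbound}, a contradiction; hence $\De=0$, $k=0$, which is precisely the statement that $s=\ell_{n-1}$ and $[C]=z_M$. Your final step (single end on $\beta_1$ via $I(C)=0$, simplicity, ECH partitions, and $p^+_{\widetilde\theta_n}(\ell_{n-1})=(\ell_{n-1})$ from Example~\ref{ex:smallmono}) is correct and matches the paper, but it only becomes available after the index-comparison and divisibility arguments you have not supplied.
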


We begin the argument by showing that there is an ECH index zero candidate $C_M$ for $C$ with the above properties, that we call the {\em model curve}.  Thus the curve $C_M$ has
$s = \ell_{n-1}, t = \ell_n - \ell_{n-1},$ and homology class (i.e. constraint vector)
\begin{align}\label{eq:c}
z_M: = \bigl(6W(\frac{\ell_n}{\ell_{n-1}}), 1^{\times 7}\bigr).
\end{align}
It follows from Lemma~\ref{lem:WcM} that $C_M$ has action precisely $\frac{1}{Q_n}$, so that it is a candidate for the heavy curve.

The calculations in Lemma~\ref{clm:homologyclass} show that it is consistent to require that $C_M$ have genus zero and ECH partitions at its ends.
 In the second step, we show that the connector $C$ must have the same numerics as $C_M$, i.e. the same homology class, genus, and multiplicities of ends.

\begin{lemma}
\label{clm:homologyclass} $I(C_M)=\ind(C_M)=0.$
\end{lemma}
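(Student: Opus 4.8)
The plan is to compute $I(C_M)$ and $\ind(C_M)$ separately using the formulas assembled in \S\ref{sec:ECH}, and check both equal zero. Since $C_M$ is a curve in $\widehat{\Ee}_n$ with positive asymptotics $\alpha = \lbrace(\beta_1,\ell_{n-1}),(\beta_2,\ell_n-\ell_{n-1})\rbrace = \lbrace(\beta_1,\ell_{n-1}),(\beta_2,t_n)\rbrace$ and homology class $-(z_M\cdot E)$, with $z_M = z_M(n)$ as in Lemma~\ref{lem:WcM}, both indices are determined once we know $z_M\cdot z_M$, $z_M\cdot 1$, the grading $\gr(\alpha)$, and the ECH partitions of the two ends. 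First I would record the two weight-expansion identities we need: $z_M\cdot W(b_n) = \ell_{n-1}Q_n + t_nQ_{n+1}-1$ from Lemma~\ref{lem:WcM} (equivalently $z_M\cdot W(b_n) = \ell_n^2 + 41\ell_n\ell_{n-1} - 5\ell_{n-1}^2 + 6$ from \eqref{eqn:cmdotW}), and the quadratic sum $z_M\cdot z_M$, which by definition of $z_M$ as $6W(\tfrac{\ell_n}{\ell_{n-1}})$ with seven $1$s appended equals $36\sum_i W_i(\tfrac{\ell_n}{\ell_{n-1}})^2 + 7 = 36\,\ell_n\ell_{n-1} + 7$, using the quadratic formula $\sum W_i^2 = pq$ from \eqref{eq:weight} applied to $p/q = \ell_n/\ell_{n-1}$. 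Likewise $z_M\cdot 1 = 6\bigl((\ell_n+\ell_{n-1})-1\bigr) + 7 = 6Q_n + 1$ from $\sum W_i = p+q-1$ and \eqref{eq:Fibq}.

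Next I would compute $\gr(\alpha)$ for $\alpha$ on $\partial E(1,b_n+\eps)$. Using \eqref{eq:gr0}, $\gr(\alpha) = 2(\#N(\ell_{n-1},t_n)-1)$ where $N(\ell_{n-1},t_n)$ counts lattice points in the triangle with slant edge $x + b_n y = \ell_{n-1} + b_n t_n$; alternatively I would use Lemma~\ref{lem:fundest} in the form \eqref{eqn:agrading} to convert each $\gr(\beta_i^{\bullet})$ into an area term $A(\theta,\bullet)$ plus lower-order data. Concretely, \eqref{eqn:Iequation} gives
\[
I(C_M) = 2\,\ell_{n-1}t_n + \gr(\beta_1^{\ell_{n-1}}) + \gr(\beta_2^{t_n}) - z_M\cdot z_M - z_M\cdot 1,
\]
and substituting \eqref{eqn:agrading} (with $\theta = \widetilde\theta_n$, $t=\ell_{n-1}$ and $\theta = \theta_n$, $t=t_n$) turns this into an expression $-2A(\widetilde\theta_n,\ell_{n-1}) - 2A(\theta_n,t_n) + (\text{linear terms}) + \bigl(\widetilde\theta_n\ell_{n-1}^2 + \theta_n t_n^2 + 2\ell_{n-1}t_n - z_M\cdot z_M\bigr)$. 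Because both $\ell_{n-1}$ and $t_n$ are denominators of best lower approximations to $\widetilde\theta_n$ and $\theta_n$ respectively (Examples~\ref{ex:bigmono}, \ref{ex:smallmono}, \ref{ex:partit}), the ECH partitions are trivial — each end has length $r_i=1$ — and by Lemma~\ref{lem:simpleestimates} the area terms reduce to $2A(\widetilde\theta_n,\ell_{n-1}) = \kappa(\widetilde\theta_n,\ell_{n-1}) = \widetilde\theta_n\ell_{n-1}^2 - \ell_{n-1}\lfloor\ell_{n-1}\widetilde\theta_n\rfloor$ and similarly for $t_n$. Here I would invoke \eqref{eq:lnbn}, namely $\lfloor\ell_{n-1}\widetilde\theta_n\rfloor = \ell_{n-2}$, and its analogue for $\theta_n$ coming from $b_n$ being the semiconvergent $c_{2n-2}\oplus 7c_{2n-1}$. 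Plugging everything in and using the Fibonacci identities \eqref{eq:Fibt}, \eqref{eq:Fibq}, \eqref{eq:recurrel} together with $z_M\cdot z_M = 36\ell_n\ell_{n-1}+7$ should collapse $I(C_M)$ to $0$ — this is essentially the content of \eqref{eqn:cmdotW} being the "right" value, as the remark after Lemma~\ref{lem:WcM} anticipates.

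For $\ind(C_M)$ I would use \eqref{eqn:indEE}: with one positive end on $\beta_1^{\ell_{n-1}}$ and one on $\beta_2^{t_n}$ (so $r=s=1$ in that formula's notation, meaning both partitions have length one),
\[
\tfrac12\ind(C_M) = -1 + 1 + 1 + \bigl(\ell_{n-1} + \lfloor\tfrac{\ell_{n-1}}{b_n}\rfloor\bigr) + \bigl(t_n + \lfloor t_n b_n\rfloor\bigr) - z_M\cdot 1.
\]
Using $\lfloor\ell_{n-1}/b_n\rfloor = \ell_{n-2}$ from \eqref{eq:lnbn}, $\lfloor t_n b_n\rfloor = \lfloor t_n P_n/Q_n\rfloor$ (evaluated via the best-approximation data, which gives $t_n P_n = t_n P_n$ and a floor expressible through the $\ell$'s and the Fibonacci recursion), $z_M\cdot 1 = 6Q_n+1$, and $Q_n = \ell_n + \ell_{n-1}$, $t_n = \ell_n - \ell_{n-1}$, the linear combination of Fibonacci quantities should reduce to $0$. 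I expect the main obstacle to be purely bookkeeping: pinning down $\lfloor t_n b_n\rfloor$ exactly (it is $t_n b_n$ rounded down, and since $b_n = P_n/Q_n$ with $t_n<Q_n$ one needs the precise residue of $t_n P_n \bmod Q_n$, obtainable from Lemma~\ref{lem:fibid0}(i)-type identities like $\ell_n P_n = \ell_{n-1}P_{n+1}+8$), and then verifying that the resulting exact-integer identity holds — which, as noted in \S\ref{sec:bestapprox}, reduces to checking it for two or three small values of $n$. I would close by remarking that consistency of requiring $C_M$ to have genus zero with ECH partitions is automatic here precisely because both ends have length-one partitions, so the adjunction/index-inequality slack $2\delta(C_M) + 2A(C_M)$ in Proposition~\ref{prop:indineq} vanishes, forcing $\delta(C_M)=0$ as well.
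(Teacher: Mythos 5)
Your proposal is correct and follows essentially the same route as the paper: the paper likewise computes $I(C_M)$ from \eqref{eqn:Eei} using the length-one ECH partitions at both ends, the floors $\lfloor \ell_{n-1}\widetilde\theta_n\rfloor=\ell_{n-2}$ and $\lfloor t_n\theta_n\rfloor=t_{n+1}$ (the latter from the constancy identity $P_nt_n-P_{n-1}t_{n+1}=2$ of Lemma~\ref{lem:fibid0}(i)), and the weight-expansion values of $z_M\cdot z_M$ and $z_M\cdot 1$, then gets $\ind(C_M)=0$ from \eqref{eqn:indEE} and the recursion \eqref{eqn:basicrecursion}. Your detour through Lemma~\ref{lem:fundest} and $\kappa$ is just the paper's triangle-area computation of the gradings in different notation, and the remaining algebra you defer does collapse exactly as you predict.
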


\begin{proof}
By \eqref{eq:gr10}, we have
\[ gr(\beta_1^{\ell_{n-1}}, \beta_2^{\ell_n - \ell_{n-1}}) = gr(\beta_1^{\ell_n-1}) + gr(\beta_2^{\ell_n - \ell_{n-1}}) + 2 \ell_{n-1}(\ell_n - \ell_{n-1}).\]
Since $\ell_{n-1}$ is a lower semiconvergent of $\widetilde{\theta}_n$, and $\ell_n - \ell_{n-1}$ is a lower semiconvergent of $\theta_n$, 
Lemma~\ref{lem:partcond}  shows that
both ends have ECH partitions of length $1$.  Thus, by \eqref{eqn:grM},
in both cases $M(\theta, t)$ is the area of a triangle, and we have
\[ gr(\beta_1^{\ell_{n-1}}) = (\ell_{n-1} + 1) \lfloor \ell_{n-1} / b_n \rfloor + \ell_{n-1} + 1,\]
and
\[ gr(\beta_2^{t_n}) = (t_n + 1) \lfloor b_n (t_n) \rfloor + t_n + 1.\]
We  
saw in \eqref{eq:lnbn} that $\lfloor \ell_{n-1} / b_n \rfloor = \ell_{n-2}$, and we have $\lfloor b_n t_n \rfloor = t_{n+1}$, because of the identity
\begin{align}\label{eq:Pntn}
P_n t_n - P_{n-1}t_{n+1} = P_1 t_1 - P_{0}t_{2} = 2,
\end{align}
see Lemma~\ref{lem:fibid0}.  Thus, we have
\begin{align}\notag
\label{eqn:gradingequation}
gr(\beta_1^{\ell_{n-1}}, \beta_2^{\ell_n - \ell_{n-1}}) &= \ell_{n-2}(\ell_{n-1}+1) + \ell_{n-1} + 1 +  t_{n+1}(t_n+1) + t_n + 1 + 2\ell_{n-1}t_n\\
&= (7\ell_{n-1} - \ell_n) (\ell_{n-1}+1) + \ell_{n-1} + 1 + (6\ell_n - \ell_{n-1})(\ell_n - \ell_{n-1} + 1)\\ \notag
&\hspace{2in} + \ell_n - \ell_{n-1} + 1 + 2\ell_{n-1}(\ell_n - \ell_{n-1})\\ \notag
& =  6\ell_n^2  -6 \ell_{n}\ell_{n-1} + 6\ell_{n-1}^2 + 6\ell_n + 6\ell_{n-1} + 2,
\end{align}
where in the second line we substituted for $\ell_{n-2}, t_{n+1} , t_n$ in terms of $\ell_n, \ell_{n-1}$ using \eqref{eqn:basicrecursion}.
Finally, the identities~\eqref{eq:weight} satisfied by weight expansions imply that
\begin{equation}
\label{eqn:homologycorrection}
z_M \cdot z_M + z_M \cdot 1 = 36 \ell_n\ell_{n-1} + 6(\ell_n - 1 + \ell_{n-1} ) + 14.
\end{equation}
We now claim that the right hand sides of \eqref{eqn:homologycorrection} and \eqref{eqn:gradingequation} are equal. To see this, subtract the right hand side of  \eqref{eqn:homologycorrection} from  \eqref{eqn:gradingequation}  to obtain
$$
6(\ell_n^2 - 7\ell_n\ell_{n-1} + \ell_{n-1}^2 -1) = 6(\ell_n^2 - \ell_{n+1}\ell_{n-1} -1) = 0
$$
where the last step uses \eqref{eq:recurrel}.
Thus $I(C_M) = 0$ by \eqref{eqn:Eei}.

Since $C_M$ has ECH partitions by assumption, it  has two ends.
By \eqref{eq:weight},  $C_M$ goes through $c_H\cdot 1 = 6( \ell_n + \ell_{n-1}) + 1$ constraints.
We then use  \eqref{eqn:freddefn}, \eqref{eq:Frind1} and \eqref{eqn:blowupcalc} to obtain
\begin{align}
\label{eqn:indcm}
\tfrac 12 \ind(C_M) & = -1 + 2 + \ell_{n-1} + (\ell_n - \ell_{n-1})  + \lfloor \widetilde{\theta}_n \ell_{n-1}\rfloor + \lfloor {\theta_n} t_n\rfloor - 6( \ell_n + \ell_{n-1} ) -1 \nonumber \\
& =  \ell_n +  \lfloor\ell_{n-1}\frac{Q_n}{P_n}\rfloor  + \lfloor t_n \frac{P_n}{Q_n}\rfloor   - 6( \ell_n + \ell_{n-1}).
\end{align}
By \eqref{eq:lnbn} and \eqref{eq:Pntn} we have
\begin{align*}
\ell_{n-1} \frac{Q_n}{Q_{n+1}} = \ell_{n-2} + \frac 8{Q_{n+1}},
\qquad
 t_n \frac{P_n}{Q_n} = t_{n+1}+\frac{2}{Q_n}.
\end{align*}
Hence, the final line in \eqref{eqn:indcm} simplifies to
\[ \ell_n + \ell_{n-2} + t_{n+1} - 6(\ell_n + \ell_{n-1}) = \ell_{n+1} - 6\ell_n - 6\ell_{n-1} + \ell_{n-2} = 0\]
by \eqref{eqn:basicrecursion}.
\end{proof}

\begin{rmk}\rm
\label{rmk:model}
(i)
Notice that if $C_M$ were represented by a $J$-holomorphic curve, then it could not be multiply covered since it goes through some constraints with multiplicity one, and hence as proved by Hutchings the condition $I(C_M)=0$ would force the curve to have ECH partitions and $\ind(C_M) = 0$, see the index inequality Proposition~\ref{prop:indineq}.  Since we have not shown that $C_M$ must exist, however, we must verify some of this by direct computation.
\MS

\NI (ii) If one wanted to show that $C_M$  has a $J$-holomorphic representative, then one could probably prove this as follows.
Suppose for simplicity that $n=1$.  Then the connector has top ends on $\be_1, \be_2^6$ and  has constraint vector $x=(6^{\times 6}, 6, 1^{\times 7}) $; see Example~\ref{ex:558}.  It can be built by starting with a curve $C_1$ with top $\be_1$ through $E_7$ and a curve $C_2$ with top $\be_2^6$ through $6(E_1 + \dots + E_6) + 5E_7 + E_8 + \dots + E_{13}$.  These curves must intersect once (since a plane in $\Ee$ asymptotic to $\be_1$ intersects a plane
   asymptotic to $\be_2$ exactly once). One can check that these curves have $I(C) = \ind(C) = 0$; and can probably construct them by stretching suitable classes
as    outlined in Remark~\ref{rmk:8}.   Resolving  the point of intersection $C_1\cdot C_2$ gives a $2$-parameter family of curves with two positive ends, so that we can recover an index $0$ curve by imposing the constraint that it go through one more constraint, namely $E_{14}$. \hfill$\er$
\end{rmk}

We next investigate the homology class of $C$, that we write as $z_M - \Delta$, where $z_M$ is the \lq\lq model" set of constraints as in \eqref{eq:c}.
We know from  Proposition~\ref{prop:hcomponent}  that $\Delta$ must be zero on the last block.  Let
\begin{align}\label{eq:kw} k = w \cdot \Delta,
\end{align}
so that $C$ is asymptotic to $\lbrace (\beta_1,\ell_{n-1}-k) ,  (\beta_2,\ell_n - \ell_{n-1}) \rbrace.$  We interpret the case $k = \ell_{n-1}$ as corresponding to $C$ having no ends on $\beta_1$ at all.

Our main tool is the following dot product calculation, whose (rather technical) proof is deferred to the end of this subsection.

\begin{lemma}
\label{le:dotpr}
$\Delta \cdot z_M = 6k \ell_{n-1}$.
\end{lemma}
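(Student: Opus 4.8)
\textbf{Plan for the proof of Lemma~\ref{le:dotpr}.}
The goal is to compute the single number $\Delta\cdot z_M$, where $z_M=z_M(n)$ is the model constraint vector of Lemma~\ref{lem:WcM} and $\Delta = z_M - z$ records how the actual heavy curve $C$ deviates from the model. The plan is to exploit the two pieces of data we already know about $\Delta$: first, it vanishes on the last block of length $7$ (Proposition~\ref{prop:hcomponent}), and second, it satisfies the area (action) constraint $w\cdot\Delta = k$ from \eqref{eq:kw}, since $C$ has action exactly $\tfrac1{Q_n}$ and the model curve $C_M$ does too (Lemma~\ref{lem:WcM}). Because $z_M$ agrees on all but the last block with $6\,W(\ell_n/\ell_{n-1})$, and $w(b_n)$ agrees on all but the last block with a corresponding multiple of $W(\ell_n/\ell_{n-1})$, I would first rewrite the dot product $\Delta\cdot z_M$ purely in terms of the truncated vectors (those obtained by deleting the last block), since $\Delta$ is supported away from that block.

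The key structural input is Lemma~\ref{lem:R}. The truncated weight vector $\widetilde W$ of $b_n$ has the form $R(Q_n, P_n-6Q_n)$ in the notation of Lemma~\ref{lem:R}(iii), and $z_M$ restricted away from the last block is $6\,W(\ell_n/\ell_{n-1})=6\,R(\ell_{n-1}, t_{n-1})$ by Corollary~\ref{cor:weight}(b). The vector $\Delta$ also satisfies the two-step recursion \eqref{eqn:R}: indeed the homology classes $z$ and $z_M$ both ``come from weight expansions'' in the appropriate sense, so their difference, being a combination of such expansions, lies in the span of $R(1,0)$ and $R(0,1)$; hence $\Delta = x_0\,R(1,0) + x_1\,R(0,1)$ for suitable integers $x_0,x_1$, and by part (i) of Lemma~\ref{lem:R} we can write $\Delta = R(x_0,x_1)=(x_0^{\times 6}, x_1, x_2^{\times 5},\dots,x_{2n-1})$. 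The condition that $\Delta$ vanishes on the last block is automatic here since we have truncated; what pins things down is that $C$ has the correct asymptotics $\{(\beta_1,\ell_{n-1}-k),(\beta_2,t_n)\}$, which forces the final entry $x_{2n-1}$ of $\Delta$ to equal $k$ (this is the analogue of the recursion ``starting at the end'' in \eqref{eq:weight1}, tracking how the $\beta_1$-multiplicity changes).

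With $\Delta$ written as $R(x_0,x_1)$ and $x_{2n-1}=k$ identified, the computation of $\Delta\cdot z_M$ reduces to a clean application of Lemma~\ref{lem:R}(iv): since the truncated $z_M$ is $6\,R(\ell_{n-1},t_{n-1})$, and $R(0,1)$ spans one coordinate direction, I would decompose $z_M^{\mathrm{trunc}}$ against the pair $R(1,0), R(0,1)$ and apply part (iv), which states $R(x_0,x_1)\cdot R(0,1)=\ell_{n-1}x_{2n-1}$. Concretely, $6\,R(\ell_{n-1},t_{n-1}) = 6\ell_{n-1}\,R(1,0) + 6 t_{n-1}\,R(0,1)$, and $\Delta = R(x_0,x_1)$, so $\Delta\cdot z_M = 6\ell_{n-1}\,(\Delta\cdot R(1,0)) + 6 t_{n-1}\,(\Delta\cdot R(0,1))$; but one checks $\Delta\cdot R(1,0)$ is the first entry $x_0$ of the ``reversed'' recursion while $\Delta\cdot R(0,1)=\ell_{n-1}x_{2n-1}=\ell_{n-1}k$ by part (iv). The remaining arithmetic is to show the $R(1,0)$ term contributes nothing after accounting for the area constraint $w\cdot\Delta=k$; this uses the orthogonality-type relations of Lemma~\ref{lem:fibid0}(i), specifically $Q_{n+1}\ell_n = Q_n\ell_{n+1}+1$ and its siblings, to cancel the cross terms, leaving exactly $6k\ell_{n-1}$.

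\textbf{Main obstacle.} The delicate point is justifying that $\Delta$ really does satisfy the recursion \eqref{eqn:R}, i.e. that it lies in the two-dimensional lattice spanned by $R(1,0)$ and $R(0,1)$, rather than being an arbitrary integer vector supported off the last block. This requires tracing through why both $z$ (the class of the heavy curve $C$) and $z_M$ are constrained to come from weight-type expansions: for $z_M$ this is its explicit definition, but for $z$ one must invoke that $C$ passes through the constraints in the pattern dictated by the block structure of $W(b_n)$ together with the genus-zero and intersection-positivity constraints established earlier in \S\ref{ss:light}. Once that structural fact is in hand, identifying the final entry as $k$ and running the Lemma~\ref{lem:R}(iv) computation is routine bookkeeping; but setting up the recursion correctly — in particular getting the indexing of the blocks of length $6,1,5,1,5,\dots$ to match between $\Delta$, $\widetilde W$, and the truncated $z_M$ — is where the real care is needed.
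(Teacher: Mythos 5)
Your plan founders on the step you yourself flag as the ``main obstacle,'' and the fix you sketch does not exist. There is no reason --- and the paper never claims --- that $\Delta$ lies in the two-dimensional lattice spanned by $R(1,0)$ and $R(0,1)$, i.e.\ that it satisfies the recursion \eqref{eqn:R}. The class $z$ of the heavy curve is just some integer vector constrained by area, index and positivity of intersections; nothing in \S\ref{ss:light} forces it to be of ``weight-expansion type,'' and the difference of two weight-type vectors with different parameters need not satisfy a common recursion anyway. The actual argument treats $\Delta$ as an arbitrary integer vector (supported off the last block, and after an averaging step, constant on blocks) and shows the single quantity $\Delta\cdot R(0,1)$ vanishes by combining two inputs that are entirely absent from your proposal: an analytic size bound $|\Delta\cdot R(0,1)|<Q_n$, obtained by writing $\Delta=(\widetilde z_M-\lambda\widetilde w)+\widetilde{\op{diff}}_C$, using the area estimates of Proposition~\ref{prop:estimates} and \eqref{eqn:ttnbound} to get $|z_i|<1$ for the entries of $\widetilde{\op{diff}}_C$ and $0\le\widetilde x_{2n-1}\le 1$, and then Lemma~\ref{lem:R}(ii),(iv); and a divisibility argument, namely that $\Delta\cdot w=k\in\Z$ forces $Q_n\mid \Delta\cdot\widetilde W$, which by Lemma~\ref{lem:R}(iii) and $\gcd(Q_n,Q_{n-1})=1$ forces $Q_n\mid\Delta\cdot R(0,1)$, hence $\Delta\cdot R(0,1)=0$. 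Once that vanishing is known, the identity $\Delta\cdot z_M=6\ell_{n-1}k$ falls out of $\widetilde z_M=6\ell_{n-1}R(1,0)+6t_{n-1}R(0,1)$ and $k=\Delta\cdot R(1,0)+(b_n-6)\,\Delta\cdot R(0,1)$.

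Even granting your unproved structural claim, your bookkeeping is off: you identify the last entry $x_{2n-1}$ of $\Delta$ with $k$, but $k$ is by definition the area pairing $w\cdot\Delta$, not a coordinate of $\Delta$; with $\Delta=R(x_0,x_1)$ and $x_{2n-1}=k$ your own formula gives $\Delta\cdot z_M=6\ell_{n-1}(\Delta\cdot R(1,0))+6t_{n-1}\ell_{n-1}k$, and since the coefficient $t_{n-1}-\ell_{n-1}(b_n-6)$ is nonzero (e.g.\ it equals $\tfrac18$ for $n=1$) there is no reason the extra terms cancel to leave $6k\ell_{n-1}$; consistency with the lemma would in fact force $x_{2n-1}=0$, not $k$. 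So both the structural hypothesis and the final identification need to be replaced by the size-plus-divisibility mechanism above (together with the block-averaging step that justifies assuming $\Delta$ constant on blocks without changing the two dot products $\Delta\cdot\widetilde w$ and $\Delta\cdot\widetilde z_M$).
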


Granted this, we can now prove our main result.

\begin{proof}[Proof of Proposition~\ref{prop:improva}]  We first show that $C $ has the same numerics as  $C_M$, i.e. that
\begin{equation}
\label{eqn:desiredeq}
k=\De=0.
\end{equation}
To begin, we estimate the ECH index of $C$ as follows.
Since $I(C_M) = 0$, we  have:
\begin{align*}
I(C) & = I(C_M) - ( I(C_M) - I(C) ) \\
       & = 0 - \Bigl( \gr(\beta_1^{\ell_{n-1}}) - \gr(\beta_1^{\ell_{n-1} - k}) + 2t_nk + ( (z_M - \Delta) \cdot (z_M-\Delta) - z_M \cdot z_M) - \Delta \cdot 1\Bigr) \\
       &  = \Delta \cdot 1 -
\Bigl(                                                                                                                                                                                                                                                                                                                                                                                                                                                                                                        \gr(\beta_1^{\ell_{n-1}}) - \gr(\beta_1^{\ell_{n-1} - k}) + 2t_nk - 2 z_M \cdot \Delta + \Delta \cdot \Delta  \Bigr),
\end{align*}
where the term $2t_nk$ comes from the term $2m_1m_2$  in \eqref{eq:gr10}.
We compute the difference $(\gr(\beta_1^{\ell_{n-1}}) - \gr(\beta_1^{\ell_{n-1} - k}))$ by applying \eqref{eqn:agrading}, obtaining
\begin{align*}
\gr(\beta_1^{\ell_{n-1}}) - \gr(\beta_1^{\ell_{n-1} - k}) & = \widetilde{\theta}_n \bigl(\ell_{n-1}^2 - (\ell_{n-1} - k)^2\bigr) + k +  \lfloor \ell_{n-1} \widetilde{\theta}_n \rfloor - \lfloor (\ell_{n-1} - k) \widetilde{\theta}_n \rfloor + \\
 & \qquad\qquad (1-r) + 2A(\widetilde{\theta}_n,\ell_{n-1}-k) - 2A(\widetilde{\theta}_n,\ell_{n-1}),
\end{align*}
where $r$ is the length of the ECH partition $p^+_{ \widetilde{\theta}_n}(\ell_{n-1}-k)$.
Since $\ind(C_M) = 0$, we can also write
\begin{align*}
- \tfrac 12  \ind(C)& = \tfrac{1}{2}\bigl(\ind(C_M) - \ind(C)\bigr) \\
& =
(1-r_C) +k + \lfloor \ell_{n-1} \widetilde{\theta}_n \rfloor -  \sum_{i=1}^{r_C}\lfloor s_i \widetilde{\theta}_n,\rfloor
-\Delta \cdot 1
\end{align*}
where $r_C$ is the number of ends of $C$ on $\be_1$; we interpret $r_C=0$ if $C$ has no ends on $\beta_1$, and any sum with indices from $1$ to $r_C$ as equal to $0$ as well.
Furthermore, because
$$
k= \De\cdot w \le \sqrt{\De\cdot \De} \sqrt{w\cdot w } =  \sqrt{\De\cdot \De} \sqrt{b_n},
$$
we have
\begin{equation}
\label{eqn:deltadeltaestimate}
\Delta \cdot \Delta \ge \widetilde{\theta}_n k^2.
\end{equation}
By \eqref{eqn:deltadeltaestimate}, we know that $\Delta \ne 0$ if $k \ge 1$, thus if $k \ge 1$ the inequality \eqref{eqn:deltadeltaestimate} is strict, since $\Delta$ and $w$ cannot be parallel because the last block of $\Delta$ is identically $0$. If $k=0$, then the inequality \eqref{eqn:deltadeltaestimate} is also strict as long as $\Delta \ne 0$.  Thus, if \eqref{eqn:desiredeq} does not hold, and we set $z_M\cdot \De = 6k \ell_n$ by Lemma~\ref{le:dotpr},  we obtain
\begin{align}\label{eqn:Iineq} \notag
I(C) - \tfrac{1}{2} \ind(C) & < - \Bigl[2k(t_n - 6 \ell_{n-1} + \widetilde{\theta}_n\ell_{n-1}) +  2A(\widetilde{\theta}_n,\ell_{n-1}-k) - 2A(\widetilde{\theta}_n,\ell_{n-1})\Bigr]\\
 & \qquad\quad -\Bigl[r_C - r + \sum_{i=1}^{r_C}\lfloor (s_i \widetilde{\theta}_n) \rfloor - \lfloor (\ell_{n-1} - k) \widetilde{\theta}_n \rfloor \Bigl].
\end{align}

\NI
{\bf Claim.}  {\it The first term in square brackets above is nonnegative.  }
\begin{proof}[Proof of Claim.]

This is immediate if $k=0$.  So assume that $k>0$.
Notice first that
\begin{align}\label{eqn:tnk}
t_n - 6 \ell_{n-1} + \widetilde{\theta}_n\ell_{n-1} =_{\epsilon} \frac{8}{P_n} > 0,
\end{align}
since $t_n = \ell_n - \ell_{n-1}$,  $\widetilde{\theta}_n =_{\epsilon} \frac{Q_n}{P_n}$ and Lemma~\ref{lem:fibid0} implies that
\[ P_n(\ell_n - 7 \ell_{n-1}) + Q_n \ell_{n-1} = Q_n \ell_{n-1} - P_n \ell_{n-2} =  Q_1 \ell_{0} - P_1\ell_{-1}= 8.\]
 If $0< k < \ell_{n-1} - \ell_{n-2}$, then $\ell_{n-1}-k> \ell_{n-2}$ so that $2A(\widetilde{\theta}_n,\ell_{n-1}-k)\ge \frac{16}{55}$   by
 Proposition~\ref{prop:estimates}~(viii).

 Since $ \frac{16}{55}> \frac {8\si}{\tau^4}$ because $\si< 0.2$,
 part~(vii)  of the same proposition shows that $2A(\widetilde{\theta}_n,\ell_{n-1}-k) - 2A(\widetilde{\theta}_n,\ell_{n-1}) > 0,$ hence the claim.
If  $\ell_{n-1} > k \ge \ell_{n-1} - \ell_{n-2}$,
then  \eqref{eqn:tnk} implies that
\[ k(t_n - 6 \ell_{n-1} + \widetilde{\theta}_n\ell_{n-1}) \ge_{\delta} \frac{8 t_{n-1}}{P_n}, \]
so that
\begin{align*} & 2k(t_n - 6 \ell_{n-1} + \widetilde{\theta}_n\ell_{n-1}) + 2A(\widetilde{\theta}_n,\ell_{n-1}-k) - 2A(\widetilde{\theta}_n,\ell_{n-1})\\
&\qquad\qquad\qquad >
2k(t_n - 6 \ell_{n-1} + \widetilde{\theta}_n\ell_{n-1}) + \frac{8}{55} - 2A(\widetilde{\theta}_n,\ell_{n-1})\\
&\qquad\qquad\qquad  \ge_{\delta} \left(\frac{8 \ell_{n-1}}{P_n} - 2A(\widetilde{\theta}_n,\ell_{n-1})\right) + (\frac{8}{55} - \frac{8 \ell_{n-2}}{P_n}) > 0,
\end{align*}
where the last step uses  Proposition~\ref{prop:estimates}~(vii), and the fact that $\frac{\ell_{n-2}}{P_n}$ is an increasing sequence with limit $\frac{\sigma}{\tau^8}$.  If $k = \ell_{n-1}$, then the first term in square brackets is $0$.
 Thus in all cases, the claim holds.
\end{proof}

Thus in all cases, if \eqref{eqn:desiredeq} does not hold, we have
\[ I(C) - \tfrac{1}{2} \ind(C) <  (r-r_C) + \lfloor (\ell_{n-1} - k) \widetilde{\theta}_n \rfloor - \sum_{i=1}^{r_C}\lfloor s_i \widetilde{\theta}_n \rfloor ) .\]
However,
by \eqref{eqn:bound} and  \eqref{eqn:ACbound} in
 Proposition~\ref{prop:convertingtheproblemtoa},
we have
\[ I(C) - \tfrac{1}{2} \ind(C) \ge r-r_C +  \lfloor (\ell_{n-1} - k) \widetilde{\theta}_n \rfloor - \sum_{i=1}^{r_C}\lfloor s_i \widetilde{\theta}_n \rfloor.\]
This is a contradiction.  Hence we must have \eqref{eqn:desiredeq}.

It remains to show that the connector has just one end on $\be_1$.  This holds because of our initial assumption that there is a breaking with a connector, i.e.  by assumption the connector $C$ does exist as a holomorphic curve.  Since it has the same asymptotics as $C_M$, it has $I(C) = 0$, and because is simple (because it goes through some constraints with multiplicity one) it must therefore have
ECH partitions; see Remark~\ref{rmk:writhexact}~(iii).  The result now holds because $p^+_{\be_1}(\ell_{n-1}) = (\ell_{n-1})$.
\end{proof}

It remains to prove :
\begin{proof}[Proof of  Lemma~\ref{le:dotpr}]  We must show $\De\cdot z_M = 6k\ell_{n-1}$.  We do this in several steps.
\MS

 \NI
 {\em Step 1:  Applying the recursion for $z_M$ and $w$.}

Let $\widetilde{z}_M$ denote the homology class of $z_M$, with the last block removed, and define  $\widetilde{w}$
 analogously.
 Because $\Delta$ is supported away from the last block, we have
\begin{equation}
\label{eqn:trunok}
\Delta \cdot z_M = \Delta \cdot \widetilde{z}_M.
\end{equation}
Here and below, to simplify the notation we truncate the vector $\Delta$ without further comment by removing the last block of zeroes, so that expressions like $\Delta \cdot \tilde{z}_M$ are defined.  This is justified in view of \eqref{eqn:trunok}.

To simplify the discussion, we suppose for the moment that the entries of ${\Delta}$ are constant on the blocks of $W(b_n)$.
Thus,
\begin{equation}
\label{eqn:tildedeltaexp}
{\Delta} = \bigl(x_0^{\times 6}, x_1, x_2^{\times 5}, \dots, x_2^{\times 5}, \dots, x_{2n-2}^{\times 5}, x_{2n-1}\bigr).
\end{equation}
This assumption does require a slight loss of generality,  but below we will see that this is justified.
By the discussion after \eqref{eq:c} the vector $\widetilde{z}_M = 6W(\frac{\ell_n}{\ell_{n-1}})$ has the same block decomposition,
and in the notation of Lemma~\ref{lem:R} we may write
\[
\widetilde{z}_M
= 6( \ell_{n-1}^{\times 6}, t_{n-1}, \dots)
= 6\ell_{n-1} \cdot R(1,0) + 6t_{n-1} \cdot R(0,1)
\]
by Lemma~\ref{lem:R}(i).  Hence
$$
\Delta \cdot \widetilde{z}_M = 6\ell_{n-1} \De\cdot R(1,0) + 6 t_{n-1} \De\cdot R(0,1).
$$
Similarly, because the weight vector $w = w(b_n) = (1^{\times 6}, b_n-6,\dots) $  satisfies the same recursion on all but the last block (on which $\De=0$), we may invoke Lemma~\ref{lem:R} to write
$$
k = {\Delta} \cdot w =  \De\cdot R(1,0) +  (b_n-6)  \De\cdot R(0,1).
$$
Therefore,
\begin{align}\label{eqn:dot}
\Delta \cdot \widetilde{z}_M &= 6\ell_{n-1} k + 6\bigl(t_{n-1} - \ell_{n-1}(b_n-6)\bigr)\De\cdot R(0,1).
\end{align}
It remains to show that $\De\cdot R(0,1)=0$.
\MS

\NI
{\em Step 2:  We prove $|\De\cdot R(0,1)|< Q_n$ when $\De$ satisfies \eqref{eqn:tildedeltaexp}.}

Assume now that the entries $x_0, \ldots, x_{2n-1}$ of $\De$ satisfy the recursion in \eqref{eqn:R}.
Then Lemma~\ref{lem:R} implies that
\begin{align}\label{eq:RDe}
\De\cdot R(0,1) =
\ell_{n-1}x_{2n-1}.
\end{align}
It is possible that the $x_i$ do not satisfy \eqref{eqn:R}.  However, they differ from a sequence $\widetilde{x}_i$ that does by a small amount.  Namely, recall that we may write
\[ \widetilde{z}_M- {\Delta} = \lambda \widetilde{w} - \widetilde{\op{diff}}_C,\qquad \lambda: = \frac{(\widetilde{z}_M - {\De})\cdot w}{w\cdot w}. \]
So,
\[ {\Delta} = (\widetilde{z}_M - \lambda \widetilde{w}) +\widetilde{\op{diff}}_C.
\]
Let the $\widetilde{x}_i$ be the entries of $\widetilde{z}_M- \la \widetilde{w}$.
Then the $\widetilde{x}_i$ for $i \ge 2$ satisfy the recursion \eqref{eqn:R}.  Further,
we claim that the entries $z_i$ of $\widetilde{\op{diff}}_C$  satisfy
\begin{align}\label{eq:zi}
|z_i|<1,\qquad \forall i.
\end{align}
To see this, note that the sum of the two area terms $2A(\cdot,\cdot)$  on the left hand side of \eqref{eqn:weakerestimate} must be at least $1.39$ by Proposition~\ref{prop:estimates}~(iii);
the right hand side of \eqref{eqn:weakerestimate}  is 
 no more than $2.54$ by \eqref{eqn:ttnbound}; and the contribution to ${\op{diff}}_C \cdot \op{diff}_C$ from the last block must be at least $7(1-0.77)^2$, again by \eqref{eqn:ttnbound}.
Thus we have
$$
\sum_i z_i^2 \le   \op{diff}_C \cdot \op{diff}_C  - 7(1-0.77)^2 \le 2.54 -1.39 - 7(1-0.77)^2 < 1,
$$
which proves \eqref{eq:zi}.
Hence if ${\bf{I}}$ denotes the vector all of whose entries are $\pm 1$ with signs the same as those of the entries of $R(0,1)$, we find by replacing $\De$ in
 \eqref{eq:RDe} with $\widetilde{z}_M  - \la \widetilde {w}$ and using $|z_i|<1$, that
\begin{align*}
\De\cdot R(0,1)  & < \ell_{n-1}\widetilde{x}_{2n-1} +  {\bf {I}} \cdot R(0,1) \\
& = \ell_{n-1}\widetilde{x}_{2n-1} + \bigl(t_0 + t_1 + \ldots + t_{n-1} + 5(\ell_0 + \ell_1+ \ldots + \ell_{n-2})\bigr) \\
 & = \ell_{n-1} \widetilde{x}_{2n-1} + \ell_{n-1} +t_{n-1} - 1,
 \end{align*}
 where the first equality uses Lemma~\ref{lem:R}~(ii) and the second  the identities $t_k= \ell_k - \ell_{k-1}$  and  $5\ell_k = t_{k+1}-t_{k}$ from \eqref{eqn:basicrecursion}.

We next claim that  $|\widetilde{x}_{2n-1}| \le 1.$  To see this, note that
\[ \widetilde{x}_{2n-1} = 6 - \lambda \frac{7}{Q_n},\quad  \mbox{ where } t_n \le \lambda  \le \ell_{n-1}\widetilde{\theta}_n + t_n.\]
 Further,
\[ 7\frac{t_n}{Q_n} >  5,\]
since the fractions $\frac{t_n}{Q_n}$ are decreasing  by Lemma~\ref{lem:fibid0}, with limit $\sigma (\tau^4 - 1)$ by \eqref{eqn:lnpnlimit}.
We also claim that
 \[7 \left(\frac{\ell_{n-1}}{P_n} + \frac{t_n}{Q_n} \right)  < 6,\]
because $\frac{\ell_{n-1}}{P_n}$ are increasing  with limit $\frac{\sigma}{\tau^4}$, while $\frac{t_n}{Q_n}$ decreases and
\[7\left( \frac{\sigma}{\tau^4} + \frac{6}{8}\right) < 6.
\]
Thus, $0\le \widetilde{x}_{2n-1} \le 1$, so that
\begin{equation}
\label{eqn:tildegammabound}
|\De\cdot R(0,1)   | < \ell_{n-1} \widetilde{x}_{2n-1} + \ell_{n-1} + t_{n-1} - 1 <  2 \ell_{n-1} + t_{n-1} - 1<  Q_n,
\end{equation}
as claimed.
\MS

  \NI
{\em Step 3: Divisibility considerations:}\;\;
We now claim that $\De\cdot R(0,1)$ is divisible by $Q_{n}$.  To see this, note that ${\Delta} \cdot \widetilde{w} = \De\cdot w = k$ is an integer,
which implies that ${\Delta} \cdot \widetilde{W}$ is divisible by $Q_{n}$.  Therefore by Lemma~\ref{lem:R}~(iii)
$$
0\equiv {\Delta} \cdot \widetilde{W} \equiv -Q_{n-1}\, \De\cdot R(0,1) \pmod {Q_n}.
$$
  Since $Q_n, Q_{n-1}$ are relatively prime, this implies that $\De\cdot R(0,1)$ is a multiple of $Q_n$.    By
  \eqref{eqn:tildegammabound}, this implies  $\De\cdot R(0,1) = 0$.  Thus $\De\cdot \widetilde{z}_M = 6\ell_{n-1}k$ by \eqref{eqn:dot}.

 \vspace{3 mm}

 \NI
{\em Step 4:  Justifying the special form for ${\Delta}$:}\;\;
We have therefore proved  Lemma~\ref{le:dotpr}, except for the assumption that ${\Delta}$ can be written in the form \eqref{eqn:tildedeltaexp}, i.e. that its entries are constant on each block.
However, the above arguments only used information about the two dot products, ${\Delta} \cdot \widetilde{w}$, ${\Delta} \cdot \widetilde{z}_M$, and the fact that if $V=R(A,B) = (V_0^{\times 6}, V_1,V_2^{\times 5},\dots) $ for some integers $A,B$ then
\begin{align}\label{deV}
\De\cdot V & = \sum_i k_i V_i \;\;\mbox{ for some } k_i\in \Z.
\end{align}
 If $\Delta$ does not have the form in \eqref{eqn:tildedeltaexp}, rewrite it, taking the average value on each block of size $m$, where $ m= 5,6$.  This does not  change either of these dot products.  Moreover, because the new values of $x_i$ have the form $\frac {n_i}m$ (i.e. their denominator
 equals the length of the relevant block) the dot product $\De\cdot V$ 
 still satisfies \eqref{deV}.
 Hence the argument goes through even if $\De$ does not  have this special form.
\MS

   This completes the proof of Lemma~\ref{le:dotpr} and hence of Proposition~\ref{prop:improva}.  
\end{proof}

\vspace{3 mm}

\subsection{The rest of the proof}\label{ss:rest}

The next result completes  the proof of Proposition~\ref{prop:boundcurv}, and hence the proof of Theorem~\ref{thm:main}.

\begin{proposition}
\label{prop:up}  There is at most one breaking with a connector.
\end{proposition}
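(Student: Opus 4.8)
The plan is to combine the structural information already assembled about connectors with an intersection-theoretic count, in exactly the style used to prove Proposition~\ref{prop:bound} and Proposition~\ref{prop:n=0}. By Proposition~\ref{prop:improva} we know that whenever a $B$-curve limits on a building with a connector, the heavy curve (which lies in $C_{LL}$ and carries essentially all the action) is forced to be a connector $C$ with a completely rigid description: it has homology class the fixed vector $z_M(n)$, it has exactly two ends, asymptotic to $\lbrace(\beta_1,\ell_{n-1}),(\beta_2,\ell_n-\ell_{n-1})\rbrace$, and (by Lemma~\ref{lem:nolowenergy} and the corollaries in \S\ref{ss:light}) it passes through precisely the constraints on the last block together with those recorded by $6W(\ell_n/\ell_{n-1})$. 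All the other matched components of $C_L$ are light, hence (by the discussion of low action curves and Lemma~\ref{lem:conn}) are unions of planes in $\widehat{\Ee}_n$ with tops on a single $\beta_i$, passing through the remaining constraints; and $C_U$ is the union of $C_0$ with an $(\ell_n-\ell_{n-1})$-fold cover of $C_0$, as illustrated for $n=1$ in Example~\ref{ex:558}. Thus, apart from $C$ itself, the only freedom left in the limiting building is the distribution of the constraints \emph{not} lying on the connector among the light planar components $D_1$ (tops on $\beta_1$) and $D_2$ (tops on $\beta_2$).

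First I would show that this remaining distribution of constraints is in fact also uniquely determined. The light curves with top on $\beta_1$ must carry zero action, so each such plane goes through exactly one of the constraints in the first six blocks of $6W(\ell_n/\ell_{n-1})$ (mirroring the $n=1$ case, where $D_1$ is six planes through $E_1,\dots,E_6$); the light curve(s) with top on $\beta_2$ must then absorb all the remaining constraints, and the index/action bookkeeping from \S\ref{sec:noconn} (Proposition~\ref{prop:lowenergy} and Lemma~\ref{le:comb}) pins down their homology class. Concretely, the constraint vector of $C_U$'s lower partner is $W(b_n)-z_M(n)$, which is nonnegative and has a forced block structure; the same optimization-plus-integrality argument used in Lemma~\ref{lem:keylemma} and Lemma~\ref{lem:keylemma2} shows there is only one way to split $W(b_n)-z_M(n)$ into the $D_1$-part and the $D_2$-part consistent with the action constraints. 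So the entire limiting building is rigid: its combinatorial type is uniquely determined once we know a connector occurs.

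Given uniqueness of the building type, I would finish exactly as in the proof of Proposition~\ref{prop:bound} at the end of \S\ref{sec:noconn} and the proof of Proposition~\ref{prop:n=0}. Suppose two of the twelve $B$-curves, say $C$ and $C'$, both break into a building with a connector. For $i$ large we may cut $C_i$ and $C'_i$ just above the neck, writing their lower parts as $D_{12,i}\cup(\text{light planes})$ and $D'_{12,i}\cup(\text{light planes})$, where the connector pieces have the same homology class $z_M(n)$ and boundaries close to $\lbrace(\beta_1,\ell_{n-1}),(\beta_2,\ell_n-\ell_{n-1})\rbrace$, and the light planes carry identical constraints. Since the orbits $\beta_1,\beta_2$ on $\partial\Ee_n$ bound discs meeting each other once, and since two distinct holomorphic curves in the same homology class representing the common data must intersect (the relevant intersection number being $\ell_{n-1}(\ell_n-\ell_{n-1})-z_M\cdot(\text{itself with a shift})$, which one computes to be positive by a $Q_\tau$ calculation as in Lemma~\ref{lem:keylemma2}, or more simply because the connector's top ends force crossings with the complementary light curves in both orderings), we get that $C_i\cdot C'_i\ge 2$; and in any case the two crossings $D_{1,i}\cap D'_{2,i}$ and $D'_{1,i}\cap D_{2,i}$ among the light planes and connectors are both nonempty and positively oriented. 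This contradicts $B\cdot B=1$, so at most one $B$-curve breaks with a connector.

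\textbf{Main obstacle.} The delicate point is not the final intersection argument — that is a direct transcription of the $n=0$ and no-connector cases — but rather making the uniqueness of the constraint distribution watertight, i.e. showing that once a connector occurs, the class of $C_U$'s light partner (and hence the whole building) is forced to be $W(b_n)-z_M(n)$ with a single admissible block decomposition. This requires carefully combining Proposition~\ref{prop:improva} (which fixes the connector) with the low-action classification of \S\ref{sec:noconn} applied to $W(b_n)-z_M(n)$ rather than to $W(b_n)$ itself; I expect one must re-run the optimization bound \eqref{eqn:estimate} in this truncated setting and check that the integrality gap still forces uniqueness, using once more the arithmetic identities for $\ell_n,t_n,Q_n$ from Lemma~\ref{lem:fibid0}.
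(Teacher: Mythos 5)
There is a genuine gap, and it sits exactly where you transplant the no-connector endgame. Your final step asserts that the two crossings $D_{1,i}\cap D'_{2,i}$ and $D'_{1,i}\cap D_{2,i}$ among the light planes must both be nonempty, ``as in Proposition~\ref{prop:bound}''. But the analogue of Lemma~\ref{lem:keylemma2} fails once a connector is present: most of the homology is carried by $D_{12}$ in class $z_M$, and the relevant relative intersection computation (cf.\ \eqref{eqn:qequation2}, which gives $Q_{\tau}([D_{1,i}\cup D_{2,i}],[D_{12,i}])=-(7\ell_n\ell_{n-1}-\ell_{n-1}^2)+1$) no longer forces $D_1\cdot D_2>0$; in fact the paper proves the opposite, namely that $D_{1,i}$ and $D_{2,i}$ are disjoint (Corollary~\ref{cor:interscor}). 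The unique double point of a $B$-curve may instead be absorbed as an intersection between the two upper multiple covers $U_1$ and $U_2$ of $C_0$, or between the connector and the light curves, and a priori it could even be internal to a single component, in which case counting crossings of pieces gives no control at all. This is why the paper's argument first proves the intersection alternative of Lemma~\ref{lem:intalt} (either $U_{1,i}\cdot U_{2,i}>0$ or $(D_{1,i}\cup D_{2,i})\cdot D_{12,i}>0$), which requires the relative adjunction bookkeeping $C\cdot C'=Q_{\tau}+L_{\tau}$ with the asymptotic linking numbers arranged to cancel between the upper and lower pieces, and then the two-curve alternative Lemma~\ref{lem:intalter} with its very specific pairings ($D_{12,i}$ against $D'_{1,i}$, $D_{2,i}$ against $D'_{12,i}$, etc.), chosen precisely so that the linking terms cancel. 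None of this is a ``direct transcription'' of the $n=0$ or no-connector cases, and your sketch does not supply it.

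The second issue is the uniqueness of the constraint distribution among the light components, which you flag as the main obstacle but propose to settle by re-running the optimization-plus-integrality bound for $W(b_n)-z_M(n)$. That absolute uniqueness is neither proved in the paper nor needed: what is needed is only that two breakings with connectors carry the \emph{same} split $z$ on $D_{1,i}$, and this relative statement (Lemma~\ref{lem:homologicaluniqueness}) is itself proved by intersection positivity across the two breakings together with the disjointness $D_{1,i}\cap D_{2,i}=\emptyset$ from Corollary~\ref{cor:interscor} — i.e.\ it depends on the very intersection alternative your argument omits. So the logical order is the reverse of what you propose: one cannot first pin down the building combinatorially by area/index bookkeeping and then count; one must first locate the double point via $Q_{\tau}$ and linking-number computations (using \eqref{eqn:cmdotW} for $W\cdot z_M$), deduce disjointness and homological rigidity of the split, and only then run the $B\cdot B=1$ contradiction via Lemma~\ref{lem:intalter}.
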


 To prove this, we locate the unique double point in the limiting building and then use the fact that $B\cdot B = 1$ to argue, as in the proof of Proposition~\ref{prop:bound} at the end of \S\ref{sec:noconn},
 that there can be only one breaking of this kind; see Corollary~\ref{cor:mainthm}.
 What is important is to show that this double point is a point  of intersection of two {\it different} connected components of the building:  if it were internal to one component, then there would be no obvious way to control the  number of nearly $J$-holomorphic representatives.

Consider a limiting building  $C_\infty$ that has a connector.  We saw in Lemma~\ref{lem:conn}  that the curves in the neck are all multiple covers of trivial cylinders.  Hence we can divide $C_{\infty}$ in a slightly different way than before (cf. Definition~\ref{def:CU}), taking the top level  to consist of the curves in $C_U$ together with those in the neck
(which are covers of trivial cylinders by Lemma~\ref{lem:conn}), and then dividing the curves in $C_{LL}$ into three groups according to the asymptotics at their top end.  Thus, we now consider the top level  to be a union of matched components as follows.

\begin{definition}\label{def:U}  {\it In this section,  $D_j$, for $j=1,2$,   denotes  the union of the curves in $\widehat\Ee$ with top end on $\be_j$, while the connector
$D_{12}$ is also a curve (rather than a matched component).  Further, we define $U_1$ to be the matched component in the upper  levels whose negative end connects to the positive end of $D_{12}$ at $\beta_1$; similarly,  $U_2$  connects to the positive end of $D_{12}$ at $\beta_2$.}
\end{definition}

 Then $U_1$ is at least an $\ell_{n-1}$-fold cover of the low action curve $C_0$, and $U_2$ is at least an $(\ell_{n}-\ell_{n-1})$-fold cover of $C_0$.  Hence, because $U_1, U_2$ contain different curves since the building has genus zero, it follows that
 \begin{itemize}\item
 the upper level is $U_1\cup U_2$,
 \item
 $U_1$ is precisely an $\ell_{n-1}$-fold cover of $C_0$ (extended by  trivial components in the neck), and
 \item $U_2$ is precisely a $(\ell_{n}-\ell_{n-1})$-fold cover of $C_0$.
 \end{itemize}
In particular, $U_1$ is in class $3 \ell_{n-1} \, L$ and $U_2$ is in class $3 t_n \, L$.

 We now argue much as in the proof of Proposition~\ref{prop:bound} at the end of \S\ref{sec:noconn}, except that we chop below rather than above the neck.
Consider a curve close to breaking along a building $C_\infty$ with a connector as above,
and chop the
nearly broken curve building close to the top of the lowest level, i.e. at the bottom of the neck region.
This gives compact curves with boundary  $U_{1,i}, U_{2,i}, D_{1,i}, D_{12,i}$ and $D_{2,i}$
defined from $U_1, U_2, D_1, D_{12}$ and $D_{2}$ and holomorphic with respect to a sequence of almost-complex structures $J^{R_i}$ with $R_i \to \infty$.  Note that because every $B$-curve has exactly one double point,
there can be at most one intersection point between these compact curves.
We now prove Proposition~\ref{prop:boundcurv} in several steps.

We begin with the following intersection alternative:

\begin{lemma}
\label{lem:intalt}
Either $U_{1,i}$ intersects $U_{2,i}$, or $(D_{1,i} \cup D_{2,i})$ intersects $D_{12,i}$.
\end{lemma}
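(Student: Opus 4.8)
Proof proposal for Lemma~\ref{lem:intalt}.

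The plan is to argue by contradiction using positivity of intersections together with the homological identities established above. Suppose that $U_{1,i}$ does not intersect $U_{2,i}$ \emph{and} that $D_{12,i}$ does not intersect $D_{1,i}\cup D_{2,i}$. First I would fix the homological bookkeeping: from the discussion preceding the statement we know $U_1$ lies in class $3\ell_{n-1}L$ and $U_2$ in class $3t_nL = 3(\ell_n-\ell_{n-1})L$, while $D_{12}$ has homology class $z_M = z_M(n)$ by Proposition~\ref{prop:improva}, and $D_1, D_2$ are unions of planes through the remaining constraints; by Proposition~\ref{prop:improva} (and the weight-expansion identities) $D_1$ carries the $6$ planes through $E_i$, $1\le i\le 6$ worth of constraints corresponding to $6W(\frac{\ell_n}{\ell_{n-1}})$ minus $z_M$'s share, more precisely $D_1\cup D_2$ carries the constraint vector $W(b_n)-z_M$. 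The key point is that since the whole building is a limit of $B$-curves with $B\cdot B = 1$ and genus zero, the total count of intersections among the pieces $U_{1,i},U_{2,i},D_{1,i},D_{12,i},D_{2,i}$ (each counted with the positive sign forced by $J$-holomorphicity) is at most $1$; I would make this precise exactly as in the proof of Proposition~\ref{prop:bound} at the end of \S\ref{sec:noconn}, by observing that for large $i$ these compact pieces, joined along their boundary circles near $\p\Ee_n$, reassemble (up to a small perturbation) into the $B$-sphere $C_i$, whose self-intersection is $1$.

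Next I would derive a contradiction by producing at least two forced intersection points under the contrary hypothesis. The curves $U_1$ and $U_2$ both have a negative end on (a multiple of) $\be_1$ and on $\be_2$: indeed $U_1$ ends on $\be_1^{\ell_{n-1}}$ matched to $D_{12}$, and since it is an $\ell_{n-1}$-fold cover of $C_0$ it also has ends on $\be_2^{\ell_{n-1}}$ which must match components of $D_2$; symmetrically $U_2$ ends on $\be_1^{t_n}$ matched to something in $D_1$. The homological intersection number of the classes $3\ell_{n-1}L$ and $3t_nL$ in $\C P^2$ is $9\ell_{n-1}t_n > 0$, so $U_{1,i}\cdot U_{2,i}$ (as closed curves) would be positive; to turn this into a statement about the \emph{compact} truncated pieces one uses the relative intersection formula \eqref{eqn:interstheq}, i.e. $U_1\cdot U_2 = Q_\tau([U_1],[U_2]) + L_\tau(U_1,U_2)$, and checks that the asymptotic linking contribution coming from the shared ends on $\be_1,\be_2$ does not cancel the large homological term $Q_\tau = 9\ell_{n-1}t_n$. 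This is where the specific value $z_M(n)$ and the Fibonacci identities of \S\ref{sec:bestapprox} enter. Similarly, $D_{12}$ shares its two positive ends $\be_1^{\ell_{n-1}}$ and $\be_2^{t_n}$ with $U_1$ and $U_2$ respectively, and the curves of $D_1\cup D_2$ have top ends on $\be_1$ and $\be_2$ too, so a parallel relative-intersection computation shows $D_{12}\cdot(D_1\cup D_2)$ is forced to be positive unless its linking contribution exactly cancels the homological term $z_M\cdot(W(b_n)-z_M)$; using Lemma~\ref{lem:WcM} and Lemma~\ref{le:dotpr} one evaluates $z_M\cdot(W(b_n)-z_M)$ explicitly.

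The upshot I would aim for is: if \emph{neither} alternative holds, then both $U_{1,i}\cdot U_{2,i}\ge 1$ \emph{and} $D_{12,i}\cdot(D_{1,i}\cup D_{2,i})\ge 1$, giving at least two intersection points among the pieces of a single $B$-curve, contradicting $B\cdot B = 1$. So at least one of the two alternatives must hold. The main obstacle I expect is the careful accounting of the asymptotic linking numbers $L_\tau$ at the matched ends: one has to be sure that when a positive end of $U_1$ on $\be_1^{\ell_{n-1}}$ is matched with the negative end of $D_{12}$, the linking terms one introduces by truncating are controlled (bounded below, or of the right sign) rather than negative and large enough to absorb the homological intersection. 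This is exactly the kind of computation carried out in \cite{hech} and in the writhe/linking analysis of \S\ref{sec:ECH}; concretely one would show that because the matched multiplicities realize ECH partitions (all ends on $\be_1$ have multiplicity one when one descends to $C_0$, etc.), the relevant links are unknotted/unlinked up to the trivialization, so $L_\tau$ contributes $0$ at those ends and the homological term survives. Once that is in hand, the contradiction with $B\cdot B=1$ is immediate and the lemma follows.
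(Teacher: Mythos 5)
Your overall framing (relative intersection numbers, positivity, the constraint coming from the single double point of a $B$-curve, the homological bookkeeping with $z_M$ and $W(b_n)-z_M$) matches the paper, but the central mechanism you propose does not work, and it misses the one idea the paper's proof actually turns on. You want to show that, under the contrary hypothesis, \emph{each} of $U_{1,i}\cdot U_{2,i}$ and $(D_{1,i}\cup D_{2,i})\cdot D_{12,i}$ is forced to be positive because ``the homological term survives'' after arguing that the asymptotic linking contributions $L_\tau$ at the matched ends vanish. That claim is false: two distinct curves with ends on covers of the same elliptic orbit have braided asymptotics whose linking is governed by the monodromy angle, not by whether the multiplicities realize ECH partitions. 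At $\be_2$ (monodromy angle $\theta_n\approx 7$) the linking between the ends of $U_{1,i}$ and $U_{2,i}$ is of order $7\ell_{n-1}t_n$, i.e.\ comparable to $Q_\tau([U_{1}],[U_{2}])=7\ell_{n-1}t_n$ itself, so it cannot be discarded. Moreover the sign structure of the homological terms makes your intended contradiction impossible even in principle: by \eqref{eqn:cp2calc}, \eqref{eqn:blowupcalc} and \eqref{eqn:cmdotW} one computes $Q_\tau([U_{1,i}],[U_{2,i}])=7\ell_{n-1}t_n>0$ but $Q_\tau([D_{1,i}\cup D_{2,i}],[D_{12,i}])=1-7\ell_{n-1}t_n$, a large \emph{negative} number; the two $Q_\tau$'s sum to exactly $1$, so there is no way to conclude that both geometric intersection numbers are at least $1$ from the homology, and ``assume neither intersects, deduce both intersect'' cannot be carried out.

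The missing idea is that one should never estimate either $L_\tau$ individually: because the negative ends of $U_{1,i}\cup U_{2,i}$ are literally the same circles as the positive ends of $D_{1,i}\cup D_{2,i}\cup D_{12,i}$, the two linking contributions cancel \emph{exactly}, $L_\tau(U_{1,i},U_{2,i})=-L_\tau(D_{1,i}\cup D_{2,i},D_{12,i})$. Adding the two instances of \eqref{eqn:interstheq} and using the computation that the $Q_\tau$'s sum to $1$ gives the identity
\begin{equation*}
U_{1,i}\cdot U_{2,i}\;+\;(D_{1,i}\cup D_{2,i})\cdot D_{12,i}\;=\;1 ,
\end{equation*}
and since both terms are nonnegative by positivity of intersections, exactly one of them equals $1$; that is the lemma. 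So the correct argument is not a contradiction forcing two intersection points, but an exact count showing the total is one. If you want to salvage your write-up, replace the ``$L_\tau$ vanishes at ECH-partitioned ends'' step by this cancellation of linking numbers between the top and bottom pairings, and replace the two separate positivity claims by the single displayed identity.
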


\begin{proof}  Because $B$-curves are simple, the curves  $U_{1,i}\cup U_{2,i}$, $D_{1,i}\cup D_{2,i}$, and
$D_{12,i}$  are somewhere injective compact curves whose boundaries form links around the orbits $\be_1,\be_2$,
so that their intersection number can be calculated using the intersection formula \eqref{eqn:interstheq}.\footnote{In \S\ref{sec:prelim}, we stated this formula for punctured curves.  However, it is equally valid for curves with boundary obtained by truncating a completed cobordism by removing any $Y\times (R,\infty) $ or $Y\times (-\infty,-R)$ region for large $R$.  For a similar situation, see \cite[Lem 3.5]{HN}.}    Thus
\begin{equation}
\label{eqn:u1u2}
 U_{1,i}\cdot U_{2,i} = Q_{\tau}([U_{1,i}],[U_{2,i}]) + L_{\tau}(U_{1,i},U_{2,i}),
 \end{equation}
and similarly
\begin{equation}
\label{eqn:d1d2d12}
(D_{1,i}\cup D_{2,i}) \cdot D_{12,i}
= Q_{\tau}([D_{1,i} \cup D_{2,i}],[D_{12,i}]) + L_{\tau}(D_{1,i} \cup D_{2,i},D_{12,i}),
\end{equation}
where $L_{\tau}$ denotes the asymptotic linking number defined in \eqref{eq:linking}, and $Q_{\tau}$ is the relative intersection pairing whose formula is given in \eqref{eqn:cp2calc} and \eqref{eqn:blowupcalc}.
Because the negative ends of $U_{1,i}\cup U_{2,i}$ are the same as the positive ends of $D_{1,i} \cup D_{2,i}\cup D_{12,i}$ we have
\begin{equation}
\label{eqn:linkingequation}
L_{\tau}(U_{1,i},U_{2,i}) = - L_{\tau}(D_{1,i} \cup D_{2,i},D_{12,i}).
\end{equation}
Now, by \eqref{eqn:cp2calc}, we have
\begin{equation}
\label{eqn:qequation1}
Q_{\tau}([U_{1,i}],[U_{2,i}]) = 9 \ell_{n-1} t_n - 2 \ell_{n-1} t_n  = 7 \ell_n\ell_{n-1} - 7 \ell_{n-1}^2.
\end{equation}
To compute $Q_{\tau}([D_{1,i} \cup D_{1,i}],[D_{12,i}])$, note that by Proposition~\ref{prop:improva},
$[D_{12,i}] =  z_M$, and so $[D_{1,i}\cup D_{2,i}] = W - z_M$.  Thus by  \eqref{eqn:blowupcalc} and \eqref{eqn:cmdotW} we have:
\begin{align}
\label{eqn:qequation2}
Q_{\tau}([D_{1,i}\cup D_{2,i}],[D_{12,i}]) & = \ell_{n-1}^2 + t_n^2 - W \cdot z_M + z_M \cdot z_M\\
& =   \ell_{n-1}^2 + (\ell_n-\ell_{n-1})^2 - (\ell_n^2+41\ell_n\ell_{n-1} - 5 \ell_{n-1}^2 + 6) + \\ \notag
&\hspace{3in}(36 \ell_{n-1}\ell_n + 7) \nonumber  \\ \notag
& =  - (7 \ell_n\ell_{n-1} - \ell_{n-1}^2) + 1.
\end{align}

By combining \eqref{eqn:qequation1} and \eqref{eqn:qequation2}, it follows that
\begin{equation}
\label{eqn:homologycalculation}
Q_{\tau}([U_{1,i}],[U_{2,i}]) = - Q_{\tau}([D_{1,i} \cup D_{2,i}],[D_{12,i}]) + 1.
\end{equation}
Now combine \eqref{eqn:homologycalculation} with \eqref{eqn:u1u2}, \eqref{eqn:d1d2d12}.  This gives
\[  U_{1,i} \cdot U_{2,i} = 1 - (D_{1,i} \cup D_{2,i}) \cdot D_{12,i}.\]
Since $0 \le (D_{1,i} \cup D_{2,i}) \cdot D_{12,i} \le 1$, the claim now follows.
\end{proof}

\begin{corollary}
\label{cor:interscor}
$D_{1,i}$ and $D_{2,i}$ do not intersect.
\end{corollary}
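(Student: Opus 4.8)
The plan is to deduce this from Lemma~\ref{lem:intalt} together with the structural facts we have already established about the building $C_\infty$. The key observation is that $D_{1,i}$ and $D_{2,i}$ are the compact truncations of the curves $D_1$ and $D_2$ in $\widehat{\Ee}_n$ whose top ends lie, respectively, on covers of $\be_1$ and covers of $\be_2$. Because $C_\infty$ has a connector by assumption, we know from Proposition~\ref{prop:hcomponent} and Proposition~\ref{prop:improva} that the heavy curve is precisely the connector $D_{12}$, which carries essentially all of the action $\om(B)\approx \tfrac 1{Q_n}$; hence every irreducible component of $D_1$ and of $D_2$ is a \emph{light} (low action) curve, and in fact has zero action and so is a plane asymptotic to a single simple cover $\be_i$ — this uses Lemma~\ref{lem:conn} and the action considerations of Lemma~\ref{lem:actconsid}.

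First I would observe that the negative ends of the upper level $U_1\cup U_2$ exactly match the positive ends of $D_{1,i}\cup D_{2,i}\cup D_{12,i}$, and that $U_1$ (respectively $U_2$) attaches along $\be_1$ (respectively $\be_2$) only. Thus the only curves in the lower part of the building that can meet $D_{1,i}$ along matched ends are components of $U_1$, and similarly for $D_{2,i}$ and $U_2$; in particular $D_{1,i}$ and $D_{2,i}$ have no common ends. Next I would invoke the fact, established just above in the discussion preceding Definition~\ref{def:U} and used in Lemma~\ref{lem:intalt}, that every $B$-curve has \emph{exactly one} double point, so the compact truncated curves $U_{1,i}, U_{2,i}, D_{1,i}, D_{2,i}, D_{12,i}$ together contain at most one point of intersection (counted with the positive multiplicity forced by positivity of intersections of $J$-holomorphic curves).

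The heart of the argument is then a dichotomy. Lemma~\ref{lem:intalt} tells us that either $U_{1,i}\cap U_{2,i}\ne\emptyset$ or $(D_{1,i}\cup D_{2,i})\cap D_{12,i}\ne\emptyset$; in the first case that intersection point is the unique double point, so $(D_{1,i}\cup D_{2,i})\cap D_{12,i}=\emptyset$, and a fortiori $D_{1,i}\cap D_{2,i}=\emptyset$ since such an intersection, being a point of $C_L$, is disjoint from the upper level. In the second case, $(D_{1,i}\cup D_{2,i})\cap D_{12,i}$ consists of exactly one point, which is then the unique double point of the $B$-curve, so again $D_{1,i}\cap D_{2,i}=\emptyset$. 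One subtlety to address: I should confirm that a point of $D_{1,i}\cap D_{2,i}$ really does count as a genuine double point of the nearby $B$-curve rather than being absorbed somewhere — but this is immediate, since the truncated curves $D_{1,i}, D_{2,i}$ are somewhere injective with distinct images (their top ends lie on different orbits $\be_1$ versus $\be_2$, so no irreducible component is shared), so any intersection point is a transverse or tangential self-intersection of the approximating $B$-curve, hence contributes positively to $B\cdot B=1$, and there is no room for more than the one double point we have already accounted for.

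The step I expect to be the main (minor) obstacle is the bookkeeping that rules out the possibility that $D_{1,i}$ and $D_{2,i}$ intersect \emph{and} one of the pairs in Lemma~\ref{lem:intalt} also intersects, which would force $B\cdot B\ge 2$; this is handled by the "exactly one double point" fact, but one has to be a little careful that the intersection points of the five truncated pieces are genuinely distinct points and that none of them is an end-matching point already counted in the asymptotic linking number $L_\tau$ rather than in the honest intersection count. Since we chop strictly \emph{below} the neck, the compact pieces $D_{1,i}, D_{2,i}, D_{12,i}$ lie entirely in (a truncation of) $\widehat{\Ee}_n$ and their boundary links sit in disjoint neighbourhoods of $\be_1$ and $\be_2$, so no intersection point can be near a boundary; hence all contributions to the various $Q_\tau$-plus-$L_\tau$ counts that correspond to honest geometric intersections are interior, and the argument goes through. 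In sum: the statement follows immediately by combining Lemma~\ref{lem:intalt} with $B\cdot B=1$ and the single-double-point property, with the only work being the verification that $D_{1,i}\cap D_{2,i}$ would genuinely add to the double-point count.
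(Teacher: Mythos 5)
Your proof is correct and is essentially the paper's own argument: combine the alternative of Lemma~\ref{lem:intalt} with the fact that each $B$-curve has exactly one double point, so an intersection of $D_{1,i}$ with $D_{2,i}$ would force at least two intersection points among the truncated pieces, which is impossible. (The paper phrases this as a contradiction rather than a dichotomy, and your side remark that the components of $D_1$, $D_2$ are zero-action planes on simple covers of $\be_i$ is neither quite accurate nor needed, but it plays no role in the argument.)
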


\begin{proof}
Assume that they do intersect.  Then neither can intersect $D_{12,i}$, or else we would have too many intersection points.  Thus, by the previous step, $U_{1,i}$ and $U_{2,i}$ would have to intersect.  This also gives too many intersection points.
\end{proof}

The next step is the following uniqueness claim.

\begin{lemma}
\label{lem:homologicaluniqueness}
The constraint $z$ that is carried by the curve $D_{1,i}$ is  independent of the breaking.
\end{lemma}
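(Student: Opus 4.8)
The claim is that once we fix the combinatorial data of the breaking — namely that $C_\infty$ has a connector, so by Proposition~\ref{prop:improva} the connector $D_{12}$ carries the fixed class $z_M$, and $U_1, U_2$ are covers of $C_0$ in classes $3\ell_{n-1}L$, $3t_nL$ — the constraint vector $z$ carried by $D_1$ is determined. The strategy mirrors the proof of Lemma~\ref{lem:keylemma} (the ``no connectors'' case): I would compare two hypothetical breakings with connectors, say with $D_1$ carrying $z$ and $z'$ respectively, and show $z=z'$ by an intersection-positivity argument using $B\cdot B = 1$.

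First I would record the homological setup. Since $[D_{12,i}] = z_M$ is fixed and $[D_{1,i}] + [D_{2,i}] + [D_{12,i}] = W(b_n)$ (as $D_1 \cup D_2 \cup D_{12}$ is all of $C_{LL}$ and carries the full weight vector, the connector having no positive ends matched internally), we have $[D_{1,i}] + [D_{2,i}] = W(b_n) - z_M =: W'$, a \emph{fixed} vector. So a breaking with a connector is determined by the single vector $z = [D_{1,i}]$, with $y := [D_{2,i}] = W' - z$. Given two such breakings with data $(z,y)$ and $(z',y')$, set $\eps := z - z' = y' - y$. The key is to bound $\sum \eps_i^2$. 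As in Step~1 of Lemma~\ref{lem:keylemma}, I would consider the cross terms $(z-z')\cdot(y'-y) = z\cdot y' + z'\cdot y - z\cdot y - z'\cdot y'$, and interpret each dot product via the intersection count: the number of intersections of $D_{1}$ (class $z$) with $D_2'$ (class $y'$) is $\ell_{n-1}(\ell_n - \ell_{n-1}) - z\cdot y' + (\text{linking correction})$, since a plane over $\be_1$ and a plane over $\be_2$ in $\widehat\Ee$ intersect once. Here I must be slightly careful: unlike the no-connector case the relevant curves $D_1, D_2$ have top ends on single multiples of $\be_1, \be_2$ and the linking/relative-intersection bookkeeping is as in \eqref{eqn:interstheq}; but since these are compact pieces of $B$-curves (simple, somewhere injective) and $B\cdot B = 1$, all pairwise intersection numbers among $D_{1,i}, D_{2,i}, D_{1,i}', D_{2,i}'$ (for $i$ large, after chopping) lie in $\{0,1\}$, and by Corollary~\ref{cor:interscor} $D_{1,i}\cap D_{2,i} = \emptyset$. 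This forces $\eps\cdot\eps = (z-z')\cdot(y'-y) \in \{0,1\}$ — in fact I expect it to be $0$, because the $D_{12}$ intersection alternative of Lemma~\ref{lem:intalt} already absorbs the unique double point.

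The main obstacle, and where I would spend the most care, is turning the bound $\sum\eps_i^2 \le$ small into $\eps = 0$. In the no-connector case one could only get $|\Cc|\le 8$; here I want genuine uniqueness. The reason it should work: the fixed vector $W' = W(b_n) - z_M$ has a rigid block structure (from Corollary~\ref{cor:weight} and Lemma~\ref{lem:WcM}), and the area constraints are \emph{much} tighter — $D_1$ must have action either $\approx 0$ or $\approx \tfrac{1}{Q_n}$, and by the analysis of \S\ref{ss:light} (Lemma~\ref{lem:nolowenergy} and Proposition~\ref{prop:hcomponent}) the heavy curve is the connector, so \emph{both} $D_1$ and $D_2$ are light, each with action $\approx 0$. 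A light curve over $\be_1$ (resp.\ $\be_2$) with constraint vector $z$ satisfies $z\cdot w(b_n) = \ell_{n-1}$ (resp.\ $= t_n$) exactly (modulo $\eps$), and moreover, applying Proposition~\ref{prop:lowenergy} / the partition analysis, such a light curve over $\be_j$ must in fact be a \emph{union of planes} each through one exceptional class, or a connected curve with the ECH partition — either way its homology class is pinned down by which constraints it passes through together with the area normalization. I would then argue: if $\eps \ne 0$, then $\eps$ has two nonzero entries (one $+1$, one $-1$) on blocks of $W'$, but the area equality $z\cdot w = z'\cdot w = \ell_{n-1}$ forces these two entries to lie on blocks with \emph{equal} weight $w_i$; combined with the rigid block structure of $W'$ (where, as in Lemma~\ref{lem:keylemma} Step~2, blocks of equal size but the only adjacent blocks of equal entries are controlled) and the fact that $D_2' = W' - z'$ must \emph{also} be a valid light-curve class of the same constrained form, one checks no such swap is possible. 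Concretely I would reduce, block by block, to the last few blocks of $W'$ (the ones of size $\le 7$, near where $z_M$'s appended $1$'s sit) and rule out the swap there by a direct area/positivity check, exactly paralleling Cases~1 and~2 of Lemma~\ref{lem:keylemma} but now with the extra rigidity that $D_1$ realizes the \emph{exact} area $\ell_{n-1}/Q_n$ via the weight expansion $W(\ell_n/\ell_{n-1})$ from Corollary~\ref{cor:weight}(b). Hence $\eps = 0$, i.e. $z = z'$, proving the lemma; Proposition~\ref{prop:up} then follows since, with $z$ fixed, two nearly-broken $B$-curves with a connector would share the configuration $D_{1,i}, D_{2,i}, D_{12,i}$ and hence (by the intersection alternative of Lemma~\ref{lem:intalt} and $B\cdot B=1$, as in the proof of Proposition~\ref{prop:bound}) could intersect each other in at least two points, a contradiction.
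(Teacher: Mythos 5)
Your setup is the right one, and you have all the needed ingredients on the table, but the decisive step is missing and the route you propose to replace it would not obviously close. Write $T=\ell_{n-1}(\ell_n-\ell_{n-1})$ for the fixed contribution of the ends, so that $D_{1,i}\cdot D_{2,i}'=T-z\cdot y'$, etc. Then
$(z-z')\cdot(y'-y)=z\cdot y'+z'\cdot y-z\cdot y-z'\cdot y'
=D_{1,i}\cdot D_{2,i}+D_{1,i}'\cdot D_{2,i}'-D_{1,i}\cdot D_{2,i}'-D_{1,i}'\cdot D_{2,i}$.
Corollary~\ref{cor:interscor} says the first two terms vanish, and positivity of intersections says the last two are nonnegative, so $(z-z')\cdot(z-z')=(z-z')\cdot(y'-y)\le 0$; since it is also a sum of squares it is $0$, hence $z=z'$. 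This is the paper's proof, and it needs neither the bound ``$\in\{0,1\}$'' nor any block analysis. Your sign bookkeeping at this point is off: you assert $\eps\cdot\eps\in\{0,1\}$ by analogy with Step~1 of Lemma~\ref{lem:keylemma}, but once you invoke Corollary~\ref{cor:interscor} the cross terms enter with a minus sign and the bound is $\le 0$, which finishes the argument outright; the phrase ``I expect it to be $0$ because Lemma~\ref{lem:intalt} absorbs the double point'' is not a proof of this.

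The genuine gap is that you then declare the ``main obstacle'' to be upgrading a small bound on $\eps\cdot\eps$ to $\eps=0$, and you propose to do this by the area-and-block combinatorics of Lemma~\ref{lem:keylemma}, concluding with ``one checks no such swap is possible'' without carrying out the check. That route is suspect: in the no-connector case exactly this analysis yields only $|\Cc|\le 8$, not uniqueness, and the extra rigidity you invoke (that a light curve over $\be_1$ is a union of planes through single exceptional classes, or has its class pinned by ECH partitions) is neither established in the paper nor needed. Also note a smaller slip: if your bound $\eps\cdot\eps\le 1$ were the operative one, the case $\eps\cdot\eps=1$ is already excluded by the area constraint $\eps\cdot w=0$ (a single $\pm1$ entry gives $|\eps\cdot w|\ge 1/Q_n$), so your final paragraph about two $\pm1$ entries on equal-weight blocks corresponds to $\eps\cdot\eps=2$, which contradicts the bound you claimed one paragraph earlier. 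In short: replace the unexecuted combinatorial program with the two-line computation above, which uses only Corollary~\ref{cor:interscor}, positivity of intersections, and $z+y=z'+y'$.
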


Note that this implies the same statement for $D_{2,i}$, since the homology class of $D_{12,i}$ is fixed in view of
Proposition~\ref{prop:improva}.

\begin{proof}
Suppose given one breaking $D_{1,i}, D_{2,i}$ with constraints $z,y$ and another
 $D_{1,i}', D_{2,i}'$ with constraints $z',y'$.  Since $z+y = z'+y'$, we have $(z - z') \cdot (y' - y) = (z - z') \cdot (z - z')\ge 0$ (where we think of $z,y$ as vectors as in the previous section).
But because  $D_{1,i}, D_{1,i}'$ are asymptotic to $\be_1$ while $D_{2,i}, D_{2,i}'$ are asymptotic to $\be_2$, the contribution of the top ends to intersection numbers such as $D_{1,i}\cdot D_{2,i}'$ is fixed and equal to $T: = \ell_{n-1} \cdot (\ell_n - \ell_{n-1})$.
Hence
\begin{align*}
0 & \le  (z - z') \cdot (y' - y) = z \cdot y'+z'\cdot y - z' \cdot y' - z \cdot y    \\
 & = - \Big( (T - z \cdot y') + (T - z' \cdot y) - (T - z \cdot y ) - (T - z' \cdot t') \Big)\\
 & = - \Big( (T - z \cdot y') + (T - z' \cdot y)\Big)
\end{align*}
where the final equality above follows from Corollary~\ref{cor:interscor}.  But  $(T - z \cdot y')$ and $(T - z'\cdot y)$ both compute the number of intersections of $J$-holomorphic curves, and so are 
nonnegative.  Thus
$ (z - z') \cdot (y' - y) = 0,$
so that $z=z', y=y'$ as claimed.
\end{proof}

Now assume as above that $C_i$ and $C_i'$ are two different $B$-curves that are close to breaking into a building with a connector.  The final step is the following variant  of Lemma~\ref{lem:intalt}.

\begin{lemma}\label{lem:intalter}
\begin{itemize}
\item If $U_{1,i}$ does not intersect $U'_{2,i}$, then either $D_{12,i}$ intersects $D'_{1,i}$ or $D_{2,i}$ intersects $D'_{12,i}$.
\item If $U_{2,i}$ does not intersect $U'_{1,i}$, then either $D_{1,i}$ intersects $D'_{12,i}$, or $D_{12,i}$ intersects $D'_{2,i}$.
\end{itemize}
\end{lemma}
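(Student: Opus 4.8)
\textbf{Proof proposal for Lemma~\ref{lem:intalter}.}

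The plan is to mimic the homological computation in the proof of Lemma~\ref{lem:intalt}, but applied to the ``mixed'' pairs of curves coming from the two different breakings $C_i$ and $C'_i$. First I would record the relevant homology data. By Proposition~\ref{prop:improva} both $D_{12,i}$ and $D'_{12,i}$ carry the fixed constraint $z_M$, and by Lemma~\ref{lem:homologicaluniqueness} the constraint $z$ of $D_{1,i}$ equals that of $D'_{1,i}$, hence also the constraint $y = W(b_n) - z_M - z$ of $D_{2,i}$ equals that of $D'_{2,i}$. Likewise $[U_{1,i}] = [U'_{1,i}] = 3\ell_{n-1}L$ and $[U_{2,i}] = [U'_{2,i}] = 3t_nL$. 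So the relative homology classes of the primed curves match those of the unprimed ones, and the $Q_\tau$ terms in any intersection formula are exactly as computed in \eqref{eqn:qequation1} and \eqref{eqn:qequation2}.

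Next I would set up the intersection-formula bookkeeping for the first bullet. Applying \eqref{eqn:interstheq} to the somewhere injective compact curves $U_{1,i}$ and $U'_{2,i}$ (valid for truncated cobordisms, exactly as in the footnote to Lemma~\ref{lem:intalt}) gives
\[
U_{1,i}\cdot U'_{2,i} = Q_\tau([U_{1,i}],[U'_{2,i}]) + L_\tau(U_{1,i},U'_{2,i}),
\]
and similarly
\[
(D'_{1,i}\cup D_{2,i})\cdot D'_{12,i}\ \text{and}\ (D_{1,i}\cup D'_{2,i})\cdot D_{12,i}
\]
decompose into $Q_\tau$ plus $L_\tau$. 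Since the negative asymptotics of $U_{1,i}$ (a sublink of $\beta_1^{\ell_{n-1}}$) coincide with the positive asymptotics of $D'_{12,i}$ along $\beta_1$, and similarly for $U'_{2,i}$ along $\beta_2$, the linking numbers cancel in pairs just as in \eqref{eqn:linkingequation}: $L_\tau(U_{1,i},U'_{2,i}) = -L_\tau(D'_{1,i}\cup D_{2,i},D'_{12,i})$, after matching up which neck ends connect to which. Because the $Q_\tau$ values are the same numbers as in \eqref{eqn:qequation1}, \eqref{eqn:qequation2} (this is where I use that the homology classes are unchanged), the identity \eqref{eqn:homologycalculation} persists in the mixed form, yielding
\[
U_{1,i}\cdot U'_{2,i} = 1 - (D'_{1,i}\cup D_{2,i})\cdot D'_{12,i} = 1 - \bigl(D'_{1,i}\cdot D'_{12,i} + D_{2,i}\cdot D'_{12,i}\bigr).
\]
Since every term on the right is a nonnegative count of intersections of $J$-holomorphic curves (and $D'_{2,i}\cdot D'_{12,i} \le (D'_{1,i}\cup D'_{2,i})\cdot D'_{12,i}\le 1$ handles the companion inequality), $U_{1,i}\cdot U'_{2,i} = 0$ forces $D'_{1,i}\cdot D'_{12,i} + D_{2,i}\cdot D'_{12,i} = 1$, so at least one of these is nonzero. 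Re-indexing the same argument with the roles of primed and unprimed swapped gives ``$D_{12,i}$ meets $D'_{1,i}$ or $D_{2,i}$ meets $D'_{12,i}$'', which is the first bullet; the second bullet is obtained by interchanging the subscripts $1\leftrightarrow 2$ and $\ell_{n-1}\leftrightarrow t_n$ throughout, using the symmetric computation with $U_{2,i}$ and $U'_{1,i}$.

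The step I expect to be the main obstacle is the careful accounting of which neck ends of $U_{1,i}$ and $U'_{2,i}$ are matched with which ends of the lower curves $D'_{1,i}, D'_{2,i}, D'_{12,i}$, so that the cancellation of asymptotic linking numbers is genuinely exact and not off by the linking contributions of ends that do \emph{not} get matched. Here one must use that in a building with a connector the upper level is precisely $U_1\cup U_2$ with $U_1 = \ell_{n-1}C_0$ and $U_2 = t_nC_0$ (established just before Definition~\ref{def:U}), so the negative ends of $U_{1,i}$ along $\beta_1$ are exactly the positive ends of $D_{12,i}$ along $\beta_1$ together with those of $D_{1,i}$, and the splitting into ``connector part'' and ``$D_j$ part'' is canonical; the linking numbers then pair up cleanly. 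Once this matching is pinned down, the rest is the same algebra as in Lemma~\ref{lem:intalt}, using the substitutions supplied by Proposition~\ref{prop:improva}, Lemma~\ref{lem:homologicaluniqueness}, and identity \eqref{eqn:cmdotW}.
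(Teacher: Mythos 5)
Your overall strategy (run the $Q_\tau$ plus $L_\tau$ bookkeeping of Lemma~\ref{lem:intalt} on the mixed pair $U_{1,i}, U'_{2,i}$, using Proposition~\ref{prop:improva} and Lemma~\ref{lem:homologicaluniqueness} to see that the homological terms are unchanged) is exactly the paper's, but the step you yourself flagged as the main obstacle -- the matching of ends -- is where your accounting goes wrong, and it is not a repairable typo. The negative ends of $U_{1,i}$ at $\beta_1$ are the positive ends of the \emph{unprimed} connector $D_{12,i}$ (that is how $U_1$ is defined), and its negative ends at $\beta_2$ are the positive ends of $D_{2,i}$; the negative ends of $U'_{2,i}$ are the positive ends of $D'_{1,i}$ at $\beta_1$ and of $D'_{12,i}$ at $\beta_2$. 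Hence the exact cancellation is
\[
L_{\tau}(U_{1,i},U'_{2,i}) \;=\; -L_{\tau}(D_{12,i},D'_{1,i}) - L_{\tau}(D_{2,i},D'_{12,i}),
\]
not $-L_{\tau}(D'_{1,i}\cup D_{2,i},D'_{12,i})$ as you assert: the links $\zeta^+_{\beta_1,D_{12,i}}$ and $\zeta^+_{\beta_1,D'_{12,i}}$ come from the two different curves $C_i$ and $C'_i$ and there is no reason their linking numbers with $\zeta^+_{\beta_1,D'_{1,i}}$ agree. (Relatedly, your parenthetical that the negative asymptotics of $U_{1,i}$ form a sublink of $\beta_1^{\ell_{n-1}}$, and your later statement that they comprise the positive ends of $D_{12,i}$ \emph{together with} those of $D_{1,i}$, both misread the matching: $U_{1,i}$ also has ends at $\beta_2$, and the $D_{1,i}$ ends are matched with $U_{2,i}$, not $U_{1,i}$.)

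This matters for the conclusion, not just the derivation. With your pairing you arrive at the alternative ``$D'_{1,i}$ meets $D'_{12,i}$ or $D_{2,i}$ meets $D'_{12,i}$'', whose first option is an intersection \emph{internal to the single curve} $C'_i$ -- perfectly possible, since every $B$-curve has one double point, and of no use in Corollary~\ref{cor:mainthm}, which needs two intersection points \emph{between} $C_i$ and $C'_i$. Swapping primed and unprimed indices, as you propose, changes the hypothesis to that of the second bullet and still does not produce the mixed pair $D_{12,i}\cdot D'_{1,i}$; the specific cross-pairings in the lemma's statement are forced by the linking cancellation above. With the correct matching, the rest of your computation does go through essentially as written: $Q_{\tau}([D_{12,i}],[D'_{1,i}]) + Q_{\tau}([D_{2,i}],[D'_{12,i}]) = Q_{\tau}([D_{1,i}\cup D_{2,i}],[D_{12,i}])$ because the constraints and asymptotics agree with the unprimed ones, and \eqref{eqn:homologycalculation} then yields $U_{1,i}\cdot U'_{2,i} = 1 - \bigl(D_{12,i}\cdot D'_{1,i} + D_{2,i}\cdot D'_{12,i}\bigr)$, which is the first bullet; the second follows symmetrically.
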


\begin{corollary}\label{cor:mainthm} Proposition~\ref{prop:up} holds.
\end{corollary}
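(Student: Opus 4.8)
The plan is to deduce Proposition~\ref{prop:up} (Corollary~\ref{cor:mainthm}) by the same two--curve intersection argument used at the end of \S\ref{sec:noconn}, now applied to the decomposition of \S\ref{ss:rest}. Suppose for contradiction that there are two distinct $B$-curves $C_i, C_i'$ that are close to breaking into buildings with a connector. Chop each nearly broken building just below the neck, obtaining compact curves with boundary $U_{1,i},U_{2,i},D_{1,i},D_{2,i},D_{12,i}$ and the primed analogues, holomorphic with respect to $J^{R_i}$. By Proposition~\ref{prop:improva} the connector always has homology class $z_M$, and by Lemma~\ref{lem:homologicaluniqueness} the classes $z$ of $D_{1,i}$ and $y$ of $D_{2,i}$ are forced as well; so $C_i$ and $C_i'$ have {\em the same} pieces up to homology, and the only freedom is in the actual holomorphic representatives. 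The goal is to show $C_i \cdot C_i' \ge 2$, contradicting $B\cdot B = 1$.

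First I would record that, since $C_i, C_i'$ are distinct simple curves in class $B$, positivity of intersections forces $C_i\cdot C_i' \ge 0$ and in fact each geometric intersection point counts $+1$; so it suffices to produce two distinct such points. Each such point can be located in one of the three ``regions'' of the stretched building: the top level ($U$'s), the neck, or the lowest level ($D$'s). Since the neck pieces of a building with a connector are all covers of trivial cylinders (Lemma~\ref{lem:conn}), intersections there are controlled; the real content is in the top and bottom. The key inputs are the two intersection alternatives. Applying Lemma~\ref{lem:intalter} to the pair $(C_i, C_i')$: from its first bullet, either $U_{1,i}$ meets $U_{2,i}'$, or $D_{12,i}$ meets $D_{1,i}'$, or $D_{2,i}$ meets $D_{12,i}'$; from its second bullet, either $U_{2,i}$ meets $U_{1,i}'$, or $D_{1,i}$ meets $D_{12,i}'$, or $D_{12,i}$ meets $D_{2,i}'$. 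Each alternative supplies at least one intersection point of $C_i$ with $C_i'$, and because the two bullets involve disjoint pairs of pieces (the first uses $U_{1,i},U_{2,i}',D_{12,i},D_{1,i}',D_{2,i},D_{12,i}'$ and the second uses $U_{2,i},U_{1,i}',D_{1,i},D_{12,i}',D_{12,i},D_{2,i}'$, and any common piece such as $D_{12,i}$ appears meeting {\em different} pieces of $C_i'$), the two intersection points produced are geometrically distinct. Hence $C_i\cdot C_i'\ge 2$.

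The main obstacle is making rigorous the claim that the two points coming from the two bullets of Lemma~\ref{lem:intalter} are genuinely distinct rather than an artifact of the same geometric point counted twice. This is where one must be careful about the case analysis: one needs to check, for each of the $3\times 3$ combinations of outcomes, that the piece of $C_i$ meeting $C_i'$ in the first bullet and the piece meeting $C_i'$ in the second bullet either are different curves, or, if they coincide (e.g.\ both $D_{12,i}$), that they meet different pieces of $C_i'$ (e.g.\ $D_{1,i}'$ versus $D_{2,i}'$, which are disjoint by Corollary~\ref{cor:interscor} applied to $C_i'$). In every case this yields two distinct intersection points. Then, exactly as in the proof of Proposition~\ref{prop:bound}, positivity of intersections for $J^{R_i}$-holomorphic curves gives $C_i\cdot C_i'\ge 2$, contradicting $B\cdot B=1$ since both $C_i$ and $C_i'$ represent $B$. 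Therefore at most one breaking with a connector occurs, which is Proposition~\ref{prop:up}; combined with Proposition~\ref{prop:bound} this completes the proof of Proposition~\ref{prop:boundcurv}, and hence of Theorem~\ref{thm:main}.

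\begin{proof}[Proof of Corollary~\ref{cor:mainthm}]
Suppose, for contradiction, that there are two distinct representatives $C_i$ and $C_i'$ of $B$ that are close to breaking into buildings each having a connector. Chopping just below the neck as above produces compact curves $U_{1,i},U_{2,i},D_{1,i},D_{2,i},D_{12,i}$ and their primed counterparts. By Proposition~\ref{prop:improva} and Lemma~\ref{lem:homologicaluniqueness}, the homology classes of all of these pieces are determined, independent of the breaking. Applying the first bullet of Lemma~\ref{lem:intalter} to the ordered pair $(C_i,C_i')$ produces an intersection point $P$ of $C_i$ with $C_i'$ lying on one of $U_{1,i}$, $D_{12,i}$, or $D_{2,i}$ (meeting $U_{2,i}'$, $D_{1,i}'$, or $D_{12,i}'$ respectively). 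Applying the second bullet produces an intersection point $P'$ lying on one of $U_{2,i}$, $D_{1,i}$, or $D_{12,i}$ (meeting $U_{1,i}'$, $D_{12,i}'$, or $D_{2,i}'$ respectively). In every combination the point $P$ and the point $P'$ are distinct: if the relevant piece of $C_i$ differs in the two cases this is clear, while if it is the same piece (necessarily $D_{12,i}$ in both) then it meets $D_{1,i}'$ in one case and $D_{2,i}'$ in the other, and $D_{1,i}'\cap D_{2,i}'=\emptyset$ by Corollary~\ref{cor:interscor} applied to $C_i'$. Hence $C_i$ and $C_i'$ have at least two distinct intersection points. Since both are $J^{R_i}$-holomorphic, positivity of intersections gives $C_i\cdot C_i'\ge 2$; but $C_i$ and $C_i'$ both represent the class $B$ with $B\cdot B=1$, a contradiction. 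Therefore there is at most one breaking with a connector, which is Proposition~\ref{prop:up}. Together with Proposition~\ref{prop:bound}, Lemma~\ref{lem:nozero} and the discussion preceding Proposition~\ref{prop:boundcurv}, this proves Proposition~\ref{prop:boundcurv}, and hence, via Corollary~\ref{cor:main} and Proposition~\ref{prop:goodC1}, completes the proof of Theorem~\ref{thm:main}.
\end{proof}
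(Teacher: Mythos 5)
Your proof is correct and is essentially the paper's own argument: the two bullets of Lemma~\ref{lem:intalter} force two intersection points between the two nearly broken $B$-curves, contradicting $B\cdot B=1$ by positivity of intersections. The only difference is that you spell out the distinctness of the two points (via Corollary~\ref{cor:interscor} in the $D_{12,i}$ case), which the paper leaves implicit, so no further changes are needed.
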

\begin{proof} Since $C_i$ and $C_i'$ are $B$-curves, we have $C_i \cdot C_i' = 1$.  On the other hand, the intersection alternative in Lemma~\ref{lem:intalter} guarantees that there are at least two intersection points between $C_i$ and $C'_i$.
Hence this scenario cannot occur.
\end{proof}

The proof of  Lemma~\ref{lem:intalter} mimics that Lemma~\ref{lem:intalt}.  One has to be precise to get the relevant linking terms to cancel, however; hence the quite specific alternatives.

\begin{proof}[Proof of Lemma~\ref{lem:intalter}]
We begin by proving the first bullet point.

By \eqref{eqn:interstheq}, we have:
\begin{equation}
\label{eqn:u1u2'}
 U_{1,i}\cdot U'_{2,i} = Q_{\tau}([U_{1,i}],[U'_{2,i}]) + L_{\tau}(U_{1,i},U'_{2,i}),
 \end{equation}
where $L_{\tau}$ denotes the asymptotic linking number defined in \eqref{eq:linking}.
Further,
\begin{align}
 L_{\tau}(U_{1,i},U'_{2,i}) =   -  L_{\tau}(D_{12,i},D_{1,i}')- L_{\tau}(D_{2,i},D'_{12, i}),
 \end{align}
because, by Definition~\ref{def:U},  the negative ends of $U_{1,i}$ and $U'_{2,i}$ on $\beta_1$ are the same as the positive ends on $\beta_1$ of $D_{12,i}$ and $D'_{1,i}$ respectively,
while the negative ends of $U_{1,i}$ and $U'_{2,i}$ on $\beta_2$ are the same as the positive ends on $\beta_2$ of $D_{2,i}$ and $D'_{12,i}$.  Note also that
there is no linking between the end of $D_{12,i}$ at $\beta_2$ and the ends of $D'_{1,i}$,
nor any
between the end of $D'_{12,i}$ at $\beta_1$ and the ends of  $D_{2,i}$.
Hence
\begin{equation}
\label{eqn:d1d2d12'}
D_{12,i} \cdot D'_{1,i} + D_{2,i} \cdot D'_{12,i} = Q_{\tau}([D_{12,i}],[D'_{1,i}]) + Q_{\tau}([D_{2,i}],[D'_{12,i}]) - L_{\tau}(U_{1,i},U'_{2,i}).
\end{equation}
By Lemma~\ref{lem:homologicaluniqueness}, the constraints $z,z'$ on $D_{1,i}, D_{1,i}'$ are the same.  Hence
\[ Q_{\tau}([D_{12,i}],[D'_{1,i}]) = t_n^2 - z \cdot z_M, \quad Q_{\tau}([D_{2,i}],[D'_{12,i}]) = \ell_{n-1}^2 - y \cdot z_M,\]
so that
\begin{align}\notag
Q_{\tau}([D_{12,i}],[D'_{1,i}]) + Q_{\tau}([D_{2,i}],[D'_{12,i}])
 & = \ell_{n-1}^2 + t_n^2 - (z+y) \cdot z_M \\ \label{eqn:qequation4}
 & = \ell_{n-1}^2 + t_n^2 - (W - z_M) \cdot z_M,\\ \notag
 & = Q_{\tau}([D_{1,i} \cup D_{1,i}],[D_{12,i}]),
\end{align}
where the last equality holds by the first line of \eqref{eqn:qequation2}.
Equation \eqref{eqn:homologycalculation} now implies that
\[ Q_{\tau}([U_{1,i}],[U'_{2,i}]) = - (Q_{\tau}([D_{12,i}],[D'_{1,i}]) + Q_{\tau}([D_{2,i}],[D'_{12,i}]))+1,\]
since $Q_{\tau}([U_{1,i}],[U'_{2,i}])=Q_{\tau}([U_{1,i}],[U_{2,i}]).$  Combine this with \eqref{eqn:u1u2'} and \eqref{eqn:d1d2d12'} as in the proof of Lemma~\ref{lem:intalt}, we obtain
\[ U_{1,i}\cdot U'_{2,i}  = 1 - (D_{12,i} \cdot D'_{1,i} + D_{2,i} \cdot D'_{12,i}).\]
Since all intersection numbers are nonnegative, this completes the proof of the first bullet point.

 The proof of the second  is identical, modulo switching the roles of $C_i, C_i'$.
\end{proof}


\begin{thebibliography}{999999}
\bibitem[BH]{BH}  O. Buse and R. Hind, Ellipsoid embeddings and symplectic packing stability, {\it Compos. Math.} {\bf 149} (2013), no 5. 889--902.

\bibitem[CGHi]{CGH} D. Cristofaro-Gardiner  and R. Hind, Symplectic embeddings of products, arXiv:1508.02659.

\bibitem[CGHR]{CGHR} D. Cristofaro-Gardiner, M. Hutchings and V. Ramos,   The asymptotics of ECH capacities,
{\it Invent. Math.} {\bf 199} (2015), no. 1, 187--214.

\bibitem[CGLS]{CGLS} D. Cristofaro-Gardiner, T. Li, and R. Stanley,  New examples of period collapse, arXiv:1509.01887.

\bibitem[HW]{HW}  G.H. Hardy and E. W. Wright,  {\it An Introduction to the Theory of Numbers}, Sixth edition. Revised  by D.R. Heath-Brown and J.H. Silverman, Oxford University Press, Oxford, 2008.

\bibitem[Hi]{Hi}  R. Hind, Some optimal embeddings of symplectic ellipsoids, {\it J. Topol.} {\bf 8} (2015), no 3, 871--883.

\bibitem[HiK]{HK} R. Hind and E. Kerman, New obstructions to symplectic embeddings, {\it Invent. Math.} {\bf 196} (2014), no.2, 383--452.

\bibitem[HiK2]{HK2}   R. Hind and E. Kerman,  Erratum to New obstructions to symplectic embeddings, to appear.

\bibitem[H01]{HUT01} M. Hutchings,   An index inequality for embedded pseudoholomorphic curves in symplectizations,  (2001)

\bibitem[H]{hech} M. Hutchings, The embedded contact homology index revisited,  {\it New Perspectives and challenges in symplectic field theory}, 263--297, CRM Proc. Lecture Notes, {\bf 49}, Amer. Math Soc., Providence, R.I.

\bibitem[H2]{hlecture} M. Hutchings, Lecture notes on embedded contact homology, in {\it Contact and symplectic topology}, 389--484, Bolyai Soc. Math. Stud. {\bf 26}, {\it J\'anos Bolyai Math. Soc.}, Budapest, 2014.

\bibitem[H3]{qech} M. Hutchings, Quantitative embedded contact homology,  {\it J. Differential Geom.} {\bf 88} (2011) no 2, 231--266.

\bibitem[HN1]{HN} M. Hutchings  and J. Nelson, Cylindrical contact homology for dynamically convex contact forms in three dimensions,
arXiv:1407.2898, {\it J. Symp. Geom.}, to appear.

\bibitem[HT1]{HT} M. Hutchings and C. Taubes, Gluing pseudoholomorphic curves along branched covered cylinders, II.  {\it J. Symp. Geom.} {\bf 7} (2009), no 1. 29--133.

\bibitem[HT2]{HT2} M. Hutchings and C. Taubes, The Weinstein conjecture for stable Hamiltonian structures, {\it Geom. Topol.} {\bf 10} (2006), 169--266.

\bibitem[M1]{Mell}  D. McDuff,  Symplectic embeddings of $4$-dimensional ellipsoids, {\it J. Topol.} {\bf 2} (2009), no.1. 1-22,  Corrigendum: {\it J. Topol.} {\bf 8} (2015) no 4, 1119-1122.

\bibitem[M2]{M1}  D. McDuff, The Hofer conjecture on embedding symplectic ellipsoids, {\it J. Differential Geom.} {\bf 88} (2011), no 3, 519--532.

\bibitem[MS]{MS}   D. McDuff and F. Schlenk, The embedding capacity of four-dimensional symplectic ellipsoids, {\it Ann. of Math.} (2) {\bf  175} (2012), no 3, 1191--1282.

\bibitem[N]{N} J. Nelson, Automatic transversality  in contact homology I: regularity, arXiv:1407.3993.

\bibitem[PV]{pelayongoc} A. Pelayo and S. V\~{u} Ng\d{o}c, The Hofer question on intermediate symplectic capacities, {\it Proc. London Math. Soc.}, {\bf 110} (2015), 787--804.

\end{thebibliography}
\end{document}